\numberwithin{equation}{section}
\newtheorem{theorem}{Theorem}[section]
\newtheorem{lemma}{Lemma}[section]
\newtheorem{corollary}{Corollary}[section]
\newtheorem{Proposition}{Proposition}[section]
\newtheorem{RHP}{RH problem}[section]
\newtheorem{remark}[theorem]{Remark}
\numberwithin {figure} {section}
\DeclareMathOperator*{\res}{Res}
\DeclareMathOperator*{\im}{Im}
\DeclareMathOperator*{\re}{Re}
\begin{document}

\begin{frontmatter}
\title{The Cauchy problem  for the  Novikov equation under a nonzero background: Painlev\'e  asymptotics
 in a transition zone }

\author[inst2]{Zhaoyu Wang}

\author[inst3]{Xuan Zhou}

\author[inst2]{Engui Fan$^{*,}$  }

\address[inst2]{ School of Mathematical Sciences and Key Laboratory of Mathematics for Nonlinear Science, Fudan University, Shanghai,200433, China}
\address[inst3]{ College of Mathematics and Systems Science, Shandong University of Science and Technology, Qingdao, 266061,  China\\
* Corresponding author and e-mail address: faneg@fudan.edu.cn  }

\begin{abstract}

In this paper, we   investigate the Painlev\'e  asymptotics  in a transition zone  for  the  solutions to  the Cauchy problem of  the Novikov equation under a  nonzero background
\begin{align}
	&u_{t}-u_{txx}+4 u_{x}=3uu_xu_{xx}+u^2u_{xxx}, \nonumber \\
	&u(x, 0)=u_{0}(x),\nonumber
\end{align}	
where  $u_0(x)\rightarrow \kappa>0, \ x\rightarrow \pm \infty$  and $u_0(x)-\kappa$ is
assumed in the Schwarz space.
%It is shown that the solution of the  Cauchy problem  can be characterized   via a  Riemann-Hilbert  problem in a new scale $(y,t)$ with
%$$y=x-\int_{x}^{\infty}\left( (u-u_{xx}+1)^{2/3} -1\right) ds.$$
 This  result is established  by performing   the   $\overline\partial$-steepest descent analysis to
  a  Riemann-Hilbert problem associated  with   the the Cauchy problem  in a new spatial scale
  \begin{equation*}
  	y = x - \int_{x}^{\infty} \left((u-u_{xx}+1)^{2/3}-1\right)ds,
  \end{equation*}
   for large times in   the
 transition zone $y/t \approx -1/8 $.
It is shown that the leading order term of the asymptotic approximation
comes from the contribution of solitons, while the sub-leading term is related to   the solution of  the Painlev\'e  \uppercase\expandafter{\romannumeral2} equation.

\end{abstract}

\begin{keyword}
Novikov equation \sep  Riemann-Hilbert problem \sep   $\overline\partial$-steepest descent method \sep  Painlev\'e transcendents \sep  large time asymptotics.

\textit{Mathematics Subject Classification:} 35Q51; 35Q15; 35C20; 37K15.
\end{keyword}

\end{frontmatter}

\tableofcontents

\quad

\section {Introduction}

In this paper, we are concerned with the Painlev\'e asymptotics  of solutions to  the Cauchy problem for the Novikov equation   under a nonzero background
 \begin{align}
	&u_{t}-u_{txx}+4 u_{x}=3uu_xu_{xx}+u^2u_{xxx},\label{Novikov1}\\
	&u(x, 0)=u_{0}(x), \quad x \in \mathbb{R},\  t>0,\\
	&u_{0}(x)\to\kappa>0,\ x\to\pm\infty,\label{Novikov3}
\end{align}	
where    $u=u(x,t)$  is a real-valued function of   $x$ and $t$. By introducing the momentum variable $m=u-u_{x x}$,  the Novikov equation (\ref{Novikov1}) can be rewritten as the conversation law form
%\begin{align}
%	&m_{t}+\left(m_xu+3mu_x\right)u=0,
%\end{align}
%or equivalently,
\begin{align}
	&(m^{2/3})_{t}+\left(u^2m^{2/3}\right)_x=0.\label{Novikov0}
\end{align}
The Novikov equation (\ref{Novikov1}) as a new integrable system was derived from the  classification of integrable
generalized Camassa-Holm equations of the form
\begin{align}
	(1-\partial_x^2)u_t=F(u,u_x,u_{xx},...)
\end{align}
possessing infinite hierarchies of higher symmetries.

The Novikov equation possesses a    scalar    Lax pair  involving  the third order derivative with respect to $x$,
which has been provided \cite{39,37}.
 Furthermore, by employing the prolongation algebra method,
Hone and Wang introduced  a $3\times 3$ matrix Lax pair and  established a bi-Hamiltonian structure for the Novikov equation (\ref{Novikov1}) \cite{HW29}. This Lax pair was used
to   explicitly construct    peakon solutions  on a zero background,  replicating  a feature characterizing
  the waves of great height-waves of largest amplitude that were exact solutions of the governing equations for water waves   \cite{HW29,CA1,CA2,CA3,TJF}.
   Hone et al. further  derived the
explicit formulas for multipeakon solutions of the Novikov equation (\ref{Novikov1})   \cite{HW28}.  Matsuno, using  the  Hirota bilinear method,
     presented parametric representations of smooth multisoliton solutions
  for  the Novikov equation (\ref{Novikov1}) on a nonzero constant  background \cite{M36}.
He also demonstrated that a smooth soliton converges to a peakon in the limit where the constant background approaches zero  while the velocity of the soliton is fixed.
Wu  et al.  obtained   $N$-soliton solutions for the Novikov equation  through Darboux transformations \cite{Wu6}.    Recently, Chang et al.
applied Pfaffian technique  to investigate multipeakons of the Novikov equation, establishing
a connection between the Novikov peakons and the finite Toda lattice of BKP type, as well as employing Hermite-Pade approximation to address the Novikov peakon problem
\cite{CH2018, CH2022}.  There exists   a unique global solution $u(x, t)$  of the Novikov equation (\ref{Novikov1}),
 such that  $u(x,t)\to0$ as $x\to\pm\infty$ for all $t>0$  \cite{32}. Boutet de Monvel et al.  developed the inverse scattering theory  to
 the Novikov equation  (\ref{Novikov1}) with a nonzero constant background.
 They proved that under a  transformation
\begin{align}
		u(x, t)\rightarrow \kappa \tilde{u}(x-\kappa^2t, \kappa^2t)+\kappa,
\end{align}
the  Cauchy problem  (\ref{Novikov1})--(\ref{Novikov3})  can be reduced into the following   Cauchy
problem on zero background
\begin{align}
	&(\tilde{m}^{2/3})_{t}+\left(\tilde{m}^{2/3}\left(u^{2}+2u\right)\right)_{x}=0, \  \tilde{m}=u-u_{x x}+1,\label{Novikov}\\
	&u(x,0)=u_0(x),\label{Novikov2}
\end{align}
where $u_0(x)$ satisfies the sign condition
\begin{equation*}
	u_0(x)- u_{0,xx}(x)+1>0.
\end{equation*}
 Building upon the above characteristics, a Riemann-Hilbert (RH) formalism for the Cauchy problem (\ref{Novikov})-(\ref{Novikov2}) has been established   \cite{RHP} .
%The Novikov equation (\ref{Novikov})  also  admits $3\times 3$  matrix spectral problem   as the Sasa-Satuma equation, Degasperis-Procesi
%(DP) equation,  good Boussinesq equation,  three-wave equation
%  \cite{Deift1982,Constantin1,Monvel3,Lenells1,RHP, Lenells2, Monvel2, Geng1,  Geng3, YF2}.

The Novikov equation and  DP equation  possess  numerous common characteristics in their RH problem  and   face   some    difficulties.
One of the difficulties is the Lax pair associated with (\ref{Novikov}) has six spectral singularities at  $\varkappa_n=e^{\frac{n\pi i}{3}}$ for $n=1,\cdots,6$, which means that the corresponding RH problem also exhibits spectral singularities.
During the  large time asymptotic analysis  for  the   DP equation,
  Boutet de Monvel et al. considered  a row vector RH problem   to  avoid the impact of singularities \cite{Monvel3,Monvel2}.
  On the contrary, in the case of the Novikov equation, the situation is different. The solution of the similar row vector RH problem cannot be directly used to recover $e^{x(y,t)-y}$ \cite{RHP}. Recently,
  this difficulty was   overcome  by  the establishment of a small-norm RH problem near the singular points \cite{novYF}.
  Furthermore, the   large time asymptotic expansions    for the solution to the Cauchy problem (\ref{Novikov})-(\ref{Novikov2}) of
   the  Novikov equation
in  four different  space-time regions (See   Figure  \ref{result1})
   $${\rm I.} \ \xi<-1/8; \ {\rm II.} \ -1/8<\xi<0;  \ {\rm III.}\   0<\xi <1; \ \ {\rm IV.} \ \xi>1, \quad \xi:=y/t$$
were obtained with
   %(The definition of $y$ is in \eqref{y} and we use the parameter $y$ to avoid the dependence of the jump matrix relating the two sides of the RH problem on the parameter $x$ and $t$ and the multivalued nature of the residue conditions with poles \cite{Monvel3}).
   the  $\bar{\partial}$  nonlinear steepest approach, which was  originally   introduced by McLaughlin-Miller \cite{MandM2006,MandM2008}.   This method
  has been successfully applied to analyze the large time asymptotics and soliton resolution  of  integrable systems   \cite{DandMNLS,fNLS,Liu3,SandRNLS,YF3,WF}.
 The remaining question is: How to describe   the  asymptotics of the solution to  the Cauchy problem (\ref{Novikov})-(\ref{Novikov2})
in the transition zones ${\rm V.} \ \xi \approx  -1/8$ and ${\rm VI.} \  \xi \approx  1$ as illustrated in Figure  \ref{result1} ?

   The aim of the present work is  to  present  the large time asymptotics of the Novikov equation in the transition zone ${\rm V.} \ \xi \approx  -1/8$.
  The leading term of the asymptotic expansion  is influenced by the discrete spectrum  and the sub-leading term is in terms of the solutions  of the Painlev\'e  \uppercase\expandafter{\romannumeral2} equation. Our main result  is  stated as follows.
 % For the analysis of the other transition zone ${\rm VI.} \  \xi \approx  1$, see Remark \ref{remar}.

\begin{figure}
\begin{center}
\begin{tikzpicture}
%\draw[yellow!20, fill=yellow!20] (0,0)--(4,0)--(4,2)--(0, 2);
%\draw[green!20, fill=green!20] (0,0 )--(4,2)--(0,2)--(0,0);
%\draw[blue!20, fill=blue!20] (0,0 )--(-4,1.1)--(-4,2)--(0, 2)--(0,0);
%\draw[yellow!20, fill=yellow!20] (0,0 )--(-4,0)--(-4,1.1)--(0,0);
\draw [ -latex ] (-4.2,0)--(4.5,0);
\draw [ -latex ](0,0)--(0,3.5);
%\draw [ red  ](0,0 )--(5,2.5);
%\draw [ red  ](0,0 )--(-5,1.4);

%\draw[dotted,thick,domain=0:1.52,smooth,variable=\x,blue ] plot ({0.8*\x + 1.2*\x*\x},{0.866*\x + 0.5*\x*\x});
%\draw[dotted,thick,domain=0:1.12,smooth,variable=\x,blue ] plot ({2*\x + 1.5*\x*\x},{0.5*\x + 0.866*\x*\x});
%\draw[dotted,thick,domain=0:1.25,smooth,variable=\x,blue ] plot ({-1.2*\x - 1.6*\x*\x},{0.86*\x + 0.25*\x*\x});
%\draw[dotted,thick,domain=0:0.92,smooth,variable=\x,blue ] plot ({-2.5*\x - 2*\x*\x},{0.12*\x + 0.866*\x*\x});

	\fill[blue!10] (0,0) -- plot[dotted,thick,domain=0:1.52,smooth,variable=\x,blue] ({0.8*\x + 1.2*\x*\x},{0.866*\x + 0.5*\x*\x}) -- plot[dotted,thick,domain=0:1.12,smooth,variable=\x,blue] ({2*\x + 1.5*\x*\x},{0.5*\x + 0.866*\x*\x}) -- cycle;
	\fill[blue!10] (0,0)--(4.1,1.6)--(4.1,2.545)--(0,0);
%	\fill[blue!10] (0,0)--(4.1,1.6)--(4.1,2.5)--(2,1.35)--(0,0);

%	\draw[dotted,thick,domain=0:1.545,smooth,variable=\x,blue] plot ({0.8*\x + 1.2*\x*\x},{0.866*\x + 0.5*\x*\x});
%	\draw[dotted,thick,domain=0:1.115,smooth,variable=\x,blue] plot ({2*\x + 1.5*\x*\x},{0.5*\x + 0.866*\x*\x});

   	\fill[blue!10] (0, 0) -- plot[dotted,thick,domain=0:1.25,smooth,variable=\x,blue] ({-1.2*\x - 1.6*\x*\x},{0.86*\x + 0.25*\x*\x}) -- plot[dotted,thick,domain=0:0.92,smooth,variable=\x,blue] ({-2.5*\x - 2*\x*\x},{0.12*\x + 0.866*\x*\x}) -- (0, 0) -- cycle;
   	\fill[blue!10] (0,0)--(-4,0.8)--(-4,1.48)--(0,0);
  % 	\draw[dotted,thick,domain=0:1.25,smooth,variable=\x,blue] plot ({-1.2*\x - 1.6*\x*\x},{0.86*\x + 0.25*\x*\x});
  % 	\draw[dotted,thick,domain=0:0.92,smooth,variable=\x,blue] plot ({-2.5*\x - 2*\x*\x},{0.12*\x + 0.866*\x*\x});

   \draw [ red  ](0,0 )--(4.1,2.05);
   \draw [ red  ](0,0 )--(-4,1.12);

\node    at (0,-0.3)  {$0$};
\node    at (5,0)  {y};
\node    at (0,3.8 )  {t};
\node  [below]  at (1.1,1.8) {\footnotesize $II$};
\node  [below]  at (-1.5,1.8) {\footnotesize $III$};
\node  [below]  at (2.5,0.6) {\footnotesize $I$};
\node  [below]  at (-3.3,0.45) {\footnotesize $ IV $};
\node  [below]  at  (3.7,2.3) {\footnotesize $ V $};
\node  [below]  at  (-3.6,1.41) {\footnotesize $ VI $};
\node  [below]  at (-4.8,1.5) {\footnotesize $ \xi=-1/8 $};
\node  [below]  at (4.7,2.3) {\footnotesize $ \xi=1 $};
\end{tikzpicture}
\end{center}
\caption{\footnotesize The different asymptotic regions
   of the  Novikov equation in the $(y,t)$-half plane, where $\xi=y/t$.   }
\label{result1}
\end{figure}
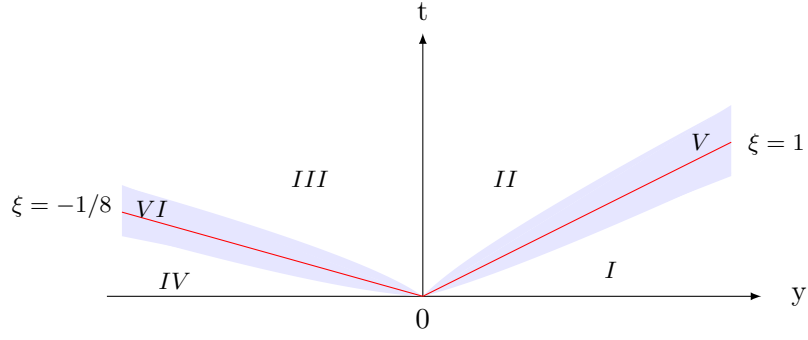

\begin{theorem}\label{last}   Let $u(x,t)=u(y(x,t),t)$ be the solution for  the Novikov equation  (\ref{Novikov})  with generic initial data   $u_0\in \mathcal{S}(\mathbb{R})$ and
 its associated scatting data $\left\lbrace  r(z),\left\lbrace \zeta_n,c_n\right\rbrace^{6N_0}_{n=1}\right\rbrace$. Let
  $u^\lozenge(y,t)$ be the $\mathcal{N}(\lozenge)$-soliton solution corresponding to the  scattering data
$\tilde{\mathcal{D}_\lozenge}=   \left\lbrace \zeta_n,c_nT^2(\zeta_n)\right\rbrace_{n\in\lozenge} $ shown in Corollary \ref{p1urxrsol}.  Then in the  transition   zone $|\xi+\frac{1}{8}|t^{2/3}<C$ with
$\xi=y/t$,  there exists a large constant $T_1$ such that for all $t>T_1$, we have
\begin{align}
	u(y,t)&= u^\lozenge(y,t;\tilde{\mathcal{D}}_\lozenge)\left( T_1(e^{\frac{\pi i}{6}})T_3(e^{\frac{\pi i}{6}})\right) ^{-1/2}-1\nonumber\\
	&+\frac{1}{2}\left( T_1(e^{\frac{\pi i}{6}})T_3(e^{\frac{\pi i}{6}})\right) ^{-1/2} f_{11}t^{-1/3}+\mathcal{O}(t^{-2/3+2\delta_1}),\label{p1consu} \\
	x(y,t)
	&=x^\lozenge(y,t;\tilde{\mathcal{D}}_\lozenge)+\frac{1}{2} \ln T_{13}(e^{\frac{\pi i}{6}})+ \frac{1}{2} f_{12} t^{-1/3} +\mathcal{O}(t^{-2/3+2\delta_1}), \label{p1constx}
\end{align}
where $u^\lozenge(y,t;\tilde{\mathcal{D}}_\lozenge)$ and $x^\lozenge(y,t;\tilde{\mathcal{D}}_\lozenge)$ are defined in Corollary \ref{p1urxrsol}, $T_{13}$, $f_{11}$, and $f_{12}$ are given by   \eqref{Tij}, \eqref{p1f11}, and \eqref{p1f12}
respectively, while functions $f_{11}$ and $f_{12}$ are related to the solution of   the Painlev\'{e} \uppercase\expandafter{\romannumeral2}  equation
\begin{equation}\label{p1pain2}
	v_{ss} = 2v^3 +sv, \quad s \in \mathbb{R},
\end{equation}
with asymptotics
\begin{equation}
v(s) \sim \left| r\left(\frac{\sqrt{7}+\sqrt{3}}{2}\right)\right| {\rm Ai}(s), \quad s \to -\infty.
\end{equation}

\end{theorem}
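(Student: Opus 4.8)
The plan is to carry out a $\overline{\partial}$-steepest descent analysis of the Riemann--Hilbert problem for $M(z)$ associated with the Cauchy problem, specialized to the scaling regime $|\xi+1/8|t^{2/3}<C$. The starting observation is that the phase function $t\theta(z)$ governing the oscillatory jumps has, for $\xi$ near $-1/8$, a pair of stationary points that coalesce: as $\xi\to-1/8$ two saddle points of $\theta$ merge into a single degenerate saddle. This coalescence is precisely what forces the natural local scale to be $t^{-1/3}$ rather than the $t^{-1/2}$ of the generic regions I--IV, and it is what replaces the parabolic-cylinder model problem by the Painlev\'e~II model problem. First I would record the signature table of $\re(t\theta)$ near the merging saddles and fix the small parameter $\delta_1$ controlling the widths of the deformation sectors, using genericity of $u_0$ to ensure the discrete spectrum is simple and disjoint from the coalescing saddle.

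Next I would perform the usual sequence of reductions. A conjugation by the scalar function $T(z)$ (the same $T$ entering $T_1$, $T_3$, $T_{13}$ and $T(\zeta_n)$) is used to cast the reflection coefficient into the correct factorized form on the jumps, to regularize the residue conditions at the discrete spectrum $\{\zeta_n\}$ into the renormalized norming constants $c_nT^2(\zeta_n)$, and to isolate the values $T_1(e^{\pi i/6})$, $T_3(e^{\pi i/6})$ that will later appear as multiplicative constants in the reconstruction. I would then open lenses by extending the reflection data off the contour via a $\overline{\partial}$-continuation, splitting $M$ into a purely meromorphic RH problem with jumps supported near the saddles and a residual $\overline{\partial}$-problem. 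A stationary-phase estimate on the nonanalytic extension shows that the $\overline{\partial}$-contribution is $\mathcal{O}(t^{-2/3+2\delta_1})$, hence it is harmlessly absorbed into the error terms of \eqref{p1consu}--\eqref{p1constx}.

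The core step is the construction of two parametrices. Away from the coalescing saddle the outer model is solved by the $\mathcal{N}(\lozenge)$-soliton RH problem with the data $\tilde{\mathcal{D}}_\lozenge$; this supplies the leading soliton contributions $u^\lozenge(y,t;\tilde{\mathcal{D}}_\lozenge)$ and $x^\lozenge(y,t;\tilde{\mathcal{D}}_\lozenge)$ of Corollary \ref{p1urxrsol}. In a shrinking disk around the merged saddle (located near $z=e^{\pi i/6}$) I would rescale $z$ by $t^{1/3}$; the rescaled jumps converge to those of the standard Painlev\'e~II RH problem, whose solution is controlled by the transcendent $v(s)$ solving \eqref{p1pain2} with the prescribed Airy normalization, the amplitude being fixed by the value of the reflection coefficient at the saddle, $r((\sqrt{7}+\sqrt{3})/2)$. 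Matching this local parametrix against the outer model on the boundary circle produces a mismatch of size $\mathcal{O}(t^{-1/3})$, whose leading Fourier coefficient encodes the entries $f_{11}$ and $f_{12}$.

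Finally I would set up the small-norm RH problem for the error $E(z)=M(z)P^{-1}(z)$, where $P$ denotes the combined parametrix; its jump is $I+\mathcal{O}(t^{-1/3})$ on the matching circle and exponentially small elsewhere, so $E=I+t^{-1/3}E_1+\mathcal{O}(t^{-2/3+2\delta_1})$ with $E_1$ computable from a single residue. Substituting this expansion, together with the $\overline{\partial}$-estimate, into the reconstruction formulas for $u(y,t)$ and $x(y,t)$ and tracking the scalar factors $T_1(e^{\pi i/6})T_3(e^{\pi i/6})$ and $T_{13}(e^{\pi i/6})$ yields \eqref{p1consu}--\eqref{p1constx}. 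I expect the principal obstacle to be the construction and matching of the Painlev\'e~II parametrix: one must verify that the degenerate saddle produces exactly the model jumps of the Painlev\'e~II problem, identify the transcendent with the correct Airy-normalized boundary behavior, and confirm that its $t^{-1/3}$-expansion on the matching circle reproduces $f_{11}$, $f_{12}$, all while maintaining uniform control of the $\overline{\partial}$-contribution across the entire transition zone $|\xi+1/8|t^{2/3}<C$.
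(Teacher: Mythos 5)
Your overall architecture is the same as the paper's: conjugation by $T$, opening lenses with a $\overline{\partial}$-extension, splitting into a pure RH problem plus a pure $\overline{\partial}$-problem, a reflectionless soliton outer model with renormalized norming constants, a Painlev\'e~II local parametrix on the $t^{1/3}$ scale, and a small-norm error problem whose $t^{-1/3}E_1$ coefficient produces $f_{11}$, $f_{12}$. However, there are two genuine gaps. First, you place ``the merged saddle'' near $z=e^{\pi i/6}$ and treat it as a single degenerate critical point. In fact the phase $\theta_{12}$ has eight real saddle points $\pm p_0,\pm 1/p_0,\pm p_1,\pm 1/p_1$ which coalesce \emph{pairwise} as $\xi\to -1/8$ into the four points $\pm z_a,\pm z_b$ with $z_a=(\sqrt{7}+\sqrt{3})/2$, $z_b=(\sqrt{7}-\sqrt{3})/2$ on $\mathbb{R}$, and by the $\omega$-symmetry one must build twelve local Painlev\'e~II parametrices (on $\mathbb{R}$, $\omega\mathbb{R}$, $\omega^2\mathbb{R}$) and sum their contributions in $E_1$. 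The point $e^{\pi i/6}$ is where the solution is \emph{reconstructed}; the paper arranges all local disks to be bounded away from it, so that the reconstruction sees only the outer model times $I+t^{-1/3}E_1+\mathcal{O}(t^{-2/3+2\delta_1})$. If the local parametrix actually sat at $e^{\pi i/6}$, the leading term of $u(y,t)$ would not be the soliton contribution, contradicting the claimed expansion.

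Second, you omit entirely the spectral singularities at $\varkappa_j=e^{i\pi(j-1)/3}$ ($\pm1,\pm\omega,\pm\omega^2$), where $M(z)$ has prescribed first-order pole behavior. This is one of the distinctive difficulties of the Novikov equation: the paper must construct separate parametrices $M^B_j$ in small boxes $\mathbb{B}_j$ around these points (shown to be $I+\mathcal{O}(t^{-1})$ via a local $\overline{\partial}$-argument), and must then control the resulting second-order pole singularities of the error function $E(z)$ at $\varkappa_j$, showing the Laurent coefficients are $\mathcal{O}(t^{-1+\rho})$. Without this step the small-norm argument for $E$ does not close, because $E$ is not an ordinary small-norm RH problem --- it carries pole singularities on its jump-free region. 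Your proposal's error analysis would therefore fail as written; you would need to add the $M^B_j$ construction and the two-step decomposition $E=E^{(2)}E^{(1)}$ used in the paper.
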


\begin{remark}\label{remar}
In the subsequent  paper,  we will provide  the large time asymptotic analysis of the Novikov equation in the    transition zone  ${\rm VI.} \  \xi \approx  1$.  For   this case, the critical points  and spectral singularities are the same, which  implies that singularities will emerge in the local model,
 requiring the use of alternative methods rather than the approach described in this paper.
\end{remark}

The organization of our paper is as follows:

  In Section \ref{sec2},   we quickly review  some basic results, especially the construction of a  RH  problem for $M(z)$   related to the Cauchy problem \eqref{Novikov1}-\eqref{Novikov3},  which will be used  to analyze  the large time asymptotics  of the Novikov equation.  For details,   refer to     \cite{RHP,novYF}.

  In Section \ref{sec3},   we convert the original RH problem for  $M(z)$ to a   RH problem for  $M^{(2)}(z)$, whose jump contour can be opened with
    different factorizations of the jump matrix \eqref{jumpv0} according to the sign of the phase functions \eqref{theta}.

In Section \ref{sec4}, we focus on the large time asymptotic analysis   in the  transition zone $|\xi+1/8|t^{2/3}<C$ with the following steps.
First of all,  introducing a matrix-valued  function  $\mathcal{R}^{(3)}(z)$  to make a continuous extension of  $M^{(2)}(z)$
into    a hybrid  $\bar{\partial}$-RH problem  for  $M^{(3)}(z)$, which can be
  decomposed     into a
 pure RH   problem  for  $M^{R}(z)$ and a  pure $\bar{\partial}$-problem for  $M^{(4)}(z)$.
 We observe that the contribution to $M^{R}(z)$ arises from three  distinct components:
 \begin{itemize}
 \item The first  contribution originates from the discrete spectrum, where  a  modified reflectionless RH problem for $M^{O}(z)$  is  constructed  in Subsection \ref{sec6}.
 \item The second  contribution   comes from jump contours near the critical points generated by phase points after colliding. The corresponding local parametrix for $M^L(z)$
can be   approximated by  the modified Painlev\'{e} \uppercase\expandafter{\romannumeral2}  RH problem near the  critical points in Subsection \ref{p1loc}.
 \item The third contribution  generates from a pure jump RH problem near the singularities in Subsection   \ref{rhsing}, which has an order  of $\mathcal{O}(t^{-1})$.
The residual error function results from a   small norm RH problem outside the neighborhood of critical points in Subsection \ref{sec7}.

\item  In Subsection \ref{sec8}, we analyze
 the    contribution    associated with  the $\bar{\partial}$-problem  for $M^{(4)}(z)$.

\item Finally, in  Subsection \ref{p1sec9}, based on  the result obtained above,we provide the proof of Theorem \ref{last}.

 \end{itemize}

\section {Inverse  Scattering   and  RH Problem}\label{sec2}

In this section, we  state some  main
results on  the   inverse scattering transform and the RH problem  associated with the Cauchy problem
 (\ref{Novikov1})--(\ref{Novikov3}).   The details   can be found in
\cite{RHP,novYF}.

\subsection{The Lax pair and spectral analysis}

\quad The Novikov equation (\ref{Novikov})      admits the Lax pair   \cite{HW29,RHP}
\begin{equation}
\breve{\Phi}_x = \breve{X} \breve{\Phi},\hspace{0.5cm}\breve{\Phi}_t =\breve{T} \breve{\Phi}, \label{lax0}
\end{equation}
where $k$ is a  spectral parameter, $\breve{\Phi}=\breve{\Phi} (k;x,t) $ is a $3 \times 3 $ matrix valued eigenfunction, the  matrices $\breve{X} $ and $\breve{T} $ are defined by
\begin{align}
	&\breve{X}=\left(\begin{array}{ccc}
		0&	k\tilde{m} & 1 \\
		0&0 & k\tilde{m}\\
		1&0&0
	\end{array}\right),\nonumber\\
	&\breve{T}=\left(\begin{array}{ccc}
		-(u+1)u_x+\frac{1}{3k^2}&	\frac{u_x}{k}-(u^2+2u)k\tilde{m} & u_x^2+1 \\
		\frac{u+1}{k}&-\frac{2}{3k^2} & -\frac{u_x}{k}-(u^2+2u)k\tilde{m}\\
		-u^2-2u & \frac{u+1}{k} &(u+1)u_x+\frac{1}{3k^2}
	\end{array}\right).
 \nonumber
\end{align}

Denote
$$k^2(z)  =\frac{1}{3\sqrt{3}}\left(z^3+\frac{1}{z^3} \right), \ \omega=e^{\frac{2i\pi}{3}}, $$
then the following  algebraic equation
\begin{align}
	\lambda^3(z)-\lambda (z)=k^2(z)
\end{align}
admits    three  roots  in the form
\begin{align}
	&\lambda_j(z) =\frac{1}{\sqrt{3}}\left(\omega^j z+\frac{1}{\omega^j z} \right), \ j=1,2,3.\label{lambda}
\end{align}
Moreover, $k(\kappa_n)=0$, for $\kappa_n=e^{\frac{i\pi}{6}+\frac{i\pi(n-1)}{3}}$, $n=1,\cdots,6$.

%
%where  $z$ is a new uniformization variable to  avoid multi-value  of   eigenvalue $k$ and
In order to control the large $k$ behavior of the solutions of the  Lax pair (\ref{lax0}), we  define
\begin{align}
	&D(x,t)=\text{diag}\{q,q^{-1},1\},\quad q:= \tilde{m}^{1/3}(x,t),\nonumber\\
	&P(z)=\left(\begin{array}{ccc}
		\lambda_1^2(z)&	\lambda_2^2(z) & \lambda_3^2(z) \\
		k (z) &k(z) & k(z)\\
			\lambda_1(z)&	\lambda_2(z) & \lambda_3(z)
	\end{array}\right),\label{P}\\
&P^{-1}(z)=\left(\begin{array}{ccc}
	\frac{1}{3\lambda_1^2(z)-1}&	0 & 0 \\
	0&\frac{1}{3\lambda_2^2(z)-1} & 0\\
	0&	0 & \frac{1}{3\lambda_3^2(z)-1}
\end{array}\right)\left(\begin{array}{ccc}
1&	\frac{z}{\lambda_1(z)} & \lambda_1(z) \\
1&\frac{z}{\lambda_2(z)} & \lambda_2(z)\\
1&	\frac{z}{\lambda_3(z)} & \lambda_3(z)
\end{array}\right)\label{P-1}.
\end{align}
Then the new function
\begin{align}
	\Phi = P^{-1}(z)D^{-1}(x,t)\breve{\Phi}  \label{ed22}
\end{align}
 satisfies the Lax pair
%By the transformation (\ref{ed22}),  $\Phi(z)$ admits a new Lax pair as
\begin{align}
	& \Phi_x-q^2\Lambda(z)\Phi=U\Phi,\\
	&\Phi_t+\left[(u^2+2u)q^2\Lambda(z)-A(z) \right] \Phi=V\Phi,\label{laxphi}
\end{align}
where
\begin{align}
	&\Lambda(z)=\text{diag}\{\lambda_1(z),\lambda_2(z),\lambda_3(z)\},  \ \ A(z)=\frac{1}{3k^2}+\Lambda(z)^{-1},\nonumber\\
	&U= U_1U_2, \ \ V= U_1(V_1+V_2\Lambda),\label{laxU}\\
	&U_1=\text{diag} \left\{\frac{1}{3\lambda_1^2-1},\frac{1}{3\lambda_2^2-1},\frac{1}{3\lambda_3^2-1}\right\}, \nonumber\\
	&U_2=\left(\begin{array}{ccc}
		c_2\lambda_1&	c_1(\lambda_1\lambda_2-\lambda_2^2)+c_2\lambda_2 & c_1(\lambda_1\lambda_3-\lambda_3^2)+c_2\lambda_3 \\
		c_1(\lambda_1\lambda_2-\lambda_1^2)+c_2\lambda_1& c_2\lambda_2 & c_1(\lambda_3\lambda_2-\lambda_3^2)+c_2\lambda_3\\
		c_1(\lambda_1\lambda_3-\lambda_3^2)+c_2\lambda_1&	c_1(\lambda_3\lambda_2-\lambda_2^2)+c_2\lambda_2 & c_2\lambda_3
	\end{array}\right), \nonumber
\end{align}
with  $c_1=\frac{q_x}{q}$ and $c_2=q^{-2}-q^2$. While  $V_1$ has same  form of $U_1$ with $c_1$ and $c_2$ replaced by $c_3=-(u^2+2u)\frac{q_x}{q}$ and $c_4=(u^2+2u)q^2+\frac{u_x^2+1}{q^2}-1$ , respectively; $V_2$ is given by
\begin{align}
	V_2=[V_2^{(jl)}]_{3\times3},\quad V_2^{(jl)}= c_5\left(\frac{1}{\lambda_l}-\frac{1}{\lambda_j} \right) +c_6\left( \frac{\lambda_j}{\lambda_l}+\frac{\lambda_l}{\lambda_j} \right).
\end{align}
with   $c_5=\frac{u_x}{q}$, $c_6=(u+1)q-1$,

Introduce a new eigenfunction $\mu=\mu(z;x,t)$ satisfying
\begin{align}
	\mu=\Phi  e ^{-Q},\label{transmu}
\end{align}
where $Q=Q(z;x,t)$ is a $3\times3$ diagonal function
$$Q=y(x,t)\Lambda(z)+tA(z),$$
with
\begin{align}
&Q_x=q^2\Lambda, \quad Q_t=-\left(u^2+2u \right) q^2\Lambda-A, \nonumber\\
	&y(x,t)=x-\int_{x}^{\infty}\left( q^2(s,t)-1\right) ds\label{y}.
	%&c_+(x,t)=\int_{x}^{\infty}\left( q^2(s,t)-1\right) ds.\nonumber
\end{align}
Then the Lax pair (\ref{laxphi}) is changed to
\begin{align}
	\mu_x-[Q_x,\mu]=U\mu,\quad \mu_t-[Q_t,\mu]=V\mu,\label{laxmu}
\end{align}
whose  solutions  satisfy   the  Fredholm integral equations
\begin{equation}
	\mu^{\pm}(z;x,t)=I-\int_{x}^{\pm \infty}e^{-\hat{\Lambda}(z)\int_{x}^{s}q^2(v,t)dv}[U\mu_\pm(s,t;z)]ds\label{intmu}.
\end{equation}
We  define six  rays at $z=0$,
$$\Sigma =\cup_{n=1}^6 L_n, \ \  \ L_n=e^{\frac{\pi(n-1)i}{3}}\mathbb{R}^+, \ n=1,\cdots,6, $$
which  divide the complex plane $ \mathbb{C}$   into  six open cones
$$S_n=\{z\in\mathbb{C};\arg z\in(   {(n-1)\pi}/{3}, { n \pi }/{3})\}, \   n=1, \cdots, 6,$$
see Figure \ref{figC}.   Denote the matrix
$$\mu^{\pm}=\left(  \mu^{\pm}_1, \mu^{\pm}_2, \mu^{\pm}_3 \right), $$
where  the scripts $ 1, 2$ and $3$ denote
the first, second and third column of $ \mu^{\pm}(z)$  respectively.
Then  from  (\ref{intmu}),   we can show that  $\left(\mu^{+}_1, \mu^{+}_2, \mu^{+}_3 \right)$ is  analytical  in   the domains
$$\left(\bar{S}_1\cup \bar{S}_2,\ \bar{S}_5\cup \bar{S}_6,\ \bar{S}_3\cup \bar{S}_4 \right),$$
while  $\left(\mu^{-}_1, \mu^{-}_2, \mu^{-}_3 \right)$ is  analytical  in the domains
$$\left(\bar{S}_4\cup \bar{S}_5,\ \bar{S}_3\cup \bar{S}_2,\ \bar{S}_1\cup \bar{S}_6 \right).$$
 Here, $\bar{S}_j$ denotes the  closure of $S_j$ for $j=1,\cdots,6$, respectively.
The initial points of integration $\infty_{j,l}$ are specified as follows for each matrix entry $(j,l)$ for $j,l=1,2,3$:
\begin{align}
	\infty_{j,l}=\left\{ \begin{array}{ll}
		+\infty,   &\text{if Re}\lambda_j\geq \text{Re}\lambda_l,\\[12pt]
		-\infty  , &\text{if Re}\lambda_j< \text{Re}\lambda_l.\\
	\end{array}\right.
\end{align}
Then (\ref{intmu}) can be rewritten as a system of   Fredholm integral equations
\begin{equation}
	\mu_{jl}(z;x,t)=I_{jl}-\int_{x}^{ \infty_{jl}}e^{-\int_{x}^{s}q^2(v,t)dv (\lambda_j(z)-\lambda_l(z))}[U\mu(s,t;z)]_{jl}ds\label{intmujl}.
\end{equation}

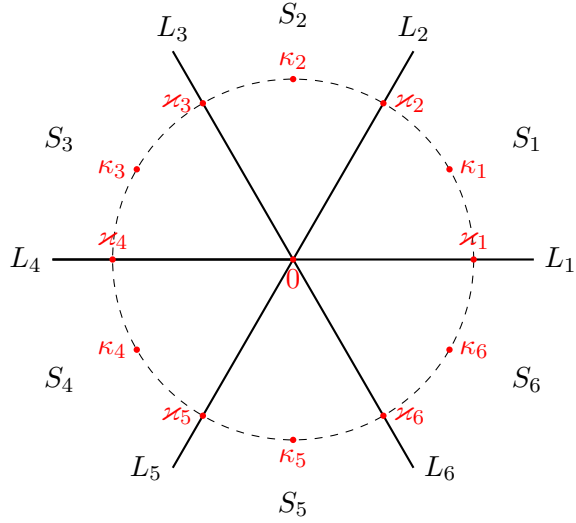
\begin{figure}[t]
	\centering
	\begin{tikzpicture}[scale=0.8]
		\draw[thick](-4,0)--(4,0)node[right]{$L_1$};
		\draw[thick](0,0)--(2,3.464)node[above]{$L_2$};
		\draw[dashed] (3,0) arc (0:360:3);
		\draw[thick](0,0)--(-2,3.464)node[above]{$L_3$};
		\draw[thick](0,0)--(-4,0)node[left]{$L_4$};
		\draw[thick ](0,0)--(2,-3.464)node[right]{$L_6$};
		\draw[thick](0,0)--(-2,-3.464)node[left]{$L_5$};
		\coordinate (A) at (1.5,2.598);
		\coordinate (B) at (1.5,-2.598);
		\coordinate (C) at (-1.5,2.598);
		\coordinate (D) at (-1.5,-2.598);
		\coordinate (E) at (3,0);
		\coordinate (F) at (-3,0);
		\coordinate (G) at (-2.598,1.5);
		\coordinate (H) at (-2.598,-1.5);
		\coordinate (J) at (2.598,1.5);
		\coordinate (K) at (2.598,-1.5);
		\coordinate (L) at (0,3);
		\coordinate (M) at (0,-3);
		\coordinate (v) at (0,0);
		\fill[red] (v) circle (1.5pt) node[below] {$0$};
		\fill[red] (A) circle (1.5pt) node[right] {$\varkappa_2$};
		\fill[red] (B) circle (1.5pt) node[right] {$\varkappa_6$};
		\fill[red] (C) circle (1.5pt) node[left] {$\varkappa_3$};
		\fill[red] (D) circle (1.5pt) node[left] {$\varkappa_5$};
		\fill[red] (E) circle (1.5pt) node[above] {$\varkappa_1$};
		\fill[red] (F) circle (1.5pt) node[above] {$\varkappa_4$};
		\fill[red] (G) circle (1.5pt) node[left] {$\kappa_3$};
		\fill[red] (H) circle (1.5pt) node[left] {$\kappa_4$};
		\fill[red] (J) circle (1.5pt) node[right] {$\kappa_1$};
		\fill[red] (K) circle (1.5pt) node[right] {$\kappa_6$};
		\fill[red] (L) circle (1.5pt) node[above] {$\kappa_2$};
		\fill[red] (M) circle (1.5pt) node[below] {$\kappa_5$};
		\coordinate (a) at (-3.464,2);
		\coordinate (s) at (-3.464,-2);
		\coordinate (d) at (3.464,2);
		\coordinate (f) at (3.464,-2);
		\coordinate (g) at (0,2);
		\coordinate (h) at (0,-2);
		\fill (a) circle (0pt) node[left] {$S_3$};
		\fill (s) circle (0pt) node[left] {$S_4$};
		\fill (d) circle (0pt) node[right] {$S_1$};
		\fill (f) circle (0pt) node[right] {$S_6$};
%		\fill (g) circle (0pt) node[above] {$S_2$};
%		\fill (h) circle (0pt) node[below] {$S_5$};
       \node  [above]  at (0,3.7) {$S_2$};
       \node  [below]  at (0,-3.7) {$S_5$};
	\end{tikzpicture}
	\caption{\footnotesize  The $z$-plane is divided into six analytical  domains $S_n, n=1,\cdots,6,$ by
 six   rays  $L_n$.  The red points  $\varkappa_n$ and $\kappa_n$    are spectral   singularities.}
	\label{figC}
\end{figure}
 It was shown  that the eigenfunction  $\mu(z)$ defined by (\ref{transmu}) has the following properties  \cite{RHP}.

\begin{Proposition}\label{propermu}
	 The	equations (\ref{intmu}) uniquely define a $3\times3$-matrix valued solutions $\mu (z) := \mu(z;x,t)$ of (\ref{laxmu})
	with the following properties:
\begin{itemize}
	\item [{\rm (1)}] det $\mu(z)$=1.

\item [{\rm (2)}]   $\mu(z)$ is piecewise meromorphic with respect to $ \mathbb{C}\setminus\Sigma $, as a function of the spectral parameter $z$.

\item [{\rm (3)}]  $\mu(z)$ obeys the  symmetries
	\begin{align}
\mu(z)=\Gamma_1\overline{\mu(\bar{z})}\Gamma_1=\Gamma_2\overline{\mu(\omega^2\bar{z})}\Gamma_2=\Gamma_3\overline{\mu(\omega\bar{z})}\Gamma_3=\overline{\mu(\bar{z}^{-1})},\nonumber
	\end{align}
where
	\begin{align}\label{gamma123}
		\Gamma_1=\left(\begin{array}{ccc}
			0&	1 & 0 \\
			1&0 & 0\\
			0&	0 & 1
		\end{array}\right),\quad \Gamma_2=\left(\begin{array}{ccc}
		0&	0 & 1 \\
		0&1 & 0\\
		1&	0 & 0
	\end{array}\right),\quad 					\Gamma_3=\left(\begin{array}{ccc}
	1&	0 & 0 \\
	0&0 & 1\\
	0&	1 & 0
\end{array}\right).
	\end{align}
\item [{\rm (4)}]  $\mu(z)$ has pole singularities at $\varkappa_n=e^{\frac{n\pi i}{3}}$ with $\mu=\mathcal{O}(\frac{1}{z-\varkappa_n})$ as $z\to\varkappa_n$.

\item[{\rm (5)}]  $\mu(z) \to I$ as $z\to\infty$, and for $z\in\mathbb{C}\setminus\Sigma $, $\mu(z)$ is bounded as $x\to-\infty$ and $\mu (z) \to I$ as $x\to+\infty$.
\end{itemize}
\end{Proposition}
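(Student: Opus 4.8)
The plan is to regard the system (\ref{intmu}), rewritten entrywise as the Volterra equations (\ref{intmujl}), as a fixed-point problem solved by Neumann iteration, and then to extract each listed property from the convergent construction together with the uniqueness it supplies.

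First I would pin down the domains in which the exponential kernels are bounded. Writing the roots $\lambda_j(z)$ as in (\ref{lambda}), the kernel of (\ref{intmujl}) carries the factor $\exp\{-\int_x^s q^2(v,t)\,dv\,(\lambda_j(z)-\lambda_l(z))\}$, and since $q^2>0$ this stays bounded exactly when the sign of $\mathrm{Re}(\lambda_j(z)-\lambda_l(z))$ is compatible with the prescribed endpoint $\infty_{jl}$. I would therefore compute the sign of $\mathrm{Re}(\lambda_j-\lambda_l)$ on each of the six open cones $S_n$; this combinatorial analysis is what produces the column-wise analyticity domains quoted just above the Proposition (e.g.\ $\mu_1^+$ analytic on $\bar S_1\cup\bar S_2$). \emph{This is the step I expect to be the main obstacle}: one must check that, on the claimed region, every relevant kernel entry is non-growing, verify the matching across the separating rays $L_n$, and control the behavior as $z\to0$, $z\to\infty$ and near the singular points.

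With the sign structure established, existence and uniqueness of $\mu$ solving (\ref{intmu})---and hence that $\mu$ solves the Lax system (\ref{laxmu})---follow from a routine contraction estimate: because the reduced potential ($\tilde m-1$, equivalently $u_0$) is Schwartz, $q^2-1$ and the entries of $U$ in (\ref{laxU}) decay rapidly, so the Volterra operator has small norm on each compact subset of the analyticity domains and the Neumann series converges absolutely and uniformly. Property (5) then drops out: as $z\to\infty$ the kernel vanishes and as $x\to+\infty$ the integration range collapses, giving $\mu\to I$, while the uniform bound on the series yields boundedness as $x\to-\infty$. Differentiating the series in $z$ under the integral sign gives analyticity inside each $S_n$; the only loss of analyticity is at zeros of the spectral determinants, which appear as poles, so $\mu$ is piecewise meromorphic on $\mathbb{C}\setminus\Sigma$, which is property (2).

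For property (1) I would note that $\Psi:=\mu e^{Q}$ solves $\Psi_x=(q^2\Lambda+U)\Psi$, whose coefficient is trace-free: $\mathrm{tr}(q^2\Lambda)=0$ because $\sum_j\lambda_j=0$ by (\ref{lambda}), and $\mathrm{tr}\,U=0$ by the residue identity $\sum_j \lambda_j/(3\lambda_j^2-1)=0$ for the roots of $\lambda^3-\lambda=k^2$. Hence $\partial_x\det\Psi=0$, and since $\mathrm{tr}\,Q=0$ we get $\det\mu=\det\Psi$ constant in $x$, equal to its $x\to+\infty$ value $\det I=1$. Property (3) I would obtain from uniqueness: the four maps $z\mapsto\bar z,\ \omega^2\bar z,\ \omega\bar z,\ \bar z^{-1}$ act on $\{\lambda_1,\lambda_2,\lambda_3\}$ by the transpositions encoded in $\Gamma_1,\Gamma_2,\Gamma_3$ of (\ref{gamma123}) (and by the identity for $z\mapsto\bar z^{-1}$), so using that $q$ is real one verifies that $\Gamma_i\overline{\mu(\sigma_i(z))}\Gamma_i$ (respectively $\overline{\mu(\bar z^{-1})}$) satisfies the same integral equation (\ref{intmu}) with the same normalization; uniqueness forces the claimed identities. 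Finally, property (4) follows by identifying $\varkappa_n=e^{n\pi i/3}$ as the branch points where $3\lambda_j^2(z)-1=0$, i.e.\ where two sheets $\lambda_j$ collide: there the gauge matrix $P^{-1}(z)$ of (\ref{P-1}) acquires a simple pole through the factor $1/(3\lambda_j^2-1)$ while $\breve\Phi$, $D$ and $e^{-Q}$ from (\ref{ed22})--(\ref{transmu}) remain regular, so $\mu=\mathcal{O}((z-\varkappa_n)^{-1})$.
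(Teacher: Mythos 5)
The paper itself offers no proof of Proposition \ref{propermu}: it is quoted directly from \cite{RHP}. So your proposal can only be measured against the standard Beals--Coifman-type construction used there, and in outline you do follow it: the sign analysis of $\mathrm{Re}(\lambda_j-\lambda_l)$ on the six cones to fix the endpoints $\infty_{jl}$, the uniqueness argument for the symmetries in (3), tracelessness for $\det\mu=1$, and the gauge factor $P^{-1}(z)$ for the poles at $\varkappa_n$ in (4). Those pieces are sound; in particular the identities you invoke, $\sum_j\lambda_j=0$, $\sum_j\lambda_j/(3\lambda_j^2-1)=0$ and $\mathrm{tr}\,A=0$ (so that $\det e^{\pm Q}=1$), all check out for the roots of $\lambda^3-\lambda=k^2$.

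The genuine gap is the existence step. Because the endpoints $\infty_{jl}$ in (\ref{intmujl}) differ from entry to entry, the system is \emph{not} Volterra but genuinely Fredholm (the paper's own terminology), and neither a ``routine contraction estimate'' nor absolute convergence of the Neumann series is available: rapid decay of the Schwartz potential makes the kernel integrable and the operator compact, not small in norm --- its $L^1$ size is fixed by $u_0$, not by any small parameter. Worse, this step contradicts your own (correct) conclusion two sentences later: if the Neumann series converged absolutely and uniformly throughout a cone, $\mu$ would be \emph{analytic} there and could have no poles, whereas property (2) --- and indeed the entire paper, whose leading asymptotics are driven by solitons attached to the discrete spectrum $\mathcal{Z}$ --- requires $\mu$ to be meromorphic, with poles exactly at the points where the homogeneous version of (\ref{intmu}) has nontrivial solutions. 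The correct tool is the analytic Fredholm alternative: on each cone the operator $K_z$ is compact and depends analytically on $z$, so $(I-K_z)^{-1}$ exists and is meromorphic in $z$, its poles forming a discrete set; this is what simultaneously yields existence, uniqueness away from the poles, and the meromorphy claimed in (2). The boundedness as $x\to-\infty$ in (5) must then also be extracted from the Fredholm resolvent rather than from a convergent series bound.
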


\begin{remark}
From the symmetries in Proposition \ref{propermu}(3), it follows that the values of $\mu$ at $z$ and at $\omega z$ are related by
\begin{equation}\label{mwzc}
\mu(\omega z) = C^{-1} \mu(z) C, \ \ {\rm where}\ \  C=\left(\begin{array}{ccc}
			0&	0 &1 \\
			1&0 & 0\\
			0&	1 & 0
		\end{array}\right).
\end{equation}
\end{remark}

Denote  $\mu_{\pm}(z)$ as   the limiting values of $\mu (z')$ as $z'\to z $  from the
positive or negative side of $L_n$,    then they   are related as follows
\begin{align}
	\mu_{+}(z)=\mu_{-}(z)e^Q V_n(z)e^{-Q},\quad z\in L_n,\ n=1,\cdots,6,
\end{align}
where $V_n(z)$ only depends on $z$ and  is completely determined by $u(x, 0)$, i.e., by the initial data for the Cauchy problem (\ref{Novikov}). Take  $L_1=\mathbb{R}^+$ and $L_4=\mathbb{R}^-$ as an example,  $V_n$ for $n=1,4$ has a   special matrix structure
\begin{align}
	V_n(z)= \left(\begin{array}{ccc}
		1&	-r_\pm(z) & 0 \\
		\bar{r}_\pm(z)& 1-|r_\pm(z)|^2 & 0\\
		0&	0 & 1
	\end{array}\right),\quad z\in\mathbb{R}^\pm.	
\end{align}
where  $r_\pm(z)$ are  single scalar
functions,  with $r_\pm(z)\in L^{\infty}(\mathbb{R}^\pm)$, and $r_\pm(z)=\mathcal{O}(z^{-1})$ as $z\to\pm\infty$. The symmetry of $\mu (z) $ gives that $r_\pm(z)=\overline{r_\pm(z^{-1})}$,
therefore it also has $r_\pm(z)\in L^{2}(\mathbb{R}^\pm)$ and $\lim_{z\to 0^\pm}r_\pm(z)=0$.
Moreover, the singularities at $\pm1$ give that $r_\pm(\pm1)=0$. So we define the  \emph{reflection coefficient}
\begin{align}
	r(z)=\left\{ \begin{array}{ll}
		r_\pm(z),   & z\in \mathbb{R}^\pm,\\[12pt]
		0  , & z=0.\\
	\end{array}\right.
\end{align}
Then $r\in L^\infty(\mathbb{R})\cap L^2(\mathbb{R})$ and $r(z)=\mathcal{O}(z^{-1})$ as $z\to\infty$.  In references \cite{RHP, BC1984},  it was shown  that there exist at most a finite
number of simple poles $z_n$ of $\mu(z)$ lying in $S_1\cap\{z\in\mathbb{C};\ |z|>1\}$ and $w_m$  lying in $S_1\cap\{z\in\mathbb{C};\ |z|=1\}$.
And there are no poles except $\pm1$, $\pm\omega$ and $\pm\omega^2$ on the contour $\Sigma $.
%Note that, unlike the case of $2\times2$ matrix function,  the residue conditions have two special matrix forms with only one nonzero entry.

 To differentiate this two types of poles,  we denote them as $z_n$, $z_n^A$ and $w_m$, $w_m^A$, respectively.
Denote $N_1$, $N_1^A$, $N_2$ and $N_2^A$ as the number of $z_n$, $z_n^A$, $w_m$,  and $w_m^A$, respectively. The  symmetries  of $\mu(z)$  imply  $\bar{z}_n^{-1}$ and $\frac{1}{\bar{z}_n^A}$ are also the poles of $\mu(z)$ in $S_1$.  It is convenient to define  $\zeta_n=z_n$, and $\zeta_{n+N_1}=\bar{z}_n^{-1}$ for $n=1,\cdot\cdot\cdot,N_1$; $\zeta_{m+2N_1}=w_m$  for $m=1,\cdot\cdot\cdot,N_2$; $\zeta_{j+2N_1+N_2}=z_j^A$, and $\zeta_{j+2N_1+N_2+N_1^A}=\frac{1}{\bar{z}_n^A}$ for $j=1,\cdot\cdot\cdot,N_1^A$; $\zeta_{m+2N_1+2N_1^A+N_2}=w_m^A$  for $m=1,\cdot\cdot\cdot,N_2^A$. For the sake of brevity, let
\begin{align*}
	N_0=2N_1+2N_1^A+N_2+N_2^A.
\end{align*}
Moreover,  $\omega \zeta_n$, $\omega^2 \zeta_n$, $\bar{\zeta}_n$, $\omega\bar{\zeta}_n$,   $\omega^2\bar{\zeta}_n$  are also poles of $\mu(z)$ in $S_j$, $j=2,\cdots,6$. For convenience, let $\zeta_{n+N_0}=\omega\bar{\zeta}_n$, $\zeta_{n+2N_0}=\omega\zeta_n,$ $\zeta_{n+3N_0}=\omega^2\bar{\zeta}_n$, $\zeta_{n+4N_0}=\omega^2\zeta_n$ and  $\zeta_{n+5N_0}=\bar{\zeta}_n$ for $n=1,\cdots,N_0$.
 Therefore,  define the discrete spectrum  $\mathcal{Z}$ as
\begin{equation}
	\mathcal{Z}=\left\{ \zeta_n\right\}_{n=1}^{6N_0}, \label{spectrals}
\end{equation}
with $\zeta_n\in S_1$ and $\bar{\zeta}_n\in S_6$ whose distribution on the $z$-plane   is shown  in Figure \ref{fig:figure1}.
\begin{figure}[H]
	\centering
	\begin{tikzpicture}[node distance=2cm]
		\draw[thick](-4,0)--(4,0)node[right]{$L_1$};
		\draw[thick](0,0)--(2,3.464)node[above]{$L_2$};
		\draw[thick](0,0)--(-2,3.464)node[above]{$L_3$};
		\draw[thick](0,0)--(-4,0)node[left]{$L_4$};
		\draw[thick](0,0)--(2,-3.464)node[right]{$L_6$};
		\draw[thick](0,0)--(-2,-3.464)node[left]{$L_5$};
		\draw[thick] (2,0) arc (0:360:2);
		\coordinate (A) at (2.2,2.2);
		\coordinate (B) at (2.2,-2.2);
		\coordinate (C) at (-0.8,3);
		\coordinate (D) at (-0.8,-3);
		\coordinate (E) at (0.9,0.9);
		\coordinate (F) at (0.9,-0.9);
		\coordinate (G) at (-3,0.8);
		\coordinate (H) at (-3,-0.8);
		\coordinate (J) at (1.7570508075688774,0.956);
		\coordinate (K) at (1.7570508075688774,-0.956);
		\coordinate (L) at (-1.7570508075688774,0.956);
		\coordinate (M) at (-1.7570508075688774,-0.956);
		\coordinate (a) at (0,2);
		\fill[blue] (a) circle (1pt) node[above] {$\omega \bar{w}_m$};
		\coordinate (s) at (0,-2);
		\fill[blue] (s) circle (1pt) node[below] {$\omega ^2w_m$};
		\coordinate (d) at (-0.33,1.23);
		\fill[red] (d) circle (1pt) node[right] {$\frac{\omega}{z_n} $};
		\coordinate (f) at (-0.33,-1.23);
		\fill[red] (f) circle (1pt) node[right] {$\frac{\omega^2}{\bar{z}_n}$};
		\coordinate (g) at (-1.23,0.33);
		\fill[red] (g) circle (1pt) node[right] {$\frac{\omega}{\bar{z}_n} $};
		\coordinate (h) at (-1.23,-0.33);
		\fill[red] (h) circle (1pt) node[right] {$\frac{\omega^2}{z_n}$};
		\fill[red] (A) circle (1pt) node[right] {$z_n$};
		\fill[red] (B) circle (1pt) node[right] {$\bar{z}_n$};
		\fill[red] (C) circle (1pt) node[left] {$\omega \bar{z}_n$};
		\fill[red] (D) circle (1pt) node[right] {$\omega^2 z_n$};
		\fill[red] (E) circle (1pt) node[right] {$\frac{1}{\bar{z}_n}$};
		\fill[red] (F) circle (1pt) node[right] {$\frac{1}{z_n}$};
		\fill[red] (G) circle (1pt) node[left] {$\omega z_n$};
		\fill[red] (H) circle (1pt) node[left] {$\omega^2\bar{z}_n$};
		\fill[blue] (J) circle (1pt) node[right] {$w_m$};
		\fill[blue] (K) circle (1pt) node[right] {$\bar{w}_m$};
		\fill[blue] (L) circle (1pt) node[left] {$\omega w_m$};
		\fill[blue] (M) circle (1pt) node[below] {$\omega^2\bar{w}_m$};
		\coordinate (a) at (-3.464,2);
		\coordinate (s) at (-3.464,-2);
		\coordinate (d) at (3.464,2);
		\coordinate (f) at (3.464,-2);
		\coordinate (g) at (0,3.5);
		\coordinate (h) at (0,-3.5);
		\fill (a) circle (0pt) node[left] {$S_3$};
		\fill (s) circle (0pt) node[left] {$S_4$};
		\fill (d) circle (0pt) node[right] {$S_1$};
		\fill (f) circle (0pt) node[right] {$S_6$};
		\fill (g) circle (0pt) node[above] {$S_2$};
		\fill (h) circle (0pt) node[below] {$S_5$};
	\end{tikzpicture}
	\caption{\footnotesize Distribution of the discrete spectrum $\mathcal{Z}$ in the $z$-plane. }
	\label{fig:figure1}
\end{figure}
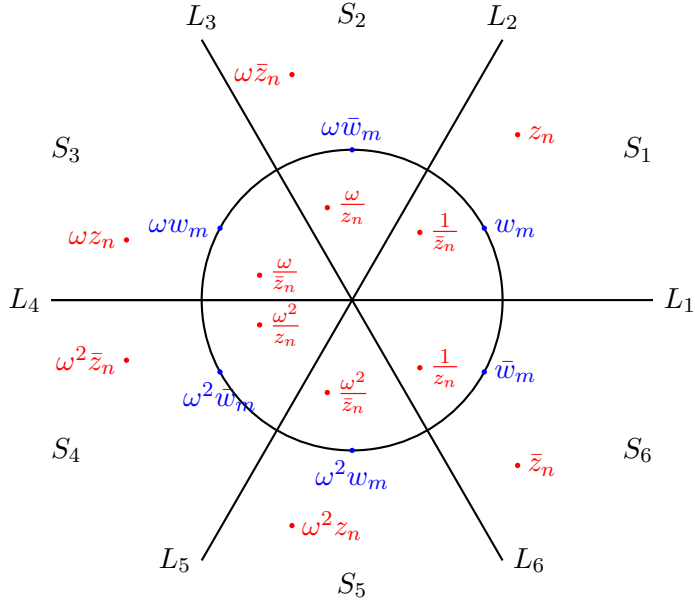
% Besides, the poles of $\mu(z)$ on $L_n$, $n=1,\cdots,6$  may occur and they correspond to spectral singularities. Therefore, it is reasonable to give the following assumption:
As shown in  \cite{RHP},  denote the  \emph{norming constant}    $c_n$   and    residue conditions as
\begin{align}
	&\res_{z=\zeta_n}\mu(z) =\lim_{z\to \zeta_n}\mu(z) e^Q\left(\begin{array}{ccc}
		0&	-c_n & 0 \\
		0& 0 & 0\\
		0&	0 & 0
	\end{array}\right)e^{-Q},
\end{align}
for $n=1,\cdots,2N_1+
N_2$ and
\begin{align}
	&\res_{z=\zeta_n}\mu(z)=\lim_{z\to \zeta_j}\mu(z) e^Q\left(\begin{array}{ccc}
		0&	0 & 0 \\
		0& 0 & -c_{n+2N_1+N_2}\\
		0&	0 & 0
	\end{array}\right)e^{-Q},\label{resrelation1}
\end{align}
for $n=1+2N_1+N_2,\cdots,N_0$.
In addition,  the collection
$\sigma_d= \left\lbrace \zeta_n,C_n\right\rbrace^{6N_0}_{n=1}$
is called the \emph{scattering data} with $C_n=c_n$, $C_{n+N_0}=\omega\bar{c}_n$, $C_{n+2N_0}=\omega c_n,$ $C_{n+3N_0}=\omega^2\bar{c}_n$, $C_{n+4N_0}=\omega^2c_n$ and  $C_{n+5N_0}=\bar{c}_n$ for $n=1,\cdots,N_0$.

 To deal with our following work,
we assume our initial data satisfies that $u_0\in\mathcal{S}(\mathbb{R})$ to generate  generic scattering data such  that $\mu(z)$  has no the poles
 on $L_n\setminus\{\varkappa_n\}$, $n=1,\cdots,6$  and  at the  point  $z= e^{\frac{\pi i}{6}}$.
  %And also  $r(z)$ belongs to $\mathcal{S}(\mathbb{R})$.
%This statement will be given is section \ref{secr}.
%\subsection{Scattering maps}\label{secr}
For the reflection coefficient $r(z)$, we have the following proposition \cite{novYF}.
\begin{Proposition}\label{pror}
	If the initial data $u_0 (x) \in  \mathcal{S}(\mathbb{R})$, then $r(z)$ belongs to $\mathcal{S}(\mathbb{R})$. There exist   fixed  constants $C_{1,r}>0$ and   $C_{2,r}$
such that  if $u_0- u_{0,xx}> C_{2,r}>-1$, $ \parallel u_0- u_{0,xx}\parallel_{L^1}<C_{1,r} $ and $ \parallel u_0\parallel_{W^{3,1} \cup W^{3,\infty}} <C_{1,r} $,   then $|r(z)|<1$ for $z\in\mathbb{R}$.
Especially, $|r(\pm1)|=0$.
\end{Proposition}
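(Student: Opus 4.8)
The plan is to read off $r(z)$ from the normalized Jost solutions and to analyze it through the Fredholm integral equations \eqref{intmu}. Recall that the potential matrix $U$ in \eqref{laxU} is built from $q-1$, $q_x/q$ and $q^{\pm2}-1$ with $q=\tilde m^{1/3}$ and $\tilde m=u-u_{xx}+1$; hence whenever $u_0\in\mathcal S(\mathbb R)$ the entries of $U(\cdot,0)$ lie in $\mathcal S(\mathbb R)$ as well, since $\tilde m-1=u_0-u_{0,xx}$ together with all its $x$-derivatives is Schwartz and $\tilde m$ stays bounded away from $0$. First I would express $r_\pm(z)$ through the scattering relation $\mu_+=\mu_- e^{Q}V e^{-Q}$ on $L_1\cup L_4=\mathbb R$: comparing the columns of $\mu^{\pm}$ that are analytic in adjacent cones, $r_\pm(z)$ is given by an explicit $s$-integral of $[U\mu]_{jl}$ against the oscillatory kernel $e^{-\int_x^s q^2\,dv\,(\lambda_j-\lambda_l)}$ coming from \eqref{intmujl}, evaluated in the limit $x\to-\infty$. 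Since for real $z$ one has $\lambda_1=\overline{\lambda_2}$ from \eqref{lambda}, the relevant phase $\lambda_1-\lambda_2$ in the $12$-block is purely imaginary, so the kernel is oscillatory on $\mathbb R$.

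For the Schwartz statement I would proceed in two steps. Smoothness in $z$: iterating \eqref{intmu} produces a Neumann series whose terms are analytic in $z$ on each cone and converge uniformly on compact subsets of $\mathbb R\setminus\{\varkappa_n\}$ because $\|U(\cdot,0)\|_{L^1}<\infty$; differentiating the integral equation in $z$ and using $\det\mu=1$ from Proposition \ref{propermu}(1) shows that $\partial_z^k\mu$ exists and is again given by a convergent series, whence $r\in C^\infty(\mathbb R)$. Rapid decay: writing $r(z)$ as an $s$-integral against the oscillatory factor, I would integrate by parts repeatedly, each step trading one power of the nonvanishing phase derivative $\lambda_j-\lambda_l$ for one $s$-derivative of the Schwartz amplitude $[U\mu]_{jl}$; since the amplitude and all its derivatives are integrable, this yields $|z^N r(z)|\le C_N$ for every $N$. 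Combining the two steps gives $r\in\mathcal S(\mathbb R)$, which is the content of the first assertion.

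The bound $|r(z)|<1$ I would obtain quantitatively from the smallness hypotheses. The sign condition $u_0-u_{0,xx}>C_{2,r}>-1$ guarantees $\tilde m>C_{2,r}+1>0$, so $q=\tilde m^{1/3}$ is real, positive and bounded and the equations \eqref{intmu} are genuinely of the stated oscillatory type with no loss of control. Under $\|u_0-u_{0,xx}\|_{L^1}<C_{1,r}$ and $\|u_0\|_{W^{3,1}\cup W^{3,\infty}}<C_{1,r}$ the matrix $U(\cdot,0)$ is small in $L^1\cap L^\infty$, so the Neumann series for $\mu$ converges with $\|\mu-I\|_{L^\infty}=\mathcal O(\|U\|_{L^1})$ uniformly in $z\in\mathbb R$. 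Feeding this back into the integral formula shows that the diagonal (transmission) scattering coefficient equals $1+\mathcal O(\|U\|_{L^1})$ while the off-diagonal (reflection) coefficient is $\mathcal O(\|U\|_{L^1})$; choosing $C_{1,r}$ small enough makes their ratio satisfy $|r(z)|<1$ for every $z\in\mathbb R$.

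Finally, the vanishing $r(\pm1)=0$ is forced by the coalescence of eigenvalues at those points. A direct computation from \eqref{lambda} gives $\lambda_1(z)-\lambda_2(z)=i(z-z^{-1})$, so $\lambda_1(\pm1)=\lambda_2(\pm1)$ with a simple zero of the $12$-phase at $z=\pm1$; equivalently $\pm1$ are the pole singularities $\varkappa_6,\varkappa_3$ of $\mu$ from Proposition \ref{propermu}(4), and since $r$ must remain bounded there the numerator of the scattering ratio is forced to vanish, giving $r(\pm1)=0$. The main obstacle is the global, rather than merely perturbative, control of $|r(z)|<1$ \emph{uniformly} in $z\in\mathbb R$ up to the spectral singularities $z=\pm1$: near these points the colliding eigenvalues $\lambda_1,\lambda_2$ make the naive estimates on \eqref{intmujl} degenerate, and one must show that the Neumann-series remainder stays small uniformly there as well, so that the quantitative smallness conclusion does not break down precisely where the analysis is most delicate.
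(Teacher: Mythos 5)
Your perturbative core---Volterra/Neumann bounds on $\mu^{\pm}$ from \eqref{intmu}, smallness of the potential under the stated hypotheses, and repeated integration by parts in $s$ against the oscillatory kernel (using $\lambda_1-\lambda_2=i(z-z^{-1})$) to obtain rapid decay---is essentially the route the paper itself takes on $(0,1-\epsilon)\cup(1+\epsilon,+\infty)$, and that part is sound. The genuine gap is exactly the one you concede in your final paragraph, and it cannot be repaired in the way you suggest. In \eqref{intmujl} the potential matrix $U=U_1U_2$ has rows $1,2$ whose entries blow up like $(z\mp1)^{-1}$ as $z\to\pm1$: indeed $U_1=\mathrm{diag}\{(3\lambda_j^2-1)^{-1}\}$ and $3\lambda_1^2(\pm1)-1=3\lambda_2^2(\pm1)-1=0$, while the smooth factors $c_1=q_x/q$, $c_2=q^{-2}-q^2$ carry no compensating zero. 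Consequently the norm of the Volterra operator diverges as $z\to\pm1$ no matter how small $\|c_1\|_{L^1}$ and $\|c_2\|_{L^1}$ are; uniform smallness of the Neumann remainder up to $z=\pm1$ is therefore false, not merely delicate. The same degeneration breaks your smoothness argument, which only covers compact subsets of $\mathbb{R}\setminus\{\varkappa_n\}$, so membership of $r$ in $\mathcal{S}(\mathbb{R})$ is unestablished precisely at $\pm1$. Your derivation of $r(\pm1)=0$ is also circular: it presupposes that $r$ stays bounded at $\pm1$, which is part of what must be proved, since both scattering entries may a priori blow up there.

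The paper closes these gaps with two structural, non-perturbative facts rather than sharpened estimates. First, it forms the scattering matrix $S(z)=\mu^-(z;x,t)^{-1}\mu^+(z;x,t)$ and observes that $\mu^+$ and $\mu^-$ carry the \emph{identical} singular structure \eqref{asyM1}--\eqref{asymo} at the points $\varkappa_j$, so the singular parts cancel in the product: $S$ extends regularly across $\pm1,\pm\omega,\pm\omega^2$ and in fact $S(\varkappa_j)\sim I$. This yields at once that $z=\pm1$ are removable singularities of $r$ (which is what makes the Schwartz property possible there) and that $r(\pm1)=0$. Second, for $|r|<1$ near $\pm1$ it uses an algebraic identity relating $S$ to its cofactor matrix $S^A=(S^{-1})^T$, namely $s_{11}^A(\omega^2z)=|s_{11}(z)|^2\left(1-|r(z)|^2\right)$, together with the genericity assumption that $s_{11}^A(\omega^2\cdot)$ has no zeros on $\mathbb{R}^+\setminus\{1\}$: then $1-|r(z)|^2$ is continuous and nonvanishing on $\mathbb{R}^+\setminus\{1\}$, and it is positive on the region where the perturbative bound applies, so it cannot change sign and stays positive up to $z=\pm1$, where $r$ vanishes anyway. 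Without these two ingredients, or substitutes for them, your proposal does not constitute a proof.
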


\subsection{An RH characterization}

\quad
We replace the  variable  $ x $ with $ y $  defined by  (\ref{y}).
The price to pay for this is that the solution of the initial problem can be given only implicitly,
or parametrically.   Let $\xi=\frac{y}{t}$.
By the definition of the new scale $y(x, t)$, we denote the  phase function
\begin{equation}
	\theta_{jl}(z)=-i\left[ \xi (\lambda_j(z)-\lambda_l(z))+\left(\frac{1}{\lambda_j(z)}-\frac{1}{\lambda_l(z)} \right) \right] .\label{theta}
\end{equation}
Especially,
\begin{equation}\label{deftheta12}
	\theta_{12}(z)=\sqrt{3}\left(z-\frac{1}{z} \right) \left[ \xi-\frac{1}{z^2-1+z^{-2}} \right], \quad
\end{equation}
with $\theta_{23}(z)=\theta_{12}(\omega z),\text{ and }\theta_{31}(z)=\theta_{12}(\omega^2 z)$.

Define
\begin{equation}
	M(z;y,t):= \mu(z;x(y,t),t),
\end{equation}
which solves the following RH problem.
\begin{RHP}\label{RHP1}
	 Find a matrix-valued function $	M(z):= M(z;y,t)$ which satisfies
    \begin{itemize}
	
	\item  $M(z)$ is meromorphic in $\mathbb{C}\setminus \Sigma $ and has finite single poles.
	
	\item
$M(z)=\Gamma_1\overline{M(\bar{z})}\Gamma_1=\Gamma_2\overline{M(\omega^2\bar{z})}\Gamma_2=\Gamma_3\overline{M(\omega\bar{z})}\Gamma_3=\overline{M(\bar{z}^{-1})}.$

	\item $M(z)$ has continuous boundary values $M_\pm(z)$ on $\Sigma$, and
	\begin{equation}
		M_+(z)=M_-(z)V (z),\hspace{0.3cm} z \in \Sigma, \label{jumpv}
	\end{equation}
	where
	\begin{align} \label{jumpv0}
		V (z)=\begin{cases}
   \left(\begin{array}{ccc}
			1&	-r(z)e^{it\theta_{12}} & 0 \\
			\bar{r}(z)e^{-it\theta_{12}} & 1-|r(z)|^2 & 0\\
			0&	0 & 1
		\end{array}\right), \ z\in L_1,\\	
	\left(\begin{array}{ccc}
		1&	0 & 0 \\
		0 & 1 & -r(\omega z)e^{it\theta_{23}}\\
		0&	\bar{r}(\omega z)e^{-it\theta_{23}} & 1-|r(\omega z)|^2
	\end{array}\right), \ z\in L_2,\\
	\left(\begin{array}{ccc}
	1-|r(\omega^2 z)|^2&	0 & \bar{r}(\omega^2 z)e^{it\theta_{13}} \\
	0 & 1 & 0\\
	-r(\omega^2 z)e^{-it\theta_{13}}	&	0 & 1
	\end{array}\right), \ z\in L_3,\\
    \left(\begin{array}{ccc}
		1&	-r(z)e^{it\theta_{12}} & 0 \\
		\bar{r}(z)e^{-it\theta_{12}} & 1-|r(z)|^2 & 0\\
		0&	0 & 1
	\end{array}\right), \ z\in L_4,\\	
    \left(\begin{array}{ccc}
	1&	0 & 0 \\
	0 & 1 & -r(\omega z)e^{it\theta_{23}}\\
	0&	\bar{r}(\omega z)e^{-it\theta_{23}} & 1-|r(\omega z)|^2
\end{array}\right), \ z\in L_5,\\
    \left(\begin{array}{ccc}
	1-|r(\omega^2 z)|^2&	0 & \bar{r}(\omega^2 z)e^{it\theta_{13}} \\
	0 & 1 & 0\\
	-r(\omega^2 z)e^{-it\theta_{13}}	&	0 & 1
\end{array}\right),  \ z\in L_6.
     \end{cases}
	\end{align}
	
	\item $M(z) = I+\mathcal{O}(z^{-1}),\hspace{0.5cm}z \rightarrow \infty$.
	
	\item  As   $z\to\varkappa_n =e^{\frac{i\pi(n-1)}{3}}$, $n = 1,\cdots,6$,
 the  limit  of $M(z)$   has pole   singularities
	\begin{align}
		&M(z)=\frac{1}{z\mp1}\left(\begin{array}{ccc}
			\alpha_\pm &	\alpha_\pm & \beta_\pm \\
			-\alpha_\pm & -\alpha_\pm & -\beta_\pm\\
			0	&	0 & 0
		\end{array}\right)+\mathcal{O}(1),\ z\to\pm 1,\label{asyM1}\\
	&M(z)=\frac{\pm \omega^2}{z\mp\omega^2}\left(\begin{array}{ccc}
		0 &	0 &  0\\
	\beta_\pm	 & \alpha_\pm &\alpha_\pm \\
		-\beta_\pm	&	-\alpha_\pm & -\alpha_\pm
	\end{array}\right)+\mathcal{O}(1),\ z\to\pm \omega^2,\\
	&M(z)=\frac{\pm \omega}{z\mp\omega}\left(\begin{array}{ccc}
	-\alpha_\pm &	-\beta_\pm & -\alpha_\pm\\
	0	 & 0 &0 \\
	\alpha_\pm &	\beta_\pm & \alpha_\pm
	\end{array}\right)+\mathcal{O}(1),\ z\to\pm \omega\label{asymo},
	\end{align}
	with $\alpha_\pm=\alpha_\pm(y,t)=-\bar{\alpha}_\pm$, $\beta_\pm=\beta_\pm(y,t)=-\bar{\beta}_\pm$ and $M^{-1}(z)$ has same specific
	matrix structure  with $\alpha_\pm$, $\beta_\pm$ replaced by $\tilde{\alpha}_\pm$, $\tilde{\beta}_\pm$. Moreover, $\left( \alpha_\pm,\ \beta_\pm\right) \neq0$ iff $\left( \tilde{\alpha}_\pm,\ \tilde{\beta}_\pm\right) \neq0$.
	
	\item $M(z)$ has simple poles at each point in $ \mathcal{Z}$ with
\begin{equation}\label{resM11}
\begin{aligned}
&\res\limits_{k=\zeta_n} M(k)=\lim_{k\rightarrow \zeta_n}M(k)B_n,\\
&\res\limits_{k=\omega \bar{\zeta}_n} M(k)=\lim_{k\rightarrow \omega\bar{\zeta}_n} M(k)\Gamma_3(\omega \bar{B}_n)\Gamma_3:=\lim_{k\rightarrow \zeta_n}  M(k)B_{n+N},\\
&\res\limits_{k=\omega \zeta_n} M(k)=\lim_{k\rightarrow \omega \zeta_n} M(k)C^2(\omega^2B_n)C^{-2}:=\lim_{k\rightarrow \zeta_n} M(k)B_{n+2N},\\
&\res\limits_{k=\omega^2\bar{\zeta}_n} M(k)=\lim_{k\rightarrow\omega^2\bar{\zeta}_n} M(k)\Gamma_2(\omega^2 \bar{B}_n)\Gamma_2:=\lim_{k\rightarrow \zeta_n} M(k)B_{n+3N},\\
&\res\limits_{k=\omega^2\zeta_n} M(k)=\lim_{k\rightarrow\omega^2\zeta_n} M(k)C(\omega^2 B_n)C^{-1}:=\lim_{k\rightarrow \zeta_n} M(k)B_{n+4N},\\
&\res\limits_{k=\bar{\zeta}_n} M(k)=\lim_{k\rightarrow \bar{\zeta}_n}M(k)\Gamma_1\bar{B}_n\Gamma_1:=\lim_{k\rightarrow \zeta_n}M(k)B_{n+5N},
\end{aligned}
\end{equation}
where
\begin{align*}
   B_n=\begin{cases}
    \left(\begin{array}{ccc} 0 & -c_ne^{\mathrm{i}t\theta_{12}(\zeta_n)} & 0 \\ 0 & 0 & 0 \\ 0 & 0 & 0 \end{array}\right), \ n=1,\cdots,2N_1+N_2,\\
   \left(\begin{array}{ccc} 0 & 0 & 0 \\ 0 & 0 & -c_ne^{\mathrm{i}t\theta_{23}(\zeta_n)} \\ 0 & 0 & 0 \end{array}\right), \ n=2N_1+N_2+1,\cdots,2N_1+N_2+2N_1^A+N_2^A.
   \end{cases}
\end{align*}

\end{itemize}
\end{RHP}
Denote $M(z;y,t)= (M_{jl}(z;y,t))_{jl=1}^3$.
Then the  solution of  Novikov equation (\ref{Novikov}) can be  obtained by the following reconstruction formula
\begin{align}
	u(x,t)=u(y(x,t),t)=&\frac{1}{2}\tilde{m}_1(y,t)\left(\frac{M_{33}(e^{\frac{i\pi}{6}};y,t)}{M_{11}(e^{\frac{i\pi}{6}};y,t)} \right)^{1/2}\nonumber\\
	&+ \frac{1}{2}\tilde{m}_3(y,t)\left(\frac{M_{33}(e^{\frac{i\pi}{6}};y,t)}{M_{11}(e^{\frac{i\pi}{6}};y,t)} \right)^{-1/2}-1 ,\label{recons u}
\end{align}
where
\begin{equation}
	x(y,t)=y+\frac{1}{2} \ln\frac{M_{33}(e^{\frac{i\pi}{6}};y,t)}{M_{11}(e^{\frac{i\pi}{6}};y,t)} ,\label{recons x}
\end{equation}
and
\begin{align}
	\tilde{m}_l:=\sum_{j=1}^3M_{jl}(e^{\frac{i\pi}{6}};y,t),\ l=1,2,3. \nonumber
\end{align}

\section{Interpolation and Conjugation}\label{sec3}

In this section,  we aim to convert the original RH problem to a new RH problem which satisfies the following conditions:
\begin{itemize}
	\item  It is well behaved as $t\to \infty$ with $\xi$ fixed.
	\item  Different factorizations of the jump matrix \eqref{jumpv0} should be taken for different transition sectors.
\end{itemize}

For convenience, we denote
$$\mathcal{N}:=\left\lbrace 1,\cdots,N_0\right\rbrace, \   \tilde{\mathcal{N}}:=\left\lbrace 1,\cdots,2N_1+N_2\right\rbrace, \ \tilde{\mathcal{N}}^A:=\left\lbrace 1+2N_1+N_2,\cdots,N_0\right\rbrace.$$
Further,  to distinguish different  types of zeros, we introduce a small positive constant $\delta_0$ to give the  partitions $\Delta,\nabla$ and $\lozenge$  of $\mathcal{N}$   as follows:
\begin{align}
&\nabla_1=\left\lbrace j \in \tilde{\mathcal{N}}  ;\text{Im}\theta_{12}(\zeta_j)> \delta_0\right\rbrace, \
\Delta_1=\left\lbrace j \in \tilde{\mathcal{N}} ;\text{Im}\theta_{12}(\zeta_j)< 0\right\rbrace,\nonumber\\
&\nabla_2=\left\lbrace i\in \tilde{\mathcal{N}}^A  ;\text{Im}\theta_{23}(\zeta_i)> \delta_0\right\rbrace, \
\Delta_2=\left\lbrace i \in \tilde{\mathcal{N}}^A ;\text{Im}\theta_{23}(\zeta_i)< 0\right\rbrace, \\
&\lozenge_1=\left\lbrace j_0 \in \tilde{\mathcal{N}} ;0\leq\text{Im}\theta_{12}(\zeta_{j_0})\leq\delta_0\right\rbrace,\ \lozenge_2=\left\lbrace i_0 \in \tilde{\mathcal{N}}^A  ;0\leq\text{Im}\theta_{23}(\zeta_{i_0})\leq\delta_0\right\rbrace, \\	
&\nabla=\nabla_1\cup\nabla_2, \
\Delta=\Delta_1\cup\Delta_2, \ \lozenge=\lozenge_1\cup\lozenge_2.\label{devide}
%\{n\in\mathcal{N};\exists k\in\{1,\cdots,5\},n-kN_0\in\lozenge_1\cup\lozenge_2\}.\label{devide}
\end{align}
For $\zeta_n$ with $n\in\Delta$, the residue of $M(z)$ at $\zeta_n$ in RH problem  \ref{RHP1}  grows without bound as $t\to\infty$. However, for $\zeta_n$ with $n\in\nabla$, the residue   decays to $0$. Denote two constants $\mathcal{N}(\lozenge)=|\lozenge|$ and
\begin{equation}
	\rho_0=\min_{n\in\mathcal{N}\setminus \lozenge}\left\lbrace |\text{Im}\theta_{12}(\zeta_n )|,\ |\text{Im}\theta_{23}(\zeta_n )| \right\rbrace >0.\label{rho0}
\end{equation}

For the poles $\zeta_n$ with $ n \in \mathcal{N}\setminus \lozenge$, we want to convert the residue of these poles into
the jumps along small closed loops enclosing themselves,
respectively.  The jump matrix $V(z)$  on  $ \Sigma$  in (\ref{jumpv})   has the following  factorizations: \\
On the contour $\mathbb{R}$,
\begin{align}
	&V (z)=\left(\begin{array}{ccc}
	1 & 0 &0\\
	\bar{r} e^{-it\theta_{12}} & 1 &0\\
	0 & 0 &1
\end{array}\right)\left(\begin{array}{ccc}
1 & -r e^{it\theta_{12}} &0\\
0 & 1&0\\
0 & 0 &1
\end{array}\right)\\
&=\left(\begin{array}{ccc}
	1 & \frac{-r e^{it\theta_{12}}}{1-|r|^2}&0\\
	0 & 1&0\\
	0 & 0 &1
\end{array}\right)\left(\begin{array}{ccc}
\frac{1 }{1-|r|^2} & 0 & 0\\
0 & 1-|r|^2 & 0\\
0 & 0 &1
\end{array}\right)\left(\begin{array}{ccc}
1 & 0 & 0\\
\frac{\bar{r} e^{-it\theta_{12}}}{1-|r|^2} & 1 & 0\\
0 & 0 &1
\end{array}\right);\nonumber
\end{align}
On the contour $\omega^2\mathbb{R}$,
\begin{align}
	&V (z)=\left(\begin{array}{ccc}
		1 & 0 &0\\
		0 & 1 &0\\
		0 & \bar{r}(\omega z) e^{-it\theta_{23}} &1
	\end{array}\right)\left(\begin{array}{ccc}
		1 & 0 &0\\
		0 & 1&-r(\omega z) e^{it\theta_{23}}\\
		0 & 0 &1
	\end{array}\right)\\
	&=\left(\begin{array}{ccc}
		1 & 0&0\\
		0 & 1&\frac{-r(\omega z) e^{it\theta_{23}}}{1-|r(\omega z)|^2}\\
		0 & 0 &1
	\end{array}\right)\left(\begin{array}{ccc}
		1 & 0 & 0\\
		0 & \frac{1 }{1-|r(\omega z)|^2} & 0\\
		0 & 0 &1-|r(\omega z)|^2
	\end{array}\right)\left(\begin{array}{ccc}
		1 & 0 & 0\\
	0 & 1 & 0\\
		0 & 	\frac{\bar{r}(\omega z) e^{-it\theta_{23}}}{1-|r(\omega z)|^2} &1
	\end{array}\right);\nonumber
\end{align}
On the contour $\omega\mathbb{R}$,
\begin{align}
	&V (z)=\left(\begin{array}{ccc}
		1 & 0 &\bar{r}(\omega^2 z) e^{it\theta_{13}}\\
		0 & 1 &0\\
		0 & 0 &1
	\end{array}\right)\left(\begin{array}{ccc}
		1 & 0 &0\\
		0 & 1&0\\
		-r(\omega^2 z) e^{-it\theta_{13}} & 0 &1
	\end{array}\right)\\
	&=\left(\begin{array}{ccc}
		1 & 0&0\\
		0 & 1&0\\
		\frac{-r(\omega^2 z) e^{-it\theta_{13}}}{1-|r(\omega^2 z)|^2} & 0 &1
	\end{array}\right)\left(\begin{array}{ccc}
		1-|r(\omega^2 z)|^2 & 0 & 0\\
		0 & 1 & 0\\
		0 & 0 &\frac{1}{1-|r(\omega^2 z)|^2}
	\end{array}\right)\left(\begin{array}{ccc}
		1 & 0 & 	\frac{\bar{r}(\omega^2 z) e^{it\theta_{13}}}{1-|r(\omega^2 z)|^2}\\
		0 & 1 & 0\\
		0 & 0 &1
	\end{array}\right).\nonumber
\end{align}

To achieve our goal, we utilize these factorizations to deform the jump contours so that the oscillating factor $e^{\pm it\theta_{12}}$, which is determined  by the sign of $\im \theta_{12}$, are decaying in the corresponding regions, respectively,.
 We consider the signature table of   $\im \theta_{12}$
  \begin{align}
  	&\im \theta_{12}=\sqrt{3}\text{Im}z\left(1+|z|^{-2}  \right)\xi-\nonumber\\
  	& \dfrac{\sqrt{3}\text{Im}z\left(1+|z|^{-2}\right) \left( -|z|^6-|z|^4+4\text{Re}^2z|z|^2-|z|^2\right) }{|z|^8+1+2[(\text{Re}^2z-\text{Im}^2z)^2-4\text{Re}^2z\text{Im}^2z]-2(1+|z|^4)(\text{Re}^2z-\text{Im}^2z)+|z|^4 }  .\label{Reitheta}
  \end{align}
  which is depicted in Figure \ref{figpha}.

 %\begin{figure}[htbp]
 %       \centering
 %             \subfigure[$ \xi=-\frac{1}{8}$]{\label{figurea}
 %         \begin{minipage}[t]{0.5\linewidth}
 %           \centering
 %           \includegraphics[width=2in]{}
 %         \end{minipage}
 %       }%
 %       \subfigure[$\xi=1$]{\label{figureb}
  %        \begin{minipage}[t]{0.5\linewidth}
  %          \centering
  %          \includegraphics[width=2in]{}
 %         \end{minipage}%
 %       }%
 %       \centering
 %       \caption{\footnotesize The signature table for $\im \theta_{12}$ for $\xi=-\frac{1}{8}$ and $\xi= 1$. The blue regions represents where $\im \theta_{12}<0$ and the white regions represents where $\im \theta_{12}>0$. Moreover, $\im \theta_{12}=0$ on the blue curve.
 %      }
 %       \label{figpha}
 %     \end{figure}

\begin{figure}[htbp]
     \centering
     \subfigure[$\xi<-\frac{1}{8}$]{
      \begin{minipage}[t]{0.32\linewidth}
       \centering
       \includegraphics[width=1.52in]{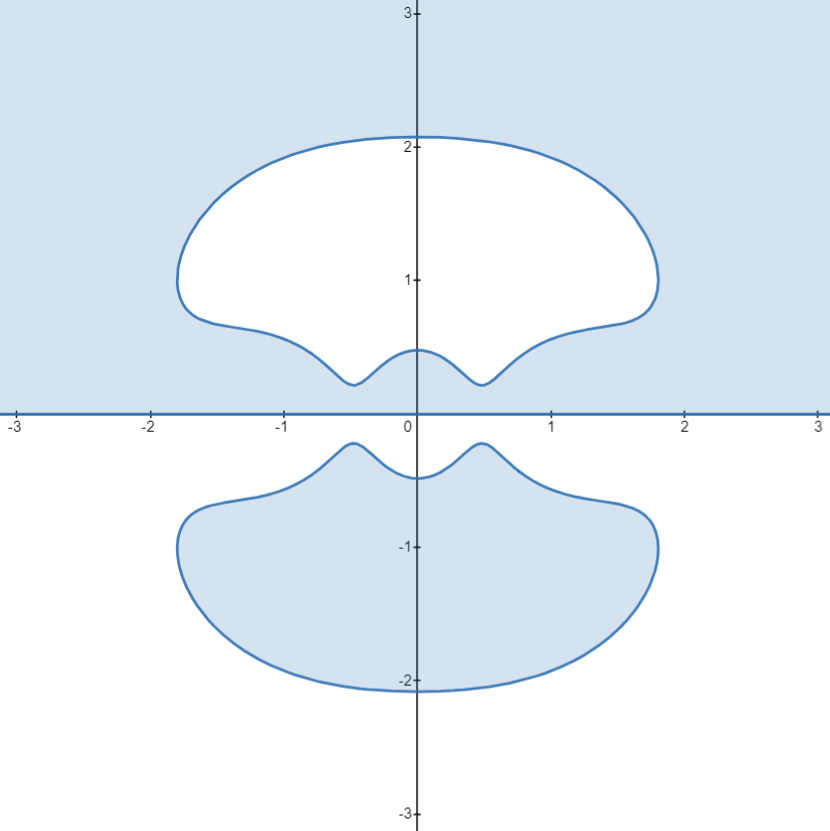}
      \end{minipage}%
     }%
    \subfigure[$\xi=-\frac{1}{8}$]{\label{figureb}
     \begin{minipage}[t]{0.32\linewidth}
      \centering
      \includegraphics[width=1.52in]{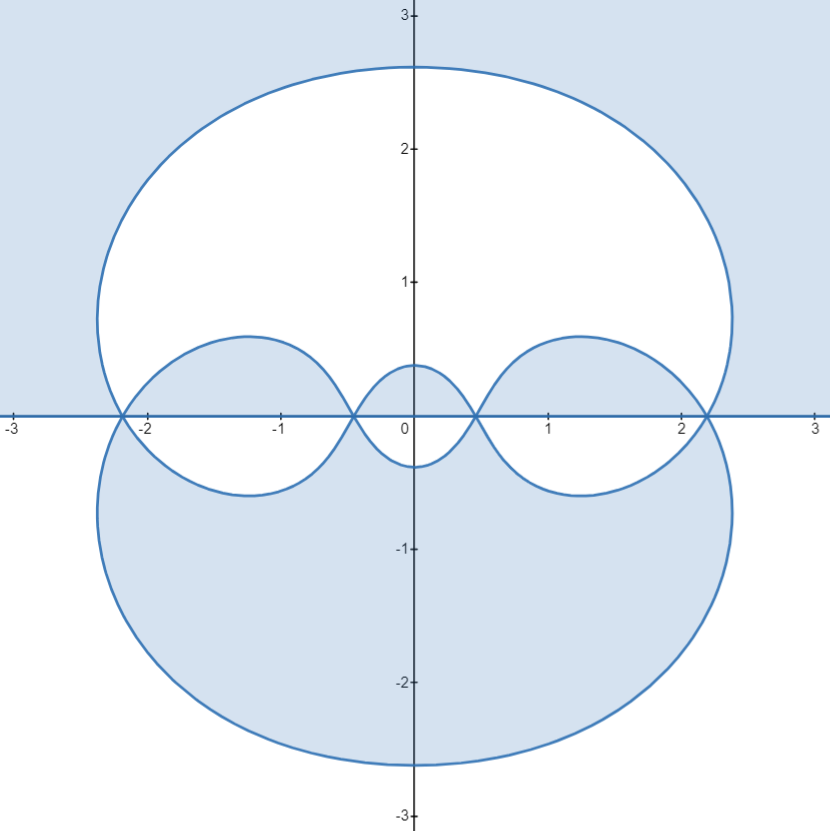}
      %\caption{fig2}
     \end{minipage}
    }%
    \subfigure[$-\frac{1}{8}<\xi<0$]{\label{figurec}
     \begin{minipage}[t]{0.32\linewidth}
      \centering
      \includegraphics[width=1.52in]{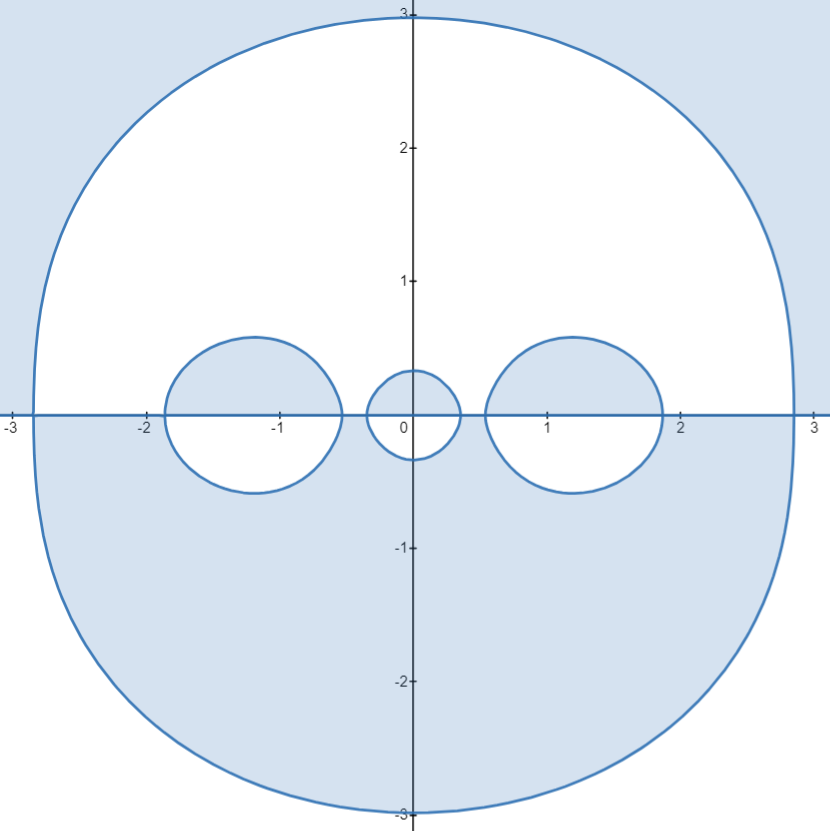}
      %\caption{fig2}
     \end{minipage}
    }%

    \subfigure[$0\le\xi<1$]{\label{figured}
 \begin{minipage}[t]{0.32\linewidth}
  \centering
  \includegraphics[width=1.52in]{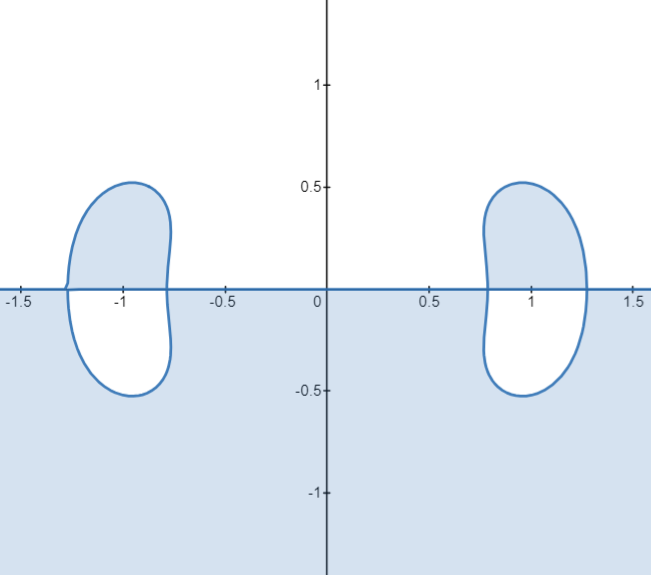}
  %\caption{fig2}
 \end{minipage}
 }%
  \subfigure[$ \xi=1$]{
      \begin{minipage}[t]{0.32\linewidth}
       \centering
       \includegraphics[width=1.52in]{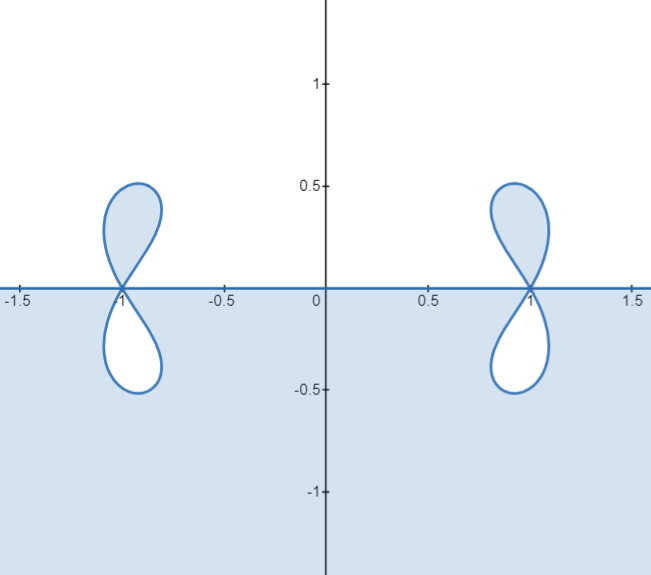}
      \end{minipage}
     }%
  \subfigure[$\xi>1$]{
      \begin{minipage}[t]{0.32\linewidth}
       \centering
       \includegraphics[width=1.52in]{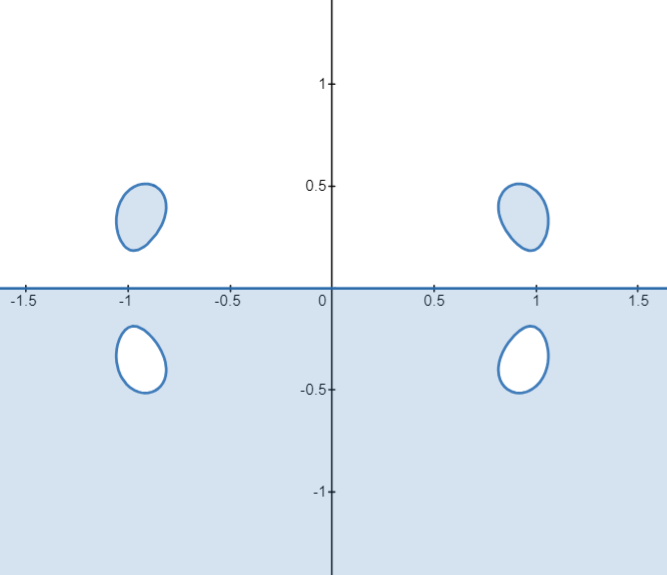}
      \end{minipage}
     }%
     \centering
     \caption{\footnotesize The signature table of $\im \theta_{12}$ with different $\xi$:
    $\textbf{(a)}$ $\xi<-\frac{1}{8}$,
    $\textbf{(b)}$ $\xi=-\frac{1}{8}$,
    $\textbf{(c)}$ $-\frac{1}{8}<\xi<0$,
    $\textbf{(d)}$ $0\le \xi<1$,
    $\textbf{(e)}$ $\xi=1$,
    $\textbf{(f)}$ $\xi>1$.
   In the blue region, $\im \theta_{12}<0$ and then $|e^{-\mathrm{i}t\theta_{12}}|\rightarrow 0$ as $t\rightarrow\infty$; In the white region, $\im \theta_{12}>0$ and then $|e^{\mathrm{i}t\theta_{12}}|\rightarrow 0$ as $t\rightarrow\infty$. Moreover, on the blue curve, $\im \theta_{12}=0$.}
         \label{figpha}
\end{figure}

% Introduce the notations on the contour $\mathbb{R}$,
%\begin{align}\label{defI}
%	I(\xi)=\mathbb{R}.
%\end{align}
%and on the contours  $\omega\mathbb{R}$ and $\omega^2\mathbb{R}$,
%$$I^\omega(\xi)=\{  \omega z:  z\in   I(\xi)\}, \ \  I^{\omega^2}(\xi)=\{ \omega^2 z: z\in %
%I(\xi)\}.$$
% Here  we denote the integration on $\emptyset$ is zero.

To proceed, we introduce the following scalar RH problem for the transition zone (as shown in Figure \ref{figureb}.
\begin{RHP} Find a scalar function  $\delta_1(z)$ with
\begin{itemize}
	\item  $\delta_1(z)$	is analytic in $\mathbb{C}\setminus\mathbb{R}$.
\item $\delta_1(z)$ has jump relation:
\begin{align}
& \delta_{1,+} (z)=\delta_{1,-}(z)(1-|r(z)|^2), \ z\in \mathbb{R},\nonumber
\end{align}
\item $\delta_1(z)\to 1$ as $z\to\infty$.
\end{itemize}
\end{RHP}
 By the Plemelj  formula, this RH problem admits an unique solution
\begin{align}
	\delta_1 (z)& =\exp\left(-i\int _{\mathbb{R}}\dfrac{\nu(s) ds}{s-z}\right),
\end{align}
where $\nu(s) =-\frac{1}{2\pi }\log (1-|r(s)|^2).$

Moreover,  define
\begin{align}
H(z)&=\prod_{n\in \Delta_1}\dfrac{z-\zeta_n}{z-\bar{\zeta}_n}\prod_{m\in \Delta_2}\dfrac{z-\omega\zeta_m}{z-\omega^2\bar{\zeta}_m}\delta_1 (z,\xi)^{-1};\label{Pi}\\
T_1(z)&=T_1(z,\xi)= \frac{H(\omega^2z)}{H(z)};\label{T}\\ T_2(z)&=T_2(z,\xi)=T_1(\omega z)= \frac{H(z)}{H(\omega z)};\\ T_3(z)&=T_3(z,\xi)=T_1(\omega^2 z)= \frac{H(\omega z)}{H(\omega^2 z)};\label{T3}\\
T_{ij}(z)&=T_{ij}(z,\xi)=\frac{T_i(z)}{T_j(z)},\ i,j=1,2,3\label{Tij}.
\end{align}
In  the above formulas, we choose the principal branch of power and logarithm functions. %Additionally, Introduce   a positive constant $\tilde{\varrho}=\frac{1}{6}\underset{j\neq i\in \mathcal{N}}{\min} |\zeta_i-\zeta_j|$ and a set of  characteristic functions $\mathcal{X}(z;\xi,j)$ on  the interval $\eta(\xi,j)\xi_j- \tilde{\varrho}<\eta(\xi,j)z<\eta(\xi,j)\xi_j$ for $j=1,\cdots,p(\xi)$, respectively,  where
%\begin{align}
%	&p(\xi) = \left\{ \begin{array}{ll}
%		8,   &\text{if } |\xi +\frac{1}{8}|t^{2/3}<C,\\[4pt]
%		4,   &\text{if } |\xi-1|t^{2/3}<C,
%	\end{array}\right.\\
%&\eta(\xi,j)=\left\{ \begin{array}{ll}
%		(-1)^j,   &\text{if } |\xi +\frac{1}{8}|t^{2/3}<C,\\[4pt]
%		(-1)^{j+1},   &\text{if } |\xi-1|t^{2/3}<C.
%	\end{array}\right.\label{eta}
%\end{align}

\begin{Proposition}\label{proT}
	The function defined by (\ref{T}) and (\ref{Tij}) has the  following properties
\begin{itemize}
\item[{\rm (a)}]  $T_1(z)$ is meromorphic in $\mathbb{C}\setminus \mathbb{R}$. For each $n\in\Delta_1$, $T_1(z)$  exhibits  a simple pole at $\zeta_n$ and a simple zero
 at $\bar{\zeta}_n$; while for each $m\in\Delta_2$, $T_1(z)$ possesses a simple pole at $\omega\zeta_m$ and a simple zero at $\omega\bar{\zeta}_m$.

\item[{\rm (b)}]  $\overline{T_1(\bar{z})}=T_1(\omega z)=T_1(z^{-1})$.

\item[{\rm (c)}]  $T_1(z)$ satisfies the jump condition
	\begin{align}
	& T_{1,-}(z)=(1-|r(z)|^2) T_{1,+ }(z),\hspace{0.5cm}z\in \mathbb{R},\\
&T_{1,+ }(z)=(1-|r(\omega^2z)|^2)T_{1,- }(z),\hspace{0.5cm}z\in \omega \mathbb{R}.
	\end{align}

	\item[{\rm (d)}]  $\lim_{z\to \infty}T_1(z):= T_1(\infty)$ with $T_1(\infty)=1$.

\item[{\rm (e)}]  $T_1(e^{\frac{i\pi}{6}})$ exists as a constants.

\item[{\rm (f)}]  $T_1(z)$ is continuous at $z=0$ with $T_1(0)=1$.
%	\begin{equation}
%	T_1(0)=T_1(\infty)=1 \label{T0}.
%	\end{equation}
\end{itemize}
\end{Proposition}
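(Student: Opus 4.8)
The plan is to first pin down the analytic behaviour of the scalar factor $\delta_1$ and then read off properties (a)--(f) from the explicit product structure of $H$ in \eqref{Pi}. By the Plemelj formula the density $\nu(s)=-\frac{1}{2\pi}\log(1-|r(s)|^2)$ is real-valued, bounded and Schwartz, since $|r(z)|<1$ on $\mathbb{R}$ by Proposition~\ref{pror} and $r\in\mathcal{S}(\mathbb{R})$. Consequently $\delta_1(z)$ is analytic and nonvanishing on $\mathbb{C}\setminus\mathbb{R}$, is bounded, tends to $1$ as $z\to\infty$, and carries the jump $\delta_{1,+}=\delta_{1,-}(1-|r|^2)$. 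The two symmetries of $\nu$ I shall use repeatedly are its reality and the inversion invariance $\nu(1/s)=\nu(s)$, the latter coming from $|r(s)|=|r(1/s)|$ (a consequence of $r(z)=\overline{r(z^{-1})}$). With these in hand, \textbf{(a)} and \textbf{(d)} are immediate: the finite products in \eqref{Pi} are Blaschke-type rational factors, so $H$ and hence $T_1=H(\omega^2z)/H(z)$ are meromorphic off $\mathbb{R}$; since $\delta_1^{\pm1}$ contributes neither zero nor pole there, the zeros and poles of $T_1$ are exactly those produced by the rational factors, all simple because the $\zeta_n$ are simple and distinct; and letting $z\to\infty$ each factor tends to $1$, giving $T_1(\infty)=1$.

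The heart of the argument is the symmetry \textbf{(b)}, which I would split into a conjugation identity and an inversion identity. For the conjugation part, a direct computation using $\overline{\delta_1(\bar z)}=\delta_1(z)^{-1}$ (from reality of $\nu$) together with $\bar\omega=\omega^2$ in the rational factors yields the clean relation $\overline{H(\bar z)}=H(z)^{-1}$. Writing $\omega^2\bar z=\overline{\omega z}$ then gives $\overline{T_1(\bar z)}=H(\omega z)^{-1}/H(z)^{-1}=H(z)/H(\omega z)=T_1(\omega z)$, which is the first equality. For the inversion identity $T_1(\omega z)=T_1(z^{-1})$, I would perform the substitution $s\mapsto 1/s$ in the Cauchy integral for $\delta_1(z^{-1})$; using $\nu(1/s)=\nu(s)$, $ds=-d\tau/\tau^2$ and the partial fraction $\frac{z}{\tau(\tau-z)}=\frac{1}{\tau-z}-\frac{1}{\tau}$, this reduces $\delta_1(z^{-1})$ to $\delta_1(z)$ times a constant phase. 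In parallel, the rational factors transform into one another because the discrete spectrum is closed under $z\mapsto z^{-1}$ (the points $\bar\zeta_n^{-1}$ belong to $\mathcal{Z}$). The main obstacle is precisely the bookkeeping here: one must verify that the constant phase from $\delta_1$ and the constant prefactors picked up by the Blaschke factors cancel in the ratio $H(\omega^2z^{-1})/H(z^{-1})$ so that it reproduces $H(z)/H(\omega z)$. I expect the invariance $\nu(1/s)=\nu(s)$ and the symmetry relations defining $\mathcal{Z}$ to be exactly what makes this cancellation work.

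The remaining items follow from locating the jump and singular sets. For \textbf{(c)}, observe that for $z\in\mathbb{R}$ the factor $H(\omega^2z)$ is analytic, since its argument lies on $\omega^2\mathbb{R}$, off the jump line of $\delta_1$; hence the jump of $T_1$ there is carried solely by the $\delta_1$ factor inside $H(z)$, which reproduces the first relation in (c). For $z\in\omega\mathbb{R}$ the roles reverse, because $\omega^2\cdot\omega\mathbb{R}=\mathbb{R}$ forces $H(\omega^2z)$ to jump through $\delta_1(\omega^2z)$, producing the factor $1-|r(\omega^2z)|^2$. For \textbf{(e)} and \textbf{(f)}, the point $e^{i\pi/6}$ lies in the interior of $S_1$ on the unit circle, hence off $\mathbb{R}\cup\omega\mathbb{R}$ and, by the genericity assumption that $\mu$ has no pole at $z=e^{i\pi/6}$, off the discrete spectrum; thus $T_1(e^{i\pi/6})$ is a finite nonzero constant. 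At $z=0$ we have $\omega^2\cdot0=0$, so the two arguments of $H$ coincide and $T_1(0)=H(0)/H(0)=1$, provided $\delta_1$ is continuous at the origin; this continuity is guaranteed by $r(0)=0$, which makes $\nu$ vanish at $0$ and tames the Cauchy kernel, so $T_1$ extends continuously with $T_1(0)=1$.
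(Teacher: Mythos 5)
The paper states Proposition \ref{proT} without any proof, so there is no argument of record to compare against; your outline is essentially the standard verification one would write down. Items (a), (d), (e), (f) and the first equality in (b) are handled correctly: in particular $\overline{\delta_1(\bar z)}=\delta_1(z)^{-1}$ (from reality of $\nu$) does give $\overline{H(\bar z)}=H(z)^{-1}$ and hence $\overline{T_1(\bar z)}=T_1(\omega z)$, and your localisation of the jump of $T_1=H(\omega^2z)\,\mathrm{rat}(z)^{-1}\delta_1(z)$ to the $\delta_1$ factor on $\mathbb{R}$ and to $\delta_1(\omega^2 z)$ on $\omega\mathbb{R}$ is the right mechanism for (c).

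There is, however, a sign error at the one step you yourself flagged as the crux. Carrying out your substitution $s=1/\tau$ with $\nu(1/\tau)=\nu(\tau)$ and the partial fraction $\frac{z}{\tau(\tau-z)}=\frac{1}{\tau-z}-\frac{1}{\tau}$ gives $\log\delta_1(z^{-1})=+i\int_{\mathbb{R}}\frac{\nu(\tau)}{\tau-z}\,d\tau-i\int_{\mathbb{R}}\frac{\nu(\tau)}{\tau}\,d\tau$, i.e. $\delta_1(z^{-1})=c_0\,\delta_1(z)^{-1}$ with $c_0$ a constant phase --- not ``$\delta_1(z)$ times a constant phase'' as you assert. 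The sign is decisive: since the Blaschke products satisfy $\mathrm{rat}(z^{-1})=C\,\mathrm{rat}(z)^{-1}$ (because $\Delta_1,\Delta_2$ are invariant under $\zeta\mapsto\bar\zeta^{-1}$, which follows from $\theta_{12}(z^{-1})=-\theta_{12}(z)$ and $\theta_{12}(\bar z)=\overline{\theta_{12}(z)}$), the corrected relation yields $H(w^{-1})=C'H(w)^{-1}$ with $C'$ constant, whence $T_1(z^{-1})=H((\omega z)^{-1})/H(z^{-1})=H(z)/H(\omega z)=T_1(\omega z)$; with your stated relation the ratio instead comes out as $T_1(\omega z)\bigl(\delta_1(z)/\delta_1(\omega z)\bigr)^{2}$ and the cancellation you ``expect'' does not occur. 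A smaller point on (c): you quote the jump $\delta_{1,+}=\delta_{1,-}(1-|r|^2)$ from the paper's scalar RH problem, but Plemelj applied to the explicit formula (with $2\pi\nu=-\log(1-|r|^2)$) gives $\delta_{1,-}=(1-|r|^2)\delta_{1,+}$; only the latter produces $T_{1,-}=(1-|r|^2)T_{1,+}$ as claimed, while your version gives the reciprocal. This inconsistency originates in the paper, but your proof should resolve it rather than inherit it.
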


For
\begin{align}
	\varrho:=\frac{1}{4}\min&\left\lbrace \min_{j\in \mathcal{N}} |\text{Im}\zeta_j| ,\min_{j\in \mathcal{N},\; \arg(z)=\frac{i \pi }{3}}|\zeta_j-z|,\min_{j\in \mathcal{N}\setminus\lozenge, \;\text{Im}\theta_{ik}(z)=0}|\zeta_j-z|,\right.\nonumber\\
	&\left. \min_{  j\in \mathcal{N}}|\zeta_j-e^{\frac{i\pi}{6}}|,\min_{  j\neq k\in \mathcal{N}}|\zeta_j-\zeta_k|\right\rbrace ,
\end{align}
we also define
$$\mathbb{D}_n:=\mathbb{D}(\zeta_n,\varrho)=\{ z: |z-\zeta_n| \le \varrho\}, \ n \in \mathcal{N},$$
to be small disks, which are pairwise disjoint,  also  disjoint  with  critical lines
$\left\lbrace z\in \mathbb{C};\text{Im} \theta(z)=0 \right\rbrace $, as well as  the contours  $\mathbb{R}$, $\omega\mathbb{R}$ and $\omega^2\mathbb{R}$.
Besides, $e^{\frac{i\pi}{6}}\notin \mathbb{D}_n$.
By the above definition and the symmetry of poles and $\theta_{jk}(z)$, for every $n$, there exists  a $k\in\{0,\cdots,5\}$ such that $n-kN_0\in\mathcal{N}\setminus\lozenge$.
Denote
 a piecewise matrix function
\begin{equation}
	G(z)=\left\{ \begin{array}{ll}
		I-\frac{B_n}{z-\zeta_n},   & z\in\mathbb{D}_n,n-kN_0\in\nabla,k\in\{0,\cdots,5\},\\[12pt]
		\left(\begin{array}{ccc}
			1& 0& 0 \\
			-\frac{z-\zeta_n}{C_{n}e^{it\theta_{12}(\zeta_{n})}}& 1 & 0\\
			0&	0 & 1
		\end{array}\right),   & z\in\mathbb{D}_n,n\in\Delta_1 \text{ or }n-2N_0\in\Delta_2,\\[12pt]
	\left(\begin{array}{ccc}
		1&	0 & 0 \\
		0& 1 & 0\\
		0& -\frac{z-\zeta_n}{C_{n}e^{it\theta_{23}(\zeta_{n})}} & 1
	\end{array}\right),   & z\in\mathbb{D}_n,n-N_0\in\Delta_1 \text{ or }n-5N_0\in\Delta_2,\\[12pt]
	\left(\begin{array}{ccc}
		1&	0 & -\frac{z-\zeta_n}{C_{n}e^{-it\theta_{13}(\zeta_{n})}} \\
		0& 1 & 0\\
		0&	0 & 1
	\end{array}\right),   & z\in\mathbb{D}_n,n-2N_0\in\Delta_1 \text{ or }n-4N_0\in\Delta_2,\\[12pt]
	\left(\begin{array}{ccc}
		1&	0 &  0\\
		0& 1 &  -\frac{z-\zeta_n}{C_{n}e^{-it\theta_{23}(\zeta_{n})}}\\
		0&	0 & 1
	\end{array}\right),   & z\in\mathbb{D}_n,n-3N_0\in\Delta_1 \text{ or }n-N_0\in\Delta_2,\\[12pt]
	\left(\begin{array}{ccc}
		1&	0 & 0 \\
		0& 1 & 0\\
		0 &-\frac{z-\zeta_n}{C_{n}e^{it\theta_{23}(\zeta_{n})}}	 & 1
	\end{array}\right),   & z\in\mathbb{D}_n,n-4N_0\in\Delta_1 \text{ or }n\in\Delta_2,\\[12pt]
	\left(\begin{array}{ccc}
	1&	-\frac{z-\zeta_n}{C_{n}e^{-it\theta_{12}(\zeta_{n})}} & 0 \\
	0& 1 & 0\\
	0&	0 & 1
	\end{array}\right),   & z\in\mathbb{D}_n,n-5N_0\in\Delta_1\text{ or } n-3N_0\in\Delta_2,\\[12pt]
	I ,& 	z \text{ in elsewhere},
	\end{array}\right.\label{funcG}
\end{equation}
where $B_n, \ n=1,\cdots,2N_1+N_2+2N_1^A +N_2^A$ are defined in RH problem \ref{RHP1}, and define
$$T(z)=\text{diag}\{T_1(z),T_2(z),T_3(z)\}.$$
Now we introduce the following transformation to construct a regular RH problem
% the new  matrix-valued   function $M^{(1)}(z)$  is defined as
\begin{equation}
M^{(1)}(z ):=M^{(1)}(z;y,t)=M(z)G(z)T(z),\label{transm1}
\end{equation}
which then satisfies the following RH problem.

\begin{RHP}\label{RHP3}
	Find a matrix-valued function  $  M^{(1)}(z )$ which satisfies:
	\begin{itemize}
\item $M^{(1)}(z)$ is meromorphic in $\mathbb{C}\setminus \Sigma^{(1)}$, where $	\Sigma^{(1)}=\Sigma \cup\left( \underset{n\in\mathcal{N}\setminus\lozenge,\;k=0,..,5}{\cup}\partial\mathbb{D}_{n+kN_0} \right)$.
    %	which is shown in Figure \ref{fig:zero}.
%	\begin{equation}
%		\Sigma^{(1)}=\Sigma \cup\left( \underset{n\in\mathcal{N}\setminus\lozenge,k=0,..,5}{\cup}\partial\mathbb{D}_{n+kN_0} \right)  ,
%	\end{equation}
%	is shown in Figure \ref{fig:zero}.
	
	\item  $M^{(1)}(z)=\Gamma_1\overline{M^{(1)}(\bar{z})}\Gamma_1=\Gamma_2\overline{M^{(1)}(\omega^2\bar{z})}\Gamma_2=\Gamma_3\overline{M^{(1)}(\omega\bar{z})}\Gamma_3 =\overline{M^{(1)}(\bar{z}^{-1})}$.

\item $	M^{(1)}_+(z)=M^{(1)}_-(z)V^{(1)}(z),\hspace{0.5cm}z \in \Sigma^{(1)},$
	where
	\begin{equation}
		V^{(1)}(z)=\left\{ \begin{array}{ll}
			\left(\begin{array}{ccc}
				1 & -\frac{rT_{21} e^{it\theta_{12}}}{1-|r|^2}&0\\
				0 & 1&0\\
				0 & 0 &1
			\end{array}\right)\left(\begin{array}{ccc}
				1 & 0 & 0\\
				\frac{\bar{r}T_{12} e^{-it\theta_{12}}}{1-|r|^2} & 1 & 0\\
				0 & 0 &1
			\end{array}\right),   & z\in \mathbb{R},\\[12pt]
		\left(\begin{array}{ccc}
			1 & 0&0\\
			0 & 1&\frac{-r(\omega z)T_{32} e^{it\theta_{23}}}{1-|r(\omega z)|^2}\\
			0 & 0 &1
		\end{array}\right)\left(\begin{array}{ccc}
			1 & 0 & 0\\
			0 & 1 & 0\\
			0 & 	\frac{\bar{r}(\omega z) e^{-it\theta_{23}}T_{23}}{1-|r(\omega z)|^2} &1
		\end{array}\right),   & z\in \omega\mathbb{R},\\[12pt]
	\left(\begin{array}{ccc}
		1 & 0&0\\
		0 & 1&0\\
		\frac{-r(\omega^2 z)T_{13} e^{-it\theta_{13}}}{1-|r(\omega^2 z)|^2} & 0 &1
	\end{array}\right)\left(\begin{array}{ccc}
		1 & 0 & 	\frac{\bar{r}(\omega^2 z)T_{31} e^{it\theta_{13}}}{1-|r(\omega^2 z)|^2}\\
		0 & 1 & 0\\
		0 & 0 &1
	\end{array}\right),   & z\in \omega^2 \mathbb{R},\\[12pt]
	T^{-1}(z)G(z)T(z),   & 	z\in\partial\mathbb{D}_n\cap (\underset{k=1}{\cup^3}S_{2k}),\\[12pt]
		T^{-1}(z)G^{-1}(z)T(z),    & z\in\partial\mathbb{D}_n\cap (\underset{k=1}{\cup^3}S_{2k-1}).\\
		\end{array}\right. \label{jumpv1}
	\end{equation}

\item  	$M^{(1)}(z) = I+\mathcal{O}(z^{-1}),\hspace{0.5cm}z \rightarrow \infty$.

\item As $z\to \varkappa_l=e^{\frac{i\pi(l-1)}{3}}$, $l = 1,\cdots,6$,   the limit of $M^{(1)}(z)$  has  the pole singularities
	\begin{align}
		&M^{(1)}(z)=\frac{1}{z\mp1}\left(\begin{array}{ccc}
			\alpha_\pm^{(1)} &	\alpha_\pm^{(1)} & \beta_\pm^{(1)} \\
			-\alpha_\pm^{(1)} & -\alpha_\pm^{(1)} & -\beta_\pm^{(1)}\\
			0	&	0 & 0
		\end{array}\right)T(\pm1)+\mathcal{O}(1),\ z\to\pm 1,\\
		&M^{(1)}(z)=\frac{1}{z\mp\omega^2}\left(\begin{array}{ccc}
			0 &	0 &  0\\
			\beta_\pm^{(1)}	 & \alpha_\pm^{(1)} &\alpha_\pm^{(1)} \\
			-\beta_\pm^{(1)}	&	-\alpha_\pm^{(1)} & -\alpha_\pm^{(1)}
		\end{array}\right)T(\pm\omega^2)+\mathcal{O}(1),\ z\to\pm \omega^2,\\
		&M^{(1)}(z)=\frac{1}{z\mp\omega}\left(\begin{array}{ccc}
			-\alpha_\pm^{(1)} &	-\beta_\pm^{(1)} & -\alpha_\pm^{(1)}\\
			0	 & 0 &0 \\
			\alpha_\pm^{(1)} &	\beta_\pm^{(1)} & \alpha_\pm^{(1)}
		\end{array}\right)T(\pm\omega)+\mathcal{O}(1),\ z\to\pm \omega,
	\end{align}
	with $\alpha_\pm^{(1)}=\alpha_\pm^{(1)}(y,t)=-\bar{\alpha}_\pm^{(1)}$, $\beta_\pm^{(1)}=\beta_\pm^{(1)}(y,t)=-\bar{\beta}_\pm^{(1)}$ and $M^{(1)}(z)^{-1}$ has same specific
	matrix structure with $\alpha_\pm^{(1)}$, $\beta_\pm^{(1)}$ replaced by $\tilde{\alpha}_\pm^{(1)}$, $\tilde{\beta}_\pm^{(1)}$.
	
	\item $M^{(1)}(z)$ has simple poles at each point $\zeta_n$ for $n-kN_0\in\lozenge$ with
	\begin{align}
		&\res_{z=\zeta_{n}}M^{(1)}(z)=\lim_{z\to \zeta_{n}}M^{(1)}(z)\left[ T^{-1}(z)B_nT(z)\right] .
	\end{align}
\end{itemize}
\end{RHP}

\begin{proof}
The above properties of RH problem \ref{RHP3} can be directly obtained by the properties of RH problem \ref{RHP1}, Proposition \ref{proT}, and \eqref{funcG}- \eqref{transm1}.
\end{proof}

Since the jump matrices on the disks $\mathbb{D}_n, n -k N_0 \in \mathcal{N}\setminus\lozenge$ decay exponentially to the identity matrix as $t \to \infty$, it follows that the RH problem is asymptotically equivalent to the following RH problem.

\begin{RHP}\label{RHP31}
	Find a matrix-valued function  $  M^{(2)}(z )$ which satisfies:
\begin{itemize}
  \item $M^{(2)}(z)$ is meromorphic in $\mathbb{C}\setminus \Sigma^{(2)}$, where $	\Sigma^{(2)}=\Sigma$.
      \item $M^{(2)}(z)$ obeys the symmetries \begin{equation}\label{symm2}
          M^{(2)}(z)=\Gamma_1\overline{M^{(2)}(\bar{z})}\Gamma_1=\Gamma_2\overline{M^{(2)}(\omega^2\bar{z})}\Gamma_2=\Gamma_3\overline{M^{(2)}(\omega\bar{z})}\Gamma_3 =\overline{M^{(2)}(\bar{z}^{-1})}.
          \end{equation}
\item $	M^{(2)}_+(z)=M^{(2)}_-(z)V^{(2)}(z),\hspace{0.5cm}z \in \Sigma^{(2)},$
	where
	\begin{equation}
		V^{(2)}(z)=\left\{\begin{array}{ll}	
			\left(\begin{array}{ccc}
				1 & -\frac{rT_{21} e^{it\theta_{12}}}{1-|r|^2}&0\\
				0 & 1&0\\
				0 & 0 &1
			\end{array}\right)\left(\begin{array}{ccc}
				1 & 0 & 0\\
				\frac{\bar{r}T_{12} e^{-it\theta_{12}}}{1-|r|^2} & 1 & 0\\
				0 & 0 &1
			\end{array}\right),   & z\in \mathbb{R},\\[12pt]
		\left(\begin{array}{ccc}
			1 & 0&0\\
			0 & 1&\frac{-r(\omega z)T_{32} e^{it\theta_{23}}}{1-|r(\omega z)|^2}\\
			0 & 0 &1
		\end{array}\right)\left(\begin{array}{ccc}
			1 & 0 & 0\\
			0 & 1 & 0\\
			0 & 	\frac{\bar{r}(\omega z) e^{-it\theta_{23}}T_{23}}{1-|r(\omega z)|^2} &1
		\end{array}\right),   & z\in \omega \mathbb{R},\\[12pt]
	\left(\begin{array}{ccc}
		1 & 0&0\\
		0 & 1&0\\
		\frac{-r(\omega^2 z)T_{13} e^{-it\theta_{13}}}{1-|r(\omega^2 z)|^2} & 0 &1
	\end{array}\right)\left(\begin{array}{ccc}
		1 & 0 & 	\frac{\bar{r}(\omega^2 z)T_{31} e^{it\theta_{13}}}{1-|r(\omega^2 z)|^2}\\
		0 & 1 & 0\\
		0 & 0 &1
	\end{array}\right),   & z\in \omega^2 \mathbb{R},\\
		\end{array}\right. \label{jumpv02}
	\end{equation}

\item  	$M^{(2)}(z) = I+\mathcal{O}(z^{-1}),\hspace{0.5cm}z \rightarrow \infty$.

\item As $z\to \varkappa_l=e^{\frac{i\pi(l-1)}{3}}$, $l = 1,\cdots,6$,
	\begin{align}
		&M^{(2)}(z)=\frac{1}{z\mp1}\left(\begin{array}{ccc}
			\alpha_\pm^{(2)} &	\alpha_\pm^{(2)} & \beta_\pm^{(2)} \\
			-\alpha_\pm^{(2)} & -\alpha_\pm^{(2)} & -\beta_\pm^{(2)}\\
			0	&	0 & 0
		\end{array}\right)T(\pm1)+\mathcal{O}(1),\ z\to\pm 1,\label{m2sin1}\\
		&M^{(2)}(z)=\frac{1}{z\mp\omega^2}\left(\begin{array}{ccc}
			0 &	0 &  0\\
			\beta_\pm^{(2)}	 & \alpha_\pm^{(2)} &\alpha_\pm^{(2)} \\
			-\beta_\pm^{(2)}	&	-\alpha_\pm^{(2)} & -\alpha_\pm^{(2)}
		\end{array}\right)T(\pm\omega^2)+\mathcal{O}(1),\ z\to\pm \omega^2,\label{m2sin2}\\
		&M^{(2)}(z)=\frac{1}{z\mp\omega}\left(\begin{array}{ccc}
			-\alpha_\pm^{(2)} &	-\beta_\pm^{(2)} & -\alpha_\pm^{(2)}\\
			0	 & 0 &0 \\
			\alpha_\pm^{(2)} &	\beta_\pm^{(2)} & \alpha_\pm^{(2)}
		\end{array}\right)T(\pm\omega)+\mathcal{O}(1),\ z\to\pm \omega,\label{m2sin3}
	\end{align}
	with $\alpha_\pm^{(2)}=\alpha_\pm^{(2)}(y,t)=-\bar{\alpha}_\pm^{(2)}$, $\beta_\pm^{(2)}=\beta_\pm^{(2)}(y,t)=-\bar{\beta}_\pm^{(2)}$ and $M^{(2)}(z)^{-1}$ has same specific
	matrix structure with $\alpha_\pm^{(2)}$, $\beta_\pm^{(2)}$ replaced by $\tilde{\alpha}_\pm^{(2)}$, $\tilde{\beta}_\pm^{(2)}$.
	
	\item $M^{(2)}(z)$ has simple poles at each point $\zeta_n$ for $n -k N_0\in\lozenge$ with
	\begin{align}
		&\res_{z=\zeta_n}M^{(2)}(z)=\lim_{z\to \zeta_n}M^{(2)}(z)\left[ T^{-1}(z)B_nT(z)\right] .
	\end{align}
\end{itemize}
\end{RHP}

\begin{Proposition}
The solution of RH problem \ref{RHP3} can be approximated by the solution of RH problem \ref{RHP31}
\begin{equation}\label{p1transm2}
M^{(1)}(z) = M^{(2)}(z) (I+ \mathcal{O}(e^{-ct})),
\end{equation}
where $c$ is a positive constant.

\end{Proposition}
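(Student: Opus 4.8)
The plan is to run the standard ratio/small-norm argument. Since RH problem \ref{RHP31} is obtained from RH problem \ref{RHP3} merely by deleting the jump contours $\partial\mathbb{D}_{n+kN_0}$, $n\in\mathcal{N}\setminus\lozenge$, while retaining the jump $V^{(2)}=V^{(1)}$ on $\Sigma$, the residue conditions at the poles $\zeta_n$ with $n-kN_0\in\lozenge$, the normalization at infinity, and the singular structure at the $\varkappa_l$, I would introduce the error matrix
\[
E(z):=M^{(1)}(z)\,(M^{(2)}(z))^{-1}
\]
and show that $E$ solves a small-norm RH problem supported only on the circles. The conclusion \eqref{p1transm2} then follows from $M^{(1)}=E\,M^{(2)}$ together with the uniform boundedness of $M^{(2)}$ and $(M^{(2)})^{-1}$ away from the singular points (which allows one to move the factor $E-I$ to the right of $M^{(2)}$).

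First I would verify that $E$ is analytic off $\cup_{n\in\mathcal{N}\setminus\lozenge,\,k}\partial\mathbb{D}_{n+kN_0}$. Across $\Sigma$ the two jump matrices coincide, so $E_+=M^{(1)}_-V^{(1)}(V^{(2)})^{-1}(M^{(2)}_-)^{-1}=E_-$ and $E$ has no jump there. At each spectral singularity $\varkappa_l$ the matrices $M^{(1)}$ and $M^{(2)}$ carry the identical leading singular part (the same rank-one coefficient multiplied on the right by $T(\varkappa_l)$, with $\alpha^{(1)}_\pm,\beta^{(1)}_\pm$ in place of $\alpha^{(2)}_\pm,\beta^{(2)}_\pm$), while $(M^{(2)})^{-1}$ carries the matching conjugate structure; exploiting the rank-one form one checks that the $(z\mp1)^{-2}$ and $(z\mp1)^{-1}$ contributions cancel in the product, so the apparent singularities of $E$ at the $\varkappa_l$ are removable. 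Likewise, since $M^{(1)}$ and $M^{(2)}$ obey the same residue relation $\res=\lim(\cdot)\,T^{-1}B_nT$ at the poles with $n-kN_0\in\lozenge$, these poles cancel in $E$. This removability check is the delicate point of the argument; everything after it is routine.

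Next I would estimate the residual jump. On a circle $\partial\mathbb{D}_n$ one has $V^{(1)}=T^{-1}G^{\pm1}T$, and $M^{(2)}$ is continuous there, so $E_+=E_-V_E$ with
\[
V_E-I=M^{(2)}\big(V^{(1)}-I\big)(M^{(2)})^{-1}.
\]
By the definition \eqref{funcG}, $G-I$ is controlled by a single off-diagonal entry: for $n-kN_0\in\nabla$ it is $-c_ne^{\pm it\theta(\zeta_n)}$ with $\im\theta(\zeta_n)>0$, and for $n-kN_0\in\Delta$ it is $\pm(z-\zeta_n)/(C_ne^{\pm it\theta(\zeta_n)})$ with $\im\theta(\zeta_n)<0$; in either case $|\im\theta(\zeta_n)|\ge\rho_0$ by \eqref{rho0}, so the exponential factor is bounded by $e^{-\rho_0 t}$ uniformly on the disk (the prefactor $|z-\zeta_n|\le\varrho$ is a harmless constant). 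Since the circles are disjoint from $\mathbb{R},\omega\mathbb{R},\omega^2\mathbb{R}$, from the critical lines, and from $e^{i\pi/6}$, the diagonal factors $T^{\pm1}$ and the matrices $M^{(2)},(M^{(2)})^{-1}$ are uniformly bounded there, whence $\|V_E-I\|_{L^\infty\cap L^2(\cup\partial\mathbb{D}_n)}=\mathcal{O}(e^{-ct})$ for any $0<c<\rho_0$.

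Finally I would invoke small-norm RH theory. The contour $\cup\partial\mathbb{D}_n$ is compact, $E\to I$ at infinity, and the jump is $I+\mathcal{O}(e^{-ct})$, so the associated Beals--Coifman operator $C_{V_E}$ has operator norm $\mathcal{O}(e^{-ct})$; hence $1-C_{V_E}$ is invertible for $t$ large and the problem has a unique solution represented by
\[
E(z)=I+\frac{1}{2\pi i}\int_{\cup\partial\mathbb{D}_n}\frac{\big((1-C_{V_E})^{-1}I\big)(s)\,(V_E(s)-I)}{s-z}\,ds,
\]
which yields $\|E-I\|_{L^\infty}=\mathcal{O}(e^{-ct})$ uniformly on compact subsets away from the circles. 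Writing $M^{(1)}=E\,M^{(2)}=M^{(2)}\big(I+(M^{(2)})^{-1}(E-I)M^{(2)}\big)$ and using the boundedness of $M^{(2)},(M^{(2)})^{-1}$ then gives \eqref{p1transm2}.
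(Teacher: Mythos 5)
Your proposal is correct and follows essentially the same route as the paper, whose proof is the single sentence ``the result is derived from the theorem of Beals--Coifman and the corresponding norm estimates''; you have simply written out that small-norm argument in full, with the ratio matrix $E=M^{(1)}(M^{(2)})^{-1}$, the exponential decay of $G^{\pm1}-I$ on the circles via \eqref{rho0}, and the Beals--Coifman resolvent bound. The cancellation of the singular parts at the $\varkappa_l$ and of the poles in $\lozenge$, which you rightly flag as the only delicate step, is exactly the content the paper leaves implicit.
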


\begin{proof}
The result is derived from the theorem of Beals-Coifman and the corresponding norm estimates.
\end{proof}

\begin{comment}
\begin{proof}
	The triangular factors  (\ref{funcG})  trades 	poles $\zeta_n$  and $\bar{\zeta}_n$  to jumps on the disk boundaries $\partial \mathbb{D}_n$ and $\partial \overline{\mathbb{D}}_n$, respectively  for $n\in\mathcal{N}\setminus\lozenge$. Then by simple calculation we can obtain the residues condition and jump condition from (\ref{RES1}), (\ref{RES2}) (\ref{jumpv}), (\ref{funcG}) and (\ref{transm1}). The   analyticity and symmetry of $M^{(1)}(z)$ are directly from its definition, the Proposition \ref{proT}, (\ref{funcG}) and the identities of $M$. As for asymptotic behaviors, from $\lim_{z\to i}G(z)=\lim_{z\to \infty}G(z)=I$ and (d) of  Proposition \ref{proT}, we  obtain the asymptotic behaviors of $M^{(1)}(z)$.
\end{proof}
\end{comment}

\section{Painlev\'e Asymptotics   in  Transition Zone  $y/t \approx -1/8 $ }\label{sec4}

In this section, we consider the large time  asymptotics   in the zone  $0 < (\xi+\frac{1}{8})t^{2/3}<C$ with  $C>0$ corresponding  to
Figure \ref{figurec}.    A comparable discussion can be conducted for the other half region $-C < (\xi+\frac{1}{8})t^{2/3}<0$  corresponding  to  Figure \ref{figureb}.

From \eqref{deftheta12}, the phase function $\theta_{12}$ has eight saddle points  such that $\theta'_{12}=0$.
Introducing $$\tilde{k}=z-\frac{1}{z},$$ then one gets
\begin{equation}
	\theta'_{12}(z) = \sqrt{3} \left( \xi - \frac{1-\tilde{k}^2}{(\tilde{k}^2+1)^2}\right) \left(1+\frac{1}{z^2}\right)
\end{equation}
Thus, in the $\tilde{k}$-plane the real critical points $\tilde{k}_1 \in \mathbb{R}$ are determined by the equation
\begin{equation}\label{xik1}
	 \xi = \frac{1-\tilde{k}^2}{(\tilde{k}^2+1)^2},
\end{equation}
or, equivalently, in terms of $\varpi = \tilde{k}^2+1 \ge 1$, by
\begin{equation}\label{sadpoi}
	\xi \varpi ^2 +\varpi -2 = 0.
\end{equation}
In $0<(\xi+\frac{1}{8})t^{2/3}<C$, this equation has two solutions $\ge 1$, which gives in the $\tilde{k}$-plane four real saddle points $\pm \kappa_0, \pm \kappa_1$:
\begin{align}
&	\kappa_0(\xi) = \left( \frac{\sqrt{1+8\xi}-1-2\xi}{2\xi} \right)^{\frac{1}{2}},\\
	&	\kappa_1(\xi) = \left(- \frac{\sqrt{1+8\xi}  +1+2\xi }{2\xi}\right)^{\frac{1}{2}}.
\end{align}
Applying the inverse map $\tilde{k} \leadsto z$ to obtain the corresponding saddle points in the $z$-plane we get  $\pm p_0, \pm \frac{1}{p_0}, \pm p_1, \pm \frac{1}{p_1}$:
\begin{align}
	&p_0(\xi) =\frac{ \sqrt{\kappa_0^2+4}-\kappa_0 }{2},\\
	&p_1(\xi) =\frac{ \sqrt{\kappa_1^2+4}-\kappa_1 }{2}.
\end{align}

As $\xi \to -\frac{1}{8}^+$, $\kappa_0, \kappa_1 \to \sqrt{3}$. Thus, pairs of saddle points collide in the $z$-plane
$$p_0, p_1 \to z_b=\frac{\sqrt{7}-\sqrt{3}}{2}, \quad \frac{1}{p_0}, \frac{1}{p_1} \to z_a = \frac{\sqrt{7}+\sqrt{3}}{2}.$$

%For convenience,  we reorder eight saddle points as
%\begin{equation*}
%	\xi_1 > \xi_2 > \cdots > \xi_7 > \xi_8,
%\end{equation*}
%with $\xi_1 = \frac{1}{\xi_4} = -\frac{1}{\xi_5} = -\xi_8$ and $\xi_2 = \frac{1}{\xi_3} = -\frac{1}{\xi_6} = -\xi_7$.

By  \eqref{jumpv02}, the jump matrix for $M^{(2)}(z)$ on $\mathbb{R}$ is
\begin{equation}
V^{(2)}(z)=\left(\begin{array}{ccc}
				1 & -\overline{\tilde{r}(z)}e^{it\theta_{12}(z)}&0\\
				0 & 1&0\\
				0 & 0 &1
			\end{array}\right)\left(\begin{array}{ccc}
				1 & 0 & 0\\
				\tilde{r}(z)e^{-it\theta_{12}(z)} & 1 & 0\\
				0 & 0 &1
			\end{array}\right),
\end{equation}
where
\begin{equation}\label{deftilder}
\tilde{r}(z):= \frac{\overline{r(z)} T_{12}(z)}{1-|r(z)|^2}.
\end{equation}
The jump matrix on $\omega \mathbb{R}$ and $\omega^2 \mathbb{R}$ can be obtained by the symmetries \eqref{symm2}.

% \begin{figure}[htbp]
%        \centering
%            \includegraphics[width=2.3in]{}
%        \caption{\footnotesize The signature table of $\im \theta_{12}$ for $0<\left(\xi + \frac{1}{8}\right) < C$. In  blue regions, $\im \theta_{12}<0$, while in white regions, $\im \theta_{12}>0$. Moreover, $\im \theta_{12}=0$ on the blue curve.
%       }
%        \label{figpai1}
%      \end{figure}

\subsection{Opening $\bar\partial$-lenses }\label{p1open}

According to the signature table of $\im \theta_{12}$ for $0<\left(\xi + \frac{1}{8}\right)t^{2/3} < C$ illustrated in Figure \ref{figurec},  we now want to remove the jump from the intervals $(-\infty,-\frac{1}{p_1}) \cup (-\frac{1}{p_0},-p_0)\cup (-p_1,p_1) \cup (p_0, \frac{1}{p_0}) $ in such a way that the new problem takes advantage of the decay/growth of $e^{\pm i t \theta_{12}(z)}$. Additionally, we want to "open the lens" in such a way that the lenses are bounded away from the disks introduced previously to remove the poles from the RH problem.

For $l=0,1,2$, define
\begin{align}
&\Omega_1^{l}: = \{z\in \mathbb{C}: 0\le \arg(z-\omega^l\frac{1}{p_1}) \le \varphi_0 \}, \label{p1spa1}\\
&\Omega_2^{l} := \{z\in \mathbb{C}: \pi -\varphi_0 \le \arg(z-\omega^l\frac{1}{p_0}) \le \pi,\ |\re(z-\omega^l\frac{1}{p_0})| \le \frac{1-p_0^2}{2p_0} \},\\
&\Omega_3^{l}:= \{z\in \mathbb{C}: 0 \le \arg(z-\omega^lp_0)  \le \varphi_0,\ |\re(z-\omega^lp_0)| \le \frac{1-p_0^2}{2p_0} \},\\
 &\Omega_4^{l} := \{z\in \mathbb{C}: \pi -\varphi_0 \le \arg(z-\omega^lp_1) \le \pi,\ |\re(z-\omega^lp_1)| \le \frac{p_1}{2} \}\label{p1spa2},
\end{align}
where $0<\varphi_0<\frac{\pi}{8}$ is a sufficiently small angle such that each $\Omega_j^{l}$ doesn't intersect the set $\{z\in\mathbb{C}: \im \theta_{12}(z) = 0 \}$ and any small disks $\mathbb{D}_n,\ n\in \mathcal{N}$.
Denote $\Omega_j^{l},\ j=5,6,7,8,$ be the regions of $\Omega_j^{l},\ j=1,2,3,4,$ symmetric about the imaginary axis.

Moreover,  we use $\Sigma_j^{l},\ j=1,\cdots,8, \ l=0,1,2,$ to denote the boundary of  $\Omega_j^{l},\ j=1,\cdots,8, \ l=0,1,2,$ in the upper half plane and set
\begin{align}
&\Sigma^{l}_{2,3} = \left\{  z\in \mathbb{C}:  \frac{z}{\omega^l}=  \frac{1+p_0^2}{2p_0} + i \rho, \ \rho \in (0, \frac{1-p_0^2}{2p_0 \tan \varphi_0})     \right\},\\
%&\Sigma^{\omega^l}_{4,5} = \left\{  z\in \mathbb{C}:  \frac{z}{\omega^l}=   i \rho, \ \rho \in (0, \frac{p_1}{ \tan \varphi_0})     \right\},\\
&\Sigma^{l}_{6,7} = \left\{  z\in \mathbb{C}:  \frac{z}{\omega^l}=  -\frac{1+p_0^2}{2p_0} + i \rho, \ \rho \in (0, \frac{1-p_0^2}{2p_0 \tan \varphi_0})     \right\},\\
&I_1^l := \omega^l (\frac{1}{p_0}, \frac{1}{p_1}), \ I^l_2 : =  \omega^l(p_1,p_0), \ I^l_3 :=  \omega^l(-p_0,-p_1), \ I_4^l = \omega^l (-\frac{1}{p_1}, -\frac{1}{p_0}).
\end{align}

\begin{figure}[htbp]
	\centering
\begin{tikzpicture} [scale=0.6]
\draw[dotted](0,0)--(6.287,0)node[right]{ $L_1$};
\draw[dotted](0,0)--(-6.287,0)node[left]{ $L_4$};
\draw[dotted,rotate=60](0,0)--(6.287,0)node[above]{ $L_2$};
\draw[dotted,rotate=60](0,0)--(-6.287,0)node[below]{ $L_5$};
\draw[dotted,rotate=120](0,0)--(6.287,0)node[above]{ $L_3$};
\draw[dotted,rotate=120](0,0)--(-6.287,0)node[below]{ $L_6$};

%\draw[thick](1.587,0.07)--(2.087,0.4)--(2.587,0.07);
\draw[thick](2.8,0)--(3.487,0.4)--(4.2,0);
%\draw[thick](4.387,0.07)--(4.887,0.4)--(5.387,0.07);
\draw[thick](5.6,0)--(6.287,0.4);
%\draw [thick] (1.4,0)circle(0.2cm);
%\draw [thick] (2.8,0)circle(0.2cm);
%\draw [thick] (4.2,0)circle(0.2cm);
%\draw [thick] (5.6,0)circle(0.2cm);
\filldraw (1.4,0)circle(0.04cm);
\filldraw (2.8,0)circle(0.04cm);
\filldraw (4.2,0)circle(0.04cm);
\filldraw (5.6,0)circle(0.04cm);
%\draw [thick,red] (3.3,0)circle(0.15cm);
\filldraw [red] (3.3,0)circle(0.04cm);
\node [below] at (3.3,-0.15) {\textcolor{red}{\tiny{$1$}}};
\node [below] at (1.4,-0.1) {\tiny{$p_1$}};
\node [below] at (2.8,-0.1) {\tiny{$p_0$}};
\node [below] at (4.2,-0.1) {\tiny{$\frac{1}{p_0}$}};
\node [below] at (5.6,-0.1) {\tiny{$\frac{1}{p_1}$}};
\draw[thick](1.4,0)--(2.8,0);
\draw[thick](4.2,0)--(5.6,0);
\draw[thick,rotate=60](1.4,0)--(2.8,0);
\draw[thick,rotate=60](4.2,0)--(5.6,0);
\draw[thick,rotate=120](1.4,0)--(2.8,0);
\draw[thick,rotate=120](4.2,0)--(5.6,0);
\draw[thick,rotate=180](1.4,0)--(2.8,0);
\draw[thick,rotate=180](4.2,0)--(5.6,0);
\draw[thick,rotate=240](1.4,0)--(2.8,0);
\draw[thick,rotate=240](4.2,0)--(5.6,0);
\draw[thick,rotate=300](1.4,0)--(2.8,0);
\draw[thick,rotate=300](4.2,0)--(5.6,0);

%\draw[thick,rotate=60](1.587,0.07)--(2.087,0.4)--(2.587,0.07);
\draw[thick,rotate=60](2.8,0)--(3.487,0.4)--(4.2,0);
%\draw[thick,rotate=60](4.387,0.07)--(4.887,0.4)--(5.387,0.07);
\draw[thick,rotate=60](5.6,0)--(6.287,0.4);
%\draw [thick,rotate=60] (1.4,0)circle(0.2cm);
%\draw [thick,rotate=60] (2.8,0)circle(0.2cm);
%\draw [thick,rotate=60] (4.2,0)circle(0.2cm);
%\draw [thick,rotate=60] (5.6,0)circle(0.2cm);
\filldraw [rotate=60](1.4,0)circle(0.04cm);
\filldraw [rotate=60](2.8,0)circle(0.04cm);
\filldraw [rotate=60](4.2,0)circle(0.04cm);
\filldraw [rotate=60](5.6,0)circle(0.04cm);
%\draw [thick,red,rotate=60] (3.3,0)circle(0.15cm);
\filldraw [red,rotate=60] (3.3,0)circle(0.04cm);
\node [right] at (2,2.858) {\textcolor{red}{\tiny{$-\omega^2$}}};
\node [right] at (0.8,1.212) {\tiny{$-\omega^2p_1$}};
\node [right] at (1.5,2.425) {\tiny{$-\omega^2 p_0$}};
\node [right] at (2.2,3.637) {\tiny{$-\omega^2\frac{1}{p_0}$}};
\node [right] at (2.9,4.85) {\tiny{$-\omega^2\frac{1}{p_1}$}};

%\draw[thick,rotate=120](1.587,0.07)--(2.087,0.4)--(2.587,0.07);
\draw[thick,rotate=120](2.8,0)--(3.487,0.4)--(4.2,0.07);
%\draw[thick,rotate=120](4.387,0.07)--(4.887,0.4)--(5.387,0.07);
\draw[thick,rotate=120](5.6,0)--(6.287,0.4);
%\draw [thick,rotate=120] (1.4,0)circle(0.2cm);
%\draw [thick,rotate=120] (2.8,0)circle(0.2cm);
%\draw [thick,rotate=120] (4.2,0)circle(0.2cm);
%\draw [thick,rotate=120] (5.6,0)circle(0.2cm);
\filldraw [rotate=120](1.4,0)circle(0.04cm);
\filldraw [rotate=120](2.8,0)circle(0.04cm);
\filldraw [rotate=120](4.2,0)circle(0.04cm);
\filldraw [rotate=120](5.6,0)circle(0.04cm);
%\draw [thick,red,rotate=120] (3.3,0)circle(0.15cm);
\filldraw [red,rotate=120] (3.3,0)circle(0.04cm);
\node [left] at (-2,2.858) {\textcolor{red}{\tiny{$\omega$}}};
\node [left] at (-0.8,1.212) {\tiny{$\omega p_1$}};
\node [left] at (-1.5,2.425) {\tiny{$\omega p_0$}};
\node [left] at (-2.2,3.637) {\tiny{$\omega \frac{1}{p_0}$}};
\node [left] at (-2.9,4.85) {\tiny{$\omega \frac{1}{p_1}$}};

%\draw[thick,rotate=180](1.587,0.07)--(2.087,0.4)--(2.587,0.07);
\draw[thick,rotate=180](2.8,0)--(3.487,0.4)--(4.2,0);
%\draw[thick,rotate=180](4.387,0.07)--(4.887,0.4)--(5.387,0.07);
\draw[thick,rotate=180](5.6,0)--(6.287,0.4);
%\draw [thick,rotate=180] (1.4,0)circle(0.2cm);
%\draw [thick,rotate=180] (2.8,0)circle(0.2cm);
%\draw [thick,rotate=180] (4.2,0)circle(0.2cm);
%\draw [thick,rotate=180] (5.6,0)circle(0.2cm);
\filldraw [rotate=180](1.4,0)circle(0.04cm);
\filldraw [rotate=180](2.8,0)circle(0.04cm);
\filldraw [rotate=180](4.2,0)circle(0.04cm);
\filldraw [rotate=180](5.6,0)circle(0.04cm);
%\draw [thick,red,rotate=180] (3.3,0)circle(0.15cm);
\filldraw [red,rotate=180] (3.3,0)circle(0.04cm);
\node [below] at (-3.3,-0.15) {\textcolor{red}{\tiny{$-1$}}};
\node [below] at (-1.4,-0.1) {\tiny{$-p_1$}};
\node [below] at (-2.8,-0.1) {\tiny{$-p_0$}};
\node [below] at (-4.2,-0.1) {\tiny{$-\frac{1}{p_0}$}};
\node [below] at (-5.6,-0.1) {\tiny{$\frac{1}{p_1}$}};

%\draw[thick,rotate=240](1.587,0.07)--(2.087,0.4)--(2.587,0.07);
\draw[thick,rotate=240](2.8,0)--(3.487,0.4)--(4.2,0);
%\draw[thick,rotate=240](4.387,0.07)--(4.887,0.4)--(5.387,0.07);
\draw[thick,rotate=240](5.6,0)--(6.287,0.4);
%\draw [thick,rotate=240] (1.4,0)circle(0.2cm);
%\draw [thick,rotate=240] (2.8,0)circle(0.2cm);
%\draw [thick,rotate=240] (4.2,0)circle(0.2cm);
%\draw [thick,rotate=240] (5.6,0)circle(0.2cm);
\filldraw [rotate=240](1.4,0)circle(0.04cm);
\filldraw [rotate=240](2.8,0)circle(0.04cm);
\filldraw [rotate=240](4.2,0)circle(0.04cm);
\filldraw [rotate=240](5.6,0)circle(0.04cm);
%\draw [thick,red,rotate=240] (3.3,0)circle(0.15cm);
\filldraw [red,rotate=240] (3.3,0)circle(0.04cm);
\node [left] at (-2,-2.858) {\textcolor{red}{\tiny{$\omega^2$}}};
\node [left] at (-0.8,-1.212) {\tiny{$\omega^2p_1$}};
\node [left] at (-1.5,-2.425) {\tiny{$\omega^2p_0$}};
\node [left] at (-2.2,-3.637) {\tiny{$\omega^2\frac{1}{p_0}$}};
\node [left] at (-2.9,-4.85) {\tiny{$\omega^2\frac{1}{p_1}$}};

%\draw[thick,rotate=300](1.587,0.07)--(2.087,0.4)--(2.587,0.07);
\draw[thick,rotate=300](2.8,0)--(3.487,0.4)--(4.2,0);
%\draw[thick,rotate=300](4.387,0.07)--(4.887,0.4)--(5.387,0.07);
\draw[thick,rotate=300](5.6,0)--(6.287,0.4);
%\draw [thick,rotate=300] (1.4,0)circle(0.2cm);
%\draw [thick,rotate=300] (2.8,0)circle(0.2cm);
%\draw [thick,rotate=300] (4.2,0)circle(0.2cm);
%\draw [thick,rotate=300] (5.6,0)circle(0.2cm);
\filldraw [rotate=300](1.4,0)circle(0.04cm);
\filldraw [rotate=300](2.8,0)circle(0.04cm);
\filldraw [rotate=300](4.2,0)circle(0.04cm);
\filldraw [rotate=300](5.6,0)circle(0.04cm);
%\draw [thick,red,rotate=300] (3.3,0)circle(0.15cm);
\filldraw [red,rotate=300] (3.3,0)circle(0.04cm);
\node [right] at (2,-2.858) {\textcolor{red}{\tiny{$-\omega$}}};
\node [right] at (0.8,-1.212) {\tiny{$-\omega p_1$}};
\node [right] at (1.5,-2.425) {\tiny{$-\omega p_0$}};
\node [right] at (2.2,-3.637) {\tiny{$-\omega \frac{1}{p_0}$}};
\node [right] at (2.9,-4.85) {\tiny{$-\omega\frac{1}{p_1}$}};

\draw[thick](-1.4,0)--(-0.7,0.4);
\draw[thick](1.4,0)--(0.7,0.4);
\draw [thick](0,0.8)--(0,-0.8);
%\draw[thick](1.587,-0.07)--(2.087,-0.4)--(2.587,-0.07);
\draw[thick](2.8,0)--(3.487,-0.4)--(4.2,0);
%\draw[thick](4.387,-0.07)--(4.887,-0.4)--(5.387,-0.07);
\draw[thick](5.6,0)--(6.287,-0.4);
%\draw [thick] (1.4,0)circle(0.2cm);
%\draw [thick] (2.8,0)circle(0.2cm);
%\draw [thick] (4.2,0)circle(0.2cm);
%\draw [thick] (5.6,0)circle(0.2cm);
%\draw [thick] (2.087,-0.4)--(2.087,0.4);
\draw [thick](3.487,-0.4)--(3.487,0.4);
%\draw [thick](4.887,-0.4)--(4.887,0.4);

\draw[thick,rotate=60](-1.4,0)--(-0.7,0.4);
\draw[thick,rotate=60](1.4,0)--(0.7,0.4);
\draw [thick,rotate=60](0,0.8)--(0,-0.8);
%\draw[thick,rotate=60](1.587,-0.07)--(2.087,-0.4)--(2.587,-0.07);
\draw[thick,rotate=60](2.8,0)--(3.487,-0.4)--(4.2,-0);
%\draw[thick,rotate=60](4.387,-0.07)--(4.887,-0.4)--(5.387,-0.07);
\draw[thick,rotate=60](5.6,0)--(6.287,-0.4);
%\draw [thick,rotate=60] (1.4,0)circle(0.2cm);
%\draw [thick,rotate=60] (2.8,0)circle(0.2cm);
%\draw [thick,rotate=60] (4.2,0)circle(0.2cm);
%\draw [thick,rotate=60] (5.6,0)circle(0.2cm);
%\draw [thick,rotate=60] (2.087,-0.4)--(2.087,0.4);
\draw [thick,rotate=60](3.487,-0.4)--(3.487,0.4);
%\draw [thick,rotate=60](4.887,-0.4)--(4.887,0.4);

\draw[thick,rotate=120](-1.4,0)--(-0.7,0.4);
\draw[thick,rotate=120](1.4,0)--(0.7,0.4);
\draw [thick,rotate=120](0,0.8)--(0,-0.8);
%\draw[thick,rotate=120](1.587,-0.07)--(2.087,-0.4)--(2.587,-0.07);
\draw[thick,rotate=120](2.8,0)--(3.487,-0.4)--(4.2,0);
%\draw[thick,rotate=120](4.387,-0.07)--(4.887,-0.4)--(5.387,-0.07);
\draw[thick,rotate=120](5.6,0)--(6.287,-0.4);
%\draw [thick,rotate=120] (1.4,0)circle(0.2cm);
%\draw [thick,rotate=120] (2.8,0)circle(0.2cm);
%\draw [thick,rotate=120] (4.2,0)circle(0.2cm);
%\draw [thick,rotate=120] (5.6,0)circle(0.2cm);
%\draw [thick,rotate=120] (2.087,-0.4)--(2.087,0.4);
\draw [thick,rotate=120](3.487,-0.4)--(3.487,0.4);
%\draw [thick,rotate=120](4.887,-0.4)--(4.887,0.4);

\draw[thick,rotate=180](-1.4,0)--(-0.7,0.4);
\draw[thick,rotate=180](1.4,0)--(0.7,0.4);
%\draw[thick,rotate=180](1.587,-0.07)--(2.087,-0.4)--(2.587,-0.07);
\draw[thick,rotate=180](2.8,0)--(3.487,-0.4)--(4.2,0);
%\draw[thick,rotate=180](4.387,-0.07)--(4.887,-0.4)--(5.387,-0.07);
\draw[thick,rotate=180](5.6,0)--(6.287,-0.4);
%\draw [thick,rotate=180] (1.4,0)circle(0.2cm);
%\draw [thick,rotate=180] (2.8,0)circle(0.2cm);
%\draw [thick,rotate=180] (4.2,0)circle(0.2cm);
%\draw [thick,rotate=180] (5.6,0)circle(0.2cm);
%\draw [thick,rotate=180] (2.087,-0.4)--(2.087,0.4);
\draw [thick,rotate=180](3.487,-0.4)--(3.487,0.4);
%\draw [thick,rotate=180](4.887,-0.4)--(4.887,0.4);

\draw[thick,rotate=240](-1.4,0)--(-0.7,0.4);
\draw[thick,rotate=240](1.4,0)--(0.7,0.4);
\draw[thick,rotate=240](-1.4,0)--(-0.7,-0.4);
\draw[thick,rotate=240](1.4,0)--(0.7,-0.4);
%\draw[thick,rotate=240](1.587,-0.07)--(2.087,-0.4)--(2.587,-0.07);
\draw[thick,rotate=240](2.8,0)--(3.487,-0.4)--(4.2,0);
%\draw[thick,rotate=240](4.387,-0.07)--(4.887,-0.4)--(5.387,-0.07);
\draw[thick,rotate=240](5.6,0)--(6.287,-0.4);
%\draw [thick,rotate=240] (1.4,0)circle(0.2cm);
%\draw [thick,rotate=240] (2.8,0)circle(0.2cm);
%\draw [thick,rotate=240] (4.2,0)circle(0.2cm);
%\draw [thick,rotate=240] (5.6,0)circle(0.2cm);
%\draw [thick,rotate=240] (2.087,-0.4)--(2.087,0.4);
\draw [thick,rotate=240](3.487,-0.4)--(3.487,0.4);
%\draw [thick,rotate=240](4.887,-0.4)--(4.887,0.4);

\draw[thick,rotate=300](-1.4,0)--(-0.7,0.4);
\draw[thick,rotate=300](1.4,0)--(0.7,0.4);
\draw[thick,rotate=300](-1.4,0)--(-0.7,-0.4);
\draw[thick,rotate=300](1.4,0)--(0.7,-0.4);
%\draw[thick,rotate=300](1.587,-0.07)--(2.087,-0.4)--(2.587,-0.07);
\draw[thick,rotate=300](2.8,0)--(3.487,-0.4)--(4.2,0);
%\draw[thick,rotate=300](4.387,-0.07)--(4.887,-0.4)--(5.387,-0.07);
\draw[thick,rotate=300](5.6,0)--(6.287,-0.4);
%\draw [thick,rotate=300] (1.4,0)circle(0.2cm);
%\draw [thick,rotate=300] (2.8,0)circle(0.2cm);
%\draw [thick,rotate=300] (4.2,0)circle(0.2cm);
%\draw [thick,rotate=300] (5.6,0)circle(0.2cm);
%\draw [thick,rotate=300] (2.087,-0.4)--(2.087,0.4);
\draw [thick,rotate=300](3.487,-0.4)--(3.487,0.4);
%\draw [thick,rotate=300](4.887,-0.4)--(4.887,0.4);

\filldraw [red] (0,0) circle [radius=0.04];
\node [right] at (-0.1,-0.35) {\textcolor{red}{\tiny$0$}};
\end{tikzpicture}
\caption{\footnotesize  The contour $\Sigma^{(3)}$. }
\label{FIGE2}
\end{figure}

For convenience, we use
\begin{equation*}
	f^*(z):=\overline{f(\bar{z})}, \quad z \in \mathbb{C},
\end{equation*}
to denote the Schwartz conjugation for a complex-valued function $f(z)$.
Using  these  rays defined above, we  define  new contours   obtained when  opening   jump contours
  $\omega^l \mathbb{R} \setminus \cup_{j=1}^4 I^l_j, \ l=0,1,2$:
\begin{align*}
    & \Sigma^{l}_{p} = \Sigma^{l}_{2,3}  \cup \Sigma^{l}_{6,7}, \quad I^l = \cup_{j=1}^4 I^l_j,\\
	&\tilde{\Sigma}^{l}= ({\underset{j=1,\cdots,8}{\cup}}(\Sigma_{j}^{l} \cup (\Sigma_{j}^{l})^* ) ) \cup \Sigma^{l}_{p} \cup (\Sigma^{l}_{p})^*,\\	&\Sigma^{(3)}=\cup_{l=0,1,2} (\tilde{\Sigma}^{l} \cup I^l),
\end{align*}
as shown in Figure  \ref{FIGE2}.
Additionally, we   define the open domains along the jump contours  $\omega^l \mathbb{R} \setminus I^l, \ l=0,1,2$:
\begin{align}
	&\Omega=\underset{l=0,1,2}{\underset{j=1,\cdots,8}{\cup}} \Omega_{j}^{l} \cup (\Omega_{j}^{l})^*,\nonumber
%	& \Omega^{(m)}=S_m\setminus\Omega,\ m=1,\cdots,6.\nonumber
\end{align}
%In the follow, to simplify notations, we denote
%$A^{  \omega^0} = A, \ \  A^{  \omega^1} = A^{  \omega },  $ for a  symbol $A$.

%Since the function $\tilde{r}$ in \eqref{deftilder} is not an analytic function,
From Figure \ref{figurec},  we open the contours $ \mathbb{R} \setminus I^0$ via continuous extensions of the jump matrix $V^{(3)}(z)$ by defining appropriate functions and other contours can be opened by the symmetries.
%The next step is to construct a matrix function $R^{(2)}$.
%Usually, we need to remove jump on $\mathbb{R}$,  $\omega\mathbb{R}$ and $\omega^2\mathbb{R}$, and  have some mild control on $\bar{\partial}R^{(2)}$ sufficiently to ensure that the $\bar{\partial}$-contribution to the long-time asymptotics of $q(x, t)$ is negligible like in \cite{fNLS}.
We can construct a matrix function $\mathcal{R}^{(3)}$ like in \cite{fNLS}. The difference is here  we have extra singularity on the  boundary.
Hence, to deal with the singularity at $\varkappa_k$, $k=1,\cdots,6$, we need to introduce a fixed cutoff function $\mathcal{X}(z)$   in $C^\infty_0(\mathbb{R},[0,1])$ with  support near $1$ with
\begin{align}\label{mathcalx}
	\mathcal{X}(z)=\left\{\begin{array}{llll}
		0, & |z-1|>2\varepsilon,\\[4pt]
		1,  & |z-1|<\varepsilon,
	\end{array}\right.
\end{align}
where  $\varepsilon $ is a small enough positive constant satisfying
 the   support of $\mathcal{X}(z)$ doesn't contain any of phase points with
$\varepsilon <\frac{1-p_0^2}{16 p_0}$.
%and the  support of $\mathcal{X}(\text{Im}z+1)\mathcal{X}(\text{Re}z)$  is disjoint with $\ddot{\Sigma}(\xi)$.
%Such constant $\varepsilon$  indeed  exists.

We now define the continuous extension functions in this case: for $j = 1,\cdots,8$,
\begin{equation}
\mathcal{R}^{(3)}(z)=\left\{\begin{array}{lll}
\left(\begin{array}{ccc}
	1 & 0 & 0\\
	-R_{j}(z)e^{-it\theta_{12}} & 1 & 0\\
	0 & 0 & 1
\end{array}\right),  &z\in \Omega_{j}^0,\\
\left(\begin{array}{ccc}
	1 & -R_{j}^*(z)e^{it\theta_{12}} & 0\\
	0 & 1 & 0\\
	0 & 0 & 1
\end{array}\right), & z\in \Omega_{j}^{0*},\\
\left(\begin{array}{ccc}
	1 & 0 & 0\\
	0 & 1 & 0\\
	-R_{j}(\omega^2z)e^{-it\theta_{13}} & 0 & 1
\end{array}\right),  &z\in \Omega^1_j,\\
\left(\begin{array}{ccc}
	1 & 0 & -R_{j}^*(\omega^2z)e^{it\theta_{13}}\\
	0 & 1 & 0\\
	0 & 0 & 1
\end{array}\right), & z\in \Omega^{1*}_j,\\
\left(\begin{array}{ccc}
	1 & 0 & 0\\
	0 & 1 & -R_{j}(\omega z)e^{it\theta_{23}}\\
	0 & 0 & 1
\end{array}\right),  &z\in \Omega^{2}_j,\\
\left(\begin{array}{ccc}
	1 & 0 &0\\
	0 & 1 & 0\\
	0 & -R_{j}^*(\omega z)e^{-it\theta_{23}} & 1
\end{array}\right), & z\in \Omega^{2*}_j,\\
I,  &elsewhere,\\
\end{array}\right.\label{R(2)1}
\end{equation}
where  the functions $R_{j}(z)$, $j=1,\cdots,8$ are given by the following proposition.

\begin{Proposition}\label{proR1}
	Define  functions $R_{j}$: $\overline{\Omega}_{j}^0\to \mathbb{C}$, $j=1,\cdots,8$, continuous on $\overline{\Omega}_{j}^0$, with continuous first partials on  $\Omega_{j}^0$, and boundary values
	%\begin{align}
	%R_{j}(z)=\begin{cases}
	%	\overline{\tilde{r}(z)},  &z\in \mathbb{R},\\
	%	\overline{\tilde{r}(\frac{1}{p_1})},  &z\in \Sigma_1^0,\\
    %    \overline{\tilde{r}(\frac{1}{p_0})},  &z\in \Sigma_2^0,\\
    %    \overline{\tilde{r}(p_0)},  &z\in \Sigma_3^0,\\
    %    \overline{\tilde{r}(p_1)},  &z\in \Sigma_4^0,\\
    %    \overline{\tilde{r}(-p_1)},  &z\in \Sigma_5^0,\\
    %    \overline{\tilde{r}(-p_0)},  &z\in\Sigma_6^0,\\
    %    \overline{\tilde{r}(-\frac{1}{p_0})},  &z\in \Sigma_7^0,\\
    %    \overline{\tilde{r}(-\frac{1}{p_1})},  &z\in \Sigma_8^0,
    %  \end{cases} \quad R^*_{j}(z)=\begin{cases}
	%	\tilde{r}(z),  &z\in \mathbb{R},\\
	%	\tilde{r}(\frac{1}{p_1}),  &z\in \Sigma_1^{0*},\\
     %   \tilde{r}(\frac{1}{p_0}),  &z\in \Sigma_2^{0*},\\
     %   \tilde{r}(p_0),  &z\in \Sigma_3^{0*},\\
     %   \tilde{r}(p_1),  &z\in \Sigma_4^{0*},\\
     %   \tilde{r}(-p_1),  &z\in \Sigma_5^{0*},\\
     %   \tilde{r}(-p_0),  &z\in\Sigma_6^{0*},\\
     %   \tilde{r}(-\frac{1}{p_0}),  &z\in \Sigma_7^{0*},\\
     %   \tilde{r}(-\frac{1}{p_1}),  &z\in \Sigma_8^{0*},
     % \end{cases}
	%\end{align}
\begin{align}
	R_{j}(z)=\begin{cases}
		\tilde{r}^*(z),  &z\in \mathbb{R},\\
		\tilde{r}^*(\frac{1}{p_1}),  &z\in \Sigma_1^0,\\
        \tilde{r}^*(\frac{1}{p_0}),  &z\in \Sigma_2^0,\\
        \tilde{r}^*(p_0),  &z\in \Sigma_3^0,\\
        \tilde{r}^*(p_1),  &z\in \Sigma_4^0,\\
        \tilde{r}^*(-p_1),  &z\in \Sigma_5^0,\\
        \tilde{r}^*(-p_0),  &z\in\Sigma_6^0,\\
        \tilde{r}^*(-\frac{1}{p_0}),  &z\in \Sigma_7^0,\\
        \tilde{r}^*(-\frac{1}{p_1}),  &z\in \Sigma_8^0,
      \end{cases}
      %\quad R^*_{j}(z)=\begin{cases}
		%\tilde{r}(z),  &z\in \mathbb{R},\\
		%\tilde{r}(\frac{1}{p_1}),  &z\in \Sigma_1^{0*},\\
        %\tilde{r}(\frac{1}{p_0}),  &z\in \Sigma_2^{0*},\\
        %\tilde{r}(p_0),  &z\in \Sigma_3^{0*},\\
        %\tilde{r}(p_1),  &z\in \Sigma_4^{0*},\\
        %\tilde{r}(-p_1),  &z\in \Sigma_5^{0*},\\
        %\tilde{r}(-p_0),  &z\in\Sigma_6^{0*},\\
        %\tilde{r}(-\frac{1}{p_0}),  &z\in \Sigma_7^{0*},\\
        %\tilde{r}(-\frac{1}{p_1}),  &z\in \Sigma_8^{0*},
      %\end{cases}
	\end{align}	
	such that
	\begin{align}
&|\bar{\partial}R_{j}(z)|\lesssim|\tilde{r}^{*'}(\re z)|+|\mathcal{X}'( \re z)|+|\re z-\xi_j|^{-1/2}, \text{for all $z\in \Omega_{j}^0$},\label{dbarRj3}	\\
&|\bar{\partial}R_{j}(z)|\lesssim|\tilde{r}^{*'}(\re z)|+|\mathcal{X}'(\re z)|,\ \text{for all $z\in \Omega_{j}^0$},\label{dbarRj4}\\
	&R_{j}(z)=\bar{\partial}R_{j}(z)=0, \ \text{ for all }z\in\Omega_{j}^0 \text{ with } |\re z\pm1|<\varepsilon,\label{R11}\\	
	&\bar{\partial}R_{j}(z)=0,\hspace{0.5cm}\text{ elsewhere},
	\end{align}
where $\xi_j,\ j=1,\cdots,8$ stands for the saddle points in the corresponding regions $\Omega_j^0,\ j=1,\cdots,8$.
Setting $R: \Omega \to \mathbb{C}$ by $R(z)|_{z \in \Omega_j^0} = R_j(z)$, the extension can preserve the symmetry $R(z) = \overline{R(\bar{z}^{-1})} $.
\end{Proposition}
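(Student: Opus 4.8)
The plan is to build each $R_j$ explicitly by a trigonometric interpolation between its two prescribed boundary values and then to extract the three sources of $\bar\partial$-non-analyticity. Writing $z-\xi_j=\rho e^{i\phi}$ with $\phi\in[0,\varphi_0]$ on $\Omega_j^0$ and $k_0=\pi/(2\varphi_0)$, I would set on the sectors abutting $+1$ (i.e. $j=1,\dots,4$)
\begin{equation*}
R_j(z)=\bigl(1-\mathcal X(\re z)\bigr)\Bigl[\tilde r^*(\xi_j)+\bigl(\tilde r^*(\re z)-\tilde r^*(\xi_j)\bigr)\cos(k_0\phi)\Bigr],
\end{equation*}
and define the sectors $j=5,\dots,8$ near $-1$ by the mirror image about the imaginary axis (with the reflected cutoff). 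Since $k_0\varphi_0=\pi/2$, the cosine vanishes on the ray $\Sigma_j^0$, leaving the constant $\tilde r^*(\xi_j)$, while at $\phi=0$ the bracket collapses to $\tilde r^*(\re z)$; this matches the prescribed boundary data. Continuity at the corner $\xi_j$ is automatic because $\tilde r^*(\re z)\to\tilde r^*(\xi_j)$ as $\rho\to0$, and the smoothness of $\tilde r^*\in\mathcal S(\mathbb R)$ and of $\mathcal X\in C_0^\infty$ gives continuous first partials on $\Omega_j^0$. Because $\tilde r(\pm1)=0$ by Proposition \ref{pror} and the definition \eqref{deftilder}, and $\mathcal X\equiv1$ on $|z-1|<\varepsilon$, the prefactor $1-\mathcal X$ makes $R_j$ vanish identically in a neighbourhood of the spectral singularities, yielding \eqref{R11} at once.

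Next I would compute $\bar\partial R_j$ using $\bar\partial=\tfrac{e^{i\phi}}{2}\bigl(\partial_\rho+\tfrac{i}{\rho}\partial_\phi\bigr)$. Three types of terms arise: a radial term in which $\partial_\rho$ hits $\tilde r^*(\re z)$, bounded by $|\tilde r^{*\prime}(\re z)|$; a cutoff term proportional to $\mathcal X'(\re z)$; and the delicate angular term
\begin{equation*}
\frac{k_0}{2\rho}\bigl(\tilde r^*(\re z)-\tilde r^*(\xi_j)\bigr)\sin(k_0\phi),
\end{equation*}
coming from $\partial_\phi$ of the cosine. To control the angular term I would estimate the finite difference in two ways. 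Since $u_0\in\mathcal S(\mathbb R)$ forces $r$, and hence $\tilde r$, into $\mathcal S(\mathbb R)$ (Propositions \ref{pror} and \ref{proT}), Cauchy--Schwarz gives $|\tilde r^*(\re z)-\tilde r^*(\xi_j)|=\bigl|\int_{\xi_j}^{\re z}\tilde r^{*\prime}(s)\,ds\bigr|\le\|\tilde r^{*\prime}\|_{L^2}\,|\re z-\xi_j|^{1/2}$; dividing by $\rho\gtrsim|\re z-\xi_j|$ (valid since $\phi\le\varphi_0$ is small, so $\re z-\xi_j=\rho\cos\phi\sim\rho$) produces the singular bound $|\re z-\xi_j|^{-1/2}$ of \eqref{dbarRj3}. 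Alternatively, the mean value theorem together with the $L^\infty$-smoothness of $\tilde r^{*\prime}$ bounds the same quotient by a derivative of $\tilde r^*$, giving the non-singular estimate \eqref{dbarRj4}; the remaining $\bar\partial R_j=0$ outside $\Omega$ is immediate from the definition.

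Finally, for the symmetry $R(z)=\overline{R(\bar z^{-1})}$ I would construct $R_j$ only on a fundamental domain for the involution $z\mapsto\bar z^{-1}$ and propagate it by that reflection, using that $\tilde r$ and the factors $T_{ij}$ respect the corresponding symmetry from Proposition \ref{proT}; one checks that the involution permutes the sectors $\Omega_j^0$ consistently with the prescribed boundary values, so the extension is well defined and symmetric. I expect the main obstacle to be uniformity in the transition limit $\xi\to-\tfrac18^{+}$, where the saddle points $p_0,p_1$ (and $1/p_0,1/p_1$) coalesce at $z_b,z_a$: the sectors $\Omega_j^0$ then shrink and the separations $|\xi_j-\xi_{j'}|\to0$, so the implied constants in \eqref{dbarRj3}--\eqref{dbarRj4} and the lower bound $\rho\gtrsim|\re z-\xi_j|$ must be shown to hold uniformly across the collision. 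This is precisely what the singular exponent $-1/2$ is designed to absorb, and it distinguishes the present construction from the non-transition regimes; the additional complication, absent in the focusing NLS analysis of \cite{fNLS}, is the pair of spectral singularities at $\pm1$, which the cutoff $\mathcal X$ in \eqref{mathcalx} is introduced to neutralize.
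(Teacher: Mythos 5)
Your construction is essentially the paper's: the same $\cos\!\left(\tfrac{\pi}{2\varphi_0}\arg(z-\xi_j)\right)$ interpolation damped by $1-\mathcal{X}(\re z)$, the same polar-coordinate computation of $\bar\partial R_j$ splitting into radial, cutoff and angular terms, and the same two bounds on the increment $\tilde r^*(\re z)-\tilde r^*(\xi_j)$ (Cauchy--Schwarz giving the $|\re z-\xi_j|^{-1/2}$ estimate via $\rho\ge|\re z-\xi_j|$, and $\|\tilde r^{*\prime}\|_{L^\infty}$ giving the non-singular one). The only deviation is that you multiply the constant $\tilde r^*(\xi_j)$ by the cutoff as well, whereas the paper adds it outside; your variant is harmless (indeed it is the one that literally enforces \eqref{R11} on the sectors meeting the support of $\mathcal X$), and your added remarks on the symmetry and on uniformity near the saddle-point collision are consistent with, though not required by, the paper's argument.
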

\begin{proof}
 Without loss of generality, we give the details for $R_1(z)$ and the other cases are easily inferred.
 The extension of $R_1(z)$ can be constructed by
 \begin{equation}\label{defr1}
 R_1(z) = (1-\mathcal{X}(\re z)) \left(\tilde{r}^*(\re z) - \tilde{r}^*(\frac{1}{p_1}) \right) \cos \left( \frac{\pi}{2\varphi_0} \arg (z-\frac{1}{p_1}) \right) + \tilde{r}^*(\frac{1}{p_1}).
 \end{equation}
Note that for $z-\frac{1}{p_1}=le^{i\varphi}=u+vi$, $l,\varphi,u,v\in\mathbb{R}$, we have  $\bar{\partial}=\frac{1}{2}\left( \partial_u+i\partial_v\right) =\frac{e^{i\varphi}}{2}\left( \partial_l+il^{-1}\partial_\varphi\right)$.
%And from $r\in H^{2,1}(\mathbb{R})$, which means $p_{14}\in H^{2,1}(R)$ we have $|p_{14}(u)|\lesssim (1+u^2)^{-1/2}$. Together with (\ref{key}) we have (\ref{R}).
Applying $\bar \partial$ operator to \eqref{defr1}, it is readily seen that
\begin{align}
\bar{\partial}R_{1}(z)&= \frac{1}{2} \tilde{r}^{*'}(u) \cos \left( \frac{\pi \varphi}{2 \varphi_0}\right)
 -  \frac{1}{2}\mathcal{X}'(u) \left(\tilde{r}^*(u) - \tilde{r}^*(\frac{1}{p_1}) \right) \cos \left( \frac{\pi \varphi}{2 \varphi_0}\right)\nonumber\\
	& +\frac{1}{2}(1-\mathcal{X}(u)) \left(\tilde{r}^*(u) - \tilde{r}^*(\frac{1}{p_1}) \right)  \bar \partial \cos \left(\frac{\pi \varphi}{2 \varphi_0}\right).
	\end{align}
From the definition \eqref{deftilder} and   H\"{o}lder's  inequality, we obtain
	\begin{align}
	|\tilde{r}^*(u)-\tilde{r}^*(\frac{1}{p_1})|=|\int_{\frac{1}{p_1}}^{u}	\tilde{r}^{*'}(s)ds|\leq\parallel \tilde{r}^{*'}\parallel_{L^2(\mathbb{R})}|u-\frac{1}{p_1}|^{1/2},
	\end{align}
%or
%\begin{align}
%|\tilde{r}(u)-\tilde{r}(\frac{1}{p_1})|= |\int_{\frac{1}{p_1}}^{u}	s^{-1}s\tilde{r}'(s)ds|\leq\parallel \tilde{r}'\parallel_{2,1}|u-\frac{1}{p_1}|^{-1/2}.
%\end{align}
%Additionally,  $|\tilde{r}'(u)| \le \|\tilde{r}'\|_{\infty} $
%which yields
%\begin{align}
%  |\tilde{r}(u) -\tilde{r}(\frac{1}{p_1})| = \left|\int_{\frac{1}{p_1}}^u \tilde{r}'(s)ds \right| %\le \|\tilde{r}'\|_{\infty} |u-\frac{1}{p_1}|,
%\end{align}
which yields \eqref{dbarRj3}. Moreover,
\eqref{dbarRj4} is obtained from the boundedness of $\tilde{r}'(z)$.

\end{proof}
In addition, $\mathcal{R}^{(3)}$ achieves the symmetry:
\begin{equation}	\mathcal{R}^{(3)}(z)=\Gamma_1\overline{\mathcal{R}^{(3)}(\bar{z})}\Gamma_1=\Gamma_2\overline{\mathcal{R}^{(3)}(\omega^2\bar{z})}\Gamma_2=\Gamma_3\overline{\mathcal{R}^{(3)}(\omega\bar{z})}\Gamma_3=\overline{\mathcal{R}^{(3)}(\bar{z}^{-1})}.
\end{equation}

\subsection{A hybrid $\bar{\partial}$-RH problem and its decomposition  }\label{sec5}

Now we  use $\mathcal{R}^{(3)}$ to define a new transformation
\begin{equation}
	M^{(3)}(z):= M^{(3)}(z;y,t) =M^{(2)}(z)\mathcal{R}^{(3)}(z)\label{transm2},
\end{equation}
which satisfies the following hybrid $\bar{\partial}$-RH problem.

\begin{RHP}\label{RHP4}
Find a matrix valued function  $ M^{(3)}(z)$ with following properties:
\begin{itemize}
\item $M^{(3)}(z)$  has  sectionally continuous first partial derivatives in
$\mathbb{C}\setminus \left( \Sigma^{(3)}\cup \left\lbrace\zeta_n \right\rbrace_{n-k N_0\in\lozenge} \right) $,  and is meromorphic outside $\bar{\Omega}$.

\item $M^{(3)}(z)=\Gamma_1\overline{M^{(3)}(\bar{z})}\Gamma_1=\Gamma_2\overline{M^{(3)}(\omega^2\bar{z})}\Gamma_2=\Gamma_3\overline{M^{(3)}(\omega\bar{z})}\Gamma_3=\overline{M^{(3)}(\bar{z}^{-1})}$.

\item $M^{(3)}(z)$ has continuous boundary
values $M^{(3)}_\pm(z)$ on $\Sigma^{(3)}$ with
\begin{equation*}
	M^{(3)}_+(z)=M^{(3)}_-(z)V^{(3)}(z),\hspace{0.5cm}z \in \Sigma^{(3)},
\end{equation*}
where
\begin{equation}\label{p1jumpv3}
V^{(3)}(z)=\left\{
\begin{array}{ll}
\left(\begin{array}{ccc}
	1 & -\overline{\tilde{r}(z)} e^{it\theta_{12}(z)}&0\\
	0 & 1&0\\
	0 & 0 &1
\end{array}\right)\left(\begin{array}{ccc}
	1 & 0 & 0\\
	\tilde{r}(z)e^{-it\theta_{12}(z)} & 1 & 0\\
	0 & 0 &1
\end{array}\right),   & z\in I^0,\\[12pt]
\left(\begin{array}{ccc}
	1 & 0&0\\
	0 & 1&-\overline{\tilde{r}(\omega z)}e^{it\theta_{23}(z)}\\
	0 & 0 &1
\end{array}\right)\left(\begin{array}{ccc}
	1 & 0 & 0\\
	0 & 1 & 0\\
	0 & 	\tilde{r}(\omega z)e^{-it\theta_{23}(z)}&1
\end{array}\right),   & z\in I^{2},\\[12pt]
\left(\begin{array}{ccc}
	1 & 0&0\\
	0 & 1&0\\
	-\overline{\tilde{r}(\omega^2 z)}e^{-it\theta_{13}(z)}& 0 &1
\end{array}\right)\left(\begin{array}{ccc}
	1 & 0 & 	\tilde{r}(\omega^2 z)we^{it\theta_{13}(z)}\\
	0 & 1 & 0\\
	0 & 0 &1
\end{array}\right),   & z\in I^{1},\\
\mathcal{R}^{(3)}(z)|_{z\in \Omega_{j+1} } - \mathcal{R}^{(2)}(z)|_{z\in \Omega_{j}}, & z \in \Sigma_{p}^{l}, l=0,1,2,\\
\mathcal{R}^{(3)}(z)|_{z\in \Omega^*_{j+1} } - \mathcal{R}^{(2)}(z)|_{z\in \Omega^*_{j}}, & z \in \Sigma_{p}^{l*}, l=0,1,2\\
\mathcal{R}^{(3)}(z),&	z\in \tilde{\Sigma}^{l},l=0,1,2\\
\mathcal{R}^{(3)}(z)^{-1},& 	z\in\tilde{\Sigma}^{l*}, l=0,1,2.
\end{array}\right.
\end{equation}	
\item $M^{(3)}(z) = I+\mathcal{O}(z^{-1}),\hspace{0.5cm}z \rightarrow \infty$.

\item For $z\in\mathbb{C}$,
\begin{align}
	\bar{\partial}M^{(3)}(z)=M^{(3)}(z)\bar{\partial}\mathcal{R}^{(3)}(z),
\end{align}
where
\begin{equation}
	\bar \partial \mathcal{R}^{(3)}(z)=\left\{\begin{array}{lll}
		\left(\begin{array}{ccc}
			0 & 0 & 0\\
			-\bar \partial  R_{j}(z)e^{-it\theta_{12}} & 0 & 0\\
			0 & 0 & 0
		\end{array}\right),  &z\in \Omega_{j}^0,\\
		\left(\begin{array}{ccc}
			0 & -\bar \partial R_{j}^*(z)e^{it\theta_{12}} & 0\\
			0 &0 & 0\\
			0 & 0 & 0
		\end{array}\right), & z\in \Omega_{j}^{0*},\\
		\left(\begin{array}{ccc}
			0 & 0 & 0\\
			0 & 0 & 0\\
			-\bar \partial R_{j}(\omega^2z)e^{-it\theta_{13}} & 0 & 1
		\end{array}\right),  &z\in \Omega^1_j,\\
		\left(\begin{array}{ccc}
			0 & 0 & -\bar \partial R_{j}^*(\omega^2z)e^{it\theta_{13}}\\
			0 & 0 & 0\\
			0 & 0 & 0
		\end{array}\right), & z\in \Omega^{1*}_j,\\
		\left(\begin{array}{ccc}
			0 & 0 & 0\\
			0 & 0 & -R_{j}(\omega z)e^{it\theta_{23}}\\
			0 & 0 & 0
		\end{array}\right),  &z\in \Omega^{2}_j,\\
		\left(\begin{array}{ccc}
			0 & 0 &0\\
			0 & 0 & 0\\
			0 & -\bar \partial R_{j}^*(\omega z)e^{-it\theta_{23}} & 0
		\end{array}\right), & z\in \Omega^{2*}_j,\\
		{\bf{0}},  &elsewhere,\\
	\end{array}\right.\label{p1dbarr3}
\end{equation}

\item $M^{(3)}(z)$ satisfies the singularity conditions in  \eqref{m2sin1}-\eqref{m2sin3} with  $M^{(3)}(z)$  replacing $M^{(2)}(z)$.

\item $M^{(3)}(z)$ has simple poles at each point $\zeta_n$  for $n -k N_0\in\lozenge$ with
\begin{align}
	&\res_{z=\zeta_n}M^{(3)}(z)=\lim_{z\to \zeta_n}M^{(3)}(z)\left[ T^{-1}(z)B_nT(z)\right].
\end{align}
\end{itemize}
\end{RHP}

%Unlike the classical case in \cite{fNLS}, our construction of $R^{(2)}$ will not completely  remove the jump of $M^{(1)}$. As shown above,  $M^{(2)}$ still has jump near the singularity $\varkappa_k$ on $l_k$, $k=1,\cdots,6$. As reward for it, near  singularity $\varkappa_k$, $R^{(2)}\equiv I$ with $\bar{\partial}R^{(2)}\equiv0$.

 To solve the RH problem \ref{RHP4},  we decompose it into a pure   RH  problem  for $M^{R}(z):= M^{R}(z;y,t)$  with $\bar\partial \mathcal{R}^{(3)}\equiv0$   and a pure $\bar{\partial}$-problem with nonzero $\bar{\partial}$-derivatives.
By omitting the $\bar\partial$-derivative part of RH problem \ref{RHP4}, we obtain the pure   RH problem  for  $M^{R}(z)$   as follows.

\begin{RHP}\label{RHP5}

\begin{itemize}
Find a matrix-valued function  $  M^{R}(z)$ with following properties:

\item $M^{R}(z)$ is  meromorphic  in $\mathbb{C}\setminus \Sigma^{(3)}$.

\item $M^{R}(z)$ has continuous boundary values $M^{R}_\pm(z)$ on $\Sigma^{(3)}$ and
\begin{equation*}
	M^{R}_+(z)=M^{R}_-(z)V^{(3)}(z),\hspace{0.5cm}z \in \Sigma^{(3)},
\end{equation*}
where $V^{(3)}(z)$ is defined by \eqref{p1jumpv3}.
\item  $M^{R}(z)=\Gamma_1\overline{M^{R}(\bar{z})}\Gamma_1=\Gamma_2\overline{M^{R}(\omega^2\bar{z})}\Gamma_2=\Gamma_3\overline{M^{R}(\omega\bar{z})}\Gamma_3=\overline{M^{R}(\bar{z}^{-1})}$.

% \item $\bar{\partial}\mathcal{R}^{(3)}(z)=0$, for $ z\in \mathbb{C}$.

\item $M^{R}(z) =I+\mathcal{O}(z^{-1}),\hspace{0.5cm}z \rightarrow \infty$.

\item As $z\rightarrow \varkappa_l=e^{\frac{i\pi(l-1)}{3}}, l = 1,\cdots,6$,   the  limit  of $M^{R}(z)$ have  the pole singularities
\begin{align}
	&M^{R}(z)=\frac{1}{z\mp1}\left(\begin{array}{ccc}
		\alpha^{R}_\pm &	\alpha^{R}_\pm & \beta^{R}_\pm \\
		-\alpha^{R}_\pm & -\alpha^{R}_\pm & -\beta^{R}_\pm\\
		0	&	0 & 0
	\end{array}\right)+\mathcal{O}(1),\ z\to\pm 1, \label{resmr1}\\
	&M^{R}(z)=\frac{1}{z\mp\omega^2}\left(\begin{array}{ccc}
		0 &	0 &  0\\
		\beta^{R}_\pm	 & \alpha^{R}_\pm &\alpha^{R}_\pm \\
		-\beta^{R}_\pm	&	-\alpha^{R}_\pm & -\alpha^{R}_\pm
	\end{array}\right)+\mathcal{O}(1),\ z\to\pm \omega^2,\label{resmr2}\\
	&M^{R}(z)=\frac{1}{z\mp\omega}\left(\begin{array}{ccc}
		-\alpha^{R}_\pm &	-\beta^{R}_\pm & -\alpha^{R}_\pm\\
		0	 & 0 &0 \\
		\alpha^{R}_\pm &	\beta^{R}_\pm & \alpha^{R}_\pm
	\end{array}\right)+\mathcal{O}(1),\ z\to\pm \omega,\label{resmr3}
\end{align}
with $\alpha^{R}_\pm=\alpha^{R}_\pm(y,t)=-\bar{\alpha}^{R}_\pm$, $\beta^{R}_\pm=\beta^{R}_\pm(y,t)=-\bar{\beta}^{R}_\pm$ and $M^{R}(z)^{-1}$ has same specific
matrix structure with $\alpha^{R}_\pm$, $\beta^{R}_\pm$ replaced by $\tilde{\alpha}^{R}_\pm$, $\tilde{\beta}^{R}_\pm$.

\item $M^{R}(z)$ has the simple poles at each point $\zeta_n$  for $n -k N_0\in\lozenge$ with
\begin{align*}
	&\res_{z=\zeta_n}M^{R}(z)=\lim_{z\to \zeta_n}M^{R}(z)\left[ T^{-1}(z)B_nT(z)\right]. %\label{resMr}.
\end{align*}

\end{itemize}
\end{RHP}

To proceed, define $\mathbb{B}_j$ as the neighborhood of $\varkappa_j$, $j=1,\cdots,6$ with
\begin{equation}
\mathbb{B}_j = \{z\in \mathbb{C} \setminus \{\varkappa_j\}_{j=1}^6:  |\re(z/\varkappa_j)-1|<2 \varepsilon, |\im (z/\varkappa_j)|< 2 \varepsilon \}.
\end{equation}
For convenience,  let $z_c = -z_b$ and $z_d = -z_a$.
%\begin{equation}
%	z_a=\frac{\sqrt{7}+\sqrt{3}}{2}, \ 	z_b=  \frac{\sqrt{7}-\sqrt{3}}{2}, \ z_c=-z_b, \ z_d=-z_a.
%\end{equation}
Then denote
\begin{align}
&U^{l}_{a}= \left\lbrace z\in \mathbb{C}:\left|z-\omega^l z_a\right|\leq \varrho^{0} \right\rbrace, \ U^{l}_{b}= \left\lbrace z\in \mathbb{C}:\left|z-\omega^lz_b\right|\leq \varrho^{0} \right\rbrace, \nonumber\\
&U^{l}_{c}= \left\lbrace z\in \mathbb{C}: \left|z-\omega^l z_c\right|\leq \varrho^{0}\right\rbrace, \ U^{l}_{d}= \left\lbrace z\in \mathbb{C}: \left|z-\omega^l z_d\right|\leq \varrho^{0} \right\rbrace, \nonumber
\end{align}
where
\begin{align*}
\varrho^{0} := \min \left\{\frac{p_1}{2}, \frac{1}{8} |p_i \pm1|, 2\left(p_i -\frac{\sqrt{7}-\sqrt{3}}{2}\right)t^{\delta_1}, 2\left(\frac{1}{p_i} -\frac{\sqrt{7}+\sqrt{3}}{2}\right)t^{\delta_1}, i=0,1 \right\}
\end{align*}
with  $\frac{1}{9}<\delta_1<\frac{1}{6}$, which makes $U_j^l$ and $\mathbb{B}_j$ are disjoint.
Denote $ U:=\underset{l=0,1,2}{\cup} \left( U^{l}_{a} \cup U^{l}_{b} \cup U^{l}_{c}\cup U^{l}_{d} \right)$ as the union set of $U^{l}_{j}$ with $j=a,b,c,d$ and $l=0,1,2$.
For $t$ large enough, we have $\omega^l\frac{1}{p_0}, \omega^l\frac{1}{p_1} \in U^{l}_{a}$, $\omega^lp_1, \omega^lp_0 \in U^{l}_{b}$, $-\omega^lp_0, -\omega^lp_1\in U^{l}_{c}$ and
$-\omega^l\frac{1}{p_1},-\omega^l\frac{1}{p_0} \in U^{l}_{d}$.

From \eqref{p1jumpv3},  we know the jump matrix $V^{(3)}(z)$ uniformly goes to $I$ on $\Sigma^{(3)}$  except in $U$, which
 enlightens  us to construct the solution $M^{R}(z)$ as follows:
\begin{equation}
M^{R}(z)=\left\{\begin{array}{ll}
E(z)M^{O}(z), & z\notin U \cup \mathbb{B}_j,\\
E(z)M^{O}(z)M^{L}(z),  &z\in U,\\
E(z)M^{O}(z)M^{B}_j(z), & z\in \mathbb{B}_j.
\end{array}\right. \label{transm4}
\end{equation}
where $M^{O}(z)$ is an outer model including  the influence of solitons, $M^{L}(z)$ is a local model which can be well approximated by the Painlev\'e \uppercase\expandafter{\romannumeral2}  RH model, $M^{B}_j(z)$ is a solution of a RH problem which only has jumps near $\varkappa_j$, and $E(z)$  is the error function which we will prove exists and bound it asymptotically.
Then we use $M^{R}(z)$ to construct  a new matrix function
\begin{equation}
M^{(4)}(z):= M^{(4)}(z;y,t)=M^{(3)}(z)M^{R}(z)^{-1}.\label{transm3}
\end{equation}
which   removes   analytical component  $M^{R}(z)$    to get  a  pure $\bar{\partial}$-problem.

\noindent\textbf{$\bar{\partial}$-problem}. Find a matrix-valued function  $ M^{(4)}(z) :=
M^{(4)}(z;y,t) $ such that

\begin{itemize}

 \item $M^{(4)}(z)$  has sectionally continuous first partial derivatives in $\mathbb{C}$.

\item  $M^{(4)}(z)=I+\mathcal{O}(z^{-1}),\hspace{0.5cm}z \rightarrow \infty$.

\item $M^{(4)}(z)$ satisfies the $\bar\partial$-equation
$$\bar{\partial}M^{(4)}(z)=M^{(4)}(z)W^{(4)}(z),\ \ z\in \mathbb{C},$$
where
\begin{equation}\label{p1w4}
W^{(4)}(z)=M^{R}(z)\bar{\partial}\mathcal{R}^{(3)}(z)M^{R}(z)^{-1}.
\end{equation}

\end{itemize}

\begin{proof}
	The proof of this pure $\bar \partial$-problem can be given similarly as shown in Section 4.3 of \cite{novYF}.

\end{proof}

The existence  and asymptotics  of   the above pure $\bar \partial$-problem for $M^{(4)}(z)$ will be shown in   Section \ref{sec7}.

\subsection{Contribution from discrete spectrum}\label{sec6}

Now we construct a model solution outside $U$ which ignores the jumps completely.
%\subsubsection{The outer model: an $N$-soliton potential}
The outer model $M^{O}(z)$ satisfies the following RH problem.

\begin{RHP}\label{RHP51}
Find a matrix-valued function  $ M^{O} (z)$ with the following properties:
\begin{itemize}
\item $M^{O} (z)$ is analytical  in $\mathbb{C}\setminus \left\lbrace\zeta_n \right\rbrace_{n-k N_0\in \lozenge} $.

\item $M^{O} (z)=\Gamma_1\overline{M^{r} (\bar{z})}\Gamma_1
=\Gamma_2\overline{M^{r} (\omega^2\bar{z})}\Gamma_2=\Gamma_3\overline{M^{r} (\omega\bar{z})}\Gamma_3=\overline{M^{r} (\bar{z}^{-1})}$.

 \item $M^{O} (z) = I+\mathcal{O}(z^{-1}),\hspace{0.5cm}z \rightarrow \infty$.

 \item As $z\rightarrow \varkappa_l=e^{\frac{i\pi(l-1)}{3}}, l = 1,\cdots,6$,   the  limit  of $M^{O}(z)$ have  the pole singularities
\begin{align}
	&M^{O}(z)=\frac{1}{z\mp1}\left(\begin{array}{ccc}
		\alpha^{O}_\pm &	\alpha^{O}_\pm & \beta^{O}_\pm \\
		-\alpha^{O}_\pm & -\alpha^{O}_\pm & -\beta^{O}_\pm\\
		0	&	0 & 0
	\end{array}\right)+\mathcal{O}(1),\ z\to\pm 1, \label{mosin}\\
	&M^{O}(z)=\frac{1}{z\mp\omega^2}\left(\begin{array}{ccc}
		0 &	0 &  0\\
		\beta^{O}_\pm	 & \alpha^{O}_\pm &\alpha^{O}_\pm \\
		-\beta^{O}_\pm	&	-\alpha^{O}_\pm & -\alpha^{O}_\pm
	\end{array}\right)+\mathcal{O}(1),\ z\to\pm \omega^2,\\
	&M^{O}(z)=\frac{1}{z\mp\omega}\left(\begin{array}{ccc}
		-\alpha^{O}_\pm &	-\beta^{O}_\pm & -\alpha^{O}_\pm\\
		0	 & 0 &0 \\
		\alpha^{O}_\pm &	\beta^{O}_\pm & \alpha^{O}_\pm
	\end{array}\right)+\mathcal{O}(1),\ z\to\pm \omega,
\end{align}
with $\alpha^{O}_\pm=\alpha^{O}_\pm(y,t)=-\bar{\alpha}^{O}_\pm$, $\beta^{O}_\pm=\beta^{O}_\pm(y,t)=-\bar{\beta}^{O}_\pm$ and $M^{O}(z)^{-1}$ has same specific
matrix structure with $\alpha^{O}_\pm$, $\beta^{O}_\pm$ replaced by $\tilde{\alpha}^{O}_\pm$, $\tilde{\beta}^{O}_\pm$.

\item $M^{O} (z)$ has simple poles at each point $\zeta_n$ for $n-k N_0\in \lozenge$ with
\begin{align}
	& \underset{z=\zeta_n}{\operatorname{Res}}  M^{O} (z) =\lim_{z\to \zeta_n}M^{O} (z)\left[ T^{-1}(z)B_nT(z)\right] \label{resMrsol}.
\end{align}
\end{itemize}
\end{RHP}

The essential fact we need concerning $M^{O}$ is as follows.

\begin{Proposition} \label{p1moutf1}

The unique solution $M^{O} $ of RH problem \ref{RHP51} is given by
\begin{equation}\label{p1mo1}
M^O(z) = M^{ \lozenge}(z;\tilde{\mathcal{D}}),
\end{equation}
where $M^{ \lozenge}$ is the solution of the RH problem  \ref{RHP31} corresponding to the reflectionless
scattering data $\tilde{\mathcal{D}}=\{(\zeta_n,\tilde{C}_n)_{n-k N_0\in \lozenge} \}$. Here the modified connection coefficients are given by
\begin{equation*}
\tilde{C}_n = c_n T_{12}(\zeta_n),
\end{equation*}
with $T_{12}$ defined by \eqref{Tij}.

\end{Proposition}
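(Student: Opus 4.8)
The plan is to identify RH problem \ref{RHP51} as the reflectionless reduction of RH problem \ref{RHP31} restricted to the collection $\lozenge$, and then to conclude via the unique solvability of such soliton problems recorded in Corollary \ref{p1urxrsol}. Concretely, I would verify that the soliton solution $M^{\lozenge}(z;\tilde{\mathcal{D}})$ satisfies every defining property of RH problem \ref{RHP51} and then invoke uniqueness to force equality. Since $\tilde{\mathcal{D}}$ is reflectionless, all jump matrices $V^{(2)}$ in RH problem \ref{RHP31} collapse to the identity, so $M^{\lozenge}$ is automatically meromorphic on $\mathbb{C}$ with poles only at the $\lozenge$-spectrum; the normalization $M^{\lozenge}\to I$ as $z\to\infty$ follows from $T_j(\infty)=1$ (Proposition \ref{proT}(d)); and the prescribed pole structure at the spectral singularities $\varkappa_l$ is the one already carried by the soliton solution of Corollary \ref{p1urxrsol}.

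The core of the argument is the bookkeeping of the residue conditions. Each $\zeta_n$ with $n-kN_0\in\lozenge$ lies away from the zeros and poles of $T(z)$ (which sit at the $\Delta$-points), so $T(z)$ is diagonal, analytic, and nonvanishing at $\zeta_n$. Consequently the conjugated residue matrix $T^{-1}(z)B_nT(z)$ evaluated at $\zeta_n$ is again nilpotent of the same form as $B_n$, with its single nonzero entry rescaled by the corresponding ratio of diagonal entries of $T$; this rescaling is precisely the factor $T_{12}(\zeta_n)$ of \eqref{Tij}, so the norming constant $c_n$ is replaced by $\tilde{C}_n=c_nT_{12}(\zeta_n)$. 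Thus the residue condition \eqref{resMrsol} of RH problem \ref{RHP51} coincides with the residue condition of the plain reflectionless soliton problem carrying the data $\tilde{\mathcal{D}}$. I would then check that this modification respects the discrete symmetries of the scattering data: using $\overline{T_1(\bar z)}=T_1(\omega z)=T_1(z^{-1})$ from Proposition \ref{proT}(b) together with the symmetry relations among the $\zeta_{n+kN_0}$, the rescaled constants $\tilde{C}_{n+kN_0}$ transform exactly as the $C_{n+kN_0}$ do, so $M^{\lozenge}(z;\tilde{\mathcal{D}})$ satisfies the four symmetry relations required of $M^{O}$.

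Finally, uniqueness makes the identification rigorous. Given two solutions $M_1,M_2$ of RH problem \ref{RHP51}, the ratio $D=M_1M_2^{-1}$ carries no jump, and the matching nilpotent residue data at each $\zeta_n$ makes those poles removable. The essential point is removability at the spectral singularities: writing $M_1=\frac{A_1}{z-\varkappa_l}+O(1)$ and $M_2^{-1}=\frac{\tilde A_2}{z-\varkappa_l}+O(1)$, the leading coefficients have the rank-one form in \eqref{resmr1}--\eqref{resmr3} (second row the negative of the first, third row zero), which forces the product $A_1\tilde A_2=0$ and lowers the pole order; the residual simple pole can then be eliminated using the identity $M_2M_2^{-1}=I$ near $\varkappa_l$. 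With $D$ entire and $D\to I$ at infinity, Liouville's theorem yields $D\equiv I$. The main obstacle I anticipate is exactly this uniqueness step at the $\varkappa_l$: unlike a standard soliton RH problem, both candidate solutions blow up there, so one must exploit the precise algebraic structure of the singular leading coefficients — together with the nilpotency of the $\zeta_n$-residues and the rank-one structure at $\varkappa_l$ to show $\det M^{O}\equiv1$ — in order to rule out spurious poles; this delicate bookkeeping is carried out as in \cite{RHP,novYF}, whereas the symmetry-compatibility of the rescaled norming constants, though routine, must be verified consistently across all six sectors.
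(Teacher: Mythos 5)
Your proposal is correct and follows essentially the same route as the paper, whose ``proof'' simply cites Proposition 4.3 of \cite{novYF} for the existence and uniqueness of the reflectionless problem with singularities at the $\varkappa_l$ and reads off the modified norming constants from the conjugation $T^{-1}(z)B_nT(z)$. Your sketch fills in exactly the two points that citation is meant to cover --- the rescaling of the nilpotent residue by the appropriate ratio of diagonal entries of $T$ (and its compatibility with the sector symmetries), and the Liouville argument made delicate by the rank-one pole structure at the spectral singularities --- so it matches the paper's argument in substance.
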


\begin{proof}
	
	The detailed proof for  the existence and uniqueness of solution of the above RH problem \ref{RHP51} can be found in  Proposition 4.3 of \cite{novYF}.  From this, we know RH problem \ref{RHP51}  has an unique solution $M^O(z)$ with modified scattering data $\tilde{\mathcal{D}} =\left\{   \{\zeta_n,\tilde{C}_n \}_{n-kN_0 \in \lozenge} \}\right\}$, where $\tilde{C}_n = c_n T_{12}(\zeta_n)$.
%Then
 %we prove that \eqref{p1mo1}-\eqref{p1mo2} solves   RH problem \ref{RHP51}.
 %The conditions in RH problem \ref{RHP51} implied that $M^O(z)$ is meromorphic with simple poles at $z =\pm \omega^l$ and if $\lozenge \neq\emptyset$, at $z=\zeta_n$. If  $ \lozenge = \emptyset$, then \eqref{p1mo1} is an immediate consequence of   RH problem \ref{RHP51} and Liouville's theorem.
\end{proof}

Denote $u^\lozenge(y,t;\tilde{\mathcal{D}})$ as the $\mathcal{N}(\lozenge)$-soliton with   scattering data $\tilde{\mathcal{D}}$.
By the reconstruction formula (\ref{recons u}) and (\ref{recons x}), we then have
\begin{corollary}\label{p1urxrsol}
The soliton solution   of the Novikov equation  (\ref{Novikov}) is given by
\begin{align}
	u^\lozenge(y,t;\tilde{\mathcal{D}})
	=&\frac{1}{2}\tilde{m}^\lozenge_1(y,t)\left(\frac{M^O_{33}(e^{\frac{i\pi}{6}};y,t)}{M^O_{11}(e^{\frac{i\pi}{6}};y,t)} \right)^{1/2}\nonumber\\
	&+ \frac{1}{2}\tilde{m}^\lozenge_3(y,t)\left(\frac{M^O_{33}(e^{\frac{i\pi}{6}};y,t)}{M^O_{11}(e^{\frac{i\pi}{6}};y,t)} \right)^{-1/2}-1,\label{recons ur}
\end{align}
in which
\begin{align}\label{recons xr}
&x^\lozenge(y,t;\tilde{\mathcal{D}})=y+\frac{1}{2} \ln\frac{M^O_{33}(e^{\frac{i\pi}{6}};y,t)}{M^O_{11}(e^{\frac{i\pi}{6}};y,t)},\
\tilde{m}^\lozenge_l:=\sum_{j=1}^3M^O_{jl}(e^{\frac{i\pi}{6}};y,t),\ l=1,2,3.
\end{align}
\end{corollary}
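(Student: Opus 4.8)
The plan is to derive Corollary \ref{p1urxrsol} directly from the reconstruction formulas (\ref{recons u})--(\ref{recons x}) together with Proposition \ref{p1moutf1}. The starting observation is that, by Proposition \ref{p1moutf1}, the outer model $M^O(z)$ coincides with $M^\lozenge(z;\tilde{\mathcal{D}})$, the solution of the reflectionless RH problem \ref{RHP31} associated with the data $\tilde{\mathcal{D}}=\{(\zeta_n,\tilde{C}_n)_{n-kN_0\in\lozenge}\}$ with $\tilde{C}_n=c_nT_{12}(\zeta_n)$. In particular $M^O$ is normalized ($M^O\to I$ as $z\to\infty$), obeys the four symmetries listed in RH problem \ref{RHP51}, carries exactly the pole structure at $\varkappa_l=e^{i\pi(l-1)/3}$ recorded in (\ref{mosin}), matching (\ref{asyM1})--(\ref{asymo}), and satisfies the residue conditions (\ref{resMrsol}) at the discrete spectrum. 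Since the reflection coefficient is turned off, $M^O$ solves a RH problem of exactly the same type as RH problem \ref{RHP1}, only with the jump across $\Sigma$ removed.

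First I would recall that the reconstruction formulas (\ref{recons u}) and (\ref{recons x}) for the full solution $M$ were extracted using only (i) the local behaviour of $M(z)$ at the fixed point $z=e^{\frac{i\pi}{6}}$ and (ii) the symmetry relations together with the definition $\tilde{m}_l=\sum_{j=1}^3 M_{jl}(e^{\frac{i\pi}{6}})$. By the choice of $\varrho$, the point $e^{\frac{i\pi}{6}}$ lies outside every disk $\mathbb{D}_n$ and is none of the spectral singularities, so $M^O$ is analytic and invertible there. Consequently the identical computation that produced (\ref{recons u})--(\ref{recons x}) applies verbatim with $M$ replaced by $M^O$ and $\tilde{m}_l$ replaced by $\tilde{m}^\lozenge_l:=\sum_{j=1}^3 M^O_{jl}(e^{\frac{i\pi}{6}})$, yielding precisely (\ref{recons ur}) and (\ref{recons xr}).

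Because $\tilde{\mathcal{D}}$ contains no continuous component, the function $u^\lozenge(y,t;\tilde{\mathcal{D}})$ defined by (\ref{recons ur}) is, by construction, the $\mathcal{N}(\lozenge)$-soliton solution of the Novikov equation (\ref{Novikov}); the variable $x$ is recovered parametrically through (\ref{recons xr}), mirroring the implicit $y\mapsto x$ correspondence of the full problem. Uniqueness of $u^\lozenge$ is inherited from the existence and uniqueness of $M^O$ supplied by Proposition \ref{p1moutf1}.

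The main point that requires care is the well-definedness of the reconstruction: one must verify that $M^O_{11}(e^{\frac{i\pi}{6}};y,t)\neq 0$ and that the ratio $M^O_{33}/M^O_{11}$ at $z=e^{\frac{i\pi}{6}}$ lies in the domain of the chosen principal branches of the square root and logarithm. I would handle this via the symmetry $M^O(z)=\overline{M^O(\bar{z}^{-1})}$, which at the self-conjugate point $z=e^{\frac{i\pi}{6}}$ (where $\bar{z}^{-1}=z$) forces $M^O(e^{\frac{i\pi}{6}})$ to be real-valued; combined with the remaining $\Gamma_j$-symmetries and the sign condition $\tilde{m}>0$ inherited through the scattering data, this pins down the sign of $M^O_{33}/M^O_{11}$ and guarantees that the $\pm\frac{1}{2}$ powers and the logarithm in (\ref{recons ur})--(\ref{recons xr}) are unambiguous. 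This branch/positivity check is the only genuine obstacle; the remainder is a direct substitution into the established reconstruction formulas.
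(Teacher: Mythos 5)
Your proposal is correct and follows essentially the same route as the paper, which simply applies the reconstruction formulas (\ref{recons u})--(\ref{recons x}) to the reflectionless outer model $M^{O}=M^{\lozenge}(\cdot;\tilde{\mathcal{D}})$ supplied by Proposition \ref{p1moutf1}. Your additional check that $M^{O}(e^{\frac{i\pi}{6}})$ is real (via $\bar{z}^{-1}=z$ on the unit circle) and that the branch choices are unambiguous is a reasonable refinement of a point the paper leaves implicit, but it does not change the argument.
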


\subsection{Contribution from jump contours }\label{secpc}
Since we have ignored the jump conditions completely in the above section, now we construct the solutions which match with the jump conditions. Again by the property
\begin{equation}
	\Vert V^{(3)}(z)-I \Vert_{L^q(\Sigma^{(3)} \setminus U)} = \mathcal{O}(e^{-K_q t}), \quad t\to \infty,
\end{equation}
where $K_q$ is a positive constant and $1\le q \le +\infty$, the jump matrix is exponentially close to the identity outside of $U$ and hence we need to investigate the local properties
 near the saddle points.

\subsubsection{Local model near  critical  points } \label{p1loc}

We define a  new local contour
$$\Sigma^{L}= {\Sigma}^{L,0} \cup  {\Sigma}^{L,1}\cup {\Sigma}^{L,2}, $$
where $  {\Sigma}^{L,l}, \ l=0,1,2$ are the local contours on  jump  contours $\omega^n \mathbb{R}$, respectively
\begin{align}
	&  {\Sigma}^{L,l}= ({\underset{j=a,b,c,d}{\cup}}(\Sigma_{j}^{^l} \cup \Sigma_{j}^{l*} ) \cup I^l)\cap U,\  l=0, 1,2. \nonumber
\end{align}

 Since there are  $12$ colliding points
 $\omega^l\xi_{j}, l=0,1,2, j=a,b,c,d$,  so
the  entire  local jump contour $\Sigma^{L}$  consists     $12$  separate local  jumps  in this case.  See Figure \ref{sigma0}.
Further  denote  the  local  jump  for each colliding  point $\omega^l\xi_{j}$
\begin{align}
	& \Sigma^{L,l}_j= (\Sigma_{j}^{l} \cup \Sigma_{j}^{l*} \cup I^l_j )\cap U^{l}_j, \ l=0, 1,2,\ j=a,b,c,d. \nonumber
\end{align}
 We consider the following  local  RH problem.
\begin{figure}[htp]
	\centering
		\begin{tikzpicture}[scale=0.85]
		\draw[dashed](0,0)--(5,0)node[right]{$L_1$};
		\draw[dashed](0,0)--(-5,0)node[left]{$L_4$};
		\coordinate (I) at (0,0);
		\fill (I) circle (1pt) node[below] {$0$};
		\coordinate (c) at (-3,0);
		\draw[thick](-0.8,0)--(-0.4,0.2);
		\draw[thick](-0.8,0)--(-0.4,-0.2);
		\draw[thick](-0.8,0)--(-1.8,0);
			\coordinate (A11) at (-1.3,0);
			\fill (A11) circle (1pt) node[below] {\tiny$z_c$};
	
		\draw[thick](-1.8,0)--(-2.2,-0.2);
		\draw[thick](-1.8,0)--(-2.2,0.2);
		\draw[thick](-4.3,0)--(-4.7,0.2);
		\draw[thick](-4.3,0)--(-4.7,-0.2);
		\draw[thick](-3.2,0)--(-2.8,0.2);
		\draw[thick](-3.2,0)--(-2.8,-0.2);
		\draw[thick](-3.2,0)--(-4.3,0);
\coordinate (B1) at (-3.75,0);
			\fill (B1) circle (1pt) node[below] {\tiny$z_d$};
	
		\draw[thick](0.8,0)--(0.4,0.2);

		\draw[thick](0.8,0)--(0.4,-0.2);

		\draw[thick](1.8,0)--(2.2,-0.2);
		\draw[thick](1.8,0)--(2.2,0.2);
		\draw[thick](1.8,0)--(0.8,0);
\coordinate (C1) at (1.3,0);
			\fill (C1) circle (1pt) node[below] {\tiny$z_b$};

		\draw[thick,red](4.3,0)--(4.7,0.2);
		\draw[thick,red](4.3,0)--(4.7,-0.2);
		\draw[thick,red](3.2,0)--(2.8,0.2);
		\draw[thick,red](3.2,0)--(2.8,-0.2);
		\draw[thick,red](3.2,0)--(4.3,0);
\coordinate (D1) at (3.75,0);
			\fill[red] (D1) circle (1pt) node[below] {\tiny$z_a$};

		\draw[dashed](0,0)--(2.4,4.157)node[above]{ $L_2$};	
		\draw[dashed](0,0)--(2.4,-4.157)node[below]{ $L_6$};;
		\draw[dashed](0,0)--(-2.4,4.157)node[above]{ $L_3$};
		\draw[dashed](0,0)--(-2.4,-4.157)node[below]{ $L_5$};;
%		\draw[dashed] (0.8,0) arc (0:360:0.8);
%		\draw[dashed] (3.2,0) arc (0:360:3.2);
%		\draw[dashed] (1.8,0) arc (0:360:1.8);
%		\draw[dashed] (4.3,0) arc (0:360:4.3);
		\coordinate (I) at (0,0);
		\fill (I) circle (1pt) node[below] {$0$};
		\coordinate (A) at (-4.3,0);
		\fill (A) circle (1pt) node[below] {\tiny$-\frac{1}{p_1}$};
		\coordinate (b) at (-3.2,0);
		\fill (b) circle (1pt) node[below] {\tiny$-\frac{1}{p_0}$};
		\coordinate (C) at (-0.8,0);
		\fill (C) circle (1pt) node[below] {\tiny$-p_1$};
		\coordinate (d) at (-1.8,0);
		\fill (d) circle (1pt) node[below] {\tiny$-p_0$};
		\coordinate (E) at (4.3,0);
		\fill[red] (E) circle (1pt) node[below] {\tiny$\frac{1}{p_1}$};
		\coordinate (R) at (3.2,0);
		\fill[red] (R) circle (1pt) node[below] {\tiny$\frac{1}{p_0}$};
		\coordinate (T) at (0.8,0);
		\fill (T) circle (1pt) node[below] {\tiny$p_1$};
		\coordinate (Y) at (1.8,0);
		\fill (Y) circle (1pt) node[below] {\tiny$p_0$};
		\coordinate (A1) at (0.4,0.69);
		\fill (A1) circle (1pt) node[right] {\tiny$- \omega^2 p_1$};
		\draw[thick](0.4,0.69)--(0.32,0.3);
	%	\draw[blue,-<](0.4,0.69)--(0.34,0.42);
		\draw[thick](0.4,0.69)--(0.1,0.43);
	%	\draw[blue,-<](0.4,0.69)--(0.25,0.56);
	%	\draw[blue](0.4,0.69)--(0.48,1.1);
	%	\draw[blue,->](0.4,0.69)--(0.44,0.9);
	%	\draw[blue](0.4,0.69)--(0.7,0.96);
	%	\draw[blue,->](0.4,0.69)--(0.57,0.84);
		\coordinate (A2) at (0.4,-0.69);
		\fill (A2) circle (1pt) node[right] {\tiny$-\omega p_1$};
	
  	    \draw[thick](0.4,-0.69)--(0.32,-0.3);
		\draw[thick](0.4,-0.69)--(0.1,-0.43);
		\coordinate (A3) at (-0.4,0.69);
		\fill (A3) circle (1pt) node[left] {\tiny$\omega p_1$};
		\draw[thick](-0.4,0.69)--(-0.32,0.3);
		\draw[thick](-0.4,0.69)--(-0.1,0.43);
		\coordinate (A4) at (-0.4,-0.69);
		\fill (A4) circle (1pt) node[left] {\tiny$\omega^2 p_1$};
		\draw[thick](-0.4,-0.69)--(-0.32,-0.3);
		\draw[thick](-0.4,-0.69)--(-0.1,-0.43);
		\coordinate (n1) at (1.6,2.77);
		\fill (n1) circle (1pt) node[right] {\tiny$-\omega^2\frac{1}{p_0}$};

		\draw[thick](1.6,2.77)--(1.52,2.37);
	%	\draw[blue,-<](1.6,2.77)--(1.55,2.5);
		\draw[thick](1.6,2.77)--(1.28,2.5);
	%	\draw[blue,-<](1.6,2.77)--(1.34,2.55);
	%	\draw[blue](1.6,2.77)--(1.68,3.2);
	%	\draw[blue,->](1.6,2.77)--(1.66,3.1);
	%	\draw[blue](1.6,2.77)--(1.96,3.1);
	%	\draw[blue,->](1.6,2.77)--(1.86,3.01);
		\coordinate (m1) at (1.6,-2.77);
		\fill (m1) circle (1pt) node[right] {\tiny$-\omega\frac{1}{p_0}$};
		\coordinate (j1) at (-1.6,2.77);
		\fill (j1) circle (1pt) node[left] {\tiny$\omega\frac{1}{p_0}$};
		\draw[thick](1.6,-2.77)--(1.52,-2.37);

		\draw[thick](1.6,-2.77)--(1.28,-2.5);
	%	\draw[teal,-<](1.6,-2.77)--(1.34,-2.55);
	%	\draw[teal](1.6,-2.77)--(1.68,-3.2);
	%	\draw[teal,->](1.6,-2.77)--(1.66,-3.1);
	%	\draw[teal](1.6,-2.77)--(1.96,-3.1);
	%	\draw[teal,->](1.6,-2.77)--(1.86,-3.01);
		\draw[thick](-1.6,2.77)--(-1.52,2.37);
	%	\draw[teal,->](-1.6,2.77)--(-1.55,2.5);
		\draw[thick](-1.6,2.77)--(-1.28,2.5);
	%	\draw[teal,->](-1.6,2.77)--(-1.34,2.55);
	%	\draw[teal](-1.6,2.77)--(-1.68,3.2);
	%	\draw[teal,-<](-1.6,2.77)--(-1.66,3.1);
	%	\draw[teal](-1.6,2.77)--(-1.96,3.1);
	%	\draw[teal,-<](-1.6,2.77)--(-1.86,3.01);
		\coordinate (k1) at (-1.6,-2.77);
		\fill (k1) circle (1pt) node[left] {\tiny$\omega^2\frac{1}{p_0}$};
		\draw[thick](-1.6,-2.77)--(-1.52,-2.37);
	%	\draw[blue,->](-1.6,-2.77)--(-1.55,-2.5);
		\draw[thick](-1.6,-2.77)--(-1.28,-2.5);
	%	\draw[blue,->](-1.6,-2.77)--(-1.34,-2.55);
	%	\draw[blue](-1.6,-2.77)--(-1.68,-3.2);
	%	\draw[blue,-<](-1.6,-2.77)--(-1.66,-3.1);
	%	\draw[blue](-1.6,-2.77)--(-1.96,-3.1);
	%	\draw[blue,-<](-1.6,-2.77)--(-1.86,-3.01);
		\coordinate (n2) at (0.9,1.56);
		\fill[thick] (n2) circle (1pt) node[right] {\tiny$- \omega^2 p_0$};
	%	\draw[blue](0.9,1.56)--(0.82,1.17);
	%	\draw[blue,-<](0.9,1.56)--(0.83,1.21);
	%	\draw[blue](0.9,1.56)--(0.58,1.31);
	%	\draw[blue,-<](0.9,1.56)--(0.65,1.37);
		\draw[thick](0.9,1.56)--(1.02,2.07);
	%	\draw[blue,->](0.9,1.56)--(0.98,1.91);
		\draw[thick](0.9,1.56)--(1.3,1.9);
	%	\draw[blue,->](0.9,1.56)--(1.18,1.8);
	%	\draw[blue](-0.9,-1.56)--(-0.82,-1.17);
	%	\draw[blue,->](-0.9,-1.56)--(-0.83,-1.21);
	%	\draw[blue](-0.9,-1.56)--(-0.58,-1.31);
	%	\draw[blue,->](-0.9,-1.56)--(-0.65,-1.37);
		\draw[thick](-0.9,-1.56)--(-1.02,-2.07);
	%	\draw[blue,-<](-0.9,-1.56)--(-0.98,-1.91);
		\draw[thick](-0.9,-1.56)--(-1.3,-1.9);
	%	\draw[blue,-<](-0.9,-1.56)--(-1.18,-1.8);

         \draw[thick](0.9,1.56)--(0.4,0.69);
         \draw[thick](-0.9,-1.56)--(-0.4,-0.69);
         \draw[thick](0.9,-1.56)--(0.4,-0.69);
         \draw[thick](-0.9,1.56)--(-0.4,0.69);

                  \coordinate (l5) at (0.65,1.125);
			\fill (l5) circle (1pt) node[right] {\tiny$ \omega^2 z_c$};
 \coordinate (l6) at (-0.65,-1.125);
			\fill (l6) circle (1pt) node[left] {\tiny$ \omega^2z_b$};

\coordinate (l7) at (0.65,-1.125);
			\fill (l7) circle (1pt) node[right] {\tiny$ \omega z_c$};
 \coordinate (l8) at (-0.65,1.125);
			\fill (l8) circle (1pt) node[left] {\tiny$ \omega z_b$};
		\coordinate (m2) at (0.9,-1.56);
		\fill (m2) circle (1pt) node[right] {\tiny$-\omega p_0$};
	%	\draw[teal](0.9,-1.56)--(0.82,-1.17);
	%	\draw[teal,-<](0.9,-1.56)--(0.83,-1.21);
	%	\draw[teal](0.9,-1.56)--(0.58,-1.31);
	%	\draw[teal,-<](0.9,-1.56)--(0.65,-1.37);
		\draw[thick](0.9,-1.56)--(1.02,-2.07);
	%	\draw[teal,->](0.9,-1.56)--(0.98,-1.91);
		\draw[thick](0.9,-1.56)--(1.3,-1.9);
	%	\draw[teal,->](0.9,-1.56)--(1.18,-1.8);
		\coordinate (j2) at (-0.9,1.56);
		\fill (j2) circle (1pt) node[left] {\tiny$\omega p_0$};
		\coordinate[blue] (k2) at (-0.9,-1.56);
		\fill (k2) circle (1pt) node[left] {\tiny$\omega^2p_0$};
	%	\draw[teal](-0.9,1.56)--(-0.82,1.17);
	%	\draw[teal,->](-0.9,1.56)--(-0.83,1.21);
	%	\draw[teal](-0.9,1.56)--(-0.58,1.31);
	%	\draw[teal,->](-0.9,1.56)--(-0.65,1.37);
		\draw[thick](-0.9,1.56)--(-1.02,2.07);
	%	\draw[teal,-<](-0.9,1.56)--(-0.98,1.91);
		\draw[thick](-0.9,1.56)--(-1.3,1.9);
	%	\draw[teal,-<](-0.9,1.56)--(-1.18,1.8);
		\coordinate (n23) at (2.15,3.72);
		\fill (n23) circle (1pt) node[right] {\tiny$-\omega^2\frac{1}{p_1}$};
		\draw[thick](2.15,3.72)--(2.23,4.17);
	%	\draw[blue,->](2.15,3.72)--(2.22,4.11);
		\draw[thick](2.15,3.72)--(2.48,4.03);
	%	\draw[blue,->](2.15,3.72)--(2.45,4);
	%	\draw[blue](2.15,3.72)--(2.09,3.35);
	%	\draw[blue,-<](2.15,3.72)--(2.1,3.42);
	%	\draw[blue](2.15,3.72)--(1.83,3.46);
	%	\draw[blue,-<](2.15,3.72)--(1.88,3.5);

		\coordinate (m23) at (2.15,-3.72);
		\fill (m23) circle (1pt) node[right] {\tiny$-\omega\frac{1}{p_1}$};
		\coordinate (j23) at (-2.15,3.72);
		\fill (j23) circle (1pt) node[left] {\tiny$\omega\frac{1}{p_1}$};
		\draw[thick](2.15,-3.72)--(2.23,-4.17);
		\draw[thick](2.15,-3.72)--(2.48,-4.03);
		\draw[thick](-2.15,3.72)--(-2.23,4.17);
		\draw[thick](-2.15,3.72)--(-2.48,4.03);

		\coordinate (k3) at (-2.15,-3.72);
		\fill (k3) circle (1pt) node[left] {\tiny$\omega^2\frac{1}{p_1}$};
		\draw[thick](-2.15,-3.72)--(-2.23,-4.17);
		\draw[thick](-2.15,-3.72)--(-2.48,-4.03);

      \draw[thick](2.15,3.72)--(1.6,2.77);
         \draw[thick](-2.15,-3.72)--(-1.6,-2.77);
         \draw[thick](2.15,-3.72)--(1.6,-2.77);
         \draw[thick](-2.15,3.72)--(-1.6,2.77);
         \coordinate (l1) at (1.875,3.245);
			\fill(l1) circle (1pt) node[right] {\tiny$ \omega^2 z_d$};
 \coordinate (l2) at (-1.875,-3.245);
			\fill (l2) circle (1pt) node[left] {\tiny$ \omega^2z_a$};

\coordinate (l3) at (1.875,-3.245);
			\fill (l3) circle (1pt) node[right] {\tiny$ \omega z_d$};
 \coordinate (l4) at (-1.875,3.245);
			\fill (l4) circle (1pt) node[left] {\tiny$ \omega z_a$};

		\end{tikzpicture}
	\caption{\footnotesize The local jump contour $\Sigma^{L}$  for $0 < (\xi+\frac{3}{8})t^{2/3}<C$.}
	\label{sigma0}
\end{figure}
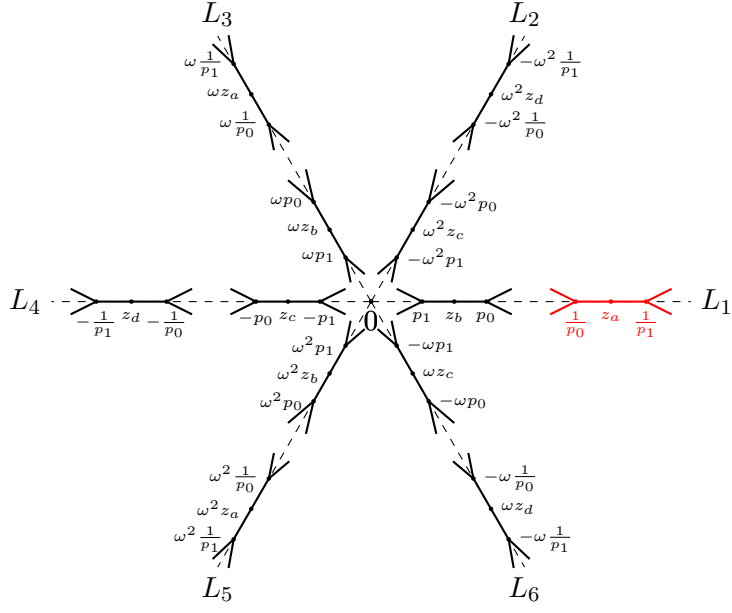

\begin{RHP} \label{rh7}
	Find a matrix-valued function  $ M^{L}(z)$ with following properties:
	\begin{itemize}
		\item  $M^{L}(z)$ is analytical  in $\mathbb{C}\setminus \Sigma^{L }$.
	
\item  $M^{L}(z)=\Gamma_1\overline{M^{L}(\bar{z})}\Gamma_1=\Gamma_2\overline{M^{L}(\omega^2\bar{z})}\Gamma_2=\Gamma_3\overline{M^{L}(\omega\bar{z})}\Gamma_3=\overline{M^{L}(\bar{z}^{-1})}$.
	
	\item $M^{L}(z)$ has continuous boundary values $M^{L}_\pm$ on $\Sigma^L $ and
	\begin{equation*}
	M^{L}_+(z)=M^{L}_-(z)V^{L}(z),\hspace{0.5cm}z \in \Sigma^{L},
	\end{equation*}
	where $ V^{L}(z)= V^{(3)}(z)\big|_{z\in\Sigma^{L}}$.

\item $M^{L}(z) = I+\mathcal{O}(z^{-1}),\hspace{0.5cm}z \rightarrow \infty.$
	\end{itemize}
\end{RHP}	

This  local  RH problem, which consists of $12$ local models on $\Sigma^{L,l}_j$ about
phase point $\omega^l\xi_{j}, \ j = a,b,c,d, \ l=0,1,2$,   has the jump condition and no poles.
%First, we show as $t \to \infty$, the interaction between $\Sigma^{lo,n}_k$ and $\Sigma^{lo,n}_j$  reduces to  0 to higher order and the contribution to
% the solution of $M^{ lo }(z)$ is simply the sum of the separate contributions from $\Sigma^{lo,n}_k$.

%     We consider the trivial decomposition of the jump matrix
%           \begin{align*}
%           &V^{lo}(z)=b_-^{-1}(z) b_+(z), \ \  \; b_-(z)=I,  \ \ \; b_+(z)=V^{lo}(z),\\
%          	&w_-(z)=0,\ \   w_+(z)= V^{lo}(z)-I,\ \  w(z) =V^{lo}(z)-I.
%          \end{align*}
%Define
%\begin{align}
%&w_k^{n}  (z) =w(z)\big|_{\Sigma^{lo,n}_k},\ \  w_{k,\pm}^{n}  (z) =w_\pm (z)\big|_{\Sigma^{lo,n}_k},\nonumber\\
%&  V^{lo,n}_k (z)=V^{lo}(z)|_{\Sigma^{lo,n}_k}, \ \ w_k^{n}  (z) =0, \ z\in \Sigma^{lo,n}_j, j\not= k.\nonumber
%\end{align}
%which  determine a  local model with the  jump contour  $\Sigma^{lo,n}_k$  at
%phase point $\xi_{n,k}$
\begin{RHP} \label{rh71}
	Find a matrix-valued function  $ M^{L,l}_j (z)$ with following properties:
\begin{itemize}	
 \item $M^{L,l}_j(z)$ is analytical  in $\mathbb{C}\setminus \Sigma^{L,l}_j$.

 \item $M^{L,l}_j (z)$ has continuous boundary values $M^{L,l}_{j,\pm} $  and
	\begin{equation*}
	M^{L,l}_{j,+} (z) =M^{L,l}_{j,-}(z) V^{L,l}_j(z),\hspace{0.5cm}z \in \Sigma^{L,l}_j,
	\end{equation*}
where $V^{L,l}_j(z)=V^L(z)|_{z\in\Sigma^{L,l}_j}$
 \item $M^{L,l}_j(z) =I+\mathcal{O}(z^{-1}),\hspace{0.5cm}z \rightarrow \infty$.
\end{itemize}
\end{RHP}

According to the theorem of Beals-Coifman, we know as $t \to \infty$, the solution  $M^{L}(z)$ of the RH problem \ref{rh7} is approximated by the sum of the separate local model  $M^{L,l}_j(z)$ of the RH problem \ref{rh71} in the neighborhoods of $\omega^l \xi_j$, $j=a,b,c,d, \ =0,1,2$ respectively.
%This Proposition implies  that
 %contributions of every cross  $\Sigma^{lo,n}_k$ can be separated out, and  the solution   $M^{lo}$ of the RHP \ref{rh7}
%can be given by  the sum of  the  local  RHP \ref{rh71} of  all phase points $\xi_{n,k}$.
%We further show that the solution of each local model can be given explicitly in terms of parabolic cylinder
%functions on every contour $\Sigma^{lo,n}_k$  when  $ t \to \infty$.
As illustrative example,  we   only consider a local model  at the  point $ z_a$ on $\mathbb{R}$, whose  jump contours denotes  (red lines in Figure \ref{sigma0})
$$\Sigma^{L,0}_a= (\Sigma_{a}^{0} \cup \Sigma_{a}^{0*} \cup I^0_1)  \cap U_a^0,$$
which corresponds to the following   local RH problem.
\begin{RHP}\label{RHPlo1}
	Find a matrix-valued function  $ M^{L,0}_a(z)$ with following properties:
	\begin{itemize}
\item $M^{L,0}_a(z)$ is analytical  in $\mathbb{C}\setminus \Sigma^{L,0}_a$.
\item $M^{L,0}_a(z)$ has continuous boundary values $M^{L,0}_{a\pm}(z)$
	$$M^{L,0}_{a+}(z)=M^{L,0}_{a-}(z)V^{L,0}_a(z),\ z \in \Sigma^{L,0}_a,$$
	where
	\begin{align}
	V^{L,0}_a(z)=\left\{\begin{array}{ll}
	\left(\begin{array}{ccc}
	1 & 0& 0\\
	\tilde{r}(1/p_1)e^{-it\theta_{12}(z)}& 1 & 0\\
	0 & 0&1
	\end{array}\right),  & z\in \Sigma_{1}^0\cap U_a^0,\\[10pt]
	\left(\begin{array}{ccc}
	1 & -\overline{\tilde{r}(1/p_1)}e^{it\theta_{12}(z)}&0\\
	0 & 1 & 0\\
	0 & 0&1
	\end{array}\right),   & z\in \Sigma_{1}^{0*}\cap U_a^0,\\[10pt]
	\left(\begin{array}{ccc}
	1 & 0& 0\\
	\tilde{r}(1/p_0)e^{-it\theta_{12}(z)}& 1 & 0\\
	0 & 0&1
	\end{array}\right),   & z\in \Sigma_{2}^0\cap U_a^0,\\[10pt]
	\left(\begin{array}{ccc}
	1 &-\overline{\tilde{r}(1/p_0)}e^{it\theta_{12}(z)}&0\\
	0 & 1 & 0\\
	0 & 0&1
	\end{array}\right),   & z\in \Sigma_{2}^{0*}\cap U_a^0,\\[10pt]
\left(\begin{array}{ccc}
	1 & -\overline{\tilde{r}(z)}e^{it\theta_{12}(z)}&0\\
	0 & 1&0\\
	0 & 0 &1
\end{array}\right)\left(\begin{array}{ccc}
	1 & 0 & 0\\
\tilde{r}(z)e^{-it\theta_{12}(z)} & 1 & 0\\
	0 & 0 &1
\end{array}\right),   & z\in (1/p_0,1/p_1).
	\end{array}\right. \label{efe}
	\end{align}
	
	\item $M^{L,0}_a(z) = I+\mathcal{O}(z^{-1}),\hspace{0.5cm}z \rightarrow \infty$.
\end{itemize}
\end{RHP}

To solve the RH problem for $M^{L,0}_a(z)$, we observe that for  $z \in U_a^0$ and $t$ large enough,
\begin{align}\label{theapp1}
-t\theta_{12}(z) = -t\theta_{12}(z_a) + \frac{8}{3}\hat{z}^3 + 2\tilde{s} \hat{z} + \mathcal{O}(t^{-1/3}\hat{z}^2).
\end{align}
Here, we define the scaled spectral parameter $\hat{z}$ as
\begin{align}\label{zscale1}
\hat{z} = \left( c_a t\right)^{1/3} (z-z_a),
\end{align}
where the constant $c_a$ is given by
\begin{align}\label{hatc1}
c_a = \frac{21}{256} (14\sqrt{3}-9\sqrt{7}).
\end{align}
Additionally, we introduce the parameter $\tilde{s}$ as
\begin{align}\label{p1as}
\tilde{s} = s_1 (1+8\xi)t^{2/3}
\end{align}
with
\begin{align}
s_1  = \frac{3^{1/6} (-7+\sqrt{21})}{2^{7/3} 7^{1/3} (14\sqrt{3}- 9\sqrt{7})} <0,
\end{align}
which parametrizes the space-time region. Next we show that after scaling, RH problem  \ref{RHPlo1} can be well-approximated by the model RH problem \ref{modelp2} in \ref{appx2}, which is associated with the Painlev\'{e} \uppercase\expandafter{\romannumeral2} equation.

Through this change of variable \eqref{zscale1}, it is directly inferred that    $M^{L,0}_a(\hat{z}) = M^{L,0}_a(z(\hat{z}))$ is holomorphic for $\hat{z} \in \mathbb{C} \setminus \hat{\Sigma}_a^{L,0}$ where
\begin{align}
\hat{\Sigma}_a^{L,0} = \cup_{j=1,2} \left( \hat{\Sigma}_j^{0} \cup \hat{\Sigma}_j^{0*}\right) \cup (\hat{z}_1, \hat{z}_2),
\end{align}
with  $\hat{\Sigma}_j^{0}, j=1,2$ be the corresponding contours of $\Sigma_j^{0}, j=1,2$ after scaling, and
\begin{align}
   \hat{z}_1 = (c_a t)^{1/3} (1/p_1-z_a), \  \hat{z}_2 = (c_a t)^{1/3} (1/p_0-z_a).
\end{align}
In addition, the jump matrix  $V^{L,0}_a(\hat{z})$ satisfies
	\begin{align}\label{asvlo}
	V^{L,0}_a(\hat{z})=\left\{\begin{array}{ll}
	\left(\begin{array}{ccc}
	1 & 0& 0\\
	\tilde{r}(1/p_1)e^{-it\theta_{12}((c_at)^{-1/3}\hat{z}+z_a)}& 1 & 0\\
	0 & 0&1
	\end{array}\right),  & \hat{z}\in \hat{\Sigma}_{1}^0,\\[10pt]
	\left(\begin{array}{ccc}
	1 &-\overline{\tilde{r}(1/p_1)}e^{it\theta_{12}((c_at)^{-1/3}\hat{z}+z_a)}&0\\
	0 & 1 & 0\\
	0 & 0&1
	\end{array}\right),   & \hat{z}\in \hat{\Sigma}_{1}^{0*},\\[10pt]
	\left(\begin{array}{ccc}
	1 & 0& 0\\
	\tilde{r}(1/p_0)e^{-it\theta_{12}((c_at)^{-1/3}\hat{z}+z_a)}& 1 & 0\\
	0 & 0&1
	\end{array}\right),   & \hat{z}\in \hat{\Sigma}_{2}^0,\\[10pt]
	\left(\begin{array}{ccc}
	1 &-\overline{\tilde{r}(1/p_0)}e^{it\theta_{12}((c_at)^{-1/3}\hat{z}+z_a)}&0\\
	0 & 1 & 0\\
	0 & 0&1
	\end{array}\right),   & \hat{z}\in \hat{\Sigma}_{2}^{0*},\\[10pt]
 \mathcal{A}_1 \left(\begin{array}{ccc}
	1 &	-\overline{\tilde{r}((c_at)^{-1/3}\hat{z}+z_a)}&0\\
	0 & 1&0\\
	0 & 0 &1
\end{array}\right)\left(\begin{array}{ccc}
	1 & 0 & 0\\
\tilde{r}((c_at)^{-1/3}\hat{z}+z_a)& 1 & 0\\
	0 & 0 &1
\end{array}\right) \mathcal{A}_1^{-1},   & \hat{z}\in (\hat{z}_1,\hat{z}_2),
	\end{array}\right.
	\end{align}
where
\begin{align}
 \mathcal{A}_1 = \left(\begin{array}{ccc}
	e^{it\theta_{12}((c_at)^{-1/3}\hat{z}+z_a)/2} & 0 &0\\
	0 & e^{-it\theta_{12}((c_at)^{-1/3}\hat{z}+z_a)/2}&0\\
	0 & 0 &1
\end{array}\right).
\end{align}

Since \eqref{theapp1} holds, we will show the RH problem for $M^{L,0}_a(\hat{z})$ in the $\hat{z}$-plane can be explicitly approximated by the model RH problem for  $N^P(\hat{z})$ in  \ref{appx2}.
To do this,  we give the following basic estimates.
\begin{Proposition} \label{p1esttid1}
As $t \to +\infty$,
\begin{align}
 & |\tilde{r}((c_at)^{-1/3}\hat{z}+z_a)e^{-it\theta_{12}((c_at)^{-1/3}\hat{z}+z_a)} -R_a e^{i(8 \hat{z}^3/3 +2 \tilde{s} \hat{z})}  |\lesssim t^{-1/3+2\delta_1}, \ \hat{z} \in (\hat{z}_1,\hat{z}_2),  \\
 &|\tilde{r}(1/p_1) e^{-it\theta_{12}((c_at)^{-1/3}\hat{z}+z_a)} -R_a  e^{i(8 \hat{z}^3/3 +2 \tilde{s} \hat{z})}  |\lesssim t^{-1/3+2\delta_1},  \ \hat{z} \in \hat{\Sigma}_{1}^{0},\label{estr01}\\
 &|\tilde{r}(1/p_0) e^{-it\theta_{12}((c_at)^{-1/3}\hat{z}+z_a)} -R_a e^{i(8 \hat{z}^3/3 +2 \tilde{s} \hat{z})}  |\lesssim t^{-1/3+2\delta_1},  \ \hat{z} \in \hat{\Sigma}_{2}^{0}.\label{estr02}
 \end{align}
 where
 \begin{align}
  R_a := \tilde{r}(z_a) e^{- i t \theta_{12}(z_a)}
 \end{align}
\end{Proposition}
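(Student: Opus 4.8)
The plan is to derive all three estimates from the cubic expansion \eqref{theapp1} by factoring out a single bounded oscillatory prefactor and reducing each bound to the Lipschitz regularity of the amplitude. Writing $z = (c_a t)^{-1/3}\hat{z} + z_a$ and exponentiating \eqref{theapp1} gives
\begin{equation*}
e^{-it\theta_{12}(z)} = e^{-it\theta_{12}(z_a)}\, e^{i(8\hat{z}^3/3 + 2\tilde{s}\hat{z})}\, e^{i\mathcal{O}(t^{-1/3}\hat{z}^2)},
\end{equation*}
so that, since $R_a = \tilde{r}(z_a)e^{-it\theta_{12}(z_a)}$, the left-hand side in the interval case becomes
\begin{equation*}
e^{-it\theta_{12}(z_a)}\, e^{i(8\hat{z}^3/3 + 2\tilde{s}\hat{z})}\big[\tilde{r}(z)\, e^{i\mathcal{O}(t^{-1/3}\hat{z}^2)} - \tilde{r}(z_a)\big],
\end{equation*}
with the two ray cases \eqref{estr01}, \eqref{estr02} identical except that $\tilde{r}(z)$ is replaced by the constant $\tilde{r}(1/p_1)$ or $\tilde{r}(1/p_0)$.

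First I would record the two quantitative inputs. Since we work in $0 < (\xi + \tfrac18)t^{2/3} < C$, equation \eqref{sadpoi} has a double root at $\xi = -\tfrac18$, so the two real saddles $1/p_0, 1/p_1$ approach $z_a$ at rate $|1/p_i - z_a| \sim t^{-1/3}$; hence $\varrho^{0} \lesssim t^{-1/3+\delta_1}$, and on $U_a^0$ the scaled variable obeys $|\hat{z}| = (c_a t)^{1/3}|z - z_a| \le (c_a t)^{1/3}\varrho^{0} \lesssim t^{\delta_1}$. Moreover $\tilde{s} = s_1(1+8\xi)t^{2/3} = \mathcal{O}(1)$ is bounded in this zone, and by the choice of the opening angle $\varphi_0$ the rays $\hat{\Sigma}_1^0, \hat{\Sigma}_2^0$ lie in the region where $|e^{i(8\hat{z}^3/3 + 2\tilde{s}\hat{z})}| \le 1$; together with $|e^{-it\theta_{12}(z_a)}| = 1$ this shows the prefactor above is uniformly bounded.

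It then remains to estimate the bracket, which I would split as
\begin{equation*}
\tilde{r}(z)\, e^{i\mathcal{O}(t^{-1/3}\hat{z}^2)} - \tilde{r}(z_a) = \tilde{r}(z)\big(e^{i\mathcal{O}(t^{-1/3}\hat{z}^2)} - 1\big) + \big(\tilde{r}(z) - \tilde{r}(z_a)\big).
\end{equation*}
Using $|e^{iw} - 1| \lesssim |w|$ with $|w| \lesssim t^{-1/3}|\hat{z}|^2 \lesssim t^{-1/3+2\delta_1}$ and the boundedness of $\tilde{r}$, the first term is $\mathcal{O}(t^{-1/3+2\delta_1})$. For the second term I would use that $\tilde{r}$ is Lipschitz near $z_a$, which holds because $r \in \mathcal{S}(\mathbb{R})$ and $|r| < 1$ (Proposition \ref{pror}) keep $1 - |r|^2$ bounded away from zero in \eqref{deftilder} while $T_{12}$ is smooth there; thus $|\tilde{r}(z) - \tilde{r}(z_a)| \lesssim |z - z_a| \lesssim t^{-1/3+\delta_1}$ in the interval case, and $|\tilde{r}(1/p_i) - \tilde{r}(z_a)| \lesssim |1/p_i - z_a| \lesssim t^{-1/3+\delta_1}$ on the two rays. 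Since $\delta_1 > 0$, all contributions are $\mathcal{O}(t^{-1/3+2\delta_1})$, which yields the three claims.

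The delicate point — and essentially the only place the transition-zone structure enters — is the uniform validity of the quadratic remainder in \eqref{theapp1}, namely that after removing the cubic and linear terms the error is genuinely $\mathcal{O}(t^{-1/3}\hat{z}^2)$ rather than $\mathcal{O}(\hat{z}^2)$ on all of $U_a^0$. This rests on the coalescence $1/p_0, 1/p_1 \to z_a$ at $\xi = -\tfrac18$, which forces $\theta_{12}''(z_a)$ to vanish at the appropriate rate as $\xi \to -\tfrac18^+$; extracting this rate from \eqref{deftheta12} and \eqref{sadpoi} and controlling its dependence on $\xi$ near the collision is the main technical obstacle. Once this is in hand, the inequality $\delta_1 < \tfrac16$ guarantees $-\tfrac13 + 2\delta_1 < 0$ so that the bounds indeed vanish as $t \to \infty$, and the remaining steps are routine.
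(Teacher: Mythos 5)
Your proposal is correct and follows essentially the same route as the paper: both rest on the cubic expansion \eqref{theapp1}, the bound $|\hat{z}|\lesssim t^{\delta_1}$ on $U_a^0$, the elementary estimate $|e^{iw}-1|\lesssim|w|$ for the phase remainder, and the Lipschitz regularity of $\tilde{r}$ (via $\|\tilde{r}'\|_{L^\infty}$) for the amplitude differences $|\tilde{r}(z)-\tilde{r}(z_a)|$ and $|\tilde{r}(1/p_i)-\tilde{r}(z_a)|\lesssim|1/p_i-z_a|\lesssim t^{-1/3}$. Your unified factorization of the prefactor $e^{-it\theta_{12}(z_a)}e^{i(8\hat{z}^3/3+2\tilde{s}\hat{z})}$ in fact treats the interval case slightly more completely than the paper's written argument, which there only records the amplitude estimate and leaves the phase-remainder term implicit.
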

\begin{proof}
For $\hat{z} \in (\hat{z}_1,\hat{z}_2)$, we have
\begin{align*}
|e^{-it\theta_{12}((c_at)^{-1/3}\hat{z}+z_a)}| = |e^{i(8 \hat{z}^3/3 +2 \tilde{s} \hat{z})}|=1,
\end{align*}
and
\begin{align*}
 \left| \tilde{r}((c_at)^{-1/3}\hat{z}+z_a) -    \tilde{r}(z_a)  \right| = \left| \int_{z_a}^{(c_at)^{-1/3}\hat{z}+z_a} \tilde{r}'(\eta) d \eta \right| \le \|\tilde{r}' \|_{L^\infty(\mathbb{R})} \left| (c_at)^{-1/3}\hat{z}\right| \lesssim t^{-1/3}.
\end{align*}
For $\hat{z} \in \hat{\Sigma}_{1}^{0}$, since $ \re (i (8\hat{z}^3/3 + 2 \tilde{s} \hat{z}))<0$, it follows that $|e^{i(8 \hat{z}^3/3 +2 \tilde{s} \hat{z})}| $ is bounded and
\begin{align}\label{estr1}
 \left|  e^{-it\theta_{12}((c_at)^{-1/3}\hat{z}+z_a)} -  e^{- i t \theta_{12}(z_a)}e^{i(8 \hat{z}^3/3 +2 \tilde{s} \hat{z})} \right| = \left| e^{\mathcal{O}(t^{-1/3} \hat{z}^3)} -1 \right| \lesssim t^{-1/3+2\delta_1}.
\end{align}
On the other hand,
\begin{align}\label{estr2}
 \left| \tilde{r}(1/p_1)  -\tilde{r}(z_a)\right| =   \left|  \int_{z_a}^{1/p_1} \tilde{r}'(\eta) d\eta  \right| \le \|\tilde{r}' \|_{L^\infty(\mathbb{R})}  \left|1/p_1-z_a \right| \lesssim t^{-1/3}.
\end{align}
Then \eqref{estr01} can be derived from  \eqref{estr1} and \eqref{estr2}. For $\hat{z} \in \hat{\Sigma}_{2}^{0}$, we can use a similar method to prove \eqref{estr02}.

\end{proof}

Then we obtain the following proposition as a direct corollary of Proposition  \ref{p1esttid1}.
\begin{Proposition}
As $t \to +\infty$,
\begin{align}
 M_a^{L,0}(\hat{z}) = \hat{M}_a^{L,0} (\hat{z}) + \mathcal{O}(t^{-1/3+2\delta_1}),
\end{align}
where $ \hat{M}_a^{L,0} (\hat{z})$ satisfies the following RH problem.
\begin{RHP}\label{p1malo}
Find a matrix-valued function  $\hat{M}_a^{L,0} (\hat{z})$ with  properties:
\begin{itemize}
\item $\hat{M}^{L,0}_a(\hat{z})$ is analytical  in $\mathbb{C}\setminus \hat{\Sigma}^{L,0}_a$.
\item
	$\hat{M}^{L,0}_{a+}(\hat{z})=\hat{M}^{L,0}_{a-}(\hat{z})\hat{V}^{L,0}_a(\hat{z}),\ \hat{z} \in \hat{\Sigma}^{L,0}_a$,
	where
	\begin{align}\label{afsvlo}
	\hat{V}^{L,0}_a(\hat{z})=\left\{\begin{array}{ll}
	\left(\begin{array}{ccc}
	1 & 0& 0\\
R_a e^{i(8 \hat{z}^3/3 +2 \tilde{s} \hat{z})}& 1 & 0\\
	0 & 0&1
	\end{array}\right),  & \hat{z}\in \hat{\Sigma}_{j}^0, \ j=1,2,\\[10pt]
	\left(\begin{array}{ccc}
	1 &-\overline{R_a}e^{-i(8 \hat{z}^3/3 +2 \tilde{s} \hat{z})}&0\\
	0 & 1 & 0\\
	0 & 0&1
	\end{array}\right),   & \hat{z}\in \hat{\Sigma}_{j}^{0*},\ j=1,2, \\[10pt]
  \left(\begin{array}{ccc}
	1 &	-\overline{R_a}e^{-i(8 \hat{z}^3/3 +2 \tilde{s} \hat{z})}&0\\
	0 & 1 & 0\\
	0 & 0&1
	\end{array}\right)\left(\begin{array}{ccc}
	1 & 0& 0\\
R_ae^{i(8 \hat{z}^3/3 +2 \tilde{s} \hat{z})}& 1 & 0\\
	0 & 0&1
	\end{array}\right),   & \hat{z}\in (\hat{z}_1,\hat{z}_2).
	\end{array}\right.
	\end{align}
	\item $\hat{M}^{L,0}_a(\hat{z}) = I+\mathcal{O}(\hat{z}^{-1}),\hspace{0.5cm}\hat{z}\rightarrow \infty$.
\end{itemize}
\end{RHP}
\end{Proposition}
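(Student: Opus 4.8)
The plan is to pass from the closeness of the two jump matrices, already quantified in Proposition~\ref{p1esttid1}, to the closeness of the two solutions via a standard small-norm (Beals--Coifman) argument applied to their quotient. First I would record that $\hat M_a^{L,0}$ is well defined and controlled: by the construction of the Painlev\'e~\uppercase\expandafter{\romannumeral2} parametrix $N^P$ in \ref{appx2} it is built from the model RH problem, and since the transition zone $|\xi+\tfrac18|t^{2/3}<C$ forces the parameter $\tilde s=s_1(1+8\xi)t^{2/3}$ in \eqref{p1as} to remain in a bounded set, RH problem~\ref{p1malo} is uniformly solvable and $\hat M_a^{L,0}$ together with its inverse is bounded uniformly in $t$, both on and off $\hat\Sigma_a^{L,0}$. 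Since both $M_a^{L,0}$ and $\hat M_a^{L,0}$ are normalized to $I$ at infinity and share the contour $\hat\Sigma_a^{L,0}$, I would introduce the quotient $\mathcal{E}:=M_a^{L,0}(\hat M_a^{L,0})^{-1}$, which is analytic off $\hat\Sigma_a^{L,0}$, tends to $I$ at infinity, and satisfies $\mathcal{E}_+=\mathcal{E}_-V_{\mathcal{E}}$ with
\[
V_{\mathcal{E}}=\hat M^{L,0}_{a,-}\,V^{L,0}_a\,(\hat V^{L,0}_a)^{-1}\,(\hat M^{L,0}_{a,-})^{-1}.
\]

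The second step is to estimate $w_{\mathcal{E}}:=V_{\mathcal{E}}-I$. Comparing \eqref{asvlo} with \eqref{afsvlo} entry by entry, the difference $V^{L,0}_a-\hat V^{L,0}_a$ is assembled precisely from the three quantities bounded in Proposition~\ref{p1esttid1}, each of size $\mathcal{O}(t^{-1/3+2\delta_1})$; conjugating by the uniformly bounded factors $\hat M^{L,0}_{a,-}$ preserves this bound, so $\|w_{\mathcal{E}}\|_{L^\infty(\hat\Sigma_a^{L,0})}\lesssim t^{-1/3+2\delta_1}$. To upgrade this to the $L^1\cap L^2$ control required by the Cauchy-operator machinery I would split $\hat\Sigma_a^{L,0}$ into the bounded segment $(\hat z_1,\hat z_2)$, over which the $L^\infty$ bound integrates trivially, and the rays $\hat\Sigma^0_j,\hat\Sigma^{0*}_j$, whose length grows like $t^{\delta_1}$; on the rays the factor $e^{i(8\hat z^3/3+2\tilde s\hat z)}$ has $\re(i\cdot\tfrac83\hat z^3)<0$, so the off-diagonal entries decay like $e^{-c|\hat z|^3}$ and the $L^1,L^2$ norms are again dominated by $\mathcal{O}(t^{-1/3+2\delta_1})$.

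With $\|w_{\mathcal{E}}\|_{L^1\cap L^2\cap L^\infty}=\mathcal{O}(t^{-1/3+2\delta_1})$, the associated Cauchy operator $C_{w_{\mathcal{E}}}$ has small norm, hence $(I-C_{w_{\mathcal{E}}})^{-1}$ exists and is uniformly bounded for $t$ large. The integral representation
\[
\mathcal{E}(\hat z)=I+\frac{1}{2\pi i}\int_{\hat\Sigma^{L,0}_a}\frac{\bigl((I-C_{w_{\mathcal{E}}})^{-1}I\bigr)(s)\,w_{\mathcal{E}}(s)}{s-\hat z}\,ds
\]
then yields $\mathcal{E}(\hat z)=I+\mathcal{O}(t^{-1/3+2\delta_1})$, uniformly for $\hat z$ bounded away from the contour. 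Multiplying back and using once more the uniform boundedness of $\hat M_a^{L,0}$ gives $M_a^{L,0}=\mathcal{E}\,\hat M_a^{L,0}=\hat M_a^{L,0}+\mathcal{O}(t^{-1/3+2\delta_1})$, which is the assertion.

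The main obstacle I anticipate is not the algebra but the uniformity on the rays: because the scaled contour is finite only of length $\sim t^{\delta_1}$ rather than fixed, the pointwise bound of Proposition~\ref{p1esttid1} does not by itself deliver the $L^1,L^2$ estimates, and one must genuinely exploit the cubic exponential decay together with the sign of $\re(i\hat z^3)$ on the opened lenses (guaranteed by $0<\varphi_0<\pi/8$ and $\tfrac19<\delta_1<\tfrac16$). A secondary point needing care is the uniform boundedness of $\hat M_a^{L,0}$ throughout the admissible range of $\tilde s$, i.e.\ the absence of spurious poles of the Painlev\'e~\uppercase\expandafter{\romannumeral2} parametrix, which rests on the normalization $|r|<1$ from Proposition~\ref{pror}.
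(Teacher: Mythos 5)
Your proposal is correct and follows essentially the same route as the paper: the paper states this result as a direct corollary of Proposition \ref{p1esttid1} and carries out the explicit small-norm quotient argument one step later (in Proposition \ref{p1mlonp}, for $\Xi=\tilde M_a^{L,0}(N^P)^{-1}$, using the uniform bound \eqref{mPbounded} on the model solution), which is exactly the machinery you make explicit here for $\mathcal{E}=M_a^{L,0}(\hat M_a^{L,0})^{-1}$. Your added care about upgrading the pointwise bound to $L^1\cap L^2$ on the rays of length $\sim t^{\delta_1}$ via the cubic decay of $\re(i\hat z^3)$ is a legitimate refinement of a point the paper passes over silently.
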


To match RH problem \ref{p1malo} with the model RH problem, we  need to convert the coefficients in front of the exponential terms in the jump matrix into the  pure imaginary number. Rewriting $ R_a$ as
\begin{align}
  R_a =  |R_a| e^{i\varphi_a}
\end{align}
with
\begin{align}
 &|R_a|  = |r(z_a)|, \ \varphi_a = \arg R_a =\arg \tilde{r}(z_a) -t \theta_{12}(z_a),
\end{align}
we make the following transformation
\begin{align}
\tilde{M}^{L,0}_a(\hat{z}) = \mathcal{B}_1 \hat{M}^{L,0}_a(\hat{z}) \mathcal{B}_1^{-1},
\end{align}
where
\begin{align}
\mathcal{B}_1 = \left(\begin{array}{ccc}
	e^{i(\varphi_a/2-\pi/4)}  & 0&0\\
	0 & e^{-i(\varphi_a/2-\pi/4)} & 0\\
	0 & 0&1
	\end{array}\right).
\end{align}
Then $\tilde{M}^{L,0}_a(\hat{z})$ satisfies the following RH problem.
\begin{RHP}\label{p1tmalo}
Find a matrix-valued function  $\tilde{M}_a^{L,0} (\hat{z})$ with  properties:
\begin{itemize}
\item $\tilde{M}^{L,0}_a(\hat{z})$ is analytical  in $\mathbb{C}\setminus \hat{\Sigma}^{L,0}_a$.
\item
	$\tilde{M}^{L,0}_{a+}(\hat{z})=\tilde{M}^{L,0}_{a-}(\hat{z})\tilde{V}^{L,0}_a(\hat{z}),\ \hat{z} \in \hat{\Sigma}^{L,0}_a$,
	where
	\begin{align}\label{afstvlo}
	\tilde{V}^{L,0}_a(\hat{z})=\left\{\begin{array}{ll}
	\left(\begin{array}{ccc}
	1 & 0& 0\\
	i|r(z_a)|e^{i(8 \hat{z}^3/3 +2 \tilde{s} \hat{z})}& 1 & 0\\
	0 & 0&1
	\end{array}\right),  & \hat{z}\in \hat{\Sigma}_{j}^0, \ j=1,2,\\[10pt]
	\left(\begin{array}{ccc}
	1 & i|r(z_a)|e^{-i(8 \hat{z}^3/3 +2 \tilde{s} \hat{z})}&0\\
	0 & 1 & 0\\
	0 & 0&1
	\end{array}\right),   & \hat{z}\in \hat{\Sigma}_{j}^{0*},\ j=1,2, \\[10pt]
  \left(\begin{array}{ccc}
	1 & i|r(z_a)|e^{-i(8 \hat{z}^3/3 +2 \tilde{s} \hat{z})}&0\\
	0 & 1 & 0\\
	0 & 0&1
	\end{array}\right)\left(\begin{array}{ccc}
	1 & 0& 0\\
	i|r(z_a)|e^{i(8 \hat{z}^3/3 +2 \tilde{s} \hat{z})}& 1 & 0\\
	0 & 0&1
	\end{array}\right),   & \hat{z}\in (\hat{z}_1,\hat{z}_2).
	\end{array}\right.
	\end{align}
	\item $\tilde{M}^{L,0}_a(\hat{z}) = I+\mathcal{O}(\hat{z}^{-1}),\hspace{0.5cm}\hat{z}\rightarrow \infty$.
\end{itemize}
\end{RHP}

Observing the jump matrix \eqref{afstvlo},  the RH problem \ref{p1tmalo} could be explicitly solved by using the model RH problem in \ref{appx2}.
\begin{Proposition}\label{p1mlonp}
 Let $c_1 = i|r(z_a)|$, then we obtain  $\tilde{M}^{L,0}_a(\hat{z})  = N^P(\hat{z})$.
\end{Proposition}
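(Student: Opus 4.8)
The plan is to show that, once we set $c_1 = i|r(z_a)|$, RH problem \ref{p1tmalo} coincides term by term with the Painlev\'e II model RH problem \ref{modelp2} of \ref{appx2}, and then to invoke the uniqueness of the latter to conclude $\tilde{M}^{L,0}_a(\hat{z}) = N^P(\hat{z})$. Since a RH problem is determined by its contour, its jump matrix and its normalization at infinity, I would verify these three items in turn.

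First I would match the contours. The jump set $\hat{\Sigma}^{L,0}_a = \cup_{j=1,2}(\hat{\Sigma}_j^0 \cup \hat{\Sigma}_j^{0*}) \cup (\hat{z}_1,\hat{z}_2)$ consists of two rays into the upper half-plane, their two Schwarz reflections into the lower half-plane, and the connecting real segment $(\hat{z}_1,\hat{z}_2)$; this is exactly the contour on which \ref{modelp2} is posed. Because every jump matrix in \eqref{afstvlo} acts trivially on the third row and column (the $(3,3)$ entry is $1$ and the third row and column are those of the identity), the problem decouples into a trivial scalar part and a genuine $2\times 2$ problem in the upper-left block, which is precisely the block on which the Painlev\'e II model is formulated.

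Next I would match the jumps and the normalization. Substituting $c_1 = i|r(z_a)|$ into the model jump matrix of \ref{modelp2} reproduces \eqref{afstvlo} verbatim: on $\hat{\Sigma}_j^0$ the lower-triangular factor carries $c_1 e^{i(8\hat{z}^3/3 + 2\tilde{s}\hat{z})}$, on $\hat{\Sigma}_j^{0*}$ the upper-triangular factor carries $c_1 e^{-i(8\hat{z}^3/3 + 2\tilde{s}\hat{z})}$, and on $(\hat{z}_1,\hat{z}_2)$ the jump is the product of the upper- and lower-triangular factors taken in the consistent order. The crucial point is that the cubic phase $\frac{8}{3}\hat{z}^3 + 2\tilde{s}\hat{z}$ and the scaling parameter $\tilde{s}$ of \eqref{p1as} are exactly those built into \ref{modelp2}; this is what the change of variables \eqref{zscale1} and the choices of $c_a$, $s_1$ were engineered to produce via the local expansion \eqref{theapp1}. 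Both $\tilde{M}^{L,0}_a$ and $N^P$ are moreover normalized by $I + \mathcal{O}(\hat{z}^{-1})$ as $\hat{z}\to\infty$.

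Having matched contour, jumps and normalization, I would close the argument by uniqueness: the model problem \ref{modelp2} has a unique solution $N^P$, established in \ref{appx2} through a vanishing lemma whose hypotheses hold here because $c_1 = i|r(z_a)|$ is purely imaginary with $|c_1| = |r(z_a)| < 1$ by Proposition \ref{pror}, together with the Schwarz symmetry of the problem; since $\tilde{M}^{L,0}_a$ solves the identical RH problem, this forces $\tilde{M}^{L,0}_a(\hat{z}) = N^P(\hat{z})$. The main obstacle is not the conclusion but the bookkeeping that precedes it: one must check that the rotation $\mathcal{B}_1$ has rendered the off-diagonal coefficients purely imaginary and identical on all four rays (so that a single parameter $c_1$ suffices), that the orientation of the rays is consistent with the factorization prescribed on $(\hat{z}_1,\hat{z}_2)$, and that the finite-length scaled rays of $\hat{\Sigma}^{L,0}_a$ may be identified with those of \ref{modelp2}. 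The last point is legitimate precisely because the triangular off-diagonal entries decay in their respective sectors, so truncating or extending the rays leaves the solution unchanged.
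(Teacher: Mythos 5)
Your identification of the jump matrices is correct: with $c_1=i|r(z_a)|$ one has $-\bar{c}_1=i|r(z_a)|$, so the triangular factors in \eqref{jumpl} reproduce \eqref{afstvlo} entry by entry, the phases $2i(\tfrac{4}{3}k^3+sk)$ and $i(\tfrac{8}{3}\hat{z}^3+2\tilde{s}\hat{z})$ agree under $k=\hat{z}$, $s=\tilde{s}$, and both problems are normalized to $I$ at infinity. The gap is in the contours, and it is not cosmetic. The contour $\hat{\Sigma}^{L,0}_a$ consists of \emph{finite} rays emanating from $\hat{z}_1=(c_at)^{1/3}(1/p_1-z_a)$ and $\hat{z}_2=(c_at)^{1/3}(1/p_0-z_a)$ at the opening angle $\varphi_0<\pi/8$, joined by the segment $(\hat{z}_1,\hat{z}_2)$; the model contour $\tilde{\Sigma}^P$ of RH problem \ref{modelp2} consists of \emph{infinite} rays from $\pm k_0$ at angle $\pi/6$ joined by $(-k_0,k_0)$, and in general $\hat{z}_1\neq-\hat{z}_2$, so no choice of $k_0$ makes the two contours coincide. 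Consequently $\tilde{M}^{L,0}_a$ and $N^P$ do not solve the \emph{same} RH problem, and uniqueness cannot be invoked to conclude exact equality. Your closing claim that ``truncating or extending the rays leaves the solution unchanged'' because the off-diagonal entries decay is false as an exact statement: extending a ray on which the jump is a nontrivial (if near-identity) triangular matrix changes the solution by a small but nonzero amount. Turning ``small'' into a theorem is precisely the content that your argument omits.

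This is in fact how the paper proceeds: it sets $\Xi(\hat{z})=\tilde{M}^{L,0}_a(\hat{z})N^P(\hat{z})^{-1}$, uses the boundedness \eqref{mPbounded} of $N^P$ together with the estimates of Proposition \ref{p1esttid1} to bound $\|V^\Xi-I\|_{L^1\cap L^2\cap L^\infty}$ by $t^{-1/3+2\delta_1}$, and concludes by small-norm theory that $\Xi=I+\mathcal{O}(t^{-1/3+2\delta_1})$. The equality asserted in the proposition is therefore only an equality modulo an error of the order already being carried through the analysis (compare the following proposition, which records $M^{L,0}_a(\hat{z})=\mathcal{B}_1^{-1}N^P(\hat{z})\mathcal{B}_1+\mathcal{O}(t^{-1/3+2\delta_1})$), and this is all that is needed for \eqref{p1ma11}. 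To repair your argument you would either have to carry out this small-norm comparison yourself, or first deform and extend the contours of RH problem \ref{p1tmalo} onto $\tilde{\Sigma}^P$ and quantify the (exponentially small, but nonzero) change in the solution that this deformation produces; in either case the conclusion is approximate identification, not the exact identity your uniqueness argument purports to deliver.
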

\begin{proof}
	Note that  $N^P(\hat{z})$ is invertible, we define
\begin{align}
   \Xi (\hat{z}) := \tilde{M}^{L,0}_a(\hat{z})  N^P(\hat{z})^{-1},
\end{align}
which satisfies the following RH problem.
\begin{RHP}\label{matchxi}
Find a matrix-valued function  $\Xi (\hat{z})$ with  properties:
\begin{itemize}
\item $\Xi(\hat{z})$ is analytical  in $\mathbb{C}\setminus \hat{\Sigma}^{L,0}_a$.
\item
	$\Xi_+(\hat{z})=\Xi_-(\hat{z})V^\Xi(\hat{z}),\ \hat{z} \in \hat{\Sigma}^{L,0}_a$,
	where
\begin{align}
V^\Xi(\hat{z})=  N^P_-(\hat{z}) \tilde{V}^{L,0}_a(\hat{z}) V^P(\hat{z})^{-1}  N^P_+(\hat{z})^{-1}.
\end{align}
	\item $\Xi(\hat{z})= I+\mathcal{O}(\hat{z}^{-1}),\hspace{0.5cm}\hat{z}\rightarrow \infty$.
\end{itemize}
\end{RHP}

Because of the boundedness of  $N^P(\hat{z})$ (see \eqref{mPbounded} below), it is sufficient to estimate the error between the jump matrices $\tilde{V}^{L,0}_a(\hat{z})$ and  $V^P(\hat{z})$.
\begin{align*}
\tilde{V}^{L,0}_a(\hat{z}) - V^P(\hat{z}) =\\
&\begin{cases}
 \begin{pmatrix} 1 & 0 &0 \\ (i|r(z_a)|-p)e^{i(8\hat{z}^2/3+2\tilde{s}\hat{z})} & 1 &0 \\ 0 & 0 & 1 \end{pmatrix}, \quad \hat{z} \in \hat{\Sigma}^{0}_{j},\ j=1,2,\\
\begin{pmatrix} 1 & (i|r(z_a)|+p^*)e^{-i(8\hat{z}^2/3+2\tilde{s}\hat{z})} &0 \\ 0 & 1 &0 \\ 0 & 0 & 1 \end{pmatrix},\quad \hat{z} \in \hat{\Sigma}^{0*}_{j}, \ j=1,2 ,\\
\begin{pmatrix} -|r(z_a)|^2 + |p|^2 & (i |r(z_a)| +  p^*)e^{-i(8\hat{z}^2/3+2\tilde{s}\hat{z})} &0 \\ (ir(z_a) -p)e^{i(8\hat{z}^2/3+2\tilde{s}\hat{z})}  & 0 &0 \\ 0 & 0 & 1 \end{pmatrix},\quad \hat{k} \in (\hat{z}_1,\hat{z}_2).
  \end{cases}
\end{align*}
Then using Proposition \ref{p1esttid1},   we have
\begin{align}
  \| V^\Xi(\hat{z}) -I \|_{L^1 \cap L^2 \cap L^\infty(\hat{\Sigma}^{L,0}_a) } \lesssim t^{-1/3+2\delta_1}.
\end{align}
Thus, the existence and uniqueness of  $ \Xi (\hat{z})$ are valid by a small norm RH problem arguments, which also yields
\begin{align}
\Xi (\hat{z}) = I + \mathcal{O}(t^{-1/3+2\delta_1}), \quad \text{as} \ t \to +\infty.
\end{align}

\end{proof}

Finally,  the following proposition follows directly from  propositions \ref{p1esttid1}-\ref{p1mlonp}.
\begin{Proposition}
As $t \to +\infty$,
\begin{align}
 M_a^{L,0}(\hat{z}) = \mathcal{B}_1^{-1} N^P(\hat{z}) \mathcal{B}_1 + \mathcal{O}(t^{-1/3+2\delta_1}).
\end{align}
Moreover, as $\hat{z} \to \infty$,
\begin{align}
M_a^{L,0}(\hat{z}) = I + \frac{M_{a1}^{L,0}}{\hat{z}} + \mathcal{O}(\hat{z}^{-2}),
\end{align}
where  $M_{a1}^{L,0}$  is the coefficient of the term $1/\hat{z}$ in the large-$\hat{z}$ expansion of  $M_a^{L,0}(\hat{z})$.
As $t\to +\infty$,
\begin{align}\label{p1ma11}
	 M_{a1}^{L,0}  =  \frac{i}{2}  \begin{pmatrix} -\int_{\tilde{s}}^\infty u(\eta)^2d\eta & u(\tilde{s})e^{-i\varphi_a} &0 \\ -u(\tilde{s})e^{i\varphi_a} & \int_{\tilde{s}}^\infty u(\eta)^2d\eta &0  \\ 0& 0 & 0 \end{pmatrix} + \mathcal{O}(t^{-1/3+2\delta_1}).
\end{align}

\end{Proposition}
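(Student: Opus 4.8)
The plan is to obtain the statement by assembling the chain of approximations already proved in Propositions \ref{p1esttid1}--\ref{p1mlonp}, and then to read off the subleading coefficient from the known large-$\hat z$ expansion of the Painlev\'e II parametrix $N^P(\hat z)$ constructed in \ref{appx2}. First I would record the identity. The proposition immediately following \ref{p1esttid1} gives $M_a^{L,0}(\hat z)=\hat M_a^{L,0}(\hat z)+\mathcal{O}(t^{-1/3+2\delta_1})$, where $\hat M_a^{L,0}$ solves RH problem \ref{p1malo}. The diagonal conjugation $\tilde M_a^{L,0}(\hat z)=\mathcal{B}_1\hat M_a^{L,0}(\hat z)\mathcal{B}_1^{-1}$ produced RH problem \ref{p1tmalo}, and Proposition \ref{p1mlonp} (with $c_1=i|r(z_a)|$) identifies its solution as $\tilde M_a^{L,0}(\hat z)=N^P(\hat z)$. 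Inverting the conjugation yields $\hat M_a^{L,0}(\hat z)=\mathcal{B}_1^{-1}N^P(\hat z)\mathcal{B}_1$, and substituting into the first approximation gives $M_a^{L,0}(\hat z)=\mathcal{B}_1^{-1}N^P(\hat z)\mathcal{B}_1+\mathcal{O}(t^{-1/3+2\delta_1})$, which is the first display.

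Next I would extract the subleading coefficient. From the model problem in \ref{appx2} one has $N^P(\hat z)=I+N_1^P/\hat z+\mathcal{O}(\hat z^{-2})$ as $\hat z\to\infty$, with the standard Painlev\'e II coefficient $N_1^P=\tfrac12\left(\begin{smallmatrix}-i\int_{\tilde s}^\infty u^2\,d\eta & u(\tilde s)&0\\ u(\tilde s)&i\int_{\tilde s}^\infty u^2\,d\eta&0\\0&0&0\end{smallmatrix}\right)$, where $u$ solves the Painlev\'e II equation \eqref{p1pain2}. Since $\mathcal{B}_1=\mathrm{diag}\bigl(e^{i(\varphi_a/2-\pi/4)},e^{-i(\varphi_a/2-\pi/4)},1\bigr)$ is diagonal, the conjugation $\mathcal{B}_1^{-1}N_1^P\mathcal{B}_1$ leaves the diagonal entries untouched and multiplies the $(1,2)$ and $(2,1)$ entries by $\overline{b}^2=ie^{-i\varphi_a}$ and $b^2=-ie^{i\varphi_a}$ respectively, where $b=e^{i(\varphi_a/2-\pi/4)}$. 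A direct check shows this reproduces exactly the matrix \eqref{p1ma11}, so it remains only to justify that the $\mathcal{O}(t^{-1/3+2\delta_1})$ error in the operator-level identity survives at the level of the $1/\hat z$ coefficient.

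For this last point I would argue through the error function $\Xi(\hat z)=\tilde M_a^{L,0}(\hat z)\,N^P(\hat z)^{-1}$ of RH problem \ref{matchxi}. The small-norm estimate $\|V^\Xi-I\|_{L^1\cap L^2\cap L^\infty}\lesssim t^{-1/3+2\delta_1}$ established in the proof of Proposition \ref{p1mlonp}, together with the Beals--Coifman representation, bounds not only $\Xi-I$ but also its first moment $\Xi_1$ (the coefficient of $\hat z^{-1}$) by the same order, via the Cauchy integral of $(I-C_{w^\Xi})^{-1}I\,w^\Xi$ over $\hat\Sigma_a^{L,0}$. Writing $\tilde M_a^{L,0}=\Xi\,N^P$ and expanding at $\hat z\to\infty$ then gives that the $1/\hat z$ coefficient of $\tilde M_a^{L,0}$ equals $N_1^P+\mathcal{O}(t^{-1/3+2\delta_1})$, and conjugating by $\mathcal{B}_1$ yields \eqref{p1ma11}. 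The main obstacle I anticipate is precisely this moment estimate: because the scaled endpoints $\hat z_1,\hat z_2\sim t^{\delta_1}$ grow with $t$, one must confirm that the $L^1$ control of $V^\Xi-I$ is uniform over the expanding contour so that the first moment, and not merely the sup norm, inherits the $\mathcal{O}(t^{-1/3+2\delta_1})$ decay; the remaining verification of the conjugation and the assembly of the identity are routine.
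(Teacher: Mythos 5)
Your proposal is correct and follows essentially the same route as the paper, which simply assembles Propositions \ref{p1esttid1}--\ref{p1mlonp} with the large-$\hat{z}$ expansion \eqref{stanp2} and the coefficient \eqref{posee}, conjugated by $\mathcal{B}_1$. Your additional care in checking that the $\mathcal{O}(t^{-1/3+2\delta_1})$ error persists at the level of the $1/\hat{z}$ coefficient (via the first moment of $\Xi$ over the expanding contour) is a point the paper leaves implicit, and your treatment of it is sound.
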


The RH problem for $M_b^{L,0}$,  $M_c^{L,0} $ and $M_d^{L,0}$ can be solved in a similar manner.
For $z \in  U_b^{0}$ and $t$ large enough, we have
\begin{align}
	-t \theta_{12}(z) = 	-t \theta_{12}(z_b)  + \frac{8}{3} \check{z}^3 + 2 \tilde{s} \check{z} + \mathcal{O}(t^{-1/3}\check{z}^2),
\end{align}
where $\tilde{s}$ is defined by  \eqref{p1as} and
\begin{align}
	\check{z} = (c_b t)^{\frac{1}{3}}(z-z_b),
\end{align}
is the scaled spectral parameter in this case with
\begin{align}\label{checkc1}
c_b = \frac{21}{256}(14\sqrt{3}+9\sqrt{7}).
\end{align}
Through the analysis,  we obtain the result of the large-$\check{z}$ expansion of  $M_b^{L,0}(\check{z})$,
\begin{align}
M_b^{L,0}(\check{z}) = I + \frac{M_{b1}^{L,0}}{\check{z}} + \mathcal{O}(\check{z}^{-2}),
\end{align}
where as $t \to +\infty$,
\begin{align}\label{p1mb11}
M_{b1}^{L,0} =  \frac{i}{2}  \begin{pmatrix} -\int_{\tilde{s}}^\infty u(\eta)^2d\eta & u(\tilde{s})e^{-i\varphi_b} &0 \\ -u(\tilde{s})e^{i\varphi_b} & \int_{\tilde{s}}^\infty u(\eta)^2d\eta &0  \\ 0& 0 & 0 \end{pmatrix} + \mathcal{O}(t^{-1/3+2\delta_1}),
\end{align}
with
\begin{align}
\varphi_b = \arg \tilde{r}(z_b)  -t\theta_{12}(z_b).
\end{align}

For $z \in U_c^0$, and for sufficiently large $t$, the following relationship holds:
\begin{align}
 -t\theta_{12}(z) = -t \theta_{12}(z_c)  + \frac{8}{3} \tilde{z}^3 + 2 \tilde{s} \tilde{z} + \mathcal{O}(t^{-1/3}\tilde{z}^2).
\end{align}
Here, $\tilde{s}$ is defined in accordance with  \eqref{p1as}, and the scaled spectral parameter $\tilde{z}$ is given by:
\begin{align}
	\tilde{z} = (c_c t)^{\frac{1}{3}}(z-z_c),
\end{align}
where
\begin{align}\label{p1cccb}
c_c=c_b,
\end{align}
which is as specified  in \eqref{checkc1}. Moreover, as $\tilde{z} \to \infty$,
\begin{align}
  M_c^{L,0}(\check{z}) = I + \frac{M_{c1}^{L,0}}{\check{z}} + \mathcal{O}(\tilde{z}^{-2}),
\end{align}
where  as $t \to +\infty$,
\begin{align}\label{p1mc11}
M_{c1}^{L,0} =  \frac{i}{2}  \begin{pmatrix} -\int_{\tilde{s}}^\infty u(\eta)^2d\eta & -u(\tilde{s})e^{i\varphi_b} &0 \\ u(\tilde{s})e^{-i\varphi_b} & \int_{\tilde{s}}^\infty u(\eta)^2d\eta &0  \\ 0& 0 & 0 \end{pmatrix} + \mathcal{O}(t^{-1/3+2\delta_1}).
\end{align}
Here we use the symmetry $|r(z)|=|r(-z)|$ whose detailed proof could be given by the similar method in the proof of Proposition 4.2 in \cite{Monvel3}.

For $z \in U_d^0$, and for sufficiently large $t$, the following relationship holds:
\begin{align}
 -t\theta_{12}(z) = -t \theta_{12}(z_d)  + \frac{8}{3} \breve{z}^3 + 2 \tilde{s} \breve{z} + \mathcal{O}(t^{-1/3}\breve{z}^2).
\end{align}
Here, $\tilde{s}$ is defined in accordance with  \eqref{p1as}, and the scaled spectral parameter $\breve{z}$ is given by:
\begin{align}
	\breve{z} = (c_d t)^{\frac{1}{3}}(z-z_d),
\end{align}
where
\begin{align}\label{p1cdca}
c_d=c_a,
\end{align}
which is as specified  in \eqref{hatc1}. Moreover, as $\breve{z} \to \infty$,
\begin{align}
  M_d^{L,0}(\breve{z}) = I + \frac{M_{d1}^{L,0}}{\breve{z}} + \mathcal{O}(\breve{z}^{-2}),
\end{align}
where  as $t \to +\infty$,
\begin{align}\label{p1md11}
M_{d1}^{L,0} =  \frac{i}{2}  \begin{pmatrix} -\int_{\tilde{s}}^\infty u(\eta)^2d\eta & -u(\tilde{s})e^{i\varphi_a} &0 \\ u(\tilde{s})e^{-i\varphi_a} & \int_{\tilde{s}}^\infty u(\eta)^2d\eta &0  \\ 0& 0 & 0 \end{pmatrix} + \mathcal{O}(t^{-1/3+2\delta_1}).
\end{align}
According to the symmetries of RH problem \ref{RHP1}, we have the following proposition.
\begin{Proposition}
 As $z \to \infty$, the coefficients  $M_{j1}^{L,l}, \ j=a,b,c,d,\ l=0,1,2$ of the term of $z^{-1}$  have the following relationships:
  \begin{align}
    M_{j1}^{L,1} = \omega \Gamma_3 \overline{M_{j1}^{L,0}} \Gamma_3,  \quad  M_{j1}^{L,2} = \omega^2 \Gamma_2 \overline{M_{j1}^{L,0}} \Gamma_2.
  \end{align}
\end{Proposition}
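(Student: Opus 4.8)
The plan is to derive the stated relations directly from the discrete symmetries of the local model $M^L(z)$, which it inherits from the global RH problem. Recall from RH problem \ref{rh7} that $M^L(z)$ satisfies
$$M^{L}(z)=\Gamma_3\overline{M^{L}(\omega\bar{z})}\Gamma_3=\Gamma_2\overline{M^{L}(\omega^2\bar{z})}\Gamma_2,$$
and that, by the Beals--Coifman decomposition stated below RH problem \ref{rh71}, $M^L(z)$ is approximated near each colliding point $\omega^l\xi_j$ by the individual local model $M_j^{L,l}(z)$. The first step is to observe that since all four points $\xi_j \in \{z_a,z_b,z_c,z_d\}$ are real, the map $z\mapsto \omega\bar z$ sends a neighborhood of $\omega\xi_j$ to a neighborhood of $\xi_j$: indeed, for $z$ near $\omega\xi_j$ one has $\omega\bar z$ near $\omega\,\overline{\omega\xi_j}=\omega^3\xi_j=\xi_j$. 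Likewise $z\mapsto\omega^2\bar z$ sends a neighborhood of $\omega^2\xi_j$ to a neighborhood of $\xi_j$.

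Second, I would check that these maps permute the local jump contours $\Sigma^{L,l}_j$ and transform the jump matrices $V^{L,l}_j$ compatibly, using the phase identities $\theta_{23}(z)=\theta_{12}(\omega z)$ and $\theta_{31}(z)=\theta_{12}(\omega^2 z)$ together with the symmetry of $\tilde r$. This, combined with the normalization $M_j^{L,l}(z)=I+\mathcal O(z^{-1})$ and the uniqueness of the solution of each local RH problem \ref{rh71}, forces the identities
$$M_j^{L,1}(z)=\Gamma_3\overline{M_j^{L,0}(\omega\bar z)}\Gamma_3,\qquad M_j^{L,2}(z)=\Gamma_2\overline{M_j^{L,0}(\omega^2\bar z)}\Gamma_2$$
for $j=a,b,c,d$; equivalently, the symmetry of the full $M^L$ descends to these relations between its local pieces.

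Finally, I extract the coefficient relations by expanding as $z\to\infty$. Writing $M_j^{L,0}(w)=I+M_{j1}^{L,0}/w+\mathcal O(w^{-2})$ and substituting $w=\omega\bar z$, one has $\overline{1/(\omega\bar z)}=\omega/z$, so
$$M_j^{L,1}(z)=\Gamma_3\Bigl(I+\tfrac{\omega\,\overline{M_{j1}^{L,0}}}{z}+\mathcal O(z^{-2})\Bigr)\Gamma_3 = I+\frac{\omega\,\Gamma_3\overline{M_{j1}^{L,0}}\Gamma_3}{z}+\mathcal O(z^{-2}),$$
using $\Gamma_3^2=I$; matching the $z^{-1}$ coefficient yields $M_{j1}^{L,1}=\omega\,\Gamma_3\overline{M_{j1}^{L,0}}\Gamma_3$. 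The analogous computation with $w=\omega^2\bar z$, $\overline{1/(\omega^2\bar z)}=\omega^2/z$ and $\Gamma_2^2=I$ gives $M_{j1}^{L,2}=\omega^2\,\Gamma_2\overline{M_{j1}^{L,0}}\Gamma_2$.

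The main obstacle is the second step: rigorously justifying that the global symmetry of $M^L$ descends to the individual local parametrices. Since these are defined only as local approximations via the Beals--Coifman decomposition, one must verify that the symmetry maps $z\mapsto\omega^l\bar z$ genuinely permute the local contours $\Sigma^{L,l}_j$ and intertwine their jump data with the factor $\Gamma_3$ or $\Gamma_2$, so that uniqueness of the normalized local RH problem can be invoked. The compatibility of the scaling constants across the three copies, guaranteed by the phase identities for $\theta_{jk}$ which make $c_j$ common to the $l=0,1,2$ copies of each phase point, is what makes this descent exact rather than merely asymptotic.
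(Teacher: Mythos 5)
Your proposal is correct and follows essentially the same route as the paper, which justifies this proposition in a single line by appealing to the symmetries of the RH problem; you have simply made explicit the three steps that sentence compresses (the maps $z\mapsto\omega\bar z$, $z\mapsto\omega^2\bar z$ carry neighborhoods of $\omega\xi_j$, $\omega^2\xi_j$ to neighborhoods of the real point $\xi_j$, the symmetry descends to the local parametrices by uniqueness of the normalized local problems, and the factor $\omega^l$ in the $z^{-1}$ coefficient comes from $\overline{(\omega^l\bar z)^{-1}}=\omega^l z^{-1}$). The bookkeeping with $\bar\omega=\omega^{-1}$ and $\Gamma_2^2=\Gamma_3^2=I$ is right, and your identification of the descent step as the only point requiring genuine verification is accurate.
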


Combining the results above,  finally we obtain the following proposition.
\begin{Proposition}\label{asymlo}
	As $t\to+\infty$,
	\begin{align} \label{p1deml}
	M^{L}(z)=I+t^{-1/3}\sum_{j=a,b,c,d } c_j^{-1/3} \left( \frac{M_{j1}^{L,0} }{z-\xi_j}+\frac{\omega\Gamma_3\overline{M_{j1}^{L,0} }\Gamma_3}{z-\omega\xi_j}+\frac{\omega^2\Gamma_2\overline{M_{j1}^{L,0} }\Gamma_2}{z-\omega^2\xi_j}\right)  +\mathcal{O}(t^{-1}),
	\end{align}
	where $c_j,\ j=a,b,c,d$ are defined by \eqref{hatc1}, \eqref{checkc1}, \eqref{p1cccb}, and \eqref{p1cdca}, and $M_{j1}^{L,0}, \ j=a,b,c,d$ are given by  \eqref{p1ma11}, \eqref{p1mb11}, \eqref{p1mc11}, and \eqref{p1md11}.

\end{Proposition}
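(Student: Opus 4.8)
The plan is to apply the Beals--Coifman theory to the localized RH problem \ref{rh7} and then patch together the model solutions built in Propositions \ref{p1esttid1}--\ref{p1mlonp}. Concretely, I would first represent the solution as a Cauchy integral, $M^{L}(z)=I+\frac{1}{2\pi i}\int_{\Sigma^{L}}\frac{(I-C_{w})^{-1}I\,w(s)}{s-z}\,ds$, with $w=V^{L}-I$ split along the twelve mutually disjoint local pieces $\Sigma^{L,l}_{j}$, $j=a,b,c,d$, $l=0,1,2$. Because each $U^{l}_{j}$ sits inside $U$ and the scaled variable $\hat z=(c_{j}t)^{1/3}(z-\omega^{l}\xi_{j})$ compresses the contour to radius $\mathcal O(t^{\delta_{1}})$, the norms $\|w\|_{L^{2}\cap L^{\infty}(\Sigma^{L,l}_{j})}$ are small, so $(I-C_{w})^{-1}$ exists for large $t$; crucially, the products $C_{w^{l}_{j}}C_{w^{l'}_{j'}}$ of Cauchy operators on distinct pieces are $\mathcal O(t^{-1})$ in operator norm, since the colliding points $\omega^{l}\xi_{j}$ stay separated by a fixed distance. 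This decouples the resolvent and gives $M^{L}(z)=I+\sum_{j,l}\big(M^{L,l}_{j}(z)-I\big)+\mathcal O(t^{-1})$, reducing everything to the separate local models.

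Next I would pull each separate contribution back to the $z$-variable. For $z$ outside $U$, each $M^{L,l}_{j}$ is evaluated through its large-$\hat z$ expansion $M^{L,l}_{j}=I+M^{L,l}_{j1}/\hat z+\mathcal O(\hat z^{-2})$. Since $1/\hat z=c_{j}^{-1/3}t^{-1/3}/(z-\omega^{l}\xi_{j})$, the leading term reproduces exactly $c_{j}^{-1/3}t^{-1/3}M^{L,l}_{j1}/(z-\omega^{l}\xi_{j})$. For $l=0$ the coefficients $M^{L,0}_{j1}$ are the explicit Painlev\'e~II expressions \eqref{p1ma11}, \eqref{p1mb11}, \eqref{p1mc11}, \eqref{p1md11}, coming from Proposition \ref{p1mlonp} and the model $N^{P}$ of \ref{appx2}; for $l=1,2$ I would invoke the Schwarz and cyclic symmetries of RH problem \ref{RHP1}, which force $M^{L,1}_{j1}=\omega\Gamma_{3}\overline{M^{L,0}_{j1}}\Gamma_{3}$ and $M^{L,2}_{j1}=\omega^{2}\Gamma_{2}\overline{M^{L,0}_{j1}}\Gamma_{2}$. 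Summing the three sectors over $j=a,b,c,d$ assembles precisely the triple sum in \eqref{p1deml}, where the $M^{L,0}_{j1}$ are understood as the exact $1/\hat z$ coefficients whose Painlev\'e form, up to $\mathcal O(t^{-1/3+2\delta_{1}})$, is recorded above.

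The delicate step is the error bookkeeping, which I expect to be the main obstacle. The $\mathcal O(\hat z^{-2})$ tails of the local models contribute $\mathcal O(t^{-2/3})$ per piece once pulled back, so to reach the stated $\mathcal O(t^{-1})$ remainder one must show these sub-leading tails combine favorably: here I would exploit the reflection symmetry $z\mapsto -z$ together with $|r(z)|=|r(-z)|$, which pairs $z_{a}\leftrightarrow z_{d}$ and $z_{b}\leftrightarrow z_{c}$ and makes the leading sub-leading contributions cancel in the assembled sum, leaving the Beals--Coifman cross terms (already $\mathcal O(t^{-1})$) as the dominant residual. The genuine difficulty is thus to establish this uniform control simultaneously on all twelve models, verifying both the $t^{-1}$ decay of the pairwise Cauchy-operator products and the cancellation of the next-order tails, all while $\hat z$ ranges over a $t$-dependent contour of radius $\mathcal O(t^{\delta_{1}})$ with $\tfrac19<\delta_{1}<\tfrac16$. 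Once this estimate is in place, the remaining algebra---the change of variables, the symmetry reduction, and the Cauchy-integral identification---is routine.
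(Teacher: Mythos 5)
Your overall strategy is the same as the paper's: a Beals--Coifman representation of $M^{L}$ over $\Sigma^{L}$, decoupling of the resolvent into the twelve disjoint local pieces via smallness of the cross products of Cauchy operators, evaluation of each $M^{L,l}_j$ through its large-$\hat z$ expansion with $1/\hat z=c_j^{-1/3}t^{-1/3}/(z-\omega^l\xi_j)$, and the symmetry reduction $M^{L,1}_{j1}=\omega\Gamma_3\overline{M^{L,0}_{j1}}\Gamma_3$, $M^{L,2}_{j1}=\omega^2\Gamma_2\overline{M^{L,0}_{j1}}\Gamma_2$ to collapse the three sectors. The paper essentially asserts this assembly after the local analysis, so up to that point your write-up is if anything more explicit than the source.

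The genuine gap is in your error bookkeeping, precisely where you anticipate the difficulty. You propose to upgrade the $\mathcal{O}(t^{-2/3})$ contribution of the $\mathcal{O}(\hat z^{-2})$ tails to $\mathcal{O}(t^{-1})$ by a cancellation between the paired saddle points $z_a\leftrightarrow z_d$ and $z_b\leftrightarrow z_c$ using $|r(z)|=|r(-z)|$. This cannot work pointwise in $z$: the second-order tails pull back to terms of the form $(c_jt)^{-2/3}N^{P}_{2,j}/(z-\xi_j)^2$, and for a generic $z$ the denominators $(z-\xi_a)^{-2}$ and $(z+\xi_a)^{-2}$ are unrelated, so even an exact antisymmetry of the coefficients $N^{P}_{2,j}$ would not produce cancellation in the sum. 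Moreover, the second Laurent coefficient of the Painlev\'e~II parametrix is not zero in general, so these tails are genuinely present at order $t^{-2/3}$. A second, smaller issue is the claim that the cross products $C_{w^l_j}C_{w^{l'}_{j'}}$ are $\mathcal{O}(t^{-1})$: the standard bound is controlled by $\|w\|_{L^2}$ on each piece, and here the lens jumps are $\mathcal{O}(1)$ in $L^\infty$ on contours of length $\varrho^{0}\sim t^{-1/3+\delta_1}$, so this needs to be argued rather than asserted. Neither defect is fatal to the result as it is actually used: everything downstream (the estimate \eqref{VE-I} and Proposition \ref{p1asye}) only requires a remainder of order $t^{-2/3+2\delta_1}$, which your argument does deliver without any cancellation. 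You should either prove the proposition with that weaker remainder or supply a real proof of the claimed $\mathcal{O}(t^{-1})$; the reflection-symmetry cancellation as stated is not one.
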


\subsubsection{RH problem near singularities}\label{rhsing}

 RH problems near singularities $\varkappa_j$, $j=1,\cdots,6$ have the following properties.

\begin{RHP}
	Find a matrix-valued function  $  M^B_j(z)$ with following properties:
	\begin{itemize}
	\item $M^B_j(z)$ is  meromorphic  in $\mathbb{C}\setminus \left(\mathbb{B}_j\cap L_j \right) $.
	
	\item $M^B_j$ has continuous boundary values $M^B_{j\pm}$ on $\mathbb{B}_j\cap L_j$ and
	\begin{equation}
	M^B_{j+}(z)=M^B_{j-}(z)V^{B}(z),\hspace{0.5cm}z \in \mathbb{B}_j\cap L_j,\label{jumpB}
	\end{equation}
where $V^{B}(z) = V^{(3)}(z)|_{z\in\mathbb{B}_j\cap L_j}$.
		
	\item $M^{B}_j(z) =I+\mathcal{O}(z^{-1}),\hspace{0.5cm}z \rightarrow \infty$.

\end{itemize}
\end{RHP}
By the symmetry of $M^R(z)$, we obtain $M^B_3(z)=\Gamma_3\overline{M^B_1(\omega \bar{z})}\Gamma_3$ for $z\in\mathbb{B}_3$, $M^B_5(z)=\Gamma_2\overline{M^B_1(\omega^2 \bar{z})}\Gamma_2$ for $z\in\mathbb{B}_5$, $M^B_6(z)=\Gamma_3\overline{M^B_4(\omega \bar{z})}\Gamma_3$ for $z\in\mathbb{B}_6$, and $M^B_2(z)=\Gamma_2\overline{M^B_4(\omega^2 \bar{z})}\Gamma_2$ for $z\in\mathbb{B}_2$. Thus, we only need to  give the detail of RH problem for $M^{B}_1(z)$. Then $M^{B}_4(z)$ can be obtained analogously and the others  can be obtained by symmetry. $M^{B}_1(z)$ only has the jump condition on $(1-2\varepsilon,1+2\varepsilon)$ with
\begin{align*}
	V^B(z)=\left(\begin{array}{ccc}
		1 & \frac{rT_{21}\mathcal{X} e^{it\theta_{12}}}{1-|r|^2}&0\\
		0 & 1&0\\
		0 & 0 &1
	\end{array}\right)\left(\begin{array}{ccc}
		1 & 0 & 0\\
		\frac{\bar{r}T_{12}\mathcal{X} e^{-it\theta_{12}}}{1-|r|^2} & 1 & 0\\
		0 & 0 &1
	\end{array}\right).
\end{align*}

To proceed, we first give the following lemma about the properties of the imaginary part of the phase function $\theta_{12}$.
\begin{lemma}\label{p1the12}
Let $\Omega_j^l,\ j=1,\cdots,8,\ l=0,1,2$, denote the sectors defined in \eqref{p1spa1}- \eqref{p1spa2}.
The following estimates for the imaginary part of the phase function $\theta_{12}(z)$ in the transition zone $0<(\xi+1/8)t^{2/3}<C$, as defined in \eqref{deftheta12}, are valid. Similar estimates can also be provided for the imaginary parts of the phase functions $\theta_{23}(z)$ and $\theta_{31}(z)$.
\begin{itemize}
\item  If $| \re z(1-|z|^{-2}) | \le 2$, then
\begin{subequations}
\begin{align}
& \im \theta_{12}(z) \le - \frac{\sqrt{3}(7-3 \tilde{k}_1^2)}{(\tilde{k}_1+1)^2}\im z \left( \re z -\xi_j \right)^2, \quad z\in \Omega_j^0, \ j=1,\cdots,8, \label{p1theia}\\
& \im \theta_{12}(z) \ge  \frac{\sqrt{3}(7-3 \tilde{k}_1^2)}{(\tilde{k}_1+1)^2} \im z \left( \re z -\xi_j \right)^2, \quad z\in \Omega_j^{0*}, \ j=1,\cdots,8,\label{p1theib}
\end{align}
\end{subequations}
\item If $| \re z(1-|z|^{-2}) | > 2$, then
\begin{subequations}
\begin{align}
& \im \theta_{12}(z) \le - \frac{\sqrt{3}}{8}  \im z, \quad z\in \Omega_j^0, \ j=1,\cdots,8,\label{p1theoa}\\
& \im \theta_{12}(z) \ge   \frac{\sqrt{3}}{8}  \im z, \quad z\in \Omega_j^{0*}, \ j=1,\cdots,8,\label{p1theob}
\end{align}
\end{subequations}

\end{itemize}

\end{lemma}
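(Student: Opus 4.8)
The plan is to reduce everything to a single-variable problem in the Joukowski-type variable $\tilde{k}=z-\frac1z$. Using the identity $z^2-1+z^{-2}=\tilde{k}^2+1$, the phase \eqref{deftheta12} becomes $\theta_{12}(z)=\sqrt3\,g(\tilde{k})$ with $g(\tilde{k})=\xi\tilde{k}-\tilde{k}/(\tilde{k}^2+1)$, which is the cleanest form for extracting the imaginary part. Writing $\tilde{k}=a+ib$ with $a=\re z(1-|z|^{-2})$ and $b=\im z(1+|z|^{-2})$, a direct computation gives
\begin{equation*}
\im\theta_{12}(z)=\sqrt3\,b\left[\xi-\frac{1-a^2-b^2}{(1+a^2-b^2)^2+4a^2b^2}\right],
\end{equation*}
which one checks against \eqref{Reitheta}. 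Since $b$ has the sign of $\im z$ and $b/\im z=1+|z|^{-2}\ge1$ is bounded on the relevant annular region, both the sign and the size of $\im\theta_{12}$ are governed by the bracketed factor (call it $F(a,b)$), while the prefactor $b$ supplies the $\im z$ in every estimate. The hypothesis $|\re z(1-|z|^{-2})|=|a|\lessgtr 2$ is thus simply a dichotomy on $|\re\tilde{k}|$.

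Next I would analyze $F$ on the real axis. There $F(a,0)=g'(a)=\xi-(1-a^2)/(a^2+1)^2$, and the characteristic equation \eqref{xik1}--\eqref{sadpoi} lets me factor
\begin{equation*}
g'(a)=\frac{\xi\,(a^2-\kappa_0^2)(a^2-\kappa_1^2)}{(a^2+1)^2}.
\end{equation*}
Because $\xi<0$, this is negative precisely for $a<\kappa_0$ and $a>\kappa_1$, which are exactly the ranges covered by the sectors $\Omega_j^0$ (the middle band $\kappa_0<a<\kappa_1$ corresponds to the intervals $I_j^l$ that are kept rather than opened). This fixes the sign $\im\theta_{12}<0$ on $\Omega_j^0$; and since $\theta_{12}$ has real coefficients one has $\im\theta_{12}(\bar z)=-\im\theta_{12}(z)$, so the conjugate-sector estimates \eqref{p1theib} and \eqref{p1theob} follow directly from \eqref{p1theia} and \eqref{p1theoa}.

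The two regimes then require different lower bounds on $|g'|$. For $|a|>2$ we are bounded away from the saddle cluster $\kappa_0,\kappa_1\to\sqrt3$, the fraction $(1-a^2)/(a^2+1)^2$ is controlled, and $g'(a)$ stays comparably close to $\xi\approx-\frac18$; combined with $b\ge\im z$ this yields the clean linear bound $\im\theta_{12}\le-\frac{\sqrt3}{8}\im z$. For $|a|\le2$ the sector $\Omega_j^0$ sits on one side of \emph{both} nearby saddles, so I would bound the product of the two linear factors by the square of the nearer one — e.g.\ $(a-\kappa_0)(a-\kappa_1)\ge(a-\kappa_1)^2$ when $a>\kappa_1$ — producing $|g'(a)|\gtrsim(a-\kappa_j)^2$ with a constant that stays positive uniformly as $\xi\to-\frac18^+$ (this is where the degenerate-saddle data $g''(\sqrt3)=0$, $g'''(\sqrt3)=-\frac{3}{16}$ enters). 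Transferring $a-\kappa_j$ back to $\re z-\xi_j$ through $d\tilde{k}/dz=1+z^{-2}$ at the saddle then gives the quadratic estimates \eqref{p1theia}--\eqref{p1theib}.

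The main obstacle is the passage from $F(a,0)$ to $F(a,b)$, i.e.\ accounting for the finite opening angle $\varphi_0$. Near the colliding saddles one has $b\sim(a-\kappa_j)\tan\varphi_0$, which is the \emph{same} order as $a-\kappa_j$; hence the $b$-dependence of $F$ is not a lower-order perturbation of $g'(a)$ and could in principle overturn the sign. I would handle this by expanding $F(a,b)=g'(a)+b^2h(a,b)$ (equivalently, working from $\im[(\tilde{k}-\sqrt3)^3]=3(a-\sqrt3)^2b-b^3$ near the merged saddle) and showing that, for $\varphi_0$ chosen small enough — precisely the choice that makes each $\Omega_j^0$ disjoint from $\{\im\theta_{12}=0\}$ — the quadratic-in-$(a-\kappa_j)$ term dominates uniformly in $\xi$ through the collision. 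Away from the saddles this perturbative step is routine; it is the uniformity as $\xi\to-\frac18$, where two simple saddles merge into one degenerate saddle, that makes the $|a|\le2$ case the delicate part of the argument.
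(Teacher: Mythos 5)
Your reduction coincides with the paper's: the same substitution $\tilde{k}=z-1/z=u+iv$, the same identity $\im\theta_{12}=\sqrt{3}\,v\,\bigl(\xi+F(u,v)\bigr)$ with $F(u,v)=\frac{u^2+v^2-1}{(u^2-v^2+1)^2+4u^2v^2}$, and the same factorization on the real axis, $\xi+F(u,0)=\xi\,\frac{(u^2-\kappa_0^2)(u^2-\kappa_1^2)}{(u^2+1)^2}$, which is exactly the paper's expression $(u^2-\tilde{k}_1^2)\frac{3+u^2-\tilde{k}_1^2(u^2-1)}{(1+\tilde{k}_1^2)^2(u^2+1)^2}$ after using $\kappa_0^2=\frac{3+\kappa_1^2}{\kappa_1^2-1}$. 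The one place you genuinely depart from the paper is the passage from $v=0$ to $v\neq 0$: the paper settles it in one line by asserting $F(u,v)\le F(u,0)$ for $u^2\le 4$ and $F(u,v)\le 0$ for $u^2>4$, while you flag it as the delicate step and propose a perturbative argument in $b$ using the smallness of $\varphi_0$. Your caution is warranted: the paper's asserted inequalities are false for general $(u,v)$ (for instance $F(2,1)=\tfrac18>\tfrac{3}{25}=F(2,0)$, and $F(u,v)>0$ whenever $u^2+v^2>1$, hence for every $u^2>4$); they can only hold after restricting $(u,v)$ to the image of the thin sectors, which is precisely the uniformity-in-$\varphi_0$ argument you sketch but do not carry out. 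Since $b$ is of the same order as $a-\kappa_j$ inside the sector, this step is a genuine computation rather than a routine perturbation, and without it your proof (like the paper's) is incomplete.

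There is also a concrete error in your outer regime. For $|a|>2$ the fraction $F(a,0)=\frac{a^2-1}{(a^2+1)^2}$ is not negligible: it decreases from $\tfrac{3}{25}$ at $a^2=4$ to $0$ at infinity, and $\tfrac{3}{25}$ nearly cancels $\xi\approx-\tfrac18$, so $g'(a)=\xi+F(a,0)\approx\xi\cdot\frac{(a^2-3)^2}{(a^2+1)^2}$ is about $-\tfrac1{200}$ near $|a|=2$ and only tends to $\xi$ as $a\to\infty$. Hence $g'$ is \emph{not} ``comparably close to $\xi$'' on all of $|a|>2$, and the constant $\tfrac{\sqrt3}{8}$ in \eqref{p1theoa} cannot be obtained from your argument (taken literally the linear bound with that constant fails near $|a|=2$; whatever constant survives must degrade as $|a|\downarrow 2$, or the quadratic estimate must be carried further out before switching to a linear one). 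So the split at $|a|=2$ needs to be handled by continuing the quadratic bound up to where $\frac{(a^2-\kappa_0^2)(a^2-\kappa_1^2)}{(a^2+1)^2}$ is bounded below by an absolute constant, rather than by asserting a clean linear bound with the advertised constant.
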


\begin{proof}
We give the proof on the sector $\Omega_1$ and the proof on other sectors can be obtained similarly.

For $z\in \Omega_1$, denote
\begin{align*}
 z - 1/z := u+vi, \ u,v \in \mathbb{R}, \ \text{and} \ \tilde{k}_1 : = \xi_1-1/\xi_1.
\end{align*}
Then we obtain $u = \re z (1-1/|z|^2)$, $v= \im z (1+1/|z|^2)$. From \eqref{deftheta12},
\begin{align}\label{p1the}
\im \theta_{12} = \sqrt{3} v \left(\xi + F(u,v) \right), \ F(u,v): = \frac{u^2 +v^2-1}{(u^2-v^2+1)^2+4u^2v^2}.
\end{align}
It is evident that
\begin{align}\label{f1}
F(u,v) \le
\begin{cases}
&F(u,0), \quad  u^2 \le 4,\\
&0,\quad  u^2 >4.
\end{cases}
\end{align}
Moreover, from \eqref{xik1},we have
\begin{align}\label{f2}
\xi + F(u,0) = \frac{1-\tilde{k}_1^2}{(1+\tilde{k}_1^2)^2} +\frac{u^2-1}{(u^2+1)^2} = (u^2 -\tilde{k}_1^2) \frac{3+u^2-\tilde{k}_1^2(u^2-1)}{(1+\tilde{k}_1^2)^2(u^2+1)^2}
\end{align}
Inserting \eqref{f1} and \eqref{f2} into \eqref{p1the}, we obtain \eqref{p1theia} and \eqref{p1theoa} from the ranges of $u$ and $\tilde{k}_1$.

\end{proof}

To obtain the large time asymptotic behavior of $M^{B}_1(e^{\frac{\pi i}{6}})$, we transform it to a pure $\bar{\partial}$-problem by multiplying a new function $R^{B}$ defined as follow:
\begin{align*}
	R^{B}(z)=\left\{\begin{array}{lll}
		\left(\begin{array}{ccc}
			1 & R^{B}_{+}e^{it\theta_{12}} & 0\\
			0 & 1 & 0\\
			0 & 0 & 1
		\end{array}\right), & z\in \mathbb{C}^-,\\[12pt]
	\left(\begin{array}{ccc}
	1 & 0 & 0\\
	 R^{B}_{-}e^{-it\theta_{12}} & 1 & 0\\
	0 & 0 & 1
	\end{array}\right), & z\in \mathbb{C}^+,
	\end{array}\right.
\end{align*}
in which
\begin{align}
	&R^{B}_{+}(z)=\mathcal{X}(\text{Re}z)\mathcal{X}(\text{Im}z+1)f(\text{Re}z)g(z),\ \ \
	R^{B}_{-}(z)=\overline{R^{B}_{+}(\bar{z})}, \nonumber
\end{align}
with
\begin{align}
	f(z)=\frac{r(z)}{1-|r(z)|^2},\hspace{0.5cm} g(z)=T_{12}(z).
\end{align}
 Then  direct  calculations yield
\begin{align}
	|\bar{\partial}R^{B}_{+}|\lesssim |\mathcal{X}'(\text{Re}z)\mathcal{X}(\text{Im}z+1)|+|\mathcal{X}(\text{Re}z)\mathcal{X}'(\text{Im}z+1)|.
\end{align}
It is obvious that  the support of  $R^{B}_{+}$ and  $\bar{\partial}R^{B}_{+}$ are contained in $\mathbb{B}_1$.
Denote
\begin{align}
	\tilde{M}^{B}_1(z)=M^{B}_1(z)R^{B}(z).
\end{align}
Then
\begin{align}
	&\bar{\partial}\tilde{M}^{B}_1(z)=M^{B}_1(z)\bar{\partial}R^{B}(z),\ \ \  \tilde{M}^{B}_1(z)\sim I,\ z\to\infty.
\end{align}
Specially, $\tilde{M}^{B}_1$ has no jump. Therefore, its solution can be given by the following integral equation
\begin{equation}
\tilde{M}^{B}_1(z)=I+\frac{1}{\pi}\iint_\mathbb{C}\dfrac{\tilde{M}^{B}_1(\eta)\bar{\partial}R^{B} (\eta)}{\eta-z}dm(\eta).\label{tMB}
\end{equation}
Denote $C_B: L^\infty(\mathbb{C})\to L^\infty(\mathbb{C})$ be the integral  operator  as
\begin{equation}
	C_Bf(z)=\frac{1}{\pi}\iint_\mathbb{C}\dfrac{f(\eta)\bar{\partial}R^{B} (\eta)}{\eta-z}dm(\eta).\label{CB}
\end{equation}
\begin{Proposition}
	$C_B$ is a bounded integral  operator from $L^\infty(\mathbb{C})$ to $L^\infty(\mathbb{C})$. Moreover, $C_B$ has the following estimate:
	\begin{align}
		\parallel C_B \parallel_{L^\infty}\lesssim t^{-1/p}, \ p>1,
	\end{align}
which implies that $\left( I-C_B\right)^{-1} $ exists as $t\to\infty$.
\end{Proposition}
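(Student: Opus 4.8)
The plan is to bound the operator norm $\|C_B\|_{L^\infty\to L^\infty}$ directly from the integral representation \eqref{CB} and then deduce the invertibility of $I-C_B$ from a Neumann series once this norm is shown to decay in $t$. Since
\[
|C_Bf(z)|\le \frac{1}{\pi}\,\|f\|_{L^\infty}\iint_{\mathbb{C}}\frac{|\bar{\partial}R^{B}(\eta)|}{|\eta-z|}\,dm(\eta),
\]
it suffices to estimate $I(z):=\iint_{\mathbb{C}}\frac{|\bar{\partial}R^{B}(\eta)|}{|\eta-z|}\,dm(\eta)$ uniformly in $z\in\mathbb{C}$. I would first recall the structural facts already established: $\bar{\partial}R^{B}$ is supported in the fixed neighborhood $\mathbb{B}_1$ of $\varkappa_1=1$, the cutoff derivatives $\mathcal{X}'$ are bounded, and $\bar{\partial}R^{B}$ carries the oscillatory factor $e^{\pm it\theta_{12}}$; hence $|\bar{\partial}R^{B}(\eta)|\lesssim e^{-t\im\theta_{12}(\eta)}$ with support in $\mathbb{B}_1$.

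The first genuine step is to extract the time decay from this exponential factor. Writing $\eta=\alpha+i\beta$, I would use that $\theta_{12}$ is analytic at $z=1$ with $\theta_{12}(1)=0$ and $\theta_{12}'(1)=2\sqrt{3}(\xi-1)\neq0$ throughout the transition zone, so that the sign structure recorded in Lemma \ref{p1the12} yields a lower bound of the form $\im\theta_{12}(\eta)\ge c\,|\im\eta|$ on $\mathbb{B}_1$ for some $c>0$, consistent with the decay of $e^{\pm it\theta_{12}}$ on the relevant half-plane. This reduces the estimate to controlling $\iint_{\mathbb{B}_1}\frac{e^{-ct|\beta|}}{|\eta-z|}\,dm(\eta)$.

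The second step is an iterated Hölder argument. For fixed $\beta$, integrating in $\alpha$ over the bounded support and using $\big\||\eta-z|^{-1}\big\|_{L^q_\alpha}\lesssim |\beta-\im z|^{1/q-1}$ for $1<q<\infty$ gives $\int|\eta-z|^{-1}\,d\alpha\lesssim |\beta-\im z|^{-1/p}$, with $p$ the conjugate exponent. Performing the remaining one-dimensional integral $\int e^{-ct|\beta|}|\beta-\im z|^{-1/p}\,d\beta$ and taking the supremum over $\im z$ (the extremal case being $\im z$ near the real axis) produces a bound of order $t^{1/p-1}=t^{-1/p'}$; after relabeling the conjugate exponent this is exactly the claimed $\|C_B\|_{L^\infty}\lesssim t^{-1/p}$, $p>1$. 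The same computation shows $\sup_z I(z)<\infty$ for each fixed $t$, which establishes that $C_B$ is a bounded operator on $L^\infty(\mathbb{C})$.

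Finally, since $\|C_B\|_{L^\infty\to L^\infty}\to0$ as $t\to\infty$, there is a $T_0$ with $\|C_B\|<1$ for $t>T_0$, so the Neumann series $\sum_{n\ge0}C_B^n$ converges in operator norm and furnishes $(I-C_B)^{-1}$. I expect the main obstacle to be the degeneracy of the exponential decay near the real axis, where $\im\theta_{12}$ vanishes: the factor $e^{-t\im\theta_{12}}$ is uniformly small only away from $\mathbb{R}$, so the decay rate is a genuine power of $t$ rather than exponential, dictated by the competition between the integrable singularity $|\beta-\im z|^{-1/p}$ and the shrinking width $O(1/t)$ of the region in which $e^{-ct|\beta|}$ is non-negligible. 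Handling this boundary behavior carefully (and tracking which conjugate exponent the final $p$ refers to) is where the computation must be done with some care, though it remains routine and parallels the $\bar{\partial}$-estimates in \cite{novYF,fNLS}.
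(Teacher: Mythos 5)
Your proposal is correct and follows essentially the same route as the paper: localize to the support of $\bar{\partial}R^{B}$ in $\mathbb{B}_1$, extract the linear exponential decay $e^{-ct|\im\eta|}$ from the phase (the paper cites Lemma \ref{p1the12} with the quadratic factor $(u-z_b)^2\im\eta$, which reduces to the same linear bound on the fixed support, while you argue via $\theta_{12}'(1)\neq 0$), apply H\"older in the horizontal variable with $\||\eta-z|^{-1}\|_{L^q}\lesssim|v-y|^{1/q-1}$, integrate in $v$ to obtain $t^{-1/p}$, and conclude by a Neumann series. The sign slips in your intermediate statement of the lower bound on $\im\theta_{12}$ and the $p$--$q$ relabeling are cosmetic (the paper has the same exponent ambiguity) and do not affect the argument.
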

\begin{proof}
	A direct calculation shows that
	\begin{align}
		\parallel C_B \parallel_{L^\infty}\lesssim \iint_{\mathbb{C}^+}\dfrac{|\bar{\partial}R^{B} (\eta)|}{|\eta-z|}dm(\eta)+\iint_{\mathbb{C}^-}\dfrac{|\bar{\partial}R^{B} (\eta)|}{|\eta-z|}dm(\eta).
	\end{align}
Take the first term as an example. Let $\eta=u+vi$, $z=x+yi$.  Using H\"{o}lder's inequality, Lemma \ref{p1the12}, and the following basic inequalities
\begin{align}
	\parallel |\eta-z|^{-1}\parallel_{L^q(0,+\infty)}
	\lesssim |v-y|^{1/q-1},\label{s-z}
\end{align}
where $1< q<+\infty$ and $\frac{1}{p}+\frac{1}{q}=1$, it follows that
\begin{align}
	\iint_{\mathbb{C}^+}\dfrac{|\bar{\partial}R^{B} (\eta)|}{|\eta-z|}dm(\eta)&\le \int_{0}^{2\varepsilon}\int_{1-2\varepsilon}^{1+2\varepsilon}|\eta-z|^{-1}|\bar{\partial}R^{B} (\eta)|e^{- \frac{\sqrt{3}(7-3 \tilde{k}_1^2)}{(\tilde{k}_1+1)^2}t(u-z_b)^2v} du dv\nonumber\\
	&\lesssim\int_{0}^{2\varepsilon}v^{-1/q}e^{- \frac{\sqrt{3}(7-3 \tilde{k}_1^2)}{(\tilde{k}_1+1)^2}(1+2\varepsilon-z_b)^2tv}dv\lesssim t^{-1/p}.
\end{align}
\end{proof}
Hence, from $\tilde{M}^{B}_1=\left( I-C_B\right)^{-1}\cdot I$, we get the existence and uniqueness of $\tilde{M}^{B}_1(z)$. Take $z=e^{\frac{\pi i}{6}}$ in (\ref{tMB}), then
 \begin{equation}
 	\tilde{M}^{B}_1(z)-I=\frac{1}{\pi}\iint_\mathbb{C}\dfrac{\tilde{M}^{B}_1(\eta)\bar{\partial}R^{B} (\eta)}{\eta-e^{\frac{\pi i}{6}}}dm(\eta).
 \end{equation}
\begin{Proposition}
	There exists  a constant $T_1$, such that for all $t>T_1$,   $\tilde{M}^{B}(z)$   admits the following estimate
	\begin{align}
		\parallel \tilde{M}^{B}(e^{\frac{i\pi}{6}})-I\parallel\lesssim t^{-1}.\label{mBi}
	\end{align}
\end{Proposition}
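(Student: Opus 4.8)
The plan is to feed the resolvent bound from the previous proposition into the Cauchy integral representation and then gain an extra power of $t$ from the fact that the evaluation point $e^{\frac{i\pi}{6}}$ lies outside the support of $\bar{\partial}R^{B}$. First I would record that, since $\parallel C_B\parallel_{L^\infty}\lesssim t^{-1/p}\to 0$, there is a threshold $T_1$ beyond which $(I-C_B)^{-1}$ exists with $\parallel(I-C_B)^{-1}\parallel\le 2$; consequently $\tilde{M}^{B}_1=(I-C_B)^{-1}I$ obeys the uniform bound $\parallel\tilde{M}^{B}_1\parallel_{L^\infty(\mathbb{C})}\lesssim 1$ for all $t>T_1$. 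This uniform $L^\infty$ control of the unknown inside the integrand is all that the final step requires.

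Next, starting from the representation
\begin{equation*}
\tilde{M}^{B}_1(e^{\frac{i\pi}{6}})-I=\frac{1}{\pi}\iint_{\mathbb{C}}\frac{\tilde{M}^{B}_1(\eta)\bar{\partial}R^{B}(\eta)}{\eta-e^{\frac{i\pi}{6}}}\,dm(\eta),
\end{equation*}
I would use that $\bar{\partial}R^{B}$ is supported in $\mathbb{B}_1$, a shrinking neighborhood of $\varkappa_1=1$, whereas $e^{\frac{i\pi}{6}}\notin\mathbb{B}_1$. Hence there is a constant $c_0>0$ with $|\eta-e^{\frac{i\pi}{6}}|\ge c_0$ throughout the support, so the Cauchy kernel carries no singularity and may simply be replaced by $c_0^{-1}$. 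This is precisely the structural difference from the boundedness argument for $C_B$: there the target point could lie inside $\mathbb{B}_1$, forcing the H\"older estimate $\parallel|\eta-z|^{-1}\parallel_{L^q}\lesssim|v-y|^{1/q-1}$ and the loss of a factor $v^{-1/p}$; here that loss is absent, which is what upgrades the power from $t^{-1/p}$ to $t^{-1}$.

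It then remains to bound $\iint_{\mathbb{C}}|\bar{\partial}R^{B}(\eta)|\,dm(\eta)$. Writing $\eta=u+iv$ and splitting into $\mathbb{C}^{\pm}$, I would use that $|\bar{\partial}R^{B}_{\pm}|$ is bounded (controlled by $\parallel\mathcal{X}'\parallel_\infty$ times bounded factors) and supported in $u\in(1-2\varepsilon,1+2\varepsilon)$, $|v|\le 2\varepsilon$, together with the decay of the oscillatory exponential. Because the phase is non-stationary at the singularity — indeed \eqref{deftheta12} gives $\theta_{12}(z)\sim\sqrt{3}(\xi-1)(z-\tfrac{1}{z})$ as $z\to 1$ with $\xi-1<0$ in the transition zone, a sign quantified rigorously by Lemma \ref{p1the12} — one has $\im\theta_{12}(\eta)\le -c'|\im\eta|$ for $\eta\in\mathbb{C}^+$ (and $\ge c'|\im\eta|$ for $\eta\in\mathbb{C}^-$), so that $|e^{\mp it\theta_{12}(\eta)}|\le e^{-c't|v|}$ on the respective half-planes. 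Therefore
\begin{equation*}
\iint_{\mathbb{C}^{+}}|\bar{\partial}R^{B}|\,dm\lesssim\int_{0}^{2\varepsilon}\!\!\int_{1-2\varepsilon}^{1+2\varepsilon}e^{-c'tv}\,du\,dv\lesssim\int_{0}^{2\varepsilon}e^{-c'tv}\,dv\lesssim t^{-1},
\end{equation*}
and the same on $\mathbb{C}^-$; combined with the previous two paragraphs this gives $\parallel\tilde{M}^{B}(e^{\frac{i\pi}{6}})-I\parallel\lesssim t^{-1}$. The estimates for the remaining $\varkappa_j$ follow from the symmetry relations already recorded.

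The hard part will not be any single inequality but pinning down why the single evaluation point $e^{\frac{i\pi}{6}}$ buys the full inverse power of $t$: the entire gain is the removal of the Cauchy-kernel singularity, and hence of the $v^{-1/p}$ weight, leaving a bounded amplitude integrated against $e^{-c'tv}$ over $v\in(0,2\varepsilon)$, which contributes exactly $t^{-1}$. Two uniformity checks must accompany this — that $e^{\frac{i\pi}{6}}$ is separated from $\mathbb{B}_1$ uniformly (true since $\mathbb{B}_1$ contracts to $z=1$), and that the decay constant $c'$ can be chosen uniformly in the zone, which holds because $\tilde{k}_1\to\sqrt{3}$ and the saddle points remain bounded away from $1$ as $\xi\to-\frac18$.
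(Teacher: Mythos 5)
Your proposal is correct and follows essentially the same route as the paper: both arguments bound the Cauchy kernel by a constant because $e^{\frac{i\pi}{6}}$ lies outside $\mathbb{B}_1$, use the uniform boundedness of $\tilde{M}^{B}_1$ coming from the invertibility of $I-C_B$, and then reduce to $\iint_{\mathbb{C}^{\pm}}|\bar{\partial}R^{B}|\,dm$, which is estimated by integrating the linear-in-$\im\eta$ exponential decay of $e^{\mp it\theta_{12}}$ (guaranteed by Lemma \ref{p1the12}, since the support of $\mathcal{X}$ stays away from the saddle points) to produce the factor $t^{-1}$. Your explicit identification of the removal of the Cauchy-kernel singularity as the source of the upgrade from $t^{-1/p}$ to $t^{-1}$ matches the paper's reasoning exactly.
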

\begin{proof}
	Since $e^{\frac{\pi i}{6}}\notin\mathbb{B}_1$, $|s-e^{\frac{\pi i}{6}}|\lesssim1$ in $\mathbb{B}_1$.  Then, we have
	\begin{align}
		\frac{1}{\pi}\iint_\mathbb{C}\dfrac{\tilde{M}^{B}_1(\eta)\bar{\partial}R^{B} (\eta)}{\eta-e^{\frac{\pi i}{6}}}dm(\eta)\lesssim\iint_{\mathbb{C}^+}|\bar{\partial}R^{B} (\eta)|dm(\eta)+\iint_{\mathbb{C}^-}|\bar{\partial}R^{B} (\eta)|dm(\eta).\nonumber
	\end{align}
Here we take the estimate of the  first term as an example
	\begin{align}
	&\iint_{\mathbb{C}^+}|\bar{\partial}R^{B} (\eta)|dm(\eta)\lesssim\int_{0}^{2\varepsilon}\int_{1-2\varepsilon}^{1+2\varepsilon}|\bar{\partial}R^{B} (\eta)|e^{- \frac{\sqrt{3}(7-3 \tilde{k}_1^2)}{(\tilde{k}_1+1)^2}(1+2\varepsilon-z_b)^2tv} dudv\nonumber\\
	&\lesssim\int_{0}^{2\varepsilon}e^{- \frac{\sqrt{3}(7-3 \tilde{k}_1^2)}{(\tilde{k}_1+1)^2}(1+2\varepsilon-z_b)^2tv}dv\lesssim t^{-1}.
\end{align}
The  estimate of the second term can be given in a similar way.
\end{proof}
Finally, we obtain
\begin{align}
	M^{B}_1(e^{\frac{i\pi}{6}})=
	\tilde{M}^{B}_1(e^{\frac{i\pi}{6}})R^{B}(e^{\frac{i\pi}{6}})=I+\mathcal{O}(t^{-1}).\label{asyMB}
\end{align}
%Similarly, for $j=2,\cdots,6$,
%\begin{align}
%	M^{B}_j(e^{\frac{i\pi}{6}})=I+\mathcal{O}(t^{-1}).
%\end{align}
In addition, for $z \in \partial \mathbb{B}_1$, $R^{B}(z)=I$ and then $M^{B}_1(z)=\tilde{M}^{B}_1(z)$.
Furthermore, for $z\in\partial\mathbb{B}_1$, when $\eta \in\partial\mathbb{B}_1$,
we still have $ {|\bar{\partial}R^{B} (\eta)|}{|\eta-z|}^{-1}=0$, which  implies that $  {|\bar{\partial}R^{B} (\eta)|}{|\eta-z|}^{-1}$ is bounded for $z\in\partial\mathbb{B}_1$ and  $\eta \in\mathbb{B}_1$.
Then through direct calculations, we obtain  for $z\in\partial\mathbb{B}_1$,
\begin{align}
	M^{B}_1(z)=I+\mathcal{O}(t^{-1}).
\end{align}
Moreover,  $M^{B}_1(1)= R^B(1)^{-1} +\mathcal{O}(t^{-1/p})$.
Here, $p$ is a  arbitrary  constant with $p>1$. For ease of use, rewrite $p$ as  $1/p=1-\rho$ where $\rho$ is a small enough positive constant with $\rho<\frac{1}{4}$. Then we obtain
 $$M^{B}_1(1)=R^B(1)^{-1}
+\mathcal{O}(t^{-1+\rho}).$$ Similarly,  for $j=2,\cdots,6$,
\begin{align}
	 \lim_{z\to \kappa_j} \frac{M^{B}_j(z)-M^{B}_j(\kappa_j )}{z-\kappa_j} =
	\mathcal{O}(t^{-1+\rho}).\label{MBkap}
\end{align}

\subsubsection{Small norm RH problem for the residual error  }\label{sec7}

\quad  In this section,  we consider the error matrix-function $E(z)$.
From the  definition (\ref{transm4}), we can obtain the  RH problem  for    $E(z)$:

\begin{RHP}\label{p1e1}
	 Find a matrix-valued function $E(z)$  with  properties:
\begin{itemize}	

\item $E(z)$ is analytical  in $\mathbb{C}\setminus  \Sigma^{E} $, where
\begin{align*}
\Sigma^{E}=\partial  U\cup ( \cup_{j=1}^6 \partial\mathbb{B}_j) \
(\Sigma^{(3)}\setminus (U \cup  (\cup_{j=1}^6 \mathbb{B}_j) ).
\end{align*}

\item $E(z)$  has continuous boundary values $E_\pm(z)$  satisfying
$$E_+(z)=E_-(z)V^{E}(z), \ \ z\in \Sigma^{E},$$
where the jump matrix $V^{E}(z)$ is given by
\begin{equation}
V^{E}(z)=\left\{\begin{array}{llll}
M^{ O}(z)V^{(3)}(z)M^{ O }(z)^{-1}, & z\in \Sigma^{(3)}\setminus (U \cup  (\cup_{j=1}^6 \mathbb{B}_j) ),\\[4pt]
M^{ O }(z)M^{L,l}_j(z)^{-1}M^{ O }(z)^{-1},  & z\in \partial U_j^l, \ j=a,b,c,d, \ l=0,1,2,\\[4pt]
M^{ O }(z)M^{B}_j(z)^{-1}M^{ O }(z)^{-1},  & z\in \partial \mathbb{B}_j,\ j=,1,\cdots,6.
\end{array}\right. \label{deVE}
\end{equation}
See  Figure \ref{figE}.

\item $E(z)$  has the following asymptotic behavior:
\begin{align*}
	&E(z) = I+\mathcal{O}(z^{-1}),\hspace{0.5cm} z \rightarrow \infty.
\end{align*}

\item As $z \to \varkappa_j = e^{\frac{i\pi(j-1)}{3}}, j = 1,\cdots,6$,
the limit  of $E(z)$
 has pole singularities with leading terms of a specific
matrix structure
\begin{align}
	&\lim_{z\to \varkappa_j}E(z)=\lim_{z\to \varkappa_j}M^R(z)M^B_j(z)^{-1}M^{O }(z)^{-1}=\mathcal{O} (  (z-\varkappa_j)^{-2}).
\end{align}
\end{itemize}
\end{RHP}

\begin{figure}[htp]
	\centering
	\begin{tikzpicture} [scale=0.7]
		\draw[dotted](0,0)--(5.2,0)node[right]{ $L_1$};
		\draw[dotted](0,0)--(-5.2,0)node[left]{ $L_4$};
		\draw[dotted,rotate=60](0,0)--(5.2,0)node[above]{ $L_2$};
		\draw[dotted,rotate=60](0,0)--(-5.2,0)node[below]{ $L_5$};
		\draw[dotted,rotate=120](0,0)--(5.2,0)node[above]{ $L_3$};
		\draw[dotted,rotate=120](0,0)--(-5.2,0)node[below]{ $L_6$};
		
		\draw[thick](2.3,0.1)--(2.887,0.4)--(3.5,0.1);
		\draw[thick](4.5,0.1)--(5.087,0.4);
        \filldraw (1.8,0)circle(0.04cm);
        \filldraw (4,0)circle(0.04cm);
        \node [below] at (1.9,0.15) {\tiny{$z_b$}};
        \node [below] at (4.1,0.15) {\tiny{$z_a$}};
        \filldraw (-1.8,0)circle(0.04cm);
        \filldraw (-4,0)circle(0.04cm);
        \node [below] at (-1.8,0.15) {\tiny{$z_c$}};
        \node [below] at (-4,0.15) {\tiny{$z_d$}};

		\draw [red,thick] (1.8,0)circle(0.5cm);
        \draw [red,thick] (4,0)circle(0.5cm);
        \draw [red,thick,rotate=120] (4,0)circle(0.5cm);
        \draw [red,thick,rotate=120] (1.8,0)circle(0.5cm);
        \draw [red,thick,rotate=180] (4,0)circle(0.5cm);
        \draw [red,thick,rotate=180] (1.8,0)circle(0.5cm);
        \draw [red,thick,rotate=240] (4,0)circle(0.5cm);
        \draw [red,thick,rotate=240] (1.8,0)circle(0.5cm);
        \draw [red,thick,rotate=300] (4,0)circle(0.5cm);
        \draw [red,thick,rotate=300] (1.8,0)circle(0.5cm);
        \draw [red,thick,rotate=60] (1.8,0)circle(0.5cm);
		\draw [red,thick,rotate=60] (4,0)circle(0.5cm);

		\draw[thick,rotate=60](2.3,0.1)--(2.887,0.4)--(3.5,0.1);
		\draw[thick,rotate=60](4.5,0.1)--(5.087,0.4);
	
      \filldraw [rotate=60](1.8,0)circle(0.04cm);
      \filldraw [rotate=60](4,0)circle(0.04cm);

      \node [right] at (2.3,3.4) {\tiny{$\omega^2z_d$}};
       \node [right] at (1.3,1.5) {\tiny{$\omega^2z_c$}};
	
		\draw[thick,rotate=120](2.3,0.1)--(2.887,0.4)--(3.5,0.1);
		\draw[thick,rotate=120](4.5,0.1)--(5.087,0.4);
	
		\filldraw [rotate=120](1.8,0)circle(0.04cm);
		\filldraw [rotate=120](4,0)circle(0.04cm);
		
        \node [left] at (-1.3,1.5) {\tiny{$\omega z_b$}};
        \node [left] at (-2.3,3.4) {\tiny{$\omega z_a$}};
	
		\draw[thick,rotate=180](2.3,0.1)--(2.887,0.4)--(3.5,0.1);
		
		\draw[thick,rotate=180](4.5,0.1)--(5.087,0.4);
		
		\draw[thick,rotate=240](2.3,0.1)--(2.887,0.4)--(3.5,0.1);
		
		\draw[thick,rotate=240](4.5,0.1)--(5.087,0.4);

		\filldraw [rotate=240](1.8,0)circle(0.04cm);
		\filldraw [rotate=240](4,0)circle(0.04cm);
		
		\node [left] at (-1.3,-1.5) {\tiny{$\omega^2z_b$}};
		\node [left] at (-2.3,-3.4) {\tiny{$\omega^2z_a$}};

		\draw[thick,rotate=300](2.3,0.1)--(2.887,0.4)--(3.5,0.1);
		
		\draw[thick,rotate=300](4.5,0.1)--(5.087,0.4);
		
		\filldraw [rotate=300](1.8,0)circle(0.04cm);
		\filldraw [rotate=300](4,0)circle(0.04cm);
		\node [right] at (1.3,-1.5) {\tiny{$\omega z_c$}};
		\node [right] at (2.3,-3.4) {\tiny{$\omega z_d$}};

		\draw[thick](-1.3,0.1)--(-0.7,0.4);
		\draw[thick](1.3,0.1)--(0.7,0.4);
		\draw [thick](0,0.8)--(0,-0.8);
		\draw[thick](2.3,-0.1)--(2.887,-0.4)--(3.5,-0.1);
		\draw[thick](4.5,-0.1)--(5.087,-0.4);
	
		\draw [thick](2.887,-0.4)--(2.887,0.4);
		
		\draw[thick,rotate=60](-1.3,0.1)--(-0.7,0.4);
		\draw[thick,rotate=60](1.3,0.1)--(0.7,0.4);
		\draw [thick,rotate=60](0,0.8)--(0,-0.8);
	
		\draw[thick,rotate=60](2.3,-0.1)--(2.887,-0.4)--(3.5,-0.1);
		\draw[thick,rotate=60](4.5,-0.1)--(5.087,-0.4);
		\draw [thick,rotate=60](2.887,-0.4)--(2.887,0.4);

		\draw[thick,rotate=120](-1.3,0.1)--(-0.7,0.4);
		\draw[thick,rotate=120](1.3,0.1)--(0.7,0.4);
		\draw [thick,rotate=120](0,0.8)--(0,-0.8);
	
		\draw[thick,rotate=120](2.3,-0.1)--(2.887,-0.4)--(3.5,-0.1);
		
		\draw[thick,rotate=120](4.5,-0.1)--(5.087,-0.4);
		
		\draw [thick,rotate=120](3.487,-0.4)--(3.487,0.4);

		\draw[thick,rotate=180](-1.3,0.1)--(-0.7,0.4);
		\draw[thick,rotate=180](1.3,0.1)--(0.7,0.4);
		
		\draw[thick,rotate=180](2.3,-0.1)--(2.887,-0.4)--(3.5,-0.1);
		
		\draw[thick,rotate=180](4.5,-0.1)--(5.087,-0.4);
		
		\draw [thick,rotate=180](2.887,-0.4)--(2.887,0.4);

		\draw[thick,rotate=240](-1.3,0.1)--(-0.7,0.4);
		\draw[thick,rotate=240](1.3,0.1)--(0.7,0.4);
		\draw[thick,rotate=240](-1.3,-0.1)--(-0.7,-0.4);
		\draw[thick,rotate=240](1.3,-0.1)--(0.7,-0.4);

		\draw[thick,rotate=240](2.3,-0.1)--(2.887,-0.4)--(3.5,-0.1);
		\draw[thick,rotate=240](4.5,-0.1)--(5.087,-0.4);
	
		\draw [thick,rotate=240](2.887,-0.4)--(2.887,0.4);

		\draw[thick,rotate=300](-1.3,0.1)--(-0.7,0.4);
		\draw[thick,rotate=300](1.3,0.1)--(0.7,0.4);
		\draw[thick,rotate=300](-1.3,-0.1)--(-0.7,-0.4);
		\draw[thick,rotate=300](1.3,-0.1)--(0.7,-0.4);
		\draw[thick,rotate=300](2.3,-0.1)--(2.887,-0.4)--(3.5,-0.1);
		\draw[thick,rotate=300](4.5,-0.1)--(5.087,-0.4);
		\draw [thick,rotate=300](2.887,-0.4)--(2.887,0.4);

        \draw[thick,orange](-2.44,-0.13)--(-2.44,0.13)--(-2.7,0.13)--(-2.7,-0.13)--(-2.44,-0.13);
		\draw[thick,orange,rotate=120](-2.44,-0.13)--(-2.44,0.13)--(-2.7,0.13)--(-2.7,-0.13)--(-2.44,-0.13);
\draw[thick,orange,rotate=60](-2.44,-0.13)--(-2.44,0.13)--(-2.7,0.13)--(-2.7,-0.13)--(-2.44,-0.13);
		\draw[thick,orange,rotate=180](-2.44,-0.13)--(-2.44,0.13)--(-2.7,0.13)--(-2.7,-0.13)--(-2.44,-0.13);
\draw[thick,orange,rotate=240](-2.44,-0.13)--(-2.44,0.13)--(-2.7,0.13)--(-2.7,-0.13)--(-2.44,-0.13);
\draw[thick,orange,rotate=300](-2.44,-0.13)--(-2.44,0.13)--(-2.7,0.13)--(-2.7,-0.13)--(-2.44,-0.13);
 \filldraw[orange] (2.57,0)circle(0.04cm);
    \filldraw [orange,rotate=60](2.57,0)circle(0.04cm);
    \filldraw [orange,rotate=120](2.57,0)circle(0.04cm);
    \filldraw [orange,rotate=240](2.57,0)circle(0.04cm);
    \filldraw [orange,rotate=300](2.57,0)circle(0.04cm);
    \filldraw [orange,rotate=180](2.57,0)circle(0.04cm);
  %   \node [right] at (2,0.2) {\tiny{$\omega^2$}};
   % \node [below] at (2.57,0.2) {\tiny{$1$}};
		\filldraw [red] (0,0) circle [radius=0.04];
		\node [right] at (-0.01,-0.35) {\textcolor{red}{\tiny$0$}};
	\end{tikzpicture}
	\caption{\footnotesize  The jump contour $\Sigma^{E}$. The red circles represents $U$ and the orange  rectangular box stands for $\mathbb{B}_j$, $j=1,\cdots,6$. }
	\label{figE}
\end{figure}
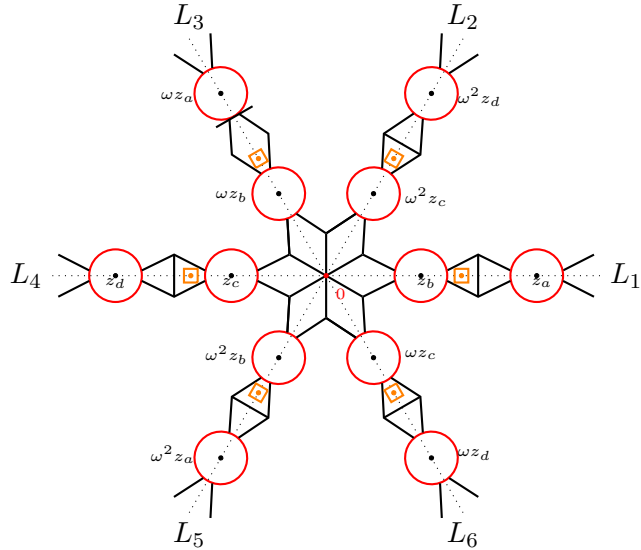

Next we prove that the above RH problem \ref{p1e1} can be approximated by the following RH problem, which owns the same jump condition with RH problem \ref{p1e1}, but without pole singularities.

\begin{RHP}\label{p1e2}
	 Find a matrix-valued function $E^{(1)}(z)$  with  properties:
\begin{itemize}	

\item $E^{(1)}(z)$ is analytical  in $\mathbb{C}\setminus  \Sigma^{E} $.

\item $E^{(1)}(z)$  has continuous boundary values $E^{(1)}_\pm(z)$  satisfying
$$E^{(1)}_+(z)=E^{(1)}_-(z)V^{E}(z), \ \ z\in \Sigma^{E},$$
where the jump matrix $V^{E}(z)$ is given by \eqref{deVE}.

\item $E^{(1)}(z) = I+\mathcal{O}(z^{-1}),\hspace{0.5cm} z \rightarrow \infty.$
\end{itemize}
\end{RHP}

  We will show  that  the  RH problem \ref{p1e2}  for  $E^{(1)}(z)$ is solvable for large $t$ as a small norm  RH problem.
  By  Proposition \ref{asymlo},  the jump matrix  $V^{E}(z)$ admits   the following estimates.
  	\begin{equation}
  		\parallel V^{E}(z)-I\parallel_{L^p(\Sigma^E)}\lesssim\left\{\begin{array}{llll}
  		e^{-ct^{3\delta_1}},  & z\in \Sigma^{E}\setminus (U \cup (\cup_{j=1}^6 \mathbb{B}_j)),\\[4pt]
  		 t^{-K_p},  & z\in  \partial U,\\
          t^{-1}, & z \in \partial \mathbb{B}_j,
  		\end{array}\right. \label{VE-I}
  	\end{equation}
 for some positive $c$ with $K_\infty = \delta_1$ and $K_2 = 1/6+\delta_1$.
%\begin{proof}
%For $z\in \partial U$,  $M^{ O}(z)$ is bounded,  so   by  Proposition \ref{asymlo} and (\ref{asyMB}),
%\begin{align}
%| V^{E}(z)-I|=   \big|M^{ O }(z)(M^{L}(z)-I)M^{O }(z)^{-1} \big| = \mathcal{O}(t^{-1/2}),\text{ }z\in\partial U.\label{VE}
%\end{align}
%\end{proof}
Therefore,    the   existence and uniqueness  of   RH problem \ref{p1e1} can be shown  by using  a  small-norm RH problem \cite{RN9,RN10}.  Moreover,  its solution   can be given by
\begin{equation}
E^{(1)}(z)=I+\frac{1}{2\pi i}\int_{\Sigma^{E}}\dfrac{\varpi(\eta) (V^{E}(\eta)-I)}{\eta-z}ds,\label{Ez}
\end{equation}
where $\varpi\in I + L^2(\Sigma^{E})$ is the unique solution of the following equation
\begin{equation}
  \varpi=I + \mathcal{C}_E\varpi.
\end{equation}
Here $\mathcal{C}_E$ is  a integral operator  defined by
\begin{equation}
\mathcal{C}_E(f)(z)=\mathcal{C}^-\left( f(V^{E}(z) -I)\right) ,
\end{equation}
and  $\mathcal{C}^-$ is the  Cauchy projection operator on $\Sigma^{E}$.
By  (\ref{VE-I}),   we have
\begin{equation}
\parallel \mathcal{C}_E\parallel_{L^2(\Sigma^{E})}\leq\parallel \mathcal{C}^-\parallel_{L^2(\Sigma^{E})} \parallel V^{E}(z)-I\parallel_{L^\infty(\Sigma^{E})} \lesssim t^{-\delta_1},
\end{equation}
which implies that  $1-\mathcal{C}_E$ is invertible  for   sufficiently large $t$.    So  $\varpi$  exists and is unique with
\begin{align}
  \varpi = I + (1-\mathcal{C}_E)^{-1} (\mathcal{C}_EI).
\end{align}
Moreover,  the following estimates hold:
\begin{equation}
\parallel \mathcal{C}_EI \parallel_{L^2(\Sigma^{E})} \lesssim t ^{-1/6-\delta_1/2}, \quad \parallel \varpi -I \parallel_{L^2(\Sigma^{E}) } \lesssim t ^{-1/6-\delta_1/2}. \label{normrho}
\end{equation}

In order to reconstruct the solution $u(y,t)$ of (\ref{Novikov}), it is  necessary to consider the large time asymptotic behavior of $E^{(1)}(e^{\frac{i\pi}{6}})$. Note that when  estimating its  asymptotic behavior, based on (\ref{Ez}) and (\ref{VE-I}), our calculations need only to focus on  $\partial U$, as  it  exponentially tends to zero on the remaining boundary.
\begin{Proposition}\label{p1asye}
	When $z= e^{\frac{i\pi}{6}}$, we have
	\begin{align*}
	E^{(1)}(e^{\frac{i\pi}{6}})=I+\frac{1}{2\pi i}\int_{\Sigma^{E}}\dfrac{\varpi(\eta) (V^{E}(\eta)-I)}{\eta-e^{\frac{i\pi}{6}}}d\eta,
	\end{align*}
	with the large time asymptotic behavior
	\begin{equation}\label{asyE}
	E^{(1)}(e^{\frac{i\pi}{6}})=I+t^{-1/3}E_1+\mathcal{O}(t^{-2/3+2\delta_1}),
	\end{equation}
where $E_1$  is explicitly  computed by
	\begin{align}
	E_1
	&=- \sum_{l=0}^2 \sum_{j\in\{a,b,c,d \}}c_j^{-1/3} \frac{M^O( \omega^lz_j)M^{L,j}_{j1}  M^O( \omega^l\xi_j)^{-1} }{\omega^lz_j -e^{\frac{i\pi}{6}} } \nonumber \\
    &=- \sum_{j\in\{a,b,c,d \}}c_j^{-1/3} \Bigl(  \frac{M^O(  z_j)M^{L,0}_{j1}  M^O(z_j)^{-1} }{z_j -e^{\frac{i\pi}{6}} } -  \frac{\omega M^O( \omega z_j) \Gamma_3 \overline{M^{L,0}_{j1}} \Gamma_3 M^O( \omega z_j)^{-1} }{\omega z_j -e^{\frac{i\pi}{6}} }\nonumber
    \\
    &-  \frac{\omega^2 M^O( \omega^2 z_j)\Gamma_2 \overline{M^{L,0}_{j1}} \Gamma_2  M^O( \omega^2z_j)^{-1} }{\omega^2z_j -e^{\frac{i\pi}{6}} } \Bigr).\label{H0}
	\end{align}
\end{Proposition}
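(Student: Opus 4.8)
The plan is to evaluate the integral representation \eqref{Ez} at $z=e^{\frac{i\pi}{6}}$ and to peel off the one portion of $\Sigma^{E}$ that produces the order $t^{-1/3}$ term. First I would substitute $\varpi=I+(\varpi-I)$ to split
\begin{align}
E^{(1)}(e^{\frac{i\pi}{6}})-I=\frac{1}{2\pi i}\int_{\Sigma^{E}}\frac{V^{E}(\eta)-I}{\eta-e^{\frac{i\pi}{6}}}\,d\eta+\frac{1}{2\pi i}\int_{\Sigma^{E}}\frac{(\varpi(\eta)-I)(V^{E}(\eta)-I)}{\eta-e^{\frac{i\pi}{6}}}\,d\eta.\nonumber
\end{align}
Because $e^{\frac{i\pi}{6}}$ stays a fixed positive distance from $\Sigma^{E}$, the Cauchy kernel is bounded on the contour, so Cauchy--Schwarz together with \eqref{normrho} and the bound $\|V^{E}-I\|_{L^{2}(\partial U)}\lesssim t^{-1/6-\delta_{1}}$ from \eqref{VE-I} controls the second integral by $t^{-1/3-3\delta_{1}/2}$; since $\delta_{1}>\tfrac19>\tfrac{2}{21}$ this is $\mathcal{O}(t^{-2/3+2\delta_{1}})$. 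In the first integral I would decompose $\Sigma^{E}=\partial U\cup(\cup_{j}\partial\mathbb{B}_{j})\cup(\Sigma^{E}\setminus(U\cup\cup_{j}\mathbb{B}_{j}))$; by \eqref{VE-I} the last part is $\mathcal{O}(e^{-ct^{3\delta_{1}}})$ and each $\partial\mathbb{B}_{j}$ is $\mathcal{O}(t^{-1})$, both absorbed into the error, leaving $\partial U$ as the only leading contribution.

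On each $\partial U_{j}^{l}$ the jump is $V^{E}-I=M^{O}(M^{L,l}_{j})^{-1}(M^{O})^{-1}-I$, and here I would insert the single-pole expansion of the local parametrix furnished by Proposition \ref{asymlo}, namely \eqref{p1deml}. Writing $\mathscr{C}_{j}^{0}=M^{L,0}_{j1}$, $\mathscr{C}_{j}^{1}=\omega\Gamma_{3}\overline{M^{L,0}_{j1}}\Gamma_{3}$, $\mathscr{C}_{j}^{2}=\omega^{2}\Gamma_{2}\overline{M^{L,0}_{j1}}\Gamma_{2}$ for the coefficient attached to the collision point $\omega^{l}z_{j}$, this yields
\begin{align}
V^{E}(\eta)-I=-c_{j}^{-1/3}t^{-1/3}\,\frac{M^{O}(\eta)\,\mathscr{C}_{j}^{l}\,M^{O}(\eta)^{-1}}{\eta-\omega^{l}z_{j}}+\mathcal{O}(t^{-2/3+2\delta_{1}})\nonumber
\end{align}
on $\partial U_{j}^{l}$. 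Since $M^{O}$ is analytic throughout $U$ — the soliton poles $\{\zeta_{n}\}_{n-kN_{0}\in\lozenge}$ lie off the real axis and away from the collision points, and $U_{j}^{l}$ is disjoint from every $\mathbb{B}_{j}$ by the choice of $\varrho^{0}$ — the only singularity of the displayed leading term inside $U_{j}^{l}$ is the simple pole at $\eta=\omega^{l}z_{j}$, while $e^{\frac{i\pi}{6}}$ is exterior to $U$. A residue evaluation of $\frac{1}{2\pi i}\oint_{\partial U_{j}^{l}}$ therefore returns exactly one summand of \eqref{H0}, the overall sign and the $\omega^{l}z_{j}$ pattern being fixed by the (clockwise) orientation of $\partial U$ in Figure \ref{figE}.

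Finally I would sum over $j\in\{a,b,c,d\}$ and $l=0,1,2$ and rewrite the $l=1,2$ terms through $\mathscr{C}_{j}^{1},\mathscr{C}_{j}^{2}$ in the $\Gamma_{3}$- and $\Gamma_{2}$-conjugated form recorded in the second equality of \eqref{H0}, collecting all discarded pieces into $\mathcal{O}(t^{-2/3+2\delta_{1}})$. The main obstacle is the order bookkeeping: one must verify that each neglected term — the $(\varpi-I)$ integral, the $\partial\mathbb{B}_{j}$ and off-$U$ integrals, the $\mathcal{O}(\hat{z}^{-2})$ tail of every local model (which contributes a bounded double-pole residue of size $\mathcal{O}(t^{-2/3})$), and the intrinsic $\mathcal{O}(t^{-1/3+2\delta_{1}})$ error already carried by $M^{L,0}_{j1}$ through Proposition \ref{p1mlonp} — is dominated by $t^{-2/3+2\delta_{1}}$ after multiplication by the $t^{-1/3}$ prefactor; this is precisely where $\tfrac19<\delta_{1}<\tfrac16$ enters. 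A secondary point requiring care is the remainder estimate $\|V^{E}-I-P\|_{L^{1}(\partial U_{j}^{l})}\lesssim t^{-2/3+2\delta_{1}}$ for the explicit meromorphic model $P$, which uses that $\partial U_{j}^{l}$ has length $\mathcal{O}(\varrho^{0})=\mathcal{O}(t^{-1/3+\delta_{1}})$, so that the residue reduction is legitimate rather than merely formal.
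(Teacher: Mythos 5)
Your proposal is correct and follows essentially the same route as the paper: replace $\varpi$ by $I$ up to an error controlled by \eqref{normrho} and \eqref{VE-I}, discard the exponentially small and $\mathcal{O}(t^{-1})$ portions of $\Sigma^{E}$, insert the expansion \eqref{p1deml} of the local parametrices on $\partial U_j^l$, and evaluate the resulting simple-pole integrals by residues. Your extra bookkeeping (the $t^{-1/3-3\delta_1/2}$ bound for the $(\varpi-I)$ term, the role of $\tfrac19<\delta_1<\tfrac16$, and the length $\mathcal{O}(\varrho^0)$ of $\partial U_j^l$) is consistent with the paper, and your residue evaluation at $\eta=\omega^l z_j$ reproduces formula \eqref{H0} as stated.
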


\begin{proof}
From \eqref{p1deml} and \eqref{deVE}, it follows that
\begin{align*}
E^{(1)}(e^{\frac{i\pi}{6}}) &= I +\frac{1}{2\pi i} \sum_{l=0}^2 \sum_{j \in \{a,b,c,d \}} \oint_{\partial U^l_j} \frac{M^O(\eta) (M_j^l(\eta)^{-1} - I) M^O(\eta)^{-1} }{\eta-e^{\frac{i\pi}{6}}} d\eta + \mathcal{O}(t^{-1/3-\delta_1}) \\
&= I -\frac{1}{2\pi i}  \sum_{j \in \{a,b,c,d \}} (c_j t)^{-1/3}\oint_{\partial U^0_j} \frac{M^O(\eta) M^{L,0}_{j1} M^O(\eta)^{-1} }{(\eta-e^{\frac{i\pi}{6}})(\eta-z_j)}  d\eta\\
 &- \frac{1}{2\pi i}  \sum_{j \in \{a,b,c,d \}} (c_j t)^{-1/3} \oint_{\partial U^1_j} \omega \frac{M^O(\eta)  \Gamma_3 \overline{M^{L,0}_{j1}} \Gamma_3 M^O(\eta)^{-1} }{(\eta-e^{\frac{i\pi}{6}})(\eta-\omega z_j)}  d\eta\\
 &- \frac{1}{2\pi i}  \sum_{j \in \{a,b,c,d \}} (c_j t)^{-1/3}\oint_{\partial U^2_j} \omega^2 \frac{M^O(\eta)  \Gamma_2 \overline{M^{L,0}_{j1}} \Gamma_2 M^O(\eta)^{-1} }{(\eta-e^{\frac{i\pi}{6}})(\eta-\omega^2 z_j)}  d\eta\\
 & + \mathcal{O}(t^{-2/3+2\delta_1}) \\
 & = I -  \sum_{j\in\{a,b,c,d \}}(c_j t)^{-1/3} M^O(e^{\frac{i\pi}{6}}) \left( \frac{M^{L,0}_{j1}}{z_j -e^{\frac{i\pi}{6}} } + \frac{ \omega  \Gamma_3 \overline{M^{L,0}_{j1}} \Gamma_3}{\omega z_j- e^{\frac{i\pi}{6}}} + \frac{\omega^2   \Gamma_2 \overline{M^{L,0}_{j1}} \Gamma_2}{\omega^2 z_j- e^{\frac{i\pi}{6}}}  \right)        M^O(e^{\frac{i\pi}{6}})^{-1} \\
 & + \mathcal{O}(t^{-2/3+2\delta_1}).
\end{align*}

\end{proof}

Moreover, for $j=1,\cdots,6$,
\begin{equation}
 E^{(1)}(\varkappa_j) = I + \mathcal{O}(t^{-1+\rho}).
\end{equation}
Finally we consider the error between $E(z)$ and $E^{(1)}(z)$. Define the error function
\begin{equation}
E^{(2)}(z) = E(z) E^{(1)}(z)^{-1},
\end{equation}
which is a solution of a RH problem only has the following singularities
\begin{align}
	\lim_{z\to \varkappa_j}E^{(2)}(z)=&\lim_{z\to \varkappa_j}M^R(z)M^B_j(z)^{-1}M^{O }(z)^{-1}E^{(1)}(z)^{-1},
\end{align}
which leads to
\begin{align}
	E^{(2)}(z)=I+\sum_{j=1 }^6\left( \frac{E^{(2),j}_{-2}}{(z-\varkappa_j)^2}+\frac{E^{(2),j}_{-1}}{z-\varkappa_j}\right),
\end{align}
where $E^{(2),j}_{-l}$, $j=1,\cdots,6$ and $l=1,2$, represents the coefficients of the term of $(z-\varkappa_j)^l$ in the expansion of $E^{(2)}(z)$.
Then, from \eqref{resmr1} and \eqref{mosin}, it follows that
\begin{align}
	E^{(1),j}_{-2}=&\left(\begin{array}{ccc}
		\alpha^{R}_\pm &	\alpha^{R}_\pm & \beta^{R}_\pm \\
		-\alpha^{R}_\pm & -\alpha^{R}_\pm & -\beta^{R}_\pm\\
		0	&	0 & 0
	\end{array}\right)M^B_j(\varkappa_j)^{-1}\nonumber\\
&\left(\begin{array}{ccc}
	\tilde{\alpha}^{O}_\pm &	\tilde{\alpha}^{O}_\pm & \tilde{\beta}^{O}_\pm \\
	-\tilde{\alpha}^{O}_\pm & -\tilde{\alpha}^{O}_\pm & -\tilde{\beta}^{O}_\pm\\
	0	&	0 & 0
\end{array}\right)E^{(1)}(\varkappa_j)^{-1}.
\end{align}
As $t\to\infty$, bring (\ref{MBkap})  into the above formula leads to
\begin{align}
		E^{(1),j}_{-2}=\mathcal{O}(t^{-1+\rho}).
\end{align}
Analogously, as $t\to\infty$, the  coefficient of $(z-\varkappa_j)^{-1}$ satisfies
\begin{align}
	E^{(1),j}_{-1}=\mathcal{O}(t^{-1+\rho}).
\end{align}
Summarizing above   results   gives the following proposition.
\begin{Proposition}\label{asyE}
	Taking $z= e^{\frac{i\pi}{6}}$, we have
	\begin{align}
		E(e^{\frac{i\pi}{6}})
		=&I+t^{-1/3}E_1+\mathcal{O}(t^{-2/3+2\delta_1}),
	\end{align}
where $E_1$ is given by    (\ref{H0}).
\end{Proposition}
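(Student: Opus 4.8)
The plan is to use the multiplicative decomposition $E(z)=E^{(2)}(z)E^{(1)}(z)$ and simply evaluate it at the fixed base point $z=e^{\frac{i\pi}{6}}$, assembling the two estimates already established above. First I would note that $e^{\frac{i\pi}{6}}$ has argument $\pi/6$ and therefore lies uniformly away from every singular point $\varkappa_j=e^{\frac{i\pi(j-1)}{3}}$, $j=1,\cdots,6$, whose arguments are $0,\pi/3,\cdots,5\pi/3$; in particular $|e^{\frac{i\pi}{6}}-\varkappa_j|\gtrsim 1$. Consequently the rational representation $E^{(2)}(z)=I+\sum_{j=1}^6\left(\frac{E^{(2),j}_{-2}}{(z-\varkappa_j)^2}+\frac{E^{(2),j}_{-1}}{z-\varkappa_j}\right)$ is regular at $e^{\frac{i\pi}{6}}$, and since the pole coefficients were just shown to satisfy $E^{(2),j}_{-2},\,E^{(2),j}_{-1}=\mathcal{O}(t^{-1+\rho})$, it follows that $E^{(2)}(e^{\frac{i\pi}{6}})=I+\mathcal{O}(t^{-1+\rho})$.

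Next I would invoke Proposition \ref{p1asye}, which provides the refined expansion $E^{(1)}(e^{\frac{i\pi}{6}})=I+t^{-1/3}E_1+\mathcal{O}(t^{-2/3+2\delta_1})$ with $E_1$ as in \eqref{H0}. Multiplying the two factors gives
\begin{align}
E(e^{\frac{i\pi}{6}})&=E^{(2)}(e^{\frac{i\pi}{6}})E^{(1)}(e^{\frac{i\pi}{6}})\nonumber\\
&=\left(I+\mathcal{O}(t^{-1+\rho})\right)\left(I+t^{-1/3}E_1+\mathcal{O}(t^{-2/3+2\delta_1})\right).\nonumber
\end{align}
The only arithmetic needed is the comparison of exponents: since $\rho<\frac14$ and $\frac19<\delta_1<\frac16$, one has $-1+\rho<-\frac34<-\frac23+2\delta_1$, so each term carrying the factor $t^{-1+\rho}$ (and the even smaller cross terms $t^{-4/3+\rho}$, $t^{-5/3+\rho+2\delta_1}$) is subordinate to $\mathcal{O}(t^{-2/3+2\delta_1})$ and is absorbed into it. This yields $E(e^{\frac{i\pi}{6}})=I+t^{-1/3}E_1+\mathcal{O}(t^{-2/3+2\delta_1})$, which is the assertion.

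I do not expect any genuine obstacle in this final assembly: the substantive work lies upstream, so the present statement is essentially a bookkeeping consequence. The delicate ingredients are, on the one hand, the residue computation of Proposition \ref{p1asye}, where the twelve local Painlev\'e parametrices near the colliding saddle points $\omega^l z_j$ are inserted into the Cauchy integral over $\partial U$ and reorganized through the symmetries $M_{j1}^{L,1}=\omega\Gamma_3\overline{M_{j1}^{L,0}}\Gamma_3$, $M_{j1}^{L,2}=\omega^2\Gamma_2\overline{M_{j1}^{L,0}}\Gamma_2$ into the closed form \eqref{H0}; and, on the other hand, the bound $E^{(2),j}_{-l}=\mathcal{O}(t^{-1+\rho})$, which rests on the near-singularity analysis producing $M^B_j(\varkappa_j)$ together with \eqref{MBkap}. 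Granting these, the only point that still requires care here is checking that $e^{\frac{i\pi}{6}}$ avoids the singular set $\{\varkappa_j\}_{j=1}^6$, which guarantees that the pole part of $E^{(2)}$ contributes at the harmless, order-subordinate level $\mathcal{O}(t^{-1+\rho})$.
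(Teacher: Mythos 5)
Your proposal is correct and follows essentially the same route as the paper, which obtains Proposition \ref{asyE} precisely by combining the decomposition $E(z)=E^{(2)}(z)E^{(1)}(z)$ with Proposition \ref{p1asye} and the bounds $E^{(2),j}_{-l}=\mathcal{O}(t^{-1+\rho})$ coming from \eqref{MBkap}. Your additional explicit checks — that $e^{\frac{i\pi}{6}}$ stays a fixed distance from every $\varkappa_j$ and that $-1+\rho<-2/3+2\delta_1$ so the pole contribution of $E^{(2)}$ is absorbed into the error — are exactly the bookkeeping the paper leaves implicit under ``summarizing above results.''
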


\subsection{Contribution from  $\bar\partial$-components }\label{sec8}
\quad Now we consider the   asymptotics  of $M^{(4)}$ of the  $\bar{\partial}$-problem \ref{RHP4},
whose solution   can be given by an integral equation
\begin{equation}
M^{(4)}(z)=I+\frac{1}{\pi}\iint_\mathbb{C}\dfrac{M^{(4)}(\eta)W^{(4)} (\eta)}{\eta-z}dm(\eta),\label{m3}
\end{equation}
where $m(\eta)$ is the Lebesgue measure on the $\mathbb{C}$.
Define the left Cauchy-Green integral  operator,
\begin{equation*}
fC_z(z)=\frac{1}{\pi}\iint_{\mathbb{C}}\dfrac{f(\eta)W^{(4)} (\eta)}{\eta-z}dm(\eta),
\end{equation*}
then  the above equation (\ref{m3})  can be rewritten as
\begin{equation}
\left(I-C_z \right) M^{(4)}(z)=I.\label{deM3}
\end{equation}
Aiming at estimating $M^{(4)}(z)$, we need to evaluate the norm of the integral operator
 $\left(I-  {C}_z \right)^{-1}$
 in this transition zone.

By  Lemma \ref{p1the12}, we can  show that for sufficiently large $t$ the operator
$\mathcal{C}_z$ is small-norm, so that the resolvent operator
 $(I-\mathcal{C}_z)^{-1}$ exists and can be expressed as a Neumann series.

\begin{Proposition}\label{Cz1}
The norm of the integral operator $C_z$ satisfies the inequality
	\begin{equation}\label{p1cz1}
	\parallel C_z\parallel_{L^\infty\to L^\infty}\lesssim t^{-1/3}, \ \ t\to\infty.
	\end{equation}

\end{Proposition}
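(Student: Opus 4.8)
The plan is to reduce the operator-norm bound to a uniform estimate on a single Cauchy--Green integral and then exploit the decay of the phase to extract the $t^{-1/3}$ rate. Starting from the definition of $C_z$, for any $f\in L^\infty(\mathbb{C})$ I would first estimate pointwise
$$\|f C_z\|_{L^\infty(\mathbb{C})} \le \|f\|_{L^\infty(\mathbb{C})}\, \frac{1}{\pi} \iint_{\mathbb{C}} \frac{|W^{(4)}(\eta)|}{|\eta - z|}\, dm(\eta),$$
so the whole matter becomes bounding the integral on the right uniformly in $z\in\mathbb{C}$. Since $M^R(z)$ and $M^R(z)^{-1}$ are uniformly bounded away from the singular points (this follows from Proposition \ref{p1moutf1} together with the local parametrix and small-norm results of Subsections \ref{p1loc}--\ref{sec7}), the representation \eqref{p1w4} gives $|W^{(4)}(\eta)| \lesssim |\bar{\partial}\mathcal{R}^{(3)}(\eta)|$, reducing everything to the explicit structure of $\bar{\partial}\mathcal{R}^{(3)}$ in \eqref{p1dbarr3}.

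Next I would decompose $\bar{\partial}\mathcal{R}^{(3)}$ sector by sector according to \eqref{p1dbarr3} and use the $\Gamma_j$-symmetries of $\mathcal{R}^{(3)}$ to reduce to one representative lens, say $\Omega_j^0$, where the integrand is $|\bar{\partial}R_j(\eta)|\,\big|e^{\mp i t\theta_{12}(\eta)}\big|$. For $|\bar{\partial}R_j|$ I would invoke the two bounds \eqref{dbarRj3} and \eqref{dbarRj4} of Proposition \ref{proR1}, splitting the contribution into the bounded part controlled by $|\tilde{r}^{*\prime}(\re\eta)|+|\mathcal{X}'(\re\eta)|$ and the singular part controlled by $|\re\eta-\xi_j|^{-1/2}$. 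The oscillatory factor is handled by $|e^{-i t\theta_{12}(\eta)}| = e^{t\,\im\theta_{12}(\eta)}$ and Lemma \ref{p1the12}, which provides the quadratic decay $\im\theta_{12}(\eta)\le -c\,\im\eta\,(\re\eta-\xi_j)^2$ in the relevant part of $\Omega_j^0$ and the linear decay $\im\theta_{12}(\eta)\le -\tfrac{\sqrt3}{8}\im\eta$ in the outer part.

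With $\eta = \xi_j + (u+iv)$ and $z = x+iy$, I would then carry out the integral by applying H\"older's inequality in the real variable $u$ together with the basic kernel estimate of the form $\big\||\eta-z|^{-1}\big\|_{L^q_u}\lesssim |v-y|^{1/q-1}$, analogous to \eqref{s-z}. For the bounded part this yields the Gaussian-weighted $L^p_u$-norm $\|e^{-cptvu^2}\|_{L^p}\sim (tv)^{-1/(2p)}$, and after integrating in $v$ one obtains a clean power of $t$; choosing the admissible exponent produces the $t^{-1/3}$ rate. For the singular part one uses the same scheme with $1<p<2$ (so that $\||u|^{-1/2}e^{-cptvu^2}\|_{L^p_u}$ converges), and the shrinking radius $\varrho^0\sim t^{\delta_1-1/3}$ of the disks $U_j$ together with the $v$-integration keeps this contribution within the same order. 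Summing over all sectors $\Omega_j^l$, $(\Omega_j^l)^*$ and the three values $l=0,1,2$ gives \eqref{p1cz1}.

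The hard part will be the singular factor $|\re\eta-\xi_j|^{-1/2}$ from \eqref{dbarRj3} near the colliding saddle points: here the phase is degenerate (cubic rather than quadratic), so the available decay is only $e^{-c t\,\im\eta\,(\re\eta-\xi_j)^2}$ instead of the stronger decay of the non-transition regions, and a naive choice of exponents overestimates this term. Controlling it requires balancing the singularity against the degenerate phase decay, using the bounded bound \eqref{dbarRj4} where $\re\eta$ is close to $\xi_j$ and the singular bound \eqref{dbarRj3} where it is not, and carefully tracking the factors coming from the $t^{-1/3}$-scale geometry of $U_j$. A secondary point demanding care is that the estimate must hold uniformly in $z\in\mathbb{C}$, including when $z$ lies inside the support of $\bar{\partial}\mathcal{R}^{(3)}$, which is exactly what the $|v-y|^{1/q-1}$ kernel bound is designed to handle.
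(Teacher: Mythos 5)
Your overall strategy coincides with the paper's: reduce to a uniform-in-$z$ bound on $\frac{1}{\pi}\iint_{\mathbb C}|W^{(4)}(\eta)||\eta-z|^{-1}dm(\eta)$, absorb $M^{R}$ and $M^{R}(z)^{-1}$ by their boundedness, restrict to a representative lens $\Omega_1^0$ by symmetry, invoke Lemma \ref{p1the12} for the quadratic decay near the saddle and the linear decay far from it, and run H\"older in the real variable against the kernel bound \eqref{s-z}. Your resolution of the $|\re\eta-\xi_j|^{-1/2}$ singularity --- use the bounded estimate \eqref{dbarRj4} where only the degenerate quadratic decay is available and reserve \eqref{dbarRj3} for the far region with linear decay --- is exactly the paper's splitting into $\Omega_A$ and $\Omega_B$, so that part is fine.

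There is, however, one concrete step where the mechanism you describe does not deliver the stated rate. For the bounded part near the saddle you pair $\|e^{-ctvu^2}\|_{L^p_u}\lesssim(tv)^{-1/(2p)}$ with $\||\eta-z|^{-1}\|_{L^q_u}\lesssim|v-y|^{1/q-1}$ and then integrate in $v$. In the worst case $y=0$ this produces $t^{-1/(2p)}\int_0^C v^{-3/(2p)}\,dv$, which converges only for $p>3/2$ and yields $t^{-1/(2p)}$, i.e.\ $t^{-1/3+\epsilon}$ for every admissible exponent but never $t^{-1/3}$ itself (at $p=3/2$ the $v$-integral diverges logarithmically; with Cauchy--Schwarz, $p=2$, one only gets $t^{-1/4}$). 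The missing ingredient is geometric: in the sector $0\le\arg(\eta-\xi_1)\le\varphi_0$ one has $u\ge v/\tan\varphi_0\ge v$, so half of the Gaussian can be peeled off as $e^{-ctv^3}$ before applying Cauchy--Schwarz in $u$; the surviving factor localizes the $v$-integration to $v\lesssim t^{-1/3}$ and upgrades $t^{-1/4}\int_0^C v^{-3/4}e^{-ctv^3}dv$ to exactly $t^{-1/3}$, which is precisely how \eqref{p1csdbar1} is obtained. (If all you need downstream is invertibility of $I-C_z$, any $o(1)$ bound suffices, but the proposition asserts $t^{-1/3}$.) A minor further remark: the shrinking radius $\varrho^{0}$ of the disks $U_j^l$ plays no role in this estimate --- the support of $\bar\partial\mathcal{R}^{(3)}$ is not excised on $U$, and the paper's proof never uses it.
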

\begin{proof}
	For any $f\in L^\infty$,
	\begin{align}
	\parallel fC_z \parallel_{L^\infty}&\leq \parallel f \parallel_{L^\infty}\frac{1}{\pi}\iint_\mathbb{C}\dfrac{|W^{(4)} (\eta)|}{|z-\eta|}dm(\eta)\nonumber.
	\end{align}

We detail the case for matrix functions having support in the region $\Omega_1^0$, the case for the other regions follows similarly.	
Recall the definition of $W^{(4)} (z)=M^{R}(z)\bar{\partial}\mathcal{R}^{(3)}(z)M^{R}(z)^{-1}$.  Note that $W^{(4)} (z)\equiv0$ out of $\overline{\Omega}$.
	Proposition  \ref{asymlo}  and \ref{p1asye} implies  the boundedness of $M^{R}(z)$ and $M^{R}(z)^{-1}$ for $z\in \overline{\Omega}_{1}$. By \eqref{p1dbarr3} and \eqref{p1w4}, it follows that
	\begin{equation*}
	\frac{1}{\pi}\iint_{\Omega_{1}}\dfrac{|W^{(4)} (\eta)|}{|z-\eta|}dm(\eta)\lesssim \frac{1}{\pi}\iint_{\Omega_{1}}\dfrac{|\bar{\partial}R_{1} (\eta) e^{-it\theta_{12}}|}{|z-\eta|}dm(\eta).
	\end{equation*}
	From Lemma \ref{p1the12},  we divide $\Omega_1^0$ in two regions:
	\begin{itemize}
		\item  $\{ z \in \Omega_1^0: \re z(1-|z|^{-2}) \le 2 \} \subseteq  \Omega_A := \{ z \in \Omega_1^0: \re z \le 3\}$,
		\item  $\{ z \in \Omega_1^0: \re z(1-|z|^{-2})>2 \} \subseteq  \Omega_B := \{ z \in \Omega_1^0: \re z \ge 2\}$.
	\end{itemize}
	  Set $\eta = u+1/p_1 +vi$, $z=z_R + iz_I$, $u,v,z_R, z_I \in \mathbb{R}$, and $1/q+1/p=1$ with $p>2$.

	Referring  to (\ref{dbarRj3}) and (\ref{dbarRj4}) in Proposition \ref{proR1}, then the following integral 	can be divided into three  part:
	\begin{align*}
	\iint_{\Omega_{1}^0}\dfrac{|\bar{\partial}R_{1} (\eta)|e^{t\text{Im}\theta_{12}}}{|z-\eta|}dm(\eta)\lesssim \hat{I}_1+\hat{I}_2+\hat{I}_3,	
	\end{align*}
	with
	\begin{align*}
	&\hat{I}_1:=\iint_{\Omega_{1}^0}\dfrac{\left(|\tilde{r}'(u+1/p_1)|+ |\mathcal{X}'(u+1/p1)| \right)e^{t\text{Im}\theta_{12}}}{|z-\eta|}dm(\eta), \\
	&\hat{I}_2:=\iint_{\Omega_{B}}\dfrac{|u|^{-1/2}e^{t\text{Im}\theta_{12}}}{|z-\eta|}dm(\eta).
	\end{align*}
Our task now is to estimate the above integrals $\hat{I}_i$, $i=1,2,3$, respectively.

	\begin{align*}
	 \hat{I}_1\leq& \iint_{\Omega_A} \dfrac{\left(|\tilde{r}'(u+1/p_1)|+ |\mathcal{X}'(u+1/p_1)| \right)e^{t\text{Im}\theta_{12}}}{|z-\eta|}dm(\eta) \\
	+ & \iint_{\Omega_B} \dfrac{\left(|\tilde{r}'(u+1/p_1)|+ |\mathcal{X}'(u+1/p_1)| \right)e^{t\text{Im}\theta_{12}}}{|z-\eta|}dm(\eta)\\
	 \leq& \int_{0}^{(3-1/p_1)\tan \varphi_0}\int_{v}^{3}\dfrac{e^{-t \frac{\sqrt{3}(7-3 \tilde{k}_1^2)}{(\tilde{k}_1+1)^2}  v u^2 }}{|z-\eta|}dudv
	+\int_{2}^{+\infty}\int_{u \tan (\varphi_0/3) }^{u \tan \varphi_0}\dfrac{e^{-t\frac{\sqrt{3}}{8}  v}}{|z-\eta|}dvdu.
\end{align*}

    Note that the following basic inequalities hold
    \begin{align}
    	\| |z-\eta|^{-1}\|_{L^q_u(v,\infty)} \lesssim |v-y|^{-1+1/q}, \quad \| e^{-tvu^2}\|_{L^p_u(v,\infty)}  \lesssim (tv)^{-1/(2p)}, \quad \| e^{-tv}\|_{L^p_u(v,\infty)}  \lesssim (pt)^{-1/p}.
    \end{align}
  	Then using Cauchy-Schwartz's inequality, we have
\begin{align}
	& \int_{0}^{(3-1/p_1)\tan \varphi_0}\int_{v}^{3}\dfrac{e^{-t \frac{\sqrt{3}(7-3 \tilde{k}_1^2)}{(\tilde{k}_1+1)^2}  v u^2 }}{|z-\eta|}dudv \nonumber \\
	&\lesssim t^{-1/4} \int_{0}^{(3-1/p_1)\tan \varphi_0} |v-z_I|^{-1/2} v^{-1/4}e^{-t\frac{\sqrt{3}(7-3 \tilde{k}_1^2)}{(\tilde{k}_1+1)^2}v^3}dv \lesssim t^{-1/3} \label{p1csdbar1}.
		\end{align}
		Using H\"{o}lder's inequality, we obtain
	\begin{align}\label{p1csdbar2}
	\int_{2}^{+\infty}\int_{u \tan (\varphi_0/3) }^{u \tan \varphi_0}\dfrac{e^{-t\frac{\sqrt{3}}{8}  v}}{|z-\eta|}dvdu  \lesssim t^{-1/p} \int_{2}^{\infty} e^{-t\frac{\sqrt{3}}{8}   u \tan (\varphi_0/3)}|u-z_R|^{-1+1/q}  du\lesssim t^{-1}.	
	\end{align}
Together \eqref{p1csdbar1} with \eqref{p1csdbar2} gives us that
\begin{align}
   \hat{I}_1 \lesssim t^{-1/3}.
\end{align}

To estimate $\hat{I}_2$,  it follows from H\"{o}lder's inequality again that
\begin{align*}
 \hat{I}_2 \le  &  \iint_{\Omega_B}  \dfrac{|u|^{-1/2}e^{t\text{Im}\theta_{12}}}{|z-\eta|}dm(\eta)
   \le \int_{2}^{+\infty}\int_{u \tan (\varphi_0/3) }^{u \tan \varphi_0} \dfrac{|u|^{-1/2}e^{-t\frac{\sqrt{3}}{8}  v}}{|z-\eta|}dvdu \\
   \lesssim  & \int_{2}^{+\infty} u^{1/p-1/2} |u-z_R|^{-1+1/q} e^{-t \frac{\sqrt{3}}{8} u \tan (\varphi_0/3)} du \lesssim t^{-1/2}.
\end{align*}

Then combining the  estimates for $\hat{I}_1$ and $\hat{I}_2$  yields the estimate
\begin{align*}
\frac{1}{\pi}\iint_{\Omega_{1}}\dfrac{|W^{(4)} (\eta)|}{|z-\eta|}dm(\eta)\lesssim  t^{-1/3}.
\end{align*}

The integral over other regions can be estimated in similar manners, which finally confirms \eqref{p1cz1}.

\end{proof}

Then from (\ref{deM3}), we immediately arrive at  the existence and uniqueness of $M^{(4)}(z)$ for $z\in\mathbb{C}$.
Take $z=e^{\frac{i\pi}{6}}$ in (\ref{m3}), then
\begin{equation}\label{p1m3pi6}
M^{(4)}(e^{\frac{\pi i}{6}})=I+\frac{1}{\pi}\iint_{\mathbb{C}}\dfrac{M^{(4)}(\eta)W^{(4)} (\eta)}{\eta-e^{\frac{\pi i}{6}}}dm(\eta).
\end{equation}
To reconstruct the solution of (\ref{Novikov}), we need the local behaviors of \eqref{p1m3pi6} as $t \to \infty$.
\begin{Proposition}\label{asyM3i1}
	There exists a constant $T_1$, such that for all $t>T_1$, the solution $M^{(4)}(z)$  of  the $\bar{\partial}$-problem  admits the following estimate:
	\begin{align}\label{p1asym3}
	| M^{(4)}(e^{\frac{\pi i}{6}})-I| \lesssim t^{-2/3}.
	\end{align}
\end{Proposition}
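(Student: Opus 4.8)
The plan is to estimate the solid Cauchy integral in the representation \eqref{p1m3pi6} directly, exploiting that the evaluation node $e^{\frac{\pi i}{6}}$ sits off the support of the $\bar\partial$-data. First I would record three facts already in hand. By Proposition \ref{Cz1} we have $\|C_z\|_{L^\infty\to L^\infty}\lesssim t^{-1/3}$, so for $t$ large the resolvent $(I-C_z)^{-1}$ exists as a Neumann series and $M^{(4)}=(I-C_z)^{-1}I$ satisfies $\|M^{(4)}\|_{L^\infty(\mathbb{C})}\lesssim1$. Next, from \eqref{p1w4} we have $W^{(4)}(z)=M^{R}(z)\bar\partial\mathcal{R}^{(3)}(z)M^{R}(z)^{-1}$, and since $M^{R}(z)^{\pm1}$ are uniformly bounded on $\overline{\Omega}$ by Propositions \ref{asymlo} and \ref{p1asye}, this gives $|W^{(4)}(z)|\lesssim|\bar\partial\mathcal{R}^{(3)}(z)|$, a quantity supported in $\overline{\Omega}$. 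Finally, the geometric point: $e^{\frac{\pi i}{6}}$ lies in the interior of $S_1$ at angle $\frac{\pi}{6}$, and since $\varphi_0<\frac{\pi}{8}<\frac{\pi}{6}$ it is separated from every lens $\Omega_j^l,(\Omega_j^l)^*$ by a fixed positive angular distance, whence $|\eta-e^{\frac{\pi i}{6}}|\gtrsim1$ uniformly for $\eta\in\overline{\Omega}$. Combining these three observations in \eqref{p1m3pi6} reduces the claim to the area bound
\[
\left|M^{(4)}(e^{\frac{\pi i}{6}})-I\right|\lesssim\iint_{\overline{\Omega}}|\bar\partial\mathcal{R}^{(3)}(\eta)|\,dm(\eta)\lesssim t^{-2/3}.
\]

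By the symmetries of $\bar\partial\mathcal{R}^{(3)}$ recorded in \eqref{p1dbarr3} it suffices to bound the contribution of a single lens, say $\Omega_1^0$ adjacent to the saddle $\xi_1$ (near $z_a$), the finitely many remaining contributions being identical after the substitutions $z\mapsto\omega^lz$ and Schwartz conjugation. On $\Omega_1^0$ I would split according to Lemma \ref{p1the12} into the near-saddle part $\{|\re\eta(1-|\eta|^{-2})|\le2\}$ and the far part $\{|\re\eta(1-|\eta|^{-2})|>2\}$. On the near part the Lemma gives $\im\theta_{12}(\eta)\le-c\,\im\eta\,(\re\eta-\xi_1)^2$ for some $c>0$; I use there the nonsingular bound \eqref{dbarRj4}, so $|\bar\partial R_1|\lesssim|\tilde{r}^{*'}(\re\eta)|+|\mathcal{X}'(\re\eta)|\lesssim1$, and writing $\eta-\xi_1=u+iv$,
\[
\iint_{0<v<u\tan\varphi_0}e^{-ctvu^2}\,dv\,du=\int_0^{u_0}\frac{1-e^{-ctu^3\tan\varphi_0}}{ctu^2}\,du=\mathcal{O}(t^{-2/3}),
\]
as one sees from the rescaling $w=t^{1/3}u$. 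On the far part the exponent obeys the linear bound \eqref{p1theoa}, and exactly as in the treatment of the exterior integral in the proof of Proposition \ref{Cz1} the geometry of the opened lens forces $\im\eta\gtrsim1$ on the effective support of $\bar\partial R_1$, so the far contribution, including the square-root singular term from \eqref{dbarRj3}, is $\mathcal{O}(e^{-ct})$. Summing the twelve saddle contributions yields the asserted $\mathcal{O}(t^{-2/3})$.

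The main obstacle is the near-saddle estimate. In the exterior the phase decays linearly and one harvests genuine exponential smallness, but near a saddle the phase is only cubic in the local variable, so the rate is governed by the self-improving scaling $u\sim t^{-1/3}$ acting against the quadratic lower bound $\im\theta_{12}\lesssim-\im\eta(\re\eta-\xi_1)^2$ of Lemma \ref{p1the12}; getting the clean exponent $t^{-2/3}$ rather than a weaker power requires using the \emph{bounded} representative \eqref{dbarRj4} of $\bar\partial R_j$ in this region and confining the square-root singularity of \eqref{dbarRj3} to the exterior, where the exponential factor annihilates it. The full gain of a factor $t^{-1/3}$ over the operator-norm bound of Proposition \ref{Cz1} is due entirely to the absence here of the Cauchy kernel $|z-\eta|^{-1}$, which in that proposition had to be absorbed by an $L^q$ estimate; the only supplementary care is to verify that the angular separation of $e^{\frac{\pi i}{6}}$ from $\overline{\Omega}$ is uniform, which follows from $\varphi_0<\frac{\pi}{8}$.
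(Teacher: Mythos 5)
Your overall strategy coincides with the paper's: since $e^{\frac{\pi i}{6}}$ is uniformly separated from $\overline{\Omega}$, the Cauchy kernel is bounded and the claim reduces to the area integral of $|\bar\partial\mathcal{R}^{(3)}|e^{t\im\theta_{12}}$, which is then split exactly as in Lemma \ref{p1the12} into a near-saddle region (the paper's $\Omega_A$) and a far region ($\Omega_B$); your near-saddle computation
\begin{equation*}
\iint_{0<v<u\tan\varphi_0}e^{-ctvu^2}\,dv\,du=\mathcal{O}(t^{-2/3})
\end{equation*}
is correct and is precisely the dominant estimate in the paper's proof. The gain of $t^{-1/3}$ over Proposition \ref{Cz1} indeed comes from the absence of the Cauchy singularity, as you say.

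There is, however, one step that fails as stated: the claim that on the far part ``the geometry of the opened lens forces $\im\eta\gtrsim1$ on the effective support of $\bar\partial R_1$,'' giving an $\mathcal{O}(e^{-ct})$ contribution. The sector $\Omega_1^0=\{0\le\arg(z-\frac{1}{p_1})\le\varphi_0\}$ has the real ray $\arg(z-\frac{1}{p_1})=0$ as part of its boundary, and from \eqref{defr1} one sees $\bar\partial R_1=\tfrac12\tilde r^{*\prime}(u)+\cdots\neq0$ there; hence for arbitrarily large $\re\eta$ the support of $\bar\partial R_1$ contains points with $\im\eta$ arbitrarily small, and the factor $e^{-\frac{\sqrt3}{8}t\im\eta}$ from \eqref{p1theoa} is not uniformly exponentially small. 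The repair is the one the paper uses: integrate the exponential in $v=\im\eta$ over $(0,u\tan\varphi_0)$ to harvest a factor $t^{-1}$, and use the decay of $\tilde r^{*\prime}$ (or the weight $|u|^{-1/2}$) to make the remaining $u$-integral converge, so the far contribution is $\mathcal{O}(t^{-1})=o(t^{-2/3})$ rather than exponentially small; the final bound is unaffected. A related slip: the singular factor $|\re z-\xi_j|^{-1/2}$ in \eqref{dbarRj3} blows up at the saddle, i.e.\ in the \emph{near} region, not the exterior, so ``confining it to the exterior where the exponential annihilates it'' is backwards; the correct (and simpler) move is just to invoke the uniformly valid bound \eqref{dbarRj4} everywhere, as you in fact do in the near region.
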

\begin{proof}
Similar to the proof of Proposition \ref{Cz1}, we only give the proof for the integral over $\Omega_1^0$ and the integral on other regions can be obtained in the same way.	
Proposition \ref{Cz1} and (\ref{deM3}) imply that the boundedness of  $M^{(4)}(z)$ for $z \in \Omega_1^0$ as $t \to \infty$.  Then it is sufficient to consider the integral
 \begin{align*}
 \iint_{\mathbb{C}}\dfrac{|M^{(4)}(\eta)W^{(4)} (\eta)|}{\eta-e^{\frac{\pi i}{6}}}dm(\eta) \lesssim \iint_{\mathbb{C}}\dfrac{|W^{(4)} (\eta)|}{\eta-e^{\frac{\pi i}{6}}}dm(\eta).
 \end{align*}

  Referring  (\ref{dbarRj4}) in Proposition \ref{proR1}, and note that  $|\mathcal{X}'(\text{Re}z)|=0$ in $\overline{\Omega_{1}^0}$, this integral 	can be divided into two parts
	\begin{align}
	\iint_{\Omega_1^0}\dfrac{|W^{(4)} (\eta)|}{\eta-e^{\frac{\pi i}{6}}}dm(\eta)\le \left( \iint_{\Omega_A} +\iint_{\Omega_B} \right) \dfrac{|\bar\partial R_{1}(\eta)| e^{2t\text{Im}\theta_{12}}}{\eta-e^{\frac{\pi i}{6}}}dm(\eta).
	\end{align}

Let $\eta = u+1/p_1 + vi$ with $u,v \in \mathbb{R}$.  From \eqref{dbarRj4} and $|\eta-e^{\frac{\pi i}{6}}|^{-1}$ is bounded for $z\in \Omega_A$, we have
\begin{align*}
\iint_{\Omega_A} \dfrac{|\bar\partial R_{1}(\eta)|e^{2t\text{Im}\theta_{12}}}{\eta-e^{\frac{\pi i}{6}}}dm(\eta)& \le \int_{0}^{(3-1/p_1)\tan \varphi_0}\int_{v}^{3} |\tilde{r}'(u+1/p_1)| e^{-t \frac{\sqrt{3}(7-3 \tilde{k}_1^2)}{(\tilde{k}_1+1)^2}  v u^2 }dudv \\
& \lesssim \int_{0}^{\infty} \int_{v}^{\infty} e^{-t \frac{\sqrt{3}(7-3 \tilde{k}_1^2)}{(\tilde{k}_1+1)^2}  v u^2 }dudv \lesssim t^{-2/3}.
\end{align*}

To estimate the integral over $\Omega_B$,  we use the fact $|\eta-e^{\frac{\pi i}{6}}|^{-1} \le|\eta|^{-1} $ and the estimate \eqref{dbarRj4} to obtain that
\begin{align}
	\iint_{\Omega_B}  \dfrac{|\bar\partial R_{1}(\eta)| e^{2t\text{Im}\theta_{12}}}{\eta-e^{\frac{\pi i}{6}}}dm(\eta) \le \int_{2}^{+\infty} \int_{0}^{u} (u^2+v^2)^{-1/2} e^{-t \frac{\sqrt{3}}{8} v}dv du \lesssim t^{-1}.
\end{align}
Combining the above two estimates, we arrive at the desired result  \eqref{p1asym3}.
\end{proof}

\subsection{Proof of Theorem \ref{last} }\label{p1sec9}

\quad Now we begin to construct the large time asymptotics of the Novikov equation (\ref{Novikov}) in the transition zone $|\xi+\frac{1}{8}|t^{2/3}<C$.
Inverting the sequence of transformations (\ref{transm1}), \eqref{p1transm2}, (\ref{transm2}), (\ref{transm4}) and (\ref{transm3}), we have for $z \in \mathbb{C} \setminus U$,
\begin{align}
M(z)=&M^{(4)}(z)E(z)M^{ O}(z)\mathcal{R}^{(3)}(z)^{-1}T(z)^{-1}G(z)^{-1} + \mathcal{O}(e^{-ct}).
\end{align}
To  reconstruct the solution $u(x,t)$ by using (\ref{recons u}),   we take $z=e^{\frac{\pi i}{6}}$. In this case,  $\mathcal{R}^{(3)}(z)=G(z)=I$, and then we can obtain that
\begin{align}\label{p1sol1}
	M(e^{\frac{\pi i}{6}})=&M^{(4)}(e^{\frac{\pi i}{6}}) E(e^{\frac{\pi i}{6}})M^{O}(e^{\frac{\pi i}{6}})
	T(e^{\frac{\pi i}{6}})^{-1} + \mathcal{O}(e^{-ct}).
\end{align}
Using Proposition \ref{asyM3i1}, \eqref{p1sol1} comes down to
\begin{align}\label{p1sol11}
	M(e^{\frac{\pi i}{6}})= E(e^{\frac{\pi i}{6}})M^{O}(e^{\frac{\pi i}{6}})
	T(e^{\frac{\pi i}{6}})^{-1} + \mathcal{O}(t^{-2/3}).
\end{align}
Substitute the above estimates into  (\ref{recons u}) and (\ref{recons x}), and obtain
\begin{align*}
	u(y,t)&= u^\lozenge(y,t;\tilde{\mathcal{D}}_\lozenge)\left( T_1(e^{\frac{\pi i}{6}})T_3(e^{\frac{\pi i}{6}})\right) ^{-1/2}-1 \\
	&+\frac{1}{2}\left( T_1(e^{\frac{\pi i}{6}})T_3(e^{\frac{\pi i}{6}})\right) ^{-1/2} f_{11}t^{-1/3}+\mathcal{O}(t^{-2/3+2\delta_1}), \nonumber \\
	x(y,t)&= y+ \frac{1}{2} \ln\frac{M^O_{33}(e^{\frac{i\pi}{6}};y,t)}{M^O_{11}(e^{\frac{i\pi}{6}};y,t)}+\frac{1}{2} \ln\left(\frac{T_1(e^{\frac{\pi i}{6}})}{T_3(e^{\frac{\pi i}{6}})}\right) + \frac{1}{2} f_{12} t^{-1/3} +\mathcal{O}(t^{-2/3+2\delta_1}), \\
	&=x^\lozenge(y,t;\tilde{\mathcal{D}}_\lozenge)+\frac{1}{2} \ln T_{13}(e^{\frac{\pi i}{6}})+ \frac{1}{2} f_{12} t^{-1/3} +\mathcal{O}(t^{-2/3+2\delta_1}), \nonumber
\end{align*}
where
\begin{align}
f_{11} &= \frac{1}{2} \left( \tilde{m}^\lozenge_1(y,t) \left(\frac{M_{33}^O(e^{\frac{\pi i}{6}})}{M_{11}^O(e^{\frac{\pi i}{6}})} \right)^{1/2} -\tilde{m}^\lozenge_3 (y,t) \left(\frac{M_{33}^O(e^{\frac{\pi i}{6}})}{M_{11}^O(e^{\frac{\pi i}{6}})} \right)^{-1/2} \right)f_{12} \nonumber\\
	& + (E_1 M^O(e^{\frac{\pi i}{6}}))_1 \left(\frac{M_{33}^O(e^{\frac{\pi i}{6}})}{M_{11}^O(e^{\frac{\pi i}{6}})} \right)^{1/2}  + (E_1 M^O(e^{\frac{\pi i}{6}}))_3 \left(\frac{M_{33}^O(e^{\frac{\pi i}{6}})}{M_{11}^O(e^{\frac{\pi i}{6}})} \right)^{-1/2}, \label{p1f11}\\
f_{12} & = (E_1 M^O(e^{\frac{\pi i}{6}}))_{33} M^O_{33}(e^{\frac{\pi i}{6}})^{-1} -  (E_1 M^O(e^{\frac{\pi i}{6}}))_{11} M^O_{11}(e^{\frac{\pi i}{6}})^{-1},	\label{p1f12}
\end{align}
$(E_1 M^O)_{ij}$ represents the element in the $i$-th row and $j$-th column of the matrix $E_1 M^O$,  $(E_1 M^O)_{j} = \sum_{i=1}^{3}(E_1 M^O)_{ij}$,
and $u^\lozenge(y,t;\tilde{\mathcal{D}}_\lozenge)$, $x^\lozenge(y,t;\tilde{\mathcal{D}}_\lozenge)$, and $\tilde{m}^\lozenge_j(y,t)$ are defined in Corollary \ref{p1urxrsol}.
%Moreover, from \eqref{p1constx},
%\begin{align}
%	x(y,t)-y = \frac{1}{2} \ln\frac{M^O_{33}(e^{\frac{i\pi}{6}};y,t)}{M^O_{11}(e^{\frac{i\pi}{6}};y,t)}+\frac{1}{2} \ln\left(\frac{T_1(e^{\frac{\pi i}{6}})}{T_3(e^{\frac{\pi i}{6}})}\right) + \frac{1}{2} f_{12} t^{-1/3} +\mathcal{O}(t^{-2/3+2\delta_1}),
%\end{align}
%which leads to
%\begin{align}\label{p1rexy}
%	\frac{x}{t}-\frac{y}{t}=\mathcal{O}(t^{-1}).
%\end{align}
Bring \eqref{Tij} into the above formulas, we obtain the final result.

\appendix

\section{Modified Painlev\'{e} \uppercase\expandafter{\romannumeral2}  RH Problem} \label{appx}
The (homogeneous) Painlev\'{e} \uppercase\expandafter{\romannumeral2} equation is
\begin{equation}\label{p23}
	v_{ss} = 2v^3 +sv, \quad s \in \mathbb{R}.
\end{equation}
The standard Painlev\'{e} \uppercase\expandafter{\romannumeral2} equation is related to a $2 \times 2$ matrix-valued RH problem, here we give a modified $3 \times 3$ matrix-valued RH problem related to \eqref{p23} as follows.

Denote $\Sigma^P = \bigcup_{n=1}^6\left\{  \Sigma_n^P = e^{i\left(\frac{\pi}{6}+(n-1)\frac{\pi}{3}\right)} \mathbb{R}_+ \right\}$, see Figure \ref{Sixrays}. Let $\mathcal{C} =\{c_1,c_2,c_3\}$ be a set of complex constants such that
\begin{equation}
	c_1 -c_2 +c_3 +c_1 c_2 c_3=0,
\end{equation}
and define the matrices $\{ C_n\}_{n=1}^6$ by
\begin{equation*}
	C_n = 	\begin{pmatrix} 1 & 0 &0 \\ c_n e^{2i(\frac{4}{3}k^3+sk)}& 1 &0 \\ 0 & 0 & 1 \end{pmatrix}, \ n \ \text{odd}; \quad C_n = 	\begin{pmatrix} 1 & c_n e^{-2i(\frac{4}{3}k^3+sk)}&0 \\ 0 & 1 &0 \\ 0 & 0 & 1 \end{pmatrix}, \ n \ \text{even},
\end{equation*}
where $c_{n+3}=-c_n,\ n=1,2,3$.
Then there exists a countable set $\mathcal{S}_{\mathcal{C}} =\{ s_j\}_{j=1}^\infty \subset \mathbb{C}$ with $s_j \to \infty$ as $j\to\infty$, such that the following RH problem
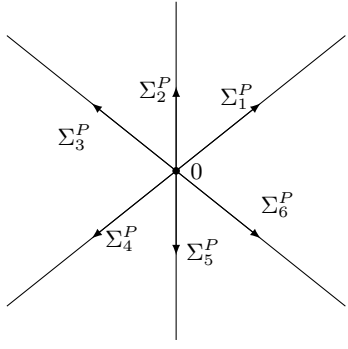
\begin{figure}
	\begin{center}
		\begin{tikzpicture}[scale=0.9]
			\node[shape=circle,fill=black,scale=0.15] at (0,0) {0};
			\node[below] at (0.3,0.25) {\footnotesize $0$};
			\draw [] (0,-2.5 )--(0,2.5);
			\draw [-latex] (0,0)--(0,1.25);
			\draw [-latex] (0,0 )--(0,-1.25);
			\draw [] (0,0 )--(2.5,2);
			\draw [-latex] (0,0)--(1.25,1);
			\draw [] (0,0 )--(2.5,-2);
			\draw [-latex] (0,0)--(1.25,-1);
			\draw [] (0,0 )--(-2.5,2);
			\draw [-latex] (0,0)--(-1.25,1);
			\draw [] (0,0 )--(-2.5,-2);
			\draw [-latex] (0,0)--(-1.25,-1);

			\node at (0.9,1.1) {\footnotesize$\Sigma^P_1$};
			\node at (1.5,-0.5) {\footnotesize$\Sigma^P_6 $};
			\node at (-1.5,0.5) {\footnotesize$\Sigma^P_3$};
			\node at (-0.8,-1) {\footnotesize$\Sigma^P_4$};
			\node at (-0.3,1.2) {\footnotesize$\Sigma^P_2$};
			\node at ( 0.4,-1.2) {\footnotesize$\Sigma^P_5$};
			
			%		\node  at (2.5,2.5) {\footnotesize $\begin{pmatrix} 1 & 0 \\ p & 1 \end{pmatrix}$};
			%		\node  at (-2.5,2.5) {\footnotesize $\begin{pmatrix} 1 & 0\\ q& 1 \end{pmatrix}$};
			%	\node  at (0,2.8) {\footnotesize $\begin{pmatrix} 1 & r\\ 0& 1 \end{pmatrix}$};
			%	\node  at (0,-2.8) {\footnotesize $\begin{pmatrix} 1 & 0 \\ -r& 1 \end{pmatrix}$};
			%	\node  at (2.5,-2.5) {\footnotesize $\begin{pmatrix} 1 & -q\\ 0& 1 \end{pmatrix}$};
			%	\node  at (-2.5,-2.5) {\footnotesize $\begin{pmatrix} 1 & -p \\ 0 & 1 \end{pmatrix}$};
			
		\end{tikzpicture}
		\caption{ \footnotesize { The jump contour $\Sigma^P$.}}
		\label{Sixrays}
	\end{center}
\end{figure}

%The Painlev\'{e} \uppercase\expandafter{\romannumeral2} RH problem satisfies the following properties:
\begin{RHP}\label{1modp2}
	Find   $M^{P}(k)=M^{P}(k,s)$ with properties
	\begin{itemize}
		\item Analyticity: $M^{P}(k)$ is analytical in $\mathbb{C}\setminus \Sigma^{P}$.
		\item Jump condition:
		\begin{equation*}
			M^{P}_+( k)=M^{P}_-(k)C_n, \quad k \in \Sigma_n^P.
		\end{equation*}
		
	\item Asymptotic behavior:
\begin{align*}
	&M^{P}( k)=I+\mathcal{O}(k ^{-1}),	\quad k \to  \infty,\\
	& M^{P}( k) = \mathcal{O}(1),\quad k \to 0.
\end{align*}

\end{itemize}
\end{RHP}
\noindent has a unique solution $M^P(k,s)$ for each $s \in \mathbb{C} \setminus  \mathcal{S}_\mathcal{C}$. For each $n$, the restriction of $M^P(k,s)$ to $\arg k \in \left(\frac{\pi(2n-3)}{6}, \frac{\pi(2n-1)}{6}\right)$ admits an analytic continuation to $\left( \mathbb{C} \setminus  \mathcal{S}_\mathcal{C} \right) \times \mathbb{C}$ and  there are smooth function $\{M_j^P(s)\}_{j=1}^\infty$ of $s \in \mathbb{C} \setminus  \mathcal{S}_\mathcal{C}$ such that, for each integer $N \ge 0$,
\begin{equation}\label{stanp}
M^P(k) = I + \sum_{j=1}^N \frac{M_j^P(s)}{k^j} + \mathcal{O}(k^{-N-1}),\quad k \to \infty,
\end{equation}
uniformly for $s$ in compact subsets of  $\mathbb{C} \setminus  \mathcal{S}_\mathcal{C}$ and for $\arg k \in [0, 2\pi]$.
Moreover,
\begin{align}\label{up2}
\left(M_1^P(s)\right)_{12} =  \left(M_1^P(s)\right)_{21}= \frac{1}{2} u(s),
\end{align}
solves the Painlev\'{e} \uppercase\expandafter{\romannumeral2} equation.
%where
%$M_1^P(s)$ is the coefficient of the term $k^{-1}$ in the asymptotic behavior of $M^{P}(k)$ as $k \to \infty$.
%\begin{align}
%	M^{P}(k) = I + k^{-1} M_1^P(s) + \mathcal{O} \left(k^{-2}\right), \quad k \to \infty.
%\end{align}
The map $(c_1,c_2,c_3) \in \mathcal{C} \to u(\cdot;c_1,c_2,c_3)$ is a bijection
\begin{equation}
\{(c_1,c_2,c_3) \in \mathbb{C}^3 | c_1 -c_2 +c_3 +c_1 c_2 c_3 =0   \} \to \{\text{solutions of} \ \eqref{p23}\},
\end{equation}
and $\mathcal{S}_\mathcal{C}$ is the set of poles of $ u(\cdot;c_1,c_2,c_3)$.
Moreover, if $\mathcal{C} = (c_1,0,-c_1)$ where $c_1 \in i \mathbb{R}$ with $|c_1| <1$, then the leading coefficient $M_1^P$ is given by
\begin{align}\label{posee}
M_1^P(s) = \frac{1}{2} \begin{pmatrix} -i\int_{s}^\infty u(\eta)^2d\eta& u(s) &0 \\ u(s) & i\int_{s}^\infty u(\eta)^2d\eta &0  \\ 0& 0 & 0 \end{pmatrix}.
\end{align}
%and the sub-leading coefficient $M_2^P$ is given by
%\begin{align}\label{subposee}
%M_2^P(s) = \frac{1}{16} \begin{pmatrix}-  \left(\int_{s}^\infty u^2(\eta) d\eta \right)^2 + u^2(s) & 2i\left( u(s)\int_{s}^\infty u^2(\eta) d\eta+ u'(s) \right) &0 \\ -2i\left( u(s)\int_{s}^\infty u^2(\eta) d\eta+ u'(s) %\right)& -  \left(\int_{s}^\infty u^2(\eta) d\eta \right)^2 + u^2(s) &0  \\ 0& 0 & 0 \end{pmatrix}.
%\end{align}
For each $C_1 > 0$,
\begin{align}\label{mPbounded}
\sup_{k \in \mathbb{C}\setminus \Sigma^P} \sup_{s \geq -C_1} |M^P(k)|  < \infty.
\end{align}
The solution $u(s)$ of the Painlev\'{e} \uppercase\expandafter{\romannumeral2} equation  is specified by its asymptotics as $s \to \infty$
\begin{equation}
v(s) \sim -\im c_1 \mathrm{Ai}(s) \sim - \frac{\im c_1}{2\sqrt{\pi}} k^{-\frac{1}{4} e^{-\frac{2}{3}k^{3/2}}}.
\end{equation}
where $\mathrm{Ai}(s)$ denotes the Airy function.

\section{Model RH Problem for the Transition Zone} \label{appx2}
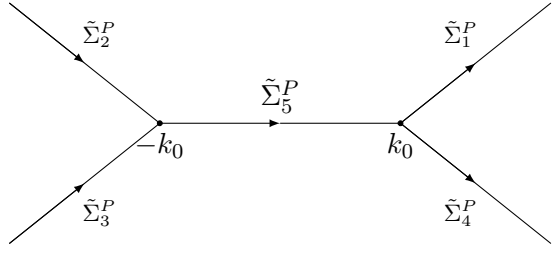
\begin{figure}[htbp]
	\centering
	\begin{tikzpicture}[scale=0.8]
		%\draw[-latex](-5,0)--(5,0);
		%	\filldraw (0,0) circle [radius=0.04];
		\filldraw (-2,0) circle [radius=0.04];
		\filldraw (2,0) circle [radius=0.04];
		%  \filldraw (10,0) circle [radius=0.04];
		\node  [above]  at (0,0) {$\tilde{\Sigma}^{P}_5$};
		\node  [below]  at (-2,0) {$-k_0$};
		\node  [below]  at (2,0) {$k_0$};
		%  \node  [below]  at (0,0) {$\hat{k}_1$};
		\draw[-latex] (-2,0)--(0,0);
		\draw[] (0,0)--(2,0);
		\draw[] (-4.5,2)--(-2,0);
		\draw[-latex] (-4.5,2)--(-3.25,1);
		\draw[] (-4.5,-2)--(-2,0);
		\draw[-latex] (-4.5,-2)--(-3.25,-1);
		%     \filldraw [red](6,0) circle [radius=0.04];
		
		\draw[] (4.5,2)--(2,0);
		\draw[-latex] (2,0)--(3.25,1);
		\draw[] (4.5,-2)--(2,0);
		\draw[-latex] (2,0)--(3.25,-1);
		
		\node  [above]  at (-3,1.1) {\footnotesize$\tilde{\Sigma}^{P}_2$};
		\node  [below]  at (-3,-1.1) {\footnotesize$\tilde{\Sigma}^{P}_3$};

		\node  [above]  at (3,1.1){\footnotesize$ \tilde{\Sigma}^{P}_1$};
		\node  [below]  at (3,-1.1){\footnotesize$\tilde{\Sigma}^{P}_4$};
	\end{tikzpicture}
	\caption{\footnotesize The jump contours $\tilde{\Sigma}^{P}$.}
	\label{fmodelp2}
\end{figure}

Let $\tilde{\Sigma}^{P} = \tilde{\Sigma}^{P}(k_0)$ denote the contour $\tilde{\Sigma}^{P} = \cup_{j=1}^5 \tilde{\Sigma}^{P}_j $, as depicted in Figure \ref{fmodelp2}, where
\begin{align*}
	&\tilde{\Sigma}^{P}_1 = \{k| k=k_0 + r e^{\frac{\pi i}{6}},\ 0\le r <\infty \}, \quad \tilde{\Sigma}^{P}_2  = \{k|k=-k_0 +r e^{\frac{5\pi i}{6}},\ 0\le r <\infty  \},\\
	&\tilde{\Sigma}^{P}_3  = \{k|k \in \overline{\Sigma^{L}_2}\}, \quad \tilde{\Sigma}^{P}_4  =  \{k|k \in \overline{\Sigma^{L}_1}\}, \quad
	\tilde{\Sigma}^{P}_5  = \{k|-k_0 \le k \le k_0 \}.
\end{align*}
The model RH problem for the  transition zone is given as follows:
\begin{RHP}\label{modelp2}
	Find   $N^{P}(k)=N^{P}(k,s,c_1,k_0)$ with properties
	\begin{itemize}
		\item Analyticity: $N^{P}(k)$ is analytical in $\mathbb{C}\setminus \tilde{\Sigma}^{P}$.
		\item Jump condition:
		\begin{equation*}
			N^{P}_+( k)=N^{P}_-(k)V^{P}(k), \quad k \in \tilde{\Sigma}^{P},
		\end{equation*}
		where
		\begin{equation}\label{jumpl}
			V^{P}(k) = \begin{cases}
				\begin{pmatrix} 1 & 0 &0 \\ c_1 e^{2i (\frac{4k^3}{3} + s k)} & 1 &0 \\ 0 & 0 & 1 \end{pmatrix}, \quad k \in \tilde{\Sigma}^{P}_1 \cup \tilde{\Sigma}^{P}_2,\\
				\begin{pmatrix} 1 & -\bar{c}_1 e^{-2i (\frac{4k^3}{3} + s k)} &0 \\ 0 & 1 &0 \\ 0 & 0 & 1 \end{pmatrix},\quad k \in \tilde{\Sigma}^{P}_3\cup \tilde{\Sigma}^{P}_4,\\
				\begin{pmatrix} 1 -|c_1|^2 & - \bar{c}_1e^{-2i (\frac{4k^3}{3} + s k)} &0 \\ c_1 e^{2i (\frac{4k^3}{3} + s k)} & 1 &0 \\ 0 & 0 & 1 \end{pmatrix},\quad k \in \tilde{\Sigma}^{P}_5.
			\end{cases}
		\end{equation}
		\item Asymptotic behavior: $N^{P}( k)=I+\mathcal{O}(k ^{-1}),	\quad k \to  \infty.$

	\end{itemize}
\end{RHP}
Define the parameter subset $\mathcal{P}_T$ of $\mathbb{R}^3$ by
\begin{equation}\label{subpt}
	\mathcal{P}_T = \{(s,t,k_0) \in \mathbb{R}^3 | -C_1 \le s \le 0, t \ge T, \sqrt{|s|}/2 \le k_0 \le C_2  \},
\end{equation}
where $C_1,C_2 >0$ are constants.
Then there exists a $T \ge 1$ such that the above RH problem has a unique solution.
 and for each integer $N \ge 1$,
 \begin{equation}\label{stanp2}
N^P(k) = I + \sum_{j=1}^N \frac{N^{P}_{j}(s)}{k^j} + \mathcal{O}(k^{-N-1}),\quad k \to \infty,
\end{equation}
uniformly with respect to $\arg k \in [0,2\pi]$ and $(s,t,k_0) \in \mathcal{P}_T$ as $k \to \infty$, where $\{ N^{P}_{j}(s)\}_{j=1}^N $ are smooth functions in \eqref{stanp}.

\begin{proof}
	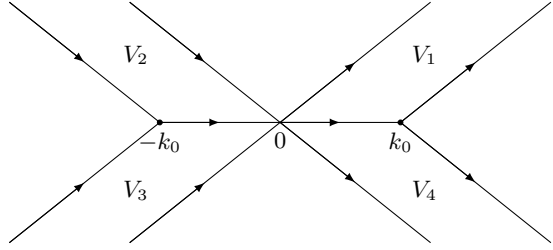
\begin{figure}[htbp]
		\centering
		\begin{tikzpicture}[scale=0.8]
			%\draw[-latex](-5,0)--(5,0);
			%	\filldraw (0,0) circle [radius=0.04];
			\filldraw (-2,0) circle [radius=0.04];
			\filldraw (2,0) circle [radius=0.04];
			%  \filldraw (10,0) circle [radius=0.04];
			\node  [below]  at (0,0) {\footnotesize$0$};
			\node  [below]  at (-2,0) {\footnotesize$-k_0$};
			\node  [below]  at (2,0) {\footnotesize$k_0$};
			%  \node  [below]  at (0,0) {$\hat{k}_1$};
			
			\draw[] (-2,0)--(2,0);
			\draw[-latex] (-2,0)--(-1,0);
			\draw[-latex] (0,0)--(1,0);
			
			\draw[] (-4.5,2)--(-2,0);
			\draw[-latex] (-4.5,2)--(-3.25,1);
			\draw[] (0,0)--(-2.5,2);
			\draw[-latex](-2.5,2)--(-5/4,1);
			\draw[] (-4.5,-2)--(-2,0);
			\draw[-latex] (-4.5,-2)--(-3.25,-1);
			\draw[] (0,0)--(-2.5,-2);
			\draw[-latex] (-2.5,-2)--(-5/4,-1);
			%     \filldraw [red](6,0) circle [radius=0.04];
			
			\draw[] (4.5,2)--(2,0);
			\draw[-latex] (2,0)--(3.25,1);
			\draw[] (0,0)--(2.5,2);
			\draw[-latex] (0,0)--(5/4,1);
			\draw[] (4.5,-2)--(2,0);
			\draw[-latex] (2,0)--(3.25,-1);
			\draw[] (0,0)--(2.5,-2);
			\draw[-latex] (0,0)--(5/4,-1);
			
			\node  [above]  at (-2.4,0.8) {\footnotesize$V_2$};
			\node  [below]  at (-2.4,-0.8) {\footnotesize$V_3$};

			\node  [above]  at (2.4,0.8){\footnotesize$V_1$};
			\node  [below]  at (2.4,-0.8){\footnotesize$V_4$};
		\end{tikzpicture}
		\caption{\footnotesize The open subsets $\{V_j\}_{j=1}^4$.}
		\label{fmatp2}
	\end{figure}

	Let $u(s;c_1,0,-c_1)$ denote the smooth real-valued solution of \eqref{p23} corresponding to $(c_1,0,-c_1)$ and $M^{P}(k) = M^{P}(k,s;c_1,0,-c_1)$ be the corresponding solution of RH problem \ref{modelp2}. Denote the open subsets $\{V_j\}_{j=1}^4$, as shown in Figure \ref{fmatp2}.
	To match with the modified Painlev\'{e} \uppercase\expandafter{\romannumeral2}  RH problem, we make a transformation  $N(k)=N(k,s,c_1,k_0)$ by
	\begin{equation*}
		N(k)= M^{P}(k) \times \begin{cases}
			\begin{pmatrix} 1 & 0 &0 \\ c_1e^{2i (\frac{4k^3}{3} + s k)} & 1 &0 \\ 0 & 0 & 1 \end{pmatrix}, \quad k \in V_1 \cup V_2,\\
			\begin{pmatrix} 1 & \bar{c}_1 e^{-2i (\frac{4k^3}{3} + s k)} &0 \\ 0 & 1 &0 \\ 0 & 0 & 1 \end{pmatrix},\quad k \in V_3 \cup V_4.\\
		\end{cases}
	\end{equation*}
    Then $N(k)$ satisfies the above RH problem \ref{modelp2}.

\end{proof}

\noindent\textbf{Acknowledgements}

This work is supported by  the National Science
Foundation of China (Grant No. 11671095,  51879045).

%\hspace*{\parindent}
%\\


\begin{thebibliography}{99}


\bibitem{39}
V.  Novikov,
\newblock   Generalizations of the Camassa-Holm equation,
\newblock {\em   J. Phys. A: Math. Theor.},  42(2009), 342002.


\bibitem{37}
A. V. Mikhailov, V. S. Novikov,
\newblock  Perturbative symmetry approach,
\newblock {\em  J. Phys. A: Math. Gen.},  35(2002), 4775-4790.

\bibitem{HW29}
A. N. W. Hone, J. P. Wang,
\newblock  Integrable peakon equations with cubic nonlinearity,
\newblock {\em  J. Phys. A: Math. Theor.},
41(2008), 372002.

\bibitem{CA1}
A. Constantin,
\newblock  The trajectories of particles in Stokes waves,
\newblock {\em  Invent.	Math.},
166(2006), 523-535.

\bibitem{CA2}
A. Constantin, J.  Escher,
\newblock  Analyticity of periodic travelling free surface water waves with vorticity,
\newblock {\em  Ann.	Math.},
173(2011), 559-568.

\bibitem{CA3}
A. Constantin, J.  Escher,
\newblock  Particle trajectories in solitary water waves,
\newblock {\em  Bull. Am. Math. Soc.},
44(2007), 423-431.

\bibitem{TJF}
J. F. Toland,
\newblock   Stokes waves,
\newblock {\em  Topol. Methods Nonlinear Anal.},
7(1996), 1-48.

\bibitem{HW28}
A. N. W. Hone, H. Lundmark, J. Szmigielski,
\newblock  Explicit multipeakon solutions of Novikov's cubically nonlinear integrable Camassa-Holm type equation,
\newblock {\em  Dyn. Partial Differ. Equ.},
6(2009), 253-289.

\bibitem{M36}
Y. Matsuno,
\newblock  Smooth multisoliton solutions and their peakon limit of Novikov's Camassa-Holm type equation with cubic nonlinearity,
\newblock {\em   J. Phys. A: Math. Theor.},
46(2013), 365203.

\bibitem{Wu6}
L. H. Wu, C. Li, N. H. Li,
\newblock  Soliton solutions to the Novikov equation and a negative flow of the Novikov hierarchy,
\newblock {\em    Appl.	Math. Lett.}, 87 (2019) 134-140.


\bibitem{CH2018} X. K. Chang,  X. B. Hua, S. H. Li,  J. X. Zhao, An application of Pfaffians to multipeakons of the Novikov equation and the finite Toda lattice of BKP type,
\newblock {\em Adv. Math.}, 338(2018), 1077-1118.


\bibitem{CH2022} X. K. Chang,  Hermite-Pade approximations with Pfaffian structures: Novikov peakon equation and integrable lattices,
\newblock {\em Adv. Math.}, 402(2022), 108338





\bibitem{RHP}
A. Boutet de Monvel,  D. Shepelsky,  L. Zielinski,
\newblock    A Riemann-Hilbert approach for the Novikov equation,
\newblock {\em Symmetry Integrability and Geometry Methods and Applications}, 12(2016), No.095, 22pp.

\bibitem{32}
S. Lai, N. Li, Y. Wu,
\newblock   The existence of global strong and weak solutions for the Novikov equation,
\newblock {\em J. Math.	Anal. Appl.}, 399(2013), 682-691.

\bibitem{Monvel3}   A. Boutet de Monvel,  A.  Shepelsky, \newblock  A Riemann-Hilbert approach for the Degasperis-Procesi equation.
\newblock {\em 	Nonlinearity}, 26(2013), 2081-2107.

\bibitem{Monvel2} A. Boutet de Monvel, J. Lenells,  D. Shepelsky, \newblock Long-time asymptotics for the Degasperis-Procesi
equation on the half-line. \newblock  {\em Ann. Inst. Fourier (Grenoble)}, 69(2019), 171-230.



\bibitem{novYF}
Y. L. Yang, E. G. Fan,
\newblock{Soliton resolution and large time behavior of solutions to the Cauchy problem for the Novikov equation with a nonzero background},
\textit{Adv. Math.}, 426(2023), 109088.



\begin{comment}
\bibitem{Manakov1974} S.V. Manakov, Nonlinear Fraunhofer diffraction,  Sov. Phys.-JETP 38(1974), 693-696.


\bibitem{ZM1976}
V. E. Zakharov, S. V.  Manakov,
\newblock {Asymptotic behavior of nonlinear wave systems integrated by the inverse scattering method},
\newblock {\em  Soviet Physics JETP,}   44(1976), 106-112.



\bibitem{SPC}
P. C. Schuur,
\newblock {Asymptotic analysis of soliton products},
\newblock {\em Lecture Notes in Mathematics,}  1232, 1986.


\bibitem{BRF}
R. F. Bikbaev,
\newblock {Asymptotic-behavior as t-infinity of the solution to the cauchy-problem for the landau-lifshitz equation},
\newblock {\em  Theor. Math. Phys,}  77(1988), 1117-1123.

\bibitem{Foka}
R. F. Bikbaev,
\newblock {Soliton generation for initial-boundary-value problems,}
\newblock {\em  Phys. Rev. Lett.}, 68(1992), 3117-3120.





\bibitem{RN6}
P. Deift, X. Zhou,
\newblock  A steepest descent method for oscillatory Riemann-Hilbert problems.
\newblock {\em Ann. Math.}, 137(1993),  295-368.


\bibitem{RN9}
P. Deift, X. Zhou,
\newblock  Long-time behavior of the non-focusing nonlinear Schr\"odinger equation-a case study,
\newblock {\em Lectures in Mathematical Sciences}, Graduate School of Mathematical Sciences, University of Tokyo, 1994.


\bibitem{RN10}
P. Deift, X. Zhou,
\newblock Long-time asymptotics for solutions of the NLS equation with initial data in a weighted Sobolev space,
\newblock {\em Comm. Pure Appl. Math.}, 56(2003), 1029-1077.


\bibitem{Grunert2009}
K. Grunert,  G. Teschl,
\newblock   Long-time asymptotics for  the Korteweg de Vries equation  via  noninear  steepest descent.
\newblock {\em Math. Phys. Anal. Geom.},     12(2009), 287-324.


\bibitem{MonvelCH}
A. Boutet de Monvel, A. Kostenko, D. Shepelsky, G. Teschl,
\newblock  Long-time asymptotics for the Camassa-Holm equation,
\newblock {\em SIAM J. Math. Anal}, 41(2009),  1559-1588.

\end{comment}





\bibitem{MandM2006}
K. T. R. McLaughlin, P. D. Miller,
\newblock {The $\bar{\partial}$ steepest descent method and the asymptotic behavior of polynomials orthogonal on
the unit circle with fixed and exponentially varying non-analytic weights},
\newblock {\em Int.  Math. Res. Not.}, (2006), Art. ID 48673.

\bibitem{MandM2008}
K. T. R. McLaughlin, P. D. Miller,
\newblock {The $\bar{\partial}$ steepest descent method for orthogonal polynomials on the real line with varying weights},
\newblock {\em  Int. Math. Res. Not.},  (2008), Art. ID  075.

\bibitem{DandMNLS}
M. Dieng, K. T. R. McLaughlin,
\newblock {Dispersive asymptotics for linear and integrable equations by the Dbar steepest descent method},
\newblock {\em } Nonlinear dispersive partial differential equations and inverse scattering,
253-291, Fields Inst. Commun., 83, Springer, New York,  2019

\bibitem{fNLS}
M. Borghese, R. Jenkins, K. T. R. McLaughlin, P. D. Miller,
\newblock { Long-time asymptotic behavior of the focusing nonlinear Schr\"odinger equation, }
\newblock {\em  Ann. Inst. H. Poincar\'{e} C Anal. Non Lin\'{e}aire}, 35(2018), 887-920.



\bibitem{Liu3}
R. Jenkins, J. Liu, P. Perry, C. Sulem,
\newblock   Soliton resolution for the derivative nonlinear Schr\"odinger equation,
\newblock {\em Comm. Math. Phys.},  363(2018), 1003-1049.

\bibitem{SandRNLS}
S. Cuccagna, R. Jenkins,
\newblock {On asymptotic stability of N-solitons of the defocusing nonlinear Schr\"odinger equation, }
\newblock {\em  Comm. Math. Phys.}, 343(2016), 921-969.



\bibitem{YF3}  Y. L. Yang, E. G. Fan, \newblock On the long-time asymptotics   of the  modified Camassa-Holm equation in  space-time  solitonic regions, \newblock {\em Adv. Math.}, 402(2022), 108340.

\bibitem{WF}   Z. Y. Wang, E. G. Fan,
\newblock {The defocusing nonlinear Schr\"{o}dinger equation with a nonzero background: Painlev\'{e} asymptotics in two transition regions},
\newblock {\em Comm. Math. Phys.}, 402(2023), 2879-2930.

\begin{comment}
\bibitem{I1980}
 Y. H. Ichikawa, K. Konno, M. Wadati, H. Sanuki,
\newblock {Spiky soliton in circular polarized Alfv\'en wave, }
\newblock {\em  J. Phys. Soc. Jpn.}, 48(1980), 279-286.

\bibitem{AVKitaev}
A. V. Kitaev, A. H. Vartanian
\newblock {Leading-order temporal asymptotics of the modified nonlinear Schr\"odinger equation: solitonless sector},
\newblock {\em  Inverse Prooblems}, 13(1997), 1311-1339.




%\bibitem{Deift1982}  P. Deift, C. Tome,   E. Trubowitz, \newblock Inverse Scattering and the Boussinesq equation, \newblock {\em  Comm.  Pure   Appl.  Math.},  35(1982), 567-628.

\bibitem{Constantin1}
A. Constantin, R. I. Ivanov, J. Lenells,
\newblock Inverse scattering transform for the Degasperis-Procesi equation.\newblock {\em
	Nonlinearity}, 23(2010), 2559-2575.





\bibitem{Lenells1}  C. Charlier, J. Lenells,
\newblock The "good" Boussinesq equation: a Riemann-Hilbert approach, arXiv:2003.02777.

\bibitem{Lenells2}  C. Charlier, J. Lenells, D. Wang,  \newblock  The "good" Boussinesq equation: long-time asymptotics, arXiv:2003.04789.



\bibitem{Geng1}  X. G. Geng, H. Liu,
\newblock The nonlinear steepest descent method to long-time asymptotics of the coupled
nonlinear Schroinger equation. \newblock {\em J. Nonl. Sci.}, 28(2018), 739-763.





\bibitem{Geng3}  H. Liu,  X.G. Geng,  B.  Xue, \newblock   The Deift-Zhou steepest descent method to long-time asymptotics for the
Sasa-Satsuma equation. \newblock  {\em J. Diff. Equ.}, 265(2018), 5984-6008.

\bibitem{YF2}
Y. L. Yang, E. G. Fan, \newblock   On the asymptotic stability of N-soliton solutions of the three-wave resonant interaction equation, 	arXiv:2101.03512.

\bibitem{HG2009}
H. Kr\"uger and G. Teschl,
\newblock    Long-time asymptotics of the Toda lattice for decaying initial data revisited,
\newblock {\em Rev. Math. Phys.}, 21(2009), 61-109.


\end{comment}



\bibitem{BC1984}
R.  Beals, R. R.   Coifman,  \newblock  Scattering and inverse scattering for first-order systems.\newblock {\em    Comm. Pure	Appl. Math.}, 37(1984), 39-90.



\bibitem{Charlier2020}
C. Charlier, J. Lenells, Airy and Painlev\'{e} asymptotics for the mKdV equation,
\textit{J.  Lond.  Math.  Soc.},   101  (2020),  194-225.


\bibitem{RN9}
P. Deift, X. Zhou,
\newblock  Long-time behavior of the non-focusing nonlinear Schr\"odinger equation-a case study,
\newblock {\em Lectures in Mathematical Sciences}, Graduate School of Mathematical Sciences, University of Tokyo, 1994.


\bibitem{RN10}
P. Deift, X. Zhou,
\newblock Long-time asymptotics for solutions of the NLS equation with initial data in a weighted Sobolev space,
\newblock {\em Comm. Pure Appl. Math.}, 56(2003), 1029-1077.

\end{thebibliography}
\end{document}